\newcommand\blfootnote[1]{%
  \begingroup
  \renewcommand\thefootnote{}\footnote{#1}%
  \addtocounter{footnote}{-1}%
  \endgroup
}
\theoremstyle{sltheoremstyle}
\newtheorem{theorem}{Theorem}[section]
\newtheorem{lemma}[theorem]{Lemma}
\newtheorem{claim}[theorem]{Claim}
\newtheorem{corollary}[theorem]{Corollary}
\newtheorem{question}[theorem]{Question}
\newtheorem{proposition}[theorem]{Proposition}
\newtheorem{conjecture}[theorem]{Conjecture}
\newtheorem{condition}[theorem]{Condition}
\newtheorem{conditions}[theorem]{Conditions}
\newtheorem*{corollary-non}{Corollary}
\newtheorem*{lemma-non}{Lemma}
\newtheorem*{theorem-non}{Theorem}
\newtheorem*{proposition-non}{Proposition}
\newtheorem*{condition-non}{Condition}
\newtheorem*{conditions-non}{Conditions}
\theoremstyle{definition}
\newtheorem{notation}[theorem]{Notation}
\newtheorem{remark}[theorem]{Remark}
\newtheorem{definition}[theorem]{Definition}
\newtheorem{example}[theorem]{Example}
\newtheorem{examples}[theorem]{Examples}
\newcommand*{\img}[1]{%
    \raisebox{-.02\baselineskip}{%
        \includegraphics[
        height=\baselineskip,
        width=\baselineskip,
        keepaspectratio,
        ]{#1}%
    }%
}
\newcommand{\lr}[1]{{\longrightarrow{#1}}}
\newcommand{\ol}[1]{{\overline{#1}}}
\newcommand{\n}[1]{\left\| #1 \right\|}
\newcommand{\set}[1]{\left\{ #1 \right\}}
\newcommand{\va}[1]{\left| #1 \right|}
\newcommand{\Isom}{\textnormal{Isom}}
\newcommand{\Id}{\textnormal{Id}}
\newcommand{\white}{\;\;\;}
\newcommand{\Stab}{\textnormal{Stab}}
\newcommand{\sm}{\setminus}
\newcommand{\CC}{\mathbb{C}}
\newcommand{\BB}{\mathbb{B}}
\newcommand{\OO}{\mathcal{O}}
\newcommand{\HH}{\mathbb{H}}
\newcommand{\QQ}{\mathbb{Q}}
\newcommand{\PP}{\mathbb{P}}
\newcommand{\PGL}{\textnormal{PGL}}
\newcommand{\GL}{\textnormal{GL}}
\newcommand{\SO}{\textnormal{SO}}
\newcommand{\RR}{\mathbb{R}}
\newcommand{\ZZ}{\mathbb{Z}}
\newcommand{\vep}{\varepsilon}
\newcommand{\ES}{\emptyset}
\newcommand{\CCH}{\bb C H}
\newcommand{\RRH}{\bb R H}
\newcommand{\Ker}{\textnormal{Ker}}
\newcommand{\id}{\textnormal{id}}
\newcommand{\Hom}{\textnormal{Hom}}
\newcommand{\Lie}{\textnormal{Lie}}
\newcommand{\Gal}{\textnormal{Gal}}
\newcommand{\End}{\textnormal{End}}
\newcommand{\PO}{\textnormal{PO}}
\newcommand{\eis}{\mathrm{eis}}
\newcommand{\GamEis}{\Gamma_{\mathrm{eis}}}
\newcommand{\Aeisfour}{\mr A_\eis^4}
\newcommand{\Aut}{\textnormal{Aut}}
\newcommand{\Proj}{\textnormal{Proj}}
\newcommand{\ca}[1]{{\mathcal{#1}}}
\newcommand{\bb}[1]{{\mathbb{#1}}}
\newcommand{\mr}[1]{{\mathscr{#1}}}
\newcommand{\mf}[1]{{\mathfrak{#1}}}
\newcommand{\tn}[1]{{\textnormal{#1}}}
\let\rm\relax 
\newcommand{\rm}[1]{{\mathrm{#1}}}
\newcommand{\wt}[1]{{\widetilde{#1}}}
\DeclareRobustCommand\longtwoheadrightarrow
\DeclareSymbolFont{bbm}{U}{bbm}{m}{n}
\DeclareSymbolFontAlphabet{\mathbbm}{bbm}
\begin{document}

\title{\Large{\textbf{Non-arithmetic hyperbolic orbifolds \\ attached to unitary Shimura varieties}}}
\author{Olivier de Gaay Fortman}
\date{\vspace{-7ex}}




\maketitle 


%


%

\abstract{
\textbf{Abstract:}\blfootnote{
\emph{Date:} \today.  \emph{Mathematics Subject Classification:} 20F65, 22E40, 53C35.} We develop a new method of constructing non-arithmetic lattices in the projective orthogonal group $\text{PO}(n,1)$ for every integer $n$ larger than one. The technique is to consider anti-holomorphic involutions on a complex arithmetic ball quotient, glue their fixed loci along geodesic subspaces, and show that the resulting metric space carries canonically the structure of a complete real hyperbolic orbifold. The volume of various of these non-arithmetic orbifolds can be explicitly calculated.
}


\newpage


{\footnotesize \tableofcontents}


\section{Introduction}


To study locally symmetric spaces $M = \Gamma \backslash G / K$ of finite volume, one would like to classify 
lattices in semi-simple Lie groups. This difficult problem can be simplified somewhat 
via the notion of arithmeticity. 
Namely, arithmetic lattices can be classified in a way which is close to the classification of semi-simple algebraic groups over number fields, something which is well-understood. Moreover, if a semi-simple Lie group $G$ is non-compact and not isogenous to $\tn{PO}(n,1)$ or $\tn{PU}(n,1)$, then every irreducible lattice $\Gamma \subset G$ is arithmetic \cite{MR0492072, MR1147961, MR1215595}. Thus, to describe all lattices, one is left with the problem of classifying non-arithmetic lattices in $\tn{PO}(n,1)$ and in $\tn{PU}(n,1)$, a complicated task which remains completely open in general. 

In this paper, we focus on $\rm{PO}(n,1)$. 
Our goal is twofold. Firstly, we provide a new construction of lattices in the Lie group $\rm{PO}(n,1)$ for arbitrary $n  \geq 2$. 
Secondly, we show that for an explicit sequence of lattices $\set{\Gamma_n \subset \PO(n,1)}_{n \geq 2}$ constructed in this way, the lattice $\Gamma_n$ is non-arithmetic for each $n \geq 2$. 

Our construction can be sketched as follows. We consider complex hyperbolic space $\CC H^n$ for some $n \geq 2$, together with an arithmetic lattice $L \subset \rm{PU}(n,1)$ and a set of anti-isometric involutions $\alpha \colon \CC H^n \to \CC H^n$ that are chosen in a suitable way. Since the fixed locus $\RR H^n_\alpha \coloneqq (\CC H^n)^\alpha$ is isomorphic to real hyperbolic $n$-space $\RR H^n$, one obtains a real hyperbolic orbifold quotient space $L_\alpha \setminus \RR H^n_\alpha$ for each of our chosen involutions $\alpha$, where $L_\alpha \coloneqq \rm{Stab}_L(\RR H^n_\alpha)$ is the stabilizer of $\RR H^n_\alpha \subset \CC H^n$ in the group $L$. We glue the spaces $L_\alpha \setminus \RR H^n_\alpha$ for varying $\alpha$ together along suitable geodesic subspaces to obtain a metric space $M$, which we then provide with a real hyperbolic orbifold structure. The latter turns out to be complete, so that each connected component $M_i \subset M$ is isometric to $\RR H^n$ modulo a certain lattice $\Gamma_i \subset \rm{PO}(n,1)$. For a precise statement, see Theorem \ref{theorem:introduction:uniformization} below. The conjugacy classes of the lattices $\Gamma_i$ form the output of our construction. 

Subsequently, we show that for some choices made in the above construction, there exists a connected component $M_i \subset M$ such that any lattice $\Gamma_i \subset \PO(n,1)$ with $M_i \cong \Gamma_i \setminus \RR H^n$ is non-arithmetic; see Theorem \ref{theorem:introduction:non-arithmeticity} below. Moreover, the volume of $M_i \cong \Gamma_i \setminus \RR H^n$ can be explicitly calculated in some cases, see Section \ref{section:volume-comp}. 

Philosophically, this construction seems to ressemble the construction of Gromov--Piatetski-Shapiro \cite{gromovshapiro}, in the sense that we glue hyperbolic orbifolds along geodesic subspaces. In technical detail, however, the two constructions are different (see Section \ref{section:arithmetic-nature}). Therefore, we ask in Question \ref{question:gromovshapiro} whether for each $n \geq 2$, the conjugacy class of non-arithmetic lattices $\Gamma \subset \PO(n,1)$ we obtain via Theorem \ref{theorem:introduction:non-arithmeticity} is commensurable with any of the Gromov--Piatetski-Shapiro conjugacy classes.

Our results also have applications to the theory of real moduli spaces. Let $\mr M_s^{\RR}$ be the moduli space of stable real cubic surfaces. 
For one of the non-arithmetic lattices $\Gamma \subset \PO(4,1)$ we construct in the above way, we provide a homeomorphism $\mr M_s^{\RR}  \cong \Gamma \setminus \RR H^4$ restricting to an isomorphism of orbifolds on the locus of smooth cubics. 
Recall that Allcock, Carlson and Toledo have provided $\mr M_s^\RR$ with a non-arithmetic complete hyperbolic orbifold structure in their work \cite{realACTsurfaces}. The hyperbolic orbifold structure we define on $\mr M_s^\RR$ via the above homeomorphism $\mr M_s^{\RR}   \cong \Gamma \setminus \RR H^4$ agrees with the one in \emph{loc.\ cit.}\ (cf.\ Theorem \ref{theorem:reprove-ACT}). 

\newpage 
\subsection{Construction of the lattices} \label{section:outline}

Let us explain our first main result in more detail. For any number field $K$, we let $\OO_K$ denote its ring of integers. Recall that a number field $K$ is called a CM field when $K$ is an imaginary quadratic extension $K \supset F$ of a number $F$ which is totally real, meaning that the image of every embedding $F \hookrightarrow \CC$ is contained in $\RR \subset \CC$. Significant examples of CM fields are imaginary quadratic field extensions of $\QQ$ and cyclotomic number fields. 

Central in this paper is the notion of \emph{admissible hermitian lattice}. By this, we mean a pair $$\mr L = (K, \Lambda)$$ where $K$ is a CM field whose different ideal is generated by a totally imaginary element, and where, for some $n \geq 1$, $\Lambda$ is a finite free $\OO_K$-module equipped with a non-degenerate hermitian form $h \colon \Lambda \times \Lambda \to \OO_K$ of signature $(n,1)$ with respect to one embedding $\tau \colon K \hookrightarrow \CC$, and of signature $(n+1,0)$ with respect to the embeddings $\varphi \colon K \hookrightarrow \CC$ non-conjugate to $\tau$ (for an explanation of these notions, see Definition \ref{definition:introduction:admissible} in Section \ref{sec:preliminaries}). The \emph{rank} of $\mr L$ is the rank of $\Lambda$ as finite free $\OO_K$-module. 

Let $\mr L = (K, \Lambda)$ be an admissible hermitian lattice of rank $n+1$. Define $$V \coloneqq \Lambda \otimes_{\OO_K, \tau} \CC.$$ Then $V$ is a complex vector space of dimension $n+1$ equipped with a hermitian form $h \colon V \times V \to \CC$ of signature $(n,1)$. Let $\CC H^n$ denote the complex hyperbolic space of lines $\ell \subset V$ which are negative with respect to the hermitian form $h$. 

Let $\Aut(\Lambda)$ be the group of $\OO_K$-linear automorphisms $\phi \colon \Lambda \xrightarrow{\sim} \Lambda$ compatible with the hermitian form $h$. 
Let $\mu_K \subset \OO_K^\ast$ be the torsion subgroup of $\OO_K^\ast$. Since $\mu_K$ consists of roots of unity, it acts on $\Lambda$ by multiplication. We define $$L \coloneqq \Aut(\Lambda)/\mu_K \subset \rm{Isom}(\CC H^n). $$

Let $\mr R$ be the set of elements $r \in \Lambda$ with $h(r,r) = 1$, and for $r \in \mr R$, we let $H_r \coloneqq \set{\ell \in \CC H^n \colon h(r, \ell) = 0}$. Define $\mr H \coloneqq \cup_{r \in \mr R} H_r$. The hyperplane arrangement $\mr H \subset \CC H^n$ 
turns out to be an \emph{orthogonal arrangement}, in the sense of \cite{orthogonalarrangements}, see Theorem \ref{orthogonal} below. In other words, if two different hyperplanes $H_r \neq H_t$ for $r, t \in \mr R$ meet, then they are orthogonal along their intersection. 

By Dirichlet's unit theorem, $\mu_K$ is a finite cyclic group; let $\zeta \in \mu_K$ be a generator. For $r \in \mr R$, define an isometry $\phi_r \colon \Lambda \to \Lambda$ as follows:
\begin{equation*}
\phi_r(x) = x-(1-\zeta)h(x,r) \cdot r, \quad \quad x \in \Lambda. 
\end{equation*}
We call $\phi_r$ a \emph{reflection in the hyperplane $H_r$}. 
For $x \in \CC H^n$, let $$G(x) \coloneqq \langle \phi_r \mid x  \in H_r \rangle \subset L$$ be the subgroup generated by reflections $\phi_r$ in the hyperplanes $H_r$ with $x \in H_r$. 

Let
$
\sigma \colon K \to K
$
be the involution whose fixed locus is the maximal totally real field $F \subset K$. An $\OO_F$-linear bijection $\alpha \colon \Lambda \xrightarrow{\sim} \Lambda$ is called \emph{anti-unitary} if $\alpha(\lambda x) = \sigma(\lambda) \alpha(x)$ and $h(\alpha(x), \alpha(y)) = \sigma(h(x,y))$ for each $\lambda \in \OO_K$ and $x,y \in \Lambda$. 
We let $\mr A$ be the set of anti-unitary involutions, and let $P\mr A = \mu_K \setminus \mr A$ be the quotient of $\mr A$ by the action of $\mu_K$ on $\mr A$ by multiplication. For each $\alpha \in P\mr A$, we define 
\begin{align} \label{align:definition:RH-STAB}
\RR H^n_\alpha \coloneqq \left( \CC H^n \right)^\alpha  \quad \text{ and }  \quad L_\alpha \coloneqq \rm{Stab}_{L}(\RR H^n_\alpha). 
\end{align}
Then $\RR H^n_\alpha$ is isometric to real hyperbolic $n$-space $\RR H^n$ (see Lemma \ref{hyperbolic}), and we prove that the natural embedding $L_\alpha \hookrightarrow \rm{Isom}(\RR H^n_\alpha) \cong \PO(n,1)$ identifies $L_\alpha$ with an arithmetic lattice in $\PO(n,1)$ (see Theorem \ref{th:crucialthm-finitevolume}). 

The following definition is the starting point of our gluing construction.

\begin{definition} \label{definition:introduction:relation}
Define a relation $\sim$ on the disjoint union $\sqcup_{\alpha \in P\mr A} \RR H^n_\alpha$ in the following way. Let $(x,\alpha), (y, \beta) \in \sqcup_{\alpha \in P\mr A} \RR H^n_\alpha$, where $x \in \RR H^n_\alpha$ and $y \in \RR^n_\beta$, for $\alpha, \beta \in P\mr A$. Then $(x,\alpha) \sim (y,\beta)$ if and only if 
$x = y \in \CC H^n$ and 
$\beta \circ \alpha \in G(x)$. 
\end{definition}
Then $\sim$ is an equivalence relation, see Lemma \ref{eqrel}. 
Moreover, by Lemma \ref{lemma:pgammaaction}, the action of $L$ on $\CC H^n$ induces an action of $L$ on $\sqcup_{\alpha \in P\mr A} \RR H^n_\alpha$ compatible with $\sim$. Therefore, $L$ acts naturally on the quotient space $\left( \sqcup_{\alpha \in P\mr A} \RR H^n_\alpha \right)/_\sim$. 
We define
\begin{align} \label{align:M}
M(\mr L)  
 \coloneqq L \setminus \left(\left( \coprod_{\alpha \in P\mr A} \RR H^n_\alpha \right)/_\sim\right).  
\end{align}
Let $C \mr A \subset P\mr A$ be a set of representatives for the quotient $L \setminus P\mr A$, where, as above, $P\mr A$ is the set of $\mu_K$-equivalence classes of anti-unitary involutions $\alpha \colon \Lambda \to \Lambda$ and where $L = \Aut(\Lambda)/\mu_K$ acts on $P\mr A$ by conjugation. 
Recall that a \emph{real hyperbolic orbifold} is a second countable Hausdorff space locally modeled on $\RR H^n$ modulo finite isometry groups, 
see e.g.\ \cite[Chapter 13]{Thurston80}, \cite[Chapter 1]{ruan-stringy} or \cite{lange-orbifolds-metric}. 

The following theorem is the first main result of this paper. 
 
\begin{theorem} \label{theorem:introduction:uniformization}
Let 
$\mr L$ be an admissible hermitian lattice of rank $n+1 \geq 2$. 
Define 
the topological space $M(\mr L)$ as 
in equation \eqref{align:M}.  The following assertions are true.  
\begin{enumerate}
\item 
\label{oopensuborbifold}
There is a canonical real hyperbolic orbifold structure on the topological space $M(\mr L)$ together with a natural open immersion of hyperbolic orbifolds \begin{align}\label{align:open-immersion} \coprod_{\alpha \in C\mr A}  L_\alpha \setminus \left(\RR H^n_\alpha - \mr H \right)  \longhookrightarrow M(\mr L)\end{align}
whose complement has measure zero. 
\item \label{connunif} For each connected component $M_i$ of the orbifold $M(\mr L)$, there exists a lattice $\Gamma_i \subset \textnormal{PO}(n,1)$ and an isomorphism of real hyperbolic orbifolds $$M_i \cong \Gamma_i \setminus  \RR H^n.
$$ 
\item \label{item:volume}The cardinality of the set $C\mr A \cong L \setminus P\mr A$ and the volume $\rm{vol}(M(\mr L))$ of the hyperbolic orbifold $M(\mr L)$ are both finite. Furthermore, we have: 
$$
\rm{vol}(M(\mr L)) = \rm{vol}\left(\coprod_{\alpha \in C\mr A} L_\alpha \setminus \RR H^n_\alpha \right) = \sum_{\alpha \in C\mr A}\rm{vol}\left( L_\alpha \setminus \RR H^n_\alpha\right). 
$$
\end{enumerate}
\end{theorem}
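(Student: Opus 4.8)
The plan is to realize $M(\mr L)$ as the metric space obtained by gluing the finitely many complete hyperbolic orbifolds $L_\alpha \setminus \RR H^n_\alpha$, one for each $\alpha \in C\mr A$, along the closed totally geodesic loci cut out by $\mr H$, and to construct a hyperbolic orbifold atlas chart by chart. Away from $\mr H$ everything is transparent: if $x \notin \mr H$ then $G(x) = 1$, so $\sim$ identifies $(x,\alpha)$ with $(y,\beta)$ only when $y = x$ and $\beta = \alpha$ in $P\mr A$; hence over $\coprod_\alpha (\RR H^n_\alpha - \mr H)$ the relation $\sim$ is trivial and $M(\mr L)$ restricts to $\coprod_{\alpha \in C\mr A} L_\alpha \setminus (\RR H^n_\alpha - \mr H)$ with its evident hyperbolic orbifold structure, while the complement of this locus in $M(\mr L)$ is the image of $\bigcup_\beta (\RR H^n_\beta \cap \mr H)$, a locally finite union of proper totally geodesic subspaces, hence of measure zero. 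This pins down the open immersion \eqref{align:open-immersion} and the measure-zero statement in \ref{oopensuborbifold}, once the orbifold structure has been produced everywhere.

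The core of \ref{oopensuborbifold} is therefore the local structure near a point $[x,\alpha]$ with $x \in \mr H$. Let $\mr R_x \subseteq \mr R$ be a set of representatives for the hyperplanes containing $x$, put $k = \#\mr R_x$, and write $m$ for the order of $\zeta$; by Theorem \ref{orthogonal} these hyperplanes are pairwise orthogonal, so one can choose a holomorphic chart linearizing the finite group $\langle G(x),\alpha\rangle$ near $x$, identifying a ball around $x$ with a ball in $T_x\CC H^n \cong \CC^n$ in which each $H_r$ ($r \in \mr R_x$) is a coordinate hyperplane, each $\phi_r$ acts by multiplication by $\zeta$ on that coordinate, and $\RR H^n_\alpha$ is the real locus $\RR^n$. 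The first point is that, near $x$, the preimage of the $\sim$-class of $(x,\alpha)$ in $\coprod_\beta \RR H^n_\beta$ is represented exactly by the subspaces $\RR H^n_{\phi\alpha}$ with $\phi \in G(x)$ for which $\phi\alpha$ is again an anti-unitary involution (equivalently $\alpha\phi\alpha^{-1} = \phi^{-1}$); these all pass through $x$, their union $\Sigma \subset \CC^n$ is a finite union of totally geodesic subspaces stable under $G(x) \cong (\ZZ/m)^k$, and the identifications among them dictated by $\sim$ are precisely those gluing the subspaces along coordinate subspaces of $\CC^n$. The crucial computation is that $\Sigma / G(x)$ is isometric, as a metric space, to $\RR H^n$ modulo a finite subgroup of $\PO(n,1)$: when $\alpha$ fixes each hyperplane through $x$, $G(x)$ permutes the subspaces $\RR H^n_{\phi\alpha}$ in $2^k$ orbits, the union of the subspaces in each orbit maps onto one orthant of $\RR H^n$ in $\Sigma/G(x)$ (the stabilizer of one subspace being the group $(\ZZ/2)^k$ generated by the $k$ coordinate reflections), and the $2^k$ orthants glue back along their totally geodesic walls to recover $\RR H^n$; when instead $\alpha$ exchanges a pair of orthogonal hyperplanes through $x$, the same bookkeeping yields $\RR H^n$ modulo a cyclic rotation group fixing a codimension-two totally geodesic subspace, and a mixture of the two phenomena occurs in general. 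In all cases one obtains a homeomorphism of a neighbourhood of $[x,\alpha]$ in $\left(\coprod_\beta \RR H^n_\beta\right)/\sim$ onto a ball in $\RR H^n$ modulo a finite isometry group; dividing further by the stabilizer $\Stab_L([x,\alpha])$, which is finite because $L$ is discrete in $\Isom(\CC H^n)$ and acts properly discontinuously on $\coprod_\beta \RR H^n_\beta$, produces the desired chart of $M(\mr L)$ modelled on $\RR H^n$ modulo finite isometries; second countability and Hausdorffness follow from finiteness of $C\mr A$ (see \ref{item:volume}) and the fact that the pieces are glued along closed subsets. The main obstacle is to verify that the transition functions between such charts are restrictions of elements of $\PO(n,1)$ — that is, that the glued length metric on $\Sigma/G(x)$ really is the hyperbolic one — and it is precisely here that orthogonality of $\mr H$, the exact shape of $\sim$ (through $G(x)$ and the order-$m$ reflections $\phi_r$), and the structure of the $\alpha$-fixed point $x$ are all used; it comes down to a convexity statement showing that each subspace $\RR H^n_{\phi\alpha}$, respectively each orthant, embeds isometrically and convexly in $\Sigma$, so that the quotient length metric is the expected one.

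Granting \ref{oopensuborbifold}, a connected component $M_i$ of $M(\mr L)$ is a connected hyperbolic orbifold, and it is complete: $M(\mr L)$ is a gluing of the finitely many complete length spaces $L_\alpha \setminus \RR H^n_\alpha$ — complete and of finite volume because $L_\alpha$ is a lattice in $\PO(n,1)$ by Theorem \ref{th:crucialthm-finitevolume} — along the closed subsets cut out by $\mr H$, and such a gluing of complete length spaces along closed subsets is complete. A complete connected hyperbolic orbifold carries a developing map to $\RR H^n$ which, by completeness and simple connectedness of $\RR H^n$, is an isometry of the orbifold universal cover onto $\RR H^n$; hence $M_i \cong \Gamma_i \setminus \RR H^n$ with $\Gamma_i = \pi_1^{\orb}(M_i)$ a discrete subgroup of $\Isom(\RR H^n) \cong \PO(n,1)$, and $\Gamma_i$ is a lattice because $\rm{vol}(M_i) < \infty$ by \ref{item:volume}. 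This gives \ref{connunif}.

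For \ref{item:volume}, the set $C\mr A \cong L \setminus P\mr A$ is finite because it is a quotient of the set of $\Aut(\Lambda)$-conjugacy classes of anti-unitary involutions of $\Lambda$, and an anti-unitary involution $\alpha$ amounts to an $\OO_F$-structure on $(\Lambda, h)$, namely a quadratic $\OO_F$-lattice $\Lambda^\alpha$ with $\Lambda^\alpha \otimes_{\OO_F} \OO_K \cong \Lambda$, of which there are finitely many isomorphism classes by the finiteness of the number of lattices in a genus. The volume identity then follows from \ref{oopensuborbifold}: the open immersion \eqref{align:open-immersion} has image of full measure, so $\rm{vol}(M(\mr L)) = \sum_{\alpha \in C\mr A} \rm{vol}(L_\alpha \setminus (\RR H^n_\alpha - \mr H)) = \sum_{\alpha \in C\mr A} \rm{vol}(L_\alpha \setminus \RR H^n_\alpha)$, each summand being finite by Theorem \ref{th:crucialthm-finitevolume} and the sum finite because $C\mr A$ is.
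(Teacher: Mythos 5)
Your outline follows the same overall strategy as the paper: linearize $\langle G(x),\alpha\rangle$ at a point of $\mr H$ using orthogonality (Theorem \ref{orthogonal}), list the finitely many $\RR H^n_{\beta}$ through $x$ with $(x,\beta)\sim(x,\alpha)$, show that their union modulo the local reflection group is a finite-isometry quotient of $\BB^n(\RR)$, obtain the open immersion and the measure-zero complement away from $\mr H$, and deduce items 2 and 3 from completeness, the uniformization theorem for complete $(G,X)$-orbifolds, finiteness of $C\mr A$, and Theorem \ref{th:crucialthm-finitevolume}. Your local combinatorics (real nodes giving $2^k$ orthants that reassemble into $\RR H^n$, conjugate pairs giving a cyclic rotation quotient) agrees with Proposition \ref{localmodel}. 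But there are genuine gaps. The step you yourself flag --- that the glued length metric on $\Sigma/G(x)$ is actually hyperbolic, so that the charts and transition maps land in $\PO(n,1)$ --- is precisely where most of the paper's work lies, and a one-line appeal to ``a convexity statement'' does not supply it: the paper first constructs a global path metric on $Y(\mr L)$ by pulling back the metric of $\CC H^n$ along $\mr P$, which requires proving that $Y_f\to\bigcup_i\RR H^n_{\alpha_i}$ is a homeomorphism and that $Y(\mr L)$ is Hausdorff (Lemmas \ref{localisometry} and \ref{lemma:hausdorff}), and then computes the local quotients explicitly via the fundamental domains $K_{f,\epsilon}$, Lemma \ref{lemma:T/G}, and the transitivity argument of Proposition \ref{localmodel}.\ref{casefive} together with Lemma \ref{lemma:elementary-lemma} to get $L_f\setminus Y_f\cong\Gamma_f\setminus\BB^n(\RR)$. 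Likewise, your completeness claim is asserted rather than proved: $M(\mr L)$ is the $L$-quotient of $Y(\mr L)$, which is glued from the \emph{infinitely} many pieces $\RR H^n_\alpha$, $\alpha\in P\mr A$ (only $C\mr A$ is finite), so ``a gluing of finitely many complete length spaces along closed subsets is complete'' does not apply as stated; one must first identify the quotient metric, which the paper does via properness and Lipschitz continuity of $\mr P$ (Corollary \ref{corollary:proper}, Lemma \ref{lemma:uniform}) and closedness of the $L$-orbits (Lemma \ref{lemma:metricquotient}).

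Your finiteness argument for $C\mr A$ is also incorrect as written: for an anti-unitary involution $\alpha$ the natural map $\Lambda^\alpha\otimes_{\OO_F}\OO_K\to\Lambda$ is injective with finite cokernel but in general \emph{not} an isomorphism (already for $\ZZ[\zeta_3]^{2,1}$ and $\alpha(x_0,x_1,x_2)=(\bar x_0,-\bar x_1,\bar x_2)$ one has $\Lambda^\alpha=\ZZ\oplus\sqrt{-3}\,\ZZ\oplus\ZZ$, whose $\OO_K$-span is a proper sublattice), so conjugacy classes of involutions are not classified by quadratic $\OO_F$-lattices $\Lambda^\alpha$ with $\Lambda^\alpha\otimes_{\OO_F}\OO_K\cong\Lambda$; moreover, even when the fixed lattices are isometric one must extend the isometry to $\Lambda$ to get conjugacy. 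The finiteness is true, but the paper obtains it from the finiteness of the nonabelian cohomology set $\rm H^1(\Gal(K/F),\Aut(\Lambda))$ via Borel--Serre (Proposition \ref{proposition:galois}); a lattice-theoretic substitute would at least need a bound on the index of $\Lambda^\alpha\otimes_{\OO_F}\OO_K$ in $\Lambda$ and an argument handling the extension problem. Your derivations of item 2 from completeness and of the volume identity from item 1 do match the paper.
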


\begin{example} \label{introduction:example1}
Let $\mr L$ be an admissible hermitian lattice of rank $n+1\geq 2$. For an anti-unitary involution $\alpha \colon \Lambda \to \Lambda$, let $M(\mr L, \alpha) \subset M(\mr L)$ be the connected component containing the image of the natural map $\RR H^n_\alpha \to M(\mr L)$. If we have $\RR H^n_\alpha \cap \mr H = \emptyset$, 
then $M(\mr L, \alpha) \cong L_\alpha \setminus \RR H^n_\alpha$. 
\end{example}

\subsection{Arithmetic nature of the lattices} \label{section:arithmetic-nature}

It is natural to ask whether which of the lattices one produces in the above way, are arithmetic. Let us make this question more precise. 
%
As a corollary of Theorem \ref{theorem:introduction:uniformization}, for each $n \geq 2$ one gets an assignment 
\begin{align} \label{align:function:Sn-lattice}
\begin{split}
\left\{
(\mr L, \alpha) \text{ with } \mr L = (K,\Lambda) \text{ an admissible \phantom{nnn}} \right. \\ \left. \text{hermitian lattice of rank $n+1$ and $\alpha \in P\mr A$}
\right\}
\end{split}
\; \longrightarrow \;
\begin{split}
\left\{
\text{conjugacy classes of \phantom{n}}
\right. \\ \left. 
\text{lattices }
\Gamma \subset \PO(n,1)
\right\}
\end{split}
\end{align}
defined by sending $(\mr L, \alpha)$ to 
the conjugacy class 
of any lattice $$\Gamma(\mr L, \alpha) \subset \PO(n,1)$$ that satisfies $M(\mr L, \alpha) \cong \Gamma(\mr L, \alpha) \setminus \RR H^n$, where $M(\mr L,\alpha)$ is the connected component of $M(\mr L)$ that contains the image of the natural map $\RR H^n_\alpha \to M(\mr L)$. We would like to understand which conjugacy classes in the image of \eqref{align:function:Sn-lattice} are arithmetic. 

\begin{example} 
Let $n \geq 1$ be an integer, and let $\mr L$ be an admissible hermitian lattice of rank $n+1$. If $\alpha \colon \Lambda \to \Lambda$ is an anti-unitary involution such that $\RR H^n_\alpha \cap \mr H = \emptyset$, 
then $[\Gamma(\mr L, \alpha)]
 = [L_\alpha]$ as conjugacy classes of lattices in $\PO(n,1)$, 
see Example \ref{introduction:example1}. Moreover, the lattice $L_\alpha \subset \PO(n,1)$ is arithmetic as we prove in Theorem \ref{th:crucialthm-finitevolume}.  
\end{example}
It seems natural to ask whether this is the only obstruction to non-arithmeticity. 

\begin{question} \label{question:introduction}
Let $n \geq 2$ be an integer. Let $\mr L = (K, \Lambda)$ be an admissible hermitian lattice of rank $n+1$, and $\alpha \colon \Lambda \to \Lambda$ an anti-unitary involution such that $\RRH^n_\alpha \cap \mr H \neq \emptyset$. 
Is it true that the conjugacy class of lattices $\Gamma(\mr L, \alpha) \subset \PO(n,1)$ associated to $(\mr L, \alpha)$ by the assignment \eqref{align:function:Sn-lattice} is non-arithmetic?
\end{question}

For each $n \geq 2$, we can answer this question positively in one particular case, yielding the second main result of this paper. Let $\zeta_3 \coloneqq e^{2 \pi i / 3} \in \CC$ 
and consider the admissible hermitian lattice  \begin{align} \label{align:eisenstein-lattice}
\mr L_{\rm{eis}}^n \coloneqq \left( \QQ(\zeta_3), \ZZ[\zeta_3]^{n,1} \right) ,
\end{align}
where 
$\ZZ[\zeta_3]^{n,1}$ is the free $\ZZ[\zeta_3]$-module of rank $n+1$ equipped with the hermitian form $h = \rm{diag}(-1, 1, \dotsc, 1) $ defined as $h(x,y) = - x_0 \bar y_0 + x_1 \bar y_1 + \cdots + x_n \bar y_n$. We call $\mr L_{\rm{eis}}^n$ the \emph{Eisenstein} hermitian lattice of rank $n+1$. Define an anti-unitary involution $\alpha_0 \colon \ZZ[\zeta_3]^{n,1} \to \ZZ[\zeta_3]^{n,1}$ by $\alpha_0(x) = \bar x$, 
let $M(\mr L_{\rm{eis}}^n, \alpha_0)  \subset M(\mr L_{\rm{eis}}^n)$ be the connected component containing the image of the map $\RR H^n_{\alpha_0} \to M(\mr L_{\rm{eis}}^n)$, and let
\begin{align} \label{lattice:eisenstein:non-arithmetic}
\Gamma^n_{\rm{eis}} \subset \PO(n,1) 
\end{align}
be a lattice such that  $M(\mr L_{\rm{eis}}^n, \alpha_0) \cong \Gamma^n_{\rm{eis}} \setminus \RR H^n$, see Theorem \ref{theorem:introduction:uniformization}.

\begin{theorem} \label{theorem:introduction:non-arithmeticity}
For each $n \in \ZZ_{\geq 2}$, 
the lattice $\Gamma^n_{\rm{eis}} \subset \PO(n,1)$ is non-arithmetic.
\end{theorem}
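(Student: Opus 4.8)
The plan is to argue by contradiction, assuming $\Gamma^n_{\eis}$ is arithmetic, and to derive a contradiction by comparing the arithmetic data of $\Gamma^n_{\eis}$ with that of the ambient complex ball quotient. First I would recall that, by Theorem \ref{theorem:introduction:uniformization}, the orbifold $M(\mr L^n_{\eis}, \alpha_0)$ is glued from finitely many pieces $L_\beta \setminus \RR H^n_\beta$ along walls contained in the hyperplane arrangement $\mr H$; since $\RR H^n_{\alpha_0} \cap \mr H \neq \emptyset$, at least two distinct pieces meet. Each arithmetic lattice $L_\beta \subset \PO(n,1)$ comes from a quadratic form over a totally real field (coming from the field $F = \QQ$ underlying $K = \QQ(\zeta_3)$), so it has an invariant trace field and an adjoint trace field that one can compute explicitly; the key point is that the gluing operation along totally geodesic walls, when the two sides are \emph{not} commensurable, produces a non-arithmetic lattice. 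This is the Gromov--Piatetski-Shapiro mechanism: a hyperbolic lattice containing two totally geodesic hypersurfaces whose stabilizers are incommensurable (as lattices in $\PO(n-1,1)$) cannot be arithmetic, because in an arithmetic lattice all totally geodesic submanifolds of the same dimension are commensurable.

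The main steps, in order, would be: (1) identify, inside $\Gamma^n_{\eis}$, the stabilizers of the geodesic walls along which distinct pieces $L_\beta \setminus \RR H^n_\beta$ are glued, realizing them as lattices in $\PO(n-1,1)$; (2) show that at least two such wall-stabilizers are \emph{incommensurable} in $\PO(n-1,1)$ --- this is where the specific choice of the Eisenstein lattice and the involution $\alpha_0$ enters, and one would compute invariant trace fields or, more robustly, compare the arithmetic invariants (e.g.\ the isomorphism class over $\overline{\QQ}$ of the relevant quadratic forms, or ramification data of the associated quaternion/Clifford algebras) of the two walls and exhibit a genuine difference; (3) invoke the commensurability criterion for totally geodesic submanifolds of arithmetic hyperbolic orbifolds (if $\Gamma$ is arithmetic then any two codimension-one totally geodesic suborbifolds lie in a single commensurability class, since each is itself arithmetic with arithmetic data determined by restricting that of $\Gamma$) to conclude that $\Gamma^n_{\eis}$ cannot be arithmetic. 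For the reduction to codimension one it suffices to intersect the configuration with a generic totally geodesic $\RR H^2 \subset \RR H^n$ meeting both walls, reducing everything to the case of two geodesics in a hyperbolic surface-orbifold; alternatively one argues directly in dimension $n$ with totally geodesic hypersurfaces.

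I would expect the main obstacle to be step (2): proving that two of the glued walls have incommensurable stabilizers. The walls are of the form $\RR H^{n-1}_\beta \cap \RR H^{n-1}_{\beta'}$-type loci (fixed loci of anti-unitary involutions intersected with reflection hyperplanes $H_r$), and one must extract their arithmetic structure precisely enough to detect a difference. Concretely, each such wall-stabilizer should be commensurable to the orthogonal group of an explicit $\ZZ$-quadratic form obtained by restricting $h$ (suitably twisted by $\zeta_3$ and by the involution $\sigma$) to an appropriate sublattice; I would compute these forms for two carefully chosen walls adjacent to $\RR H^n_{\alpha_0}$ and show, by a local (say $p$-adic, or real place) invariant, that they are not similar over $\QQ$, hence the groups are incommensurable. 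A secondary technical point is to make sure the gluing genuinely forces these walls to coexist inside a \emph{single} connected component $M(\mr L^n_{\eis}, \alpha_0)$, which follows from the description of the equivalence relation $\sim$ via the reflection groups $G(x)$ together with connectivity of the relevant incidence graph of hyperplanes; this is essentially bookkeeping with Definition \ref{definition:introduction:relation} and Theorem \ref{theorem:introduction:uniformization}\eqref{connunif}. Once incommensurability of two walls is in hand, the non-arithmeticity of $\Gamma^n_{\eis}$ is immediate from the classical criterion, uniformly in $n \geq 2$.
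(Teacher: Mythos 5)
There is a genuine gap, and it sits exactly at your step (3). The criterion you invoke --- that in an arithmetic lattice all totally geodesic codimension-one suborbifolds lie in a single commensurability class, so that two incommensurable ``walls'' force non-arithmeticity --- is false. For an arithmetic lattice of simplest type, totally geodesic hypersurface stabilizers are commensurable to $\mathrm{O}(q|_{v^\perp},\OO_F)$ for rational vectors $v$ of positive norm, and these restricted forms need not be similar. Concretely, for $q = -x_0^2+x_1^2+x_2^2+x_3^2$ and $\Gamma = \mathrm{PO}(q,\ZZ) \subset \mathrm{PO}(3,1)$, the vector $v=(0,0,0,1)$ gives the isotropic wall form $-x_0^2+x_1^2+x_2^2$ (non-cocompact Fuchsian stabilizer), while $v=(0,1,1,1)$ gives a form of determinant $-3$ that is anisotropic over $\QQ_2$ (cocompact stabilizer); so a single arithmetic orbifold contains incommensurable totally geodesic hypersurfaces. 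The correct Gromov--Piatetski-Shapiro mechanism is not about walls but about the glued \emph{pieces}: one needs subgroups that are Zariski dense in $\mathrm{PO}(n,1)$ and contained in non-commensurable arithmetic lattices, and then uses that a Zariski-dense subgroup common to two arithmetic lattices forces commensurability. The paper explicitly flags (in Section \ref{section:arithmetic-nature}) that this hypothesis is not available for its construction: the orbifold fundamental groups of the glued pieces here are generally \emph{not} Zariski dense in $\mathrm{PO}(n,1)$, which is precisely why a direct interbreeding-style argument is not attempted and why Question \ref{question:gromovshapiro} is left open. Your step (2) (an actual incommensurability computation for two walls adjacent to $\RR H^n_{\alpha_0}$) is also only sketched, but even if completed it would not yield the conclusion through the criterion as you stated it.

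For contrast, the paper's proof takes a completely different route. It first reduces to $n=2$: using the classification of anti-unitary involutions of $\ZZ[\zeta_3]^{2,1}$ (Theorem \ref{useful-theorem}, proved via the Picard/Deligne--Mostow period map for binary sextics with a double root at infinity), it builds a totally geodesic immersion of complete orbifolds $M(\mr L^2_{\eis}) \to M(\mr L^n_{\eis},\alpha_0)$ (Theorem \ref{theorem:totally-geodesic-standard}), and then invokes Bergeron--Clozel (Theorem \ref{th:bergeron}): arithmeticity of the ambient orbifold would force arithmeticity of the immersed one. Finally, for $n=2$ it identifies $\Gamma^2_{\eis}$ up to conjugacy with Allcock--Carlson--Toledo's lattice $\Gamma^\RR_\infty$ (Theorem \ref{theorem:connection-ACT}) and cites their non-arithmeticity result. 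If you want to pursue a self-contained GPS-style argument instead, you would have to replace your step (3) by the genuine criterion and exhibit, inside $\Gamma^n_{\eis}$, two Zariski-dense subgroups lying in incommensurable arithmetic lattices (e.g.\ related to the incommensurable groups $\mathrm{PO}(\Psi^n_i,\ZZ)$ of Theorem \ref{theorem:volume}); establishing that such subgroups actually occur in the holonomy group is a substantial missing step, not bookkeeping.
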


We point out that for $n = 2,3,4$, the lattice $\Gamma^n_{\rm{eis}} \subset \PO(n,1)$ is not cocompact. See Remark \ref{remark:non-cocompact} below.  

The way in which we obtain a non-arithmetic lattice in $\PO(n,1)$ for $n \geq 2$ seems to be an orbifold analogue of the construction of non-arithmetic lattices provided by Gromov and Piatetski--Shapiro \cite{gromovshapiro}. Apart from the analogy, their construction is technically different from ours. 
Gromov and Piatetski--Shapiro glue hyperbolic manifolds with boundary whose fundamental groups are Zariski dense in $\rm{PO}(n,1)$ and lie in non-commensurable arithmetic lattices. Here, we glue real hyperbolic orbifolds with boundary and corners whose orbifold fundamental groups are generally not Zariski dense in $\rm{PO}(n,1)$ (but do lie in arithmetic lattices that are generally not all commensurable). In light of these differences, it makes sense to ask whether the lattices we construct are essentially different.

\begin{question} \label{question:gromovshapiro}
Let $n \in \ZZ_{\geq 2}$. Consider the non-arithmetic lattice $\Gamma_{\rm{eis}}^n \subset \PO(n,1)$ defined in \eqref{lattice:eisenstein:non-arithmetic} above (cf.\ Theorem \ref{theorem:introduction:non-arithmeticity}). 
Is the lattice $\Gamma^n_{\rm{eis}} \subset \PO(n,1)$ commensurable with any of the Gromov--Piatetski-Shapiro lattices (cf.\ \cite{gromovshapiro})? Is $\Gamma^n_{\rm{eis}} \subset \PO(n,1)$ commensurable with any other non-arithmetic lattice in $\PO(n,1)$ currently known to exist, such as one of those constructed by  Belolipetsky and Thomson \cite{MR2821431}?
\end{question}

Next, we apply Theorem \ref{theorem:introduction:uniformization} to another sequence of admissible hermitian lattices $\mr L$, defined in the following way. Let $p>3$ be a prime number, let $\zeta_p = e^{2 \pi i/p} \in \CC$, and assume that the class number of $\QQ(\zeta_p)$ is odd. Let $\lambda$ be a unit in the ring of integers of $\QQ(\zeta_p+\zeta_p^{-1})$ such that $\lambda$ is positive for one embedding $\QQ(\zeta_p+\zeta_p^{-1}) \hookrightarrow \RR$ and negative for all other embeddings $\QQ(\zeta_p +\zeta_p^{-1})\hookrightarrow \RR$, and such that $\lambda \not \equiv -1 \bmod (1-\zeta_p)$ (such an element exists, see Lemma \ref{lemma:narkiewicz} in Section \ref{sec:standard}). 
For $n \geq 2$, put 
$$\mr L_{\zeta_p}^n(\lambda) \coloneqq (\QQ(\zeta_p), \ZZ[\zeta_p]^{n,1}_\lambda).$$
Here, $\ZZ[\zeta_p]^{n,1}_\lambda$ is the free $\ZZ[\zeta_p]$-module of rank $n+1$ equipped with the hermitian form $h$ defined as
$h(x,y) = - \lambda \cdot x_0 \bar y_0 + x_1 \bar y_1 + \cdots + x_n \bar y_n$. It is readily verified that $\mr L_{\zeta_p}^n(\lambda)$ is an admissible hermitian lattice. Define an anti-unitary involution $\alpha_0 \colon \ZZ[\zeta_p]^{n,1}_{\lambda} \to \ZZ[\zeta_p]^{n,1}_{\lambda}$ as $\alpha_0(x) = \bar x$, let $M(\mr L_{\zeta_p}^n(\lambda), \alpha_0)  \subset M(\mr L_{\zeta_p}^n(\lambda))$ be the connected component containing the image of the natural map $\RR H^n_{\alpha_0} \to M(\mr L_{\zeta_5}^n(\lambda))$, and let
\begin{align} \label{lattice:cyclotomic-5:non-arithmetic:new}
\Gamma^n_{\zeta_p}(\lambda) \subset \PO(n,1) 
\end{align}
be a lattice such that $M(\mr L_{\zeta_p}^n(\lambda), \alpha_0) \cong \Gamma^n_{\zeta_5}(\lambda) \setminus \RR H^n$, see Theorem \ref{theorem:introduction:uniformization}. 

\begin{theorem} \label{theorem:non-arithmeticity:p-arbitrary}
Let $p>3$ and $\lambda$ be as above. 
For $n \geq 2$, let $\Gamma^n_{\zeta_p}(\lambda) \subset \PO(n,1) $ be the lattice defined in equation \eqref{lattice:cyclotomic-5:non-arithmetic:new} above. Assume that each anti-unitary involution of $\ZZ[\zeta_p]^{2,1}_\lambda$ is $\Aut(\ZZ[\zeta_p]^{2,1}_\lambda)$-conjugate to an anti-unitary involution of the form $(x_0, x_1, x_2) \mapsto (\epsilon_0 \bar x_0, \epsilon_1 \bar x_1, \epsilon_2 \bar x_2)$ with $\epsilon_i \in \set{\pm 1}$. Assume moreover that $\Gamma^2_{\zeta_p}(\lambda) \subset \PO(2,1) $ is non-arithmetic. Then $\Gamma^n_{\zeta_p}(\lambda) \subset \PO(n,1) $ is non-arithmetic for all $n \geq 2$. 
\end{theorem}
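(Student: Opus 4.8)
The plan is to realise $M(\mr L_{\zeta_p}^2(\lambda),\alpha_0)$, up to commensurability of uniformising lattices, as a totally geodesic suborbifold of $M(\mr L_{\zeta_p}^n(\lambda),\alpha_0)$, and then to invoke the principle that a finite-volume totally geodesic suborbifold of dimension at least two of a finite-volume arithmetic real hyperbolic orbifold is itself arithmetic (a standard consequence of Margulis' characterisation of arithmeticity by density of the commensurator, or in the orthogonal case of the classification of arithmetic lattices; it is used in this form already in \cite{gromovshapiro} and \cite{realACTsurfaces}). Granting this, the theorem follows at once: if $\Gamma_{\zeta_p}^n(\lambda)$ were arithmetic for some $n$, the lattice uniformising the suborbifold would be arithmetic, hence so would the commensurable lattice $\Gamma_{\zeta_p}^2(\lambda)$, contradicting the hypothesis. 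Each $n\geq 2$ is thus treated by direct comparison with $n=2$ — no induction on $n$ is needed, and the case $n=2$ is the hypothesis itself.

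To build the suborbifold, write $\Lambda=\ZZ[\zeta_p]^{n,1}_\lambda$ and let $W=\ZZ[\zeta_p]^{2,1}_\lambda\subset\Lambda$ be spanned by the first three coordinates, so $\Lambda=W\oplus W^\perp$ orthogonally with $W^\perp=\ZZ[\zeta_p]^{n-2}$ positive definite, the form being diagonal. This gives a totally geodesic holomorphic embedding $\CC H^2\hookrightarrow\CC H^n$, and since $\alpha_0$ is coordinatewise conjugation it preserves $W$ and $W^\perp$ and restricts on $W$ to the analogous involution, so $\RR H^2_{\alpha_0}=\CC H^2\cap\RR H^n_{\alpha_0}$ is a totally geodesic $\RR H^2$ inside $\RR H^n_{\alpha_0}$. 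The key point is that the gluing data restrict well along this embedding. A vector $r\in\Lambda$ with $h(r,r)=1$ is, relative to $\CC H^2$, of exactly one kind: if $r\in W$ then $H_r$ meets $\CC H^2$ in a wall of the arrangement $\mr H^2$; if $r\in W^\perp$ then $\CC H^2\subset H_r$; and no ``mixed'' vector $r=r_W+r_{W^\perp}$ with $r_W,r_{W^\perp}\neq0$ has $H_r\cap\CC H^2\neq\emptyset$. The last exclusion uses the sign condition on $\lambda$: the form on $W$ is positive definite at every real place but the distinguished one, so if $H_r$ met $\CC H^2$ then $h(r_W,r_W)=1-h(r_{W^\perp},r_{W^\perp})$ would be an element of $\OO_F$ that is positive at every archimedean place and has every conjugate strictly below $1$, forcing its norm to be a positive rational integer strictly less than $1$ — absurd. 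Consequently, for each $x\in\CC H^2$ the reflection group $G(x)\subset L$ splits as a commuting product $G^2(x)\times G^\perp$, where $G^2(x)\subset\Aut(W)$ is precisely the reflection group at $x$ in the rank-three picture and $G^\perp\subset\Aut(W^\perp)$ is a fixed finite group acting trivially on $\CC H^2$. Moreover any element of $L=\Aut(\Lambda)/\mu_K$ preserving $\CC H^2$ preserves $W=\Lambda\cap(W\otimes_\tau\CC)$, so $\Stab_L(\CC H^2)$ surjects with finite kernel onto $\Aut(W)/\mu_K$, and intersecting with $\Stab_L(\RR H^n_{\alpha_0})$ yields a surjection with finite kernel onto the stabiliser of $\RR H^2_{\alpha_0}$ in $\Aut(W)/\mu_K$.

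Using these facts, for $x\in\RR H^2_{\alpha_0}$ one checks that every anti-unitary involution $\beta$ of $\Lambda$ with $\beta\circ\alpha_0\in G(x)$ preserves $W$, that its restriction $\beta_W$ satisfies $\beta_W\circ\alpha_0\in G^2(x)$ and hence is one of the involutions occurring in the rank-three gluing at $x$, that conversely every such $\beta_W$ extends (e.g.\ by $\beta_W\oplus\alpha_0|_{W^\perp}$) to such a $\beta$, and that $\RR H^n_\beta\cap\CC H^2=(\CC H^2)^{\beta_W}\cong\RR H^2$. Together with the hypothesis that every anti-unitary involution of $\ZZ[\zeta_p]^{2,1}_\lambda$ is $\Aut(\ZZ[\zeta_p]^{2,1}_\lambda)$-conjugate to a diagonal one — which pins down the set of involutions relevant to the rank-three construction and makes its compatibility with $W\hookrightarrow\Lambda$ transparent — this identifies the local orbifold structure of the image $N$ of $\RR H^2_{\alpha_0}$ in $M(\mr L_{\zeta_p}^n(\lambda),\alpha_0)=\Gamma_{\zeta_p}^n(\lambda)\backslash\RR H^n$ with that of $M(\mr L_{\zeta_p}^2(\lambda),\alpha_0)$ up to finite data. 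Hence $N$ is a complete, closed, totally geodesic $2$-dimensional suborbifold, the natural map $M(\mr L_{\zeta_p}^2(\lambda),\alpha_0)\to N$ is a finite orbifold covering (both sides having finite volume by Theorem \ref{theorem:introduction:uniformization}(\ref{item:volume})), and so $N=\Delta\backslash\RR H^2$ for a lattice $\Delta\subset\PO(2,1)$ commensurable with $\Gamma_{\zeta_p}^2(\lambda)$, which is what the first paragraph requires.

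The main obstacle is the bookkeeping behind the third paragraph: verifying in detail that the entire combinatorial package of the rank-three construction (fixed loci, walls, reflection groups, the relation of Definition \ref{definition:introduction:relation}, and the quotient by $\Aut(W)/\mu_K$) maps compatibly, up to finite data, into the corresponding package for $\Lambda$, so that $N$ really is a $2$-dimensional finite-volume totally geodesic suborbifold with uniformising lattice commensurable with $\Gamma_{\zeta_p}^2(\lambda)$, rather than some lower-dimensional or infinite-volume locus. This is also the step where the diagonality hypothesis on the anti-unitary involutions of $\ZZ[\zeta_p]^{2,1}_\lambda$ enters essentially, keeping the set of relevant involutions under control when passing between $\Lambda$ and $W$; a subsidiary delicate point is the hyperplane-restriction statement, where the archimedean sign conditions on $\lambda$ are used in full strength.
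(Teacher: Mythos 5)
Your overall strategy is the one the paper uses — embed $\CC H^2\hookrightarrow\CC H^n$ via the first three coordinates compatibly with the involutions, control the short roots meeting $\CC H^2$ (your total-positivity argument excluding ``mixed'' roots is correct and corresponds to what is implicit in Lemma \ref{lemma:localcoordinates:infamily:new}), and conclude by the fact that arithmeticity passes to totally geodesic subobjects of dimension $\geq 2$ (Theorem \ref{th:bergeron}, Bergeron--Clozel). But the pivotal claim in your third paragraph is a genuine gap: you assert that the image $N$ of $\RR H^2_{\alpha_0}$ in $M(\mr L^n_{\zeta_p}(\lambda),\alpha_0)$ is a complete, closed, totally geodesic $2$-dimensional suborbifold and that the natural map $M(\mr L^2_{\zeta_p}(\lambda),\alpha_0)\to N$ is a finite orbifold covering. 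Neither is established, and as stated the first is false in general: $M(\mr L^2_{\zeta_p}(\lambda),\alpha_0)$ is the \emph{glued} orbifold, assembled from the pieces $\RR H^2_{\beta_0},\RR H^2_{\beta_1},\RR H^2_{\beta_2}$ and their wall-crossing continuations $(\CC H^2)^{\phi_r^j\beta_i}$, and its natural map into $M(\mr L^n,\alpha_0)$ sends the $\beta_i$-piece into the $\alpha_i$-piece $(i=0,1,2)$. A geodesic of the two-dimensional orbifold that crosses a wall therefore leaves $N$: locally around a wall point the two-dimensional object continues into fixed loci of involutions such as $\phi_r^{j}\alpha_0$ (odd $j$) or conjugates of $\alpha_1,\alpha_2$, which are in general not $L$-conjugate to $\alpha_0$ (cf.\ the invariants of Lemma \ref{lemma:conjinv:intro}), so their images are not contained in $N$. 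Hence $N$ is not geodesically complete, there is no finite covering $M(\mr L^2,\alpha_0)\to N$, and no lattice $\Delta$ commensurable with $\Gamma^2_{\zeta_p}(\lambda)$ uniformizing $N$ is produced. Even if one replaces $N$ by the full image of the natural map, an orbifold immersion need not be a covering onto its image, so the commensurability step still does not go through.

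The repair is exactly the paper's construction, and it is also where the hypothesis on the anti-unitary involutions of $\ZZ[\zeta_p]^{2,1}_\lambda$ does its real work, which your sketch does not capture: that hypothesis guarantees that the three pieces $\RR H^2_{\beta_0},\RR H^2_{\beta_1},\RR H^2_{\beta_2}$ exhaust $M(\mr L^2)$ and that $M(\mr L^2)$ is connected (Lemma \ref{connected}), so that one can define a map on \emph{all} of the orbifold uniformized by $\Gamma^2_{\zeta_p}(\lambda)$ by sending each $\beta_i$-piece into the corresponding $\alpha_i$-piece (Proposition \ref{proposition:canonical-map}); one then checks, by the local analysis at points with real nodes and with pairs of complex conjugate nodes, that this map is a morphism of orbifolds and in fact a totally geodesic \emph{immersion} (Proposition \ref{prop:fund-funda-fund}, Corollary \ref{corollary:totallygeodesic}, using Lemma \ref{direct}), and finally applies Theorem \ref{th:bergeron} directly to the immersion $M(\mr L^2)\to M(\mr L^n,\alpha_0)$ — no statement about the image being a suborbifold, and no commensurability claim, is needed. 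Your local computations (the splitting $G(\nu(x))=G^2(x)\times\langle\phi_{e_3},\dotsc,\phi_{e_n}\rangle$, the extension $\beta_W\oplus\alpha_0|_{W^\perp}$) are the right ingredients for that verification, but they must be organized into the immersion statement for the whole glued orbifold rather than into a covering of the single-piece image $N$.
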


We prove an analogous result for admissible hermitian lattices of the form $\mr L = (\QQ(\sqrt{-p}), (\OO_{\QQ(\sqrt{-p})})^{n,1})$, where $p \geq 3$ is an arbitrary prime number, see Theorem \ref{theorem:totally-geodesic-standard} in Section \ref{sec:nonarithmetic}. For $p=3$, this reduction to the case $n=2$ in the proof of the non-arithmeticity of $\Gamma^n_{\eis}$ is fact what allows us to prove Theorem \ref{theorem:introduction:non-arithmeticity} in Section \ref{section:proof-of-theorem}.  

In a follow-up paper, see \cite{gaay-hyperbolic}, 
we use the theory developed here to investigate the structure of the moduli space of stable real binary quintics, thereby extending results of Allcock, Carlson and Toledo (see Section \ref{section:uniformization} below). 
We will identify this moduli space with the space $M(\mr L_{\zeta_5}^2(\lambda))$, where $\lambda = \zeta_5 + \zeta_5^{-1}$, and use this description to prove that the conditions in Theorem \ref{theorem:non-arithmeticity:p-arbitrary} are verified in this case. In particular, by Theorem \ref{theorem:non-arithmeticity:p-arbitrary}, 
this will prove non-arithmeticity of $\Gamma^n_{\zeta_5}(\lambda) \subset \PO(n,1) $  for all $n \geq 2$. We will also show that the lattice $\Gamma^2_{\zeta_5}(\lambda) \subset \PO(2,1) $ is cocompact. 


\subsection{Volume} \label{section:volume-comp}
We return our attention to the Eisenstein hermitian lattices $\mr L_{\rm{eis}}^n = (\QQ(\zeta_3), \ZZ[\zeta_3]^{n,1})$ for $n \geq 2$. Let $\mr A_\eis^n$ be the set of anti-unitary involutions of $\ZZ[\zeta_3]^{n,1}$, let $C\mr A_\eis^n$ be a set of representatives for the action of $L^n \coloneqq \Aut(\ZZ[\zeta_3]^{n,1})/\langle -\zeta_3 \rangle$ on $P\mr A_\eis^n = \mr A_\eis^n / \langle -\zeta_3 \rangle$, 
and define $\RR H^n_\alpha$ and $L_\alpha^n \subset \rm{Isom}(\RR H^n_\alpha)$ as in equation \eqref{align:definition:RH-STAB}. 
For $i = 0, \dotsc, n$, consider the integral quadratic form $\Psi_i^n$ defined as 
\[
\Psi_i^n(x_0, \dotsc, x_n) = - x_0^2 + 3 x_1^2 + \cdots + 3x_i^2 + x_{i+1}^2 + \cdots + x_n^2.
\]
We use Theorem \ref{theorem:introduction:uniformization} to prove the following result. 



\begin{theorem} \label{theorem:volume}
Let $n \in \ZZ_{\geq 2}$. 
Let $\Gamma^n_{\rm{eis}}  \subset \PO(n,1)$ be the non-arithmetic lattice defined in equation \eqref{lattice:eisenstein:non-arithmetic} above (cf.\ Theorem \ref{theorem:introduction:non-arithmeticity}). Then there are a union of geodesic subspaces $\mr H_i \subset \RR H^n$ for $i \in \set{0, \dotsc, n}$ and open immersions
\[
\coprod_{i = 0}^n  \rm{PO}(\Psi_i^n,\ZZ) \setminus \left(\RR H^n - \mr H_i \right) \longhookrightarrow \Gamma^n_{\rm{eis}}   \setminus \RR H^n \longhookrightarrow M(\mr L_{\rm{eis}}^n).  
\]
In particular, 
\begin{align} \label{bound:1}
\sum_{i = 0}^n \rm{Vol}\left(  \rm{PO}(\Psi_i^n,\ZZ) \setminus \RR H^n \right) \leq &\rm{Vol}\left( \Gamma^n_{\rm{eis}}   \setminus \RR H^n   \right), \\
\label{bound:2}
&\rm{Vol}\left( \Gamma^n_{\rm{eis}}   \setminus \RR H^n  \right) \leq 
\sum_{\alpha \in C\mr A} \rm{Vol}(L^n_\alpha \setminus \RR H^n_\alpha). 
\end{align}

\end{theorem}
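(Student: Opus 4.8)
The plan is to deduce this statement from Theorem \ref{theorem:introduction:uniformization} applied to the Eisenstein hermitian lattice $\mr L_{\rm{eis}}^n = (\QQ(\zeta_3), \ZZ[\zeta_3]^{n,1})$, together with an explicit analysis of the anti-unitary involution $\alpha_0(x) = \bar x$ and the geometry of its fixed locus. First I would identify $\RR H^n_{\alpha_0}$ concretely: since $\alpha_0$ is complex conjugation on $\ZZ[\zeta_3]^{n,1}$, its fixed $\OO_F$-lattice is $\ZZ^{n,1}$ with the quadratic form $q = \rm{diag}(-1,1,\dotsc,1)$, so that $\RR H^n_{\alpha_0} \cong \RR H^n$ and $L_{\alpha_0}^n$ is identified with $\rm{PO}(q,\ZZ) = \rm{PO}(\Psi_0^n,\ZZ)$. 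The right-hand open immersion $\Gamma^n_{\rm{eis}} \setminus \RR H^n \hookrightarrow M(\mr L_{\rm{eis}}^n)$ is exactly the inclusion of the connected component $M(\mr L_{\rm{eis}}^n, \alpha_0)$, composed with the isomorphism $M(\mr L_{\rm{eis}}^n, \alpha_0) \cong \Gamma^n_{\rm{eis}} \setminus \RR H^n$ of Theorem \ref{theorem:introduction:uniformization}\eqref{connunif}; the measure-zero-complement statement from part \eqref{oopensuborbifold} then gives \eqref{bound:2}, since $\rm{Vol}(\Gamma^n_{\rm{eis}} \setminus \RR H^n) = \rm{Vol}(M(\mr L_{\rm{eis}}^n, \alpha_0)) \le \rm{Vol}(M(\mr L_{\rm{eis}}^n)) = \sum_{\alpha \in C\mr A} \rm{Vol}(L^n_\alpha \setminus \RR H^n_\alpha)$ by part \eqref{item:volume}.

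For the left-hand open immersion and the bound \eqref{bound:1}, the key point is to understand which $\alpha \in P\mr A_{\rm{eis}}^n$ are glued to $\alpha_0$ inside the connected component $M(\mr L_{\rm{eis}}^n, \alpha_0)$, i.e.\ lie in the $L$-orbit of the $\sim$-equivalence class of $(x,\alpha_0)$ for $x$ ranging over $\RR H^n_{\alpha_0}$. The relation $\sim$ says $(x,\alpha_0) \sim (x,\alpha)$ when $\alpha \circ \alpha_0 \in G(x) = \langle \phi_r : x \in H_r\rangle$. Composing $\alpha_0$ with a reflection $\phi_r$ (for $r \in \mr R$ with $\bar r = \pm r$, or more precisely $r$ fixed up to $\mu_K$) produces a new anti-unitary involution whose fixed locus is a copy of $\RR H^n$ carrying a quadratic form obtained from $q$ by "twisting along $r$ by a factor related to $h(r,r)$ and the ramification at $(1-\zeta_3)$". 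I would show that, up to $L$-conjugacy, the anti-unitary involutions reachable from $\alpha_0$ by a single such reflection (crossing one wall $H_r$ transversally) are exactly $\alpha_0$ composed with conjugation by a coordinate hyperplane reflection, and that iterating this over $i$ mutually orthogonal walls $H_{r_1},\dotsc,H_{r_i}$ — possible because $\mr H$ is an orthogonal arrangement (Theorem \ref{orthogonal}) — yields the lattice $\rm{PO}(\Psi_i^n,\ZZ)$, with the geodesic subspace $\mr H_i \subset \RR H^n$ being the image of $\mr H \cap \RR H^n_{\alpha_0}$ in the relevant chart. Concretely the twist multiplies exactly $i$ of the positive coefficients of $q = \Psi_0^n$ by $3 = N_{K/\QQ}(1-\zeta_3)$, producing $\Psi_i^n$; each $\rm{PO}(\Psi_i^n,\ZZ)\setminus(\RR H^n - \mr H_i)$ then embeds as one of the open pieces $L_\alpha \setminus (\RR H^n_\alpha - \mr H)$ appearing in the open immersion \eqref{align:open-immersion}, and these pieces are disjoint in $M(\mr L_{\rm{eis}}^n, \alpha_0)$, giving \eqref{bound:1}.

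The main obstacle I anticipate is the bookkeeping in the previous paragraph: precisely matching the abstract gluing data $(\alpha, G(x))$ to the explicit integral quadratic forms $\Psi_i^n$, and in particular verifying that (a) the $L$-conjugacy classes of anti-unitary involutions in the component $M(\mr L_{\rm{eis}}^n,\alpha_0)$ are represented exactly by the $n+1$ "coordinate-sign" twists of $\alpha_0$ indexed by $i=0,\dotsc,n$, with no collapsing or extra classes, and (b) that the stabilizer $L_\alpha$ of each such twisted fixed locus is $\rm{PO}(\Psi_i^n,\ZZ)$ and not a proper subgroup or overgroup — this requires controlling the unit group $\mu_K = \langle -\zeta_3\rangle$ and checking that the reflections $\phi_r$ generate enough of the integral orthogonal group on the twisted lattice. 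Here I would lean on the orthogonality of the arrangement $\mr H$ to reduce to a rank-one / single-wall computation, on the reduction-modulo-$(1-\zeta_3)$ analysis governing when two walls meet (so that at most $i \le n$ orthogonal walls can be crossed simultaneously starting from $\RR H^n_{\alpha_0}$), and on standard facts about the $\ZZ[\zeta_3]$-module $\ZZ[\zeta_3]^{n,1}$ being unimodular so that the twisted lattices are the expected $\ZZ$-lattices. Once the combinatorics of walls and twists is pinned down, both displayed open immersions and hence the inequalities \eqref{bound:1} and \eqref{bound:2} follow formally from Theorem \ref{theorem:introduction:uniformization}.
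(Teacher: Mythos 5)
Your overall route coincides with the paper's: deduce the right-hand immersion and \eqref{bound:2} from Theorem \ref{theorem:introduction:uniformization} applied to the component $M(\mr L_{\rm{eis}}^n,\alpha_0)$, and obtain the left-hand immersion and \eqref{bound:1} by exhibiting $n+1$ pairwise non-conjugate ``coordinate-sign'' twists of $\alpha_0$ (the paper uses $\alpha_i(x)=(\bar x_0,-\bar x_1,\dotsc,-\bar x_i,\bar x_{i+1},\dotsc,\bar x_n)$) whose fixed loci meet $\RR H^n_{\alpha_0}$, hence are glued into the same component, and whose stabilizers are $\rm{PO}(\Psi_i^n,\ZZ)$. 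One correction to your framing: your anticipated obstacle (a) asks for more than is needed. For the inequality \eqref{bound:1} you do \emph{not} have to show that the classes occurring in the component are \emph{exactly} the $n+1$ coordinate twists (``no extra classes''); that is essentially the content of the open Conjecture \ref{conjecture:volume} and, at least globally, fails for $n=3$ where there are $n+2$ classes. All that is required is pairwise non-conjugacy of the chosen $\alpha_0,\dotsc,\alpha_n$, their membership in the component, and the stabilizer identification.

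The genuine gap is that the two verifications on which the whole argument rests are only gestured at, and your sketch does not contain the arguments that make them work. First, pairwise non-$L$-conjugacy of the involutions $\pm\alpha_i$: the paper proves this (Lemma \ref{lemma:conjinv:intro}) by computing the invariants $d(\alpha)=\dim W^{\alpha}$ and $t(\alpha)=\det(q|_{W^{\alpha}})$ of the reduction $W=\Lambda/(1-\zeta_3)\Lambda$; you invoke reduction mod $(1-\zeta_3)$ only for wall combinatorics, not for distinguishing conjugacy classes. Second, the exact equality $L^n_{\alpha_i}=\rm{PO}(\Psi_i^n,\ZZ)$ (already needed for $i=0$): the general Theorem \ref{th:crucialthm-finitevolume} gives only \emph{commensurability}, and upgrading it to equality (Theorem \ref{theorem:identification-of-arithmetic-part}, cf.\ Proposition \ref{proposition:identification-of-arithmetic-part}) requires two specific facts: no element of $L$ conjugates $\alpha_i$ to $-\alpha_i$ (this is where the non-conjugacy lemma enters, via $L_{\alpha_i}=N_L(\alpha_i)$), and every isometry of the fixed $\ZZ$-lattice $\Lambda^{\alpha_i}\cong\ZZ\oplus\theta\ZZ^{\oplus i}\oplus\ZZ^{\oplus(n-i)}$ (with $\theta=\sqrt{-3}$ up to a unit, whence the coefficients $3$ in $\Psi_i^n$) extends to a $\ZZ[\zeta_3]$-isometry of $\ZZ[\zeta_3]^{n,1}$, which the paper gets from the identity expressing $\Lambda$ as the $\OO_K$-span of $\Lambda^{\alpha_i}$ and $\theta\cdot(\Lambda^{\alpha_i})^{\vee}$. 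Your alternative suggestion that ``the reflections $\phi_r$ generate enough of the integral orthogonal group'' is a different and much harder claim and is not how equality is obtained. Without these two points carried out, the displayed open immersion onto the pieces $\rm{PO}(\Psi_i^n,\ZZ)\sm(\RR H^n-\mr H_i)$, and hence \eqref{bound:1}, is not established; the rest of your argument (the right-hand immersion, \eqref{bound:2}, and the gluing of $\RR H^n_{\alpha_i}$ to $\RR H^n_{\alpha_0}$ because $\alpha_i\circ\alpha_0$ is a product of cubes of coordinate reflections fixing the nonempty intersection) is correct and matches the paper.
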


We note that there is a canonical bijection between $L^n \setminus P\mr A_\eis^n$ and the non-abelian cohomology group $\rm H^1(G, L^n)$, where $G = \ZZ/2 $ acts on the group $L^n =  \Aut(\ZZ[\zeta_3]^{n,1})/\langle -\zeta_3 \rangle$ via conjugation by the involution $\alpha_0 \colon \ZZ[\zeta_3]^{n,1} \to \ZZ[\zeta_3]^{n,1}$ defined as $\alpha_0(x) = \bar x$, see Proposition \ref{proposition:galois}. 
Moreover, by work of Prasad \cite{prasad}, there is a closed formula for the volume 
of the arithmetic orbifold $ \rm{PO}(\Psi_i^n,\ZZ) \setminus \RR H^n$; see also \cite{MR2124587, MR2974199}. For instance, consider the quadratic form $\Psi_0^{n}(x_0, \dotsc, x_n) = -x_0^2 + x_1^2 + \cdots + x_n^2$ for $n = 2r \geq 2$ even; by \cite[Theorem 6]{ratcliffe-volumes}, we have:
\begin{align*}
\rm{Vol}\left(  \rm{PO}(\Psi_0^{2r},\ZZ) \setminus \RR H^{2r} \right)  = 
\left(2^r + \varepsilon_r \right) \cdot \frac{\pi^r}{(2r)!}  \prod_{k = 1}^r \va{B_{2k}} 
\end{align*}
with the $B_{2k}$ Bernoulli numbers, and with $\varepsilon_r = 1$ if  $r \equiv 0,1 \bmod 4$ and $\varepsilon_r = -1$ if $r \equiv 2,3 \bmod 4$. 

We expect that \eqref{bound:1} is an equality whenever $n$ is even:

\begin{conjecture} \label{conjecture:volume}
Let $n \in 2\ZZ_{\geq 1}$. 
Then \eqref{bound:1} is an equality; in other words, the volume of the non-arithmetic hyperbolic orbifold $M(\mr L_{\rm{eis}}^n, \alpha_0) \cong \Gamma^n_{\rm{eis}} \setminus \RR H^n$ equals
\[
\rm{Vol}\left( \Gamma^n_{\rm{eis}}   \setminus \RR H^n  \right)= \sum_{i = 0}^n \rm{Vol}\left(  \rm{PO}(\Psi_i^n,\ZZ) \setminus \RR H^n \right). 
\]
\end{conjecture}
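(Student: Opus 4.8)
The plan is to reduce the conjectured equality to a classification of anti-unitary involutions, and to prove that classification by induction on $n$, with the rank-$3$ analysis behind Theorem~\ref{theorem:introduction:non-arithmeticity} as base case. First I would apply the volume decomposition of Theorem~\ref{theorem:introduction:uniformization} not to $M(\mr L_\eis^n)$ but to the single connected component $M(\mr L_\eis^n,\alpha_0)=\Gamma^n_\eis\setminus\RR H^n$: the immersion \eqref{align:open-immersion}, restricted to those $\alpha$ whose fixed locus $\RR H^n_\alpha$ maps into $M(\mr L_\eis^n,\alpha_0)$, is an open immersion of complement of measure zero $\coprod_{\alpha}L^n_\alpha\setminus(\RR H^n_\alpha-\mr H)\hookrightarrow\Gamma^n_\eis\setminus\RR H^n$, the coproduct running over a set of representatives for the $L^n$-conjugacy classes of anti-unitary involutions of $\ZZ[\zeta_3]^{n,1}$ landing in that component; hence $\rm{Vol}(\Gamma^n_\eis\setminus\RR H^n)=\sum_\alpha\rm{Vol}(L^n_\alpha\setminus\RR H^n_\alpha)$ over exactly those $\alpha$. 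Theorem~\ref{theorem:volume} provides, among these, the $n+1$ pieces attached to $\alpha_i\colon x\mapsto(\bar x_0,-\bar x_1,\dotsc,-\bar x_i,\bar x_{i+1},\dotsc,\bar x_n)$, each contributing the summand $\rm{Vol}(\rm{PO}(\Psi_i^n,\ZZ)\setminus\RR H^n)$. Thus the conjecture is equivalent to the statement that $[\alpha_0],\dotsc,[\alpha_n]$ are the \emph{only} contributing conjugacy classes.

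Next I would make ``$\RR H^n_\alpha$ maps into $M(\mr L_\eis^n,\alpha_0)$'' combinatorial. Unwinding Definition~\ref{definition:introduction:relation} and the definition \eqref{align:M}, this holds precisely when, up to $L^n$, the involution $\alpha$ is joined to $\alpha_0$ by a finite chain $\alpha_0=\beta_0,\dotsc,\beta_m=\alpha$ in which each $\beta_{j+1}$ is a ``wall crossing'' of $\beta_j$: there are a mirror $H_r$ of $\mr H$ and a point $x\in\RR H^n_{\beta_j}\cap\RR H^n_{\beta_{j+1}}\cap H_r$ with $\beta_{j+1}\circ\beta_j\in G(x)$. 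Because $\mr H$ is an orthogonal arrangement (Theorem~\ref{orthogonal}), each local group $G(x)$ is a direct product of the finite cyclic groups $\langle\phi_r\rangle$ over the mirrors through $x$, which bounds the possible moves. A direct computation then shows that crossing the mirror $H_{e_j}$ turns $\alpha_i$ into an involution $\Aut$-conjugate to $\alpha_{i+1}$ (when $j>i$) or to $\alpha_{i-1}$ (when $j\le i$): the twist of $\alpha_{i}$ at the $j$-th coordinate changes by a factor whose fixed $\ZZ$-sublattice of $\ZZ[\zeta_3]$ is $\ZZ$ or $\ZZ\cdot(1-\zeta_3)$, the latter carrying the norm form $3x^2$. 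Hence $[\alpha_0],\dotsc,[\alpha_n]$ do all lie in the component, and what remains is to show that no chain of wall crossings leaves this list.

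That last point I would settle by induction on $n$, along the lines of the totally-geodesic reduction in Theorems~\ref{theorem:non-arithmeticity:p-arbitrary} and~\ref{theorem:totally-geodesic-standard}: a contributing involution $\alpha$ in rank $n+1$ preserves a primitive sublattice of $\Psi$-type on which it restricts to a contributing involution in rank $n$, to which the inductive hypothesis applies; the base case $n=2$ is the explicit classification of anti-unitary involutions of $\ZZ[\zeta_3]^{2,1}$ up to $\Aut$-conjugacy underlying Theorem~\ref{theorem:introduction:non-arithmeticity}. One must also check that $L^n_{\alpha_i}\setminus\RR H^n_{\alpha_i}$ is \emph{isometric} — not merely commensurable — to $\rm{PO}(\Psi_i^n,\ZZ)\setminus\RR H^n$. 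For $\alpha_0$ this is immediate since $\ZZ[\zeta_3]^{n,1}=\ZZ^{n,1}\otimes_\ZZ\ZZ[\zeta_3]$, so every element of $\rm O(\ZZ^{n,1})$ extends $\ZZ[\zeta_3]$-linearly and commutes with complex conjugation. For general $i$ this identification, together with the verification that the ``twisted'' involutions $x_0\mapsto-\bar x_0$ — whose fixed lattices acquire a strictly larger $\langle3\rangle$-Jordan block at the prime $3$ and hence lie in a genus distinct from every $\Psi_j^n$ — never reach the component $M(\mr L_\eis^n,\alpha_0)$, is exactly where the hypothesis that $n$ is even enters, through a parity count on signatures and $3$-adic invariants of the fixed lattices. (For $n$ odd one expects such involutions to contribute, so that \eqref{bound:1} should be strict.)

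The main obstacle is precisely this combined problem: classifying the anti-unitary involutions of $\ZZ[\zeta_3]^{n,1}$ up to $\Aut$-conjugacy \emph{while simultaneously} controlling the wall-crossing equivalence relative to $\alpha_0$, since that equivalence is generated by the mirrors of $\mr H$ with no a priori bound on the length of a connecting chain, and since the delicate $3$-adic bookkeeping that distinguishes the relevant genera is what the parity hypothesis is there to organize. Once this input is in place, the volume identity follows from additivity of volume, and Prasad's formula \cite{prasad} (via \cite{ratcliffe-volumes}) makes the resulting common value explicit.
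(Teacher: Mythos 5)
The statement you are trying to prove is stated in the paper as Conjecture~\ref{conjecture:volume} and is \emph{not} proved there: the paper only knows it for $n\in\{2,4\}$ (via \cite{realACTnonarithmetic, realACTsurfaces} together with Theorem~\ref{useful-theorem}), and it explicitly records that it does not know whether $\#\rm H^1(G,L^n)=n+1$ holds for all even $n$, which is essentially the statement your argument needs. Your reduction is sound as far as it goes: by Theorem~\ref{theorem:introduction:uniformization} the volume of the component $M(\mr L_\eis^n,\alpha_0)$ is the sum of $\rm{Vol}(L^n_\alpha\setminus\RR H^n_\alpha)$ over the conjugacy classes of anti-unitary involutions whose copy of $\RR H^n$ lands in that component, the identification $L^n_{\alpha_i}\setminus\RR H^n_{\alpha_i}\cong\rm{PO}(\Psi_i^n,\ZZ)\setminus\RR H^n$ is Theorem~\ref{theorem:identification-of-arithmetic-part}, and the classes $[\alpha_0],\dotsc,[\alpha_n]$ are pairwise distinct (Lemma~\ref{lemma:conjinv:intro}) and do lie in the component (this is the content of Theorem~\ref{theorem:volume}). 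So the conjecture is indeed equivalent to: no further class of anti-unitary involutions reaches the component $M(\mr L_\eis^n,\alpha_0)$ by wall crossings.

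That last step is where your proposal has a genuine gap rather than a proof. The induction you sketch is not carried out: you give no argument that a ``contributing'' anti-unitary involution of $\ZZ[\zeta_3]^{n+1,1}$ preserves a primitive sublattice of the required type on which it restricts to a contributing involution one rank down — an arbitrary anti-unitary involution need not respect any coordinate splitting, and the chain of wall crossings connecting it to $\alpha_0$ can pass through mirrors in general position, so there is no a priori compatible sublattice along the whole chain. Likewise, the place where evenness of $n$ is supposed to enter (``a parity count on signatures and $3$-adic invariants of the fixed lattices'') is a placeholder, not an argument; note that for $n=3$ the extra class exists globally, so any genuine proof must explain why for even $n$ such extra classes either do not exist or at least never join the component of $\alpha_0$, and nothing in your outline distinguishes these two possibilities beyond assertion. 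In short, your write-up correctly reformulates the conjecture and reproves the easy inclusion already in the paper, but the classification/wall-crossing statement you defer to induction is precisely the open problem, so this is a strategy outline rather than a proof.
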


Work of Allcock, Carlson and Toledo \cite{realACTnonarithmetic, realACTsurfaces} implies Conjecture \ref{conjecture:volume} for $n \in \set{2,4}$. In fact, if the cardinality of $L^n \setminus P\mr A_\eis^n =  \rm H^1(G, L^n)$ equals $n+1$, then 
\eqref{bound:1} and \eqref{bound:2} are both equalities, thus Conjecture \ref{conjecture:volume} holds for such $n$. This happens for $n =4$ by \cite[Theorem 4.1]{realACTsurfaces}, and for $n=2$ by Theorem \ref{useful-theorem} in Section \ref{section:proof-of-theorem}. We do not know if the equality $\# \rm H^1(G, L^n) = n + 1$ 
is a coincidence for $n \in \set{2,4}$, or holds for all $n \in 2 \ZZ_{\geq 1}$.

For $n = 3$, one has 
$\# \rm{H}^1(G, L^n) = 5 = n+2$ (cf.\ \cite[Remark 6]{realACTbinarysextics}), which implies that \eqref{bound:1} or \eqref{bound:2} must be a strict inequality in this case. 

Conjecture \ref{conjecture:volume} would provide a way to explicitly calculate the volume of the non-arithmetic quotients $\GamEis^n \sm \RR H^n$ for $n \geq 2$ even. To the best of our knowledge, there is not much known about explicit volume computations in the other known families of non-arithmetic quotients in arbitrary dimension. For a lattice that comes from an interbreeding construction \cite{gromovshapiro} or generalizations (c.f.~\cite{raimbault, gelander-counting}), its covolume is a rational linear combination of the volumes of the arithmetic quotients that are glued, see \cite[Corollary 1.5]{emery-volumes} or \cite[Theorem 1.12]{emery-hyperbolic}. Note, however, that the rational coefficients are not provided, and that in general the arithmetic pieces used for the interbreeding construction are not explicated.  

\subsection{Uniformization of real moduli spaces} \label{section:uniformization}

Theorems \ref{theorem:introduction:uniformization} and \ref{theorem:introduction:non-arithmeticity} also have applications to the study of moduli spaces of real algebraic varieties. 
Write $\mr C_s^{\RR}$ for the space of non-zero
cubic forms with real coefficients in four variables which are stable in the sense of geometric invariant theory, and let $\mr C_0^\RR \subset \mr C_s^\RR$ be the open subspace of cubic forms whose associated cubic surface is smooth. The orbifold quotients
$
\mr M_s^{\RR} = \GL_4(\RR) \setminus \mr C_s^{\RR} \supset \GL_4(\RR) \setminus \mr C_0^\RR = \mr M_0^\RR
$
are the respective moduli spaces of stable and smooth real cubic surfaces. By \cite[Theorem 1.2]{realACTsurfaces}, there are a non-arithmetic lattice $P\Gamma^\RR \subset \PO(4,1)$, a homeomorphism 
\begin{align} \label{align:ACT-stable}
\mr M_s^\RR \cong P\Gamma^\RR \setminus \RR H^4, 
\end{align}
and a $P\Gamma^\RR$-invariant union $\mr H' \subset \RR H^4$ of two- and three-dimensional subspaces of $\RR H^4$ such that \eqref{align:ACT-stable} restricts to an orbifold isomorphism 
$
\mr M_0^\RR \cong P\Gamma^\RR \setminus \left( \RR H^4 - \mr H'\right)$. 

Our theory in dimension $n = 4$ relates to this result as follows. Consider the 
open immersion 
 $
\coprod_{i = 0}^4  \rm{PO}(\Psi_i^4,\ZZ) \setminus \left(\RR H^4 - \mr H_i \right) \hookrightarrow \Gamma^4_{\rm{eis}}   \setminus \RR H^4, 
$
cf.\ Theorem \ref{theorem:volume}. 

\begin{theorem} \label{theorem:reprove-ACT}
Let $\mr M_s^\RR \supset \mr M_0^\RR$ be the moduli spaces of stable and smooth real cubic surfaces. Then the following assertions are true. 
\begin{enumerate}
\item There are a union of geodesic subspaces $\mr H_i \subset \RR H^4$ for $i = 0, \dotsc, 4$ and a homeomorphism \begin{align} \label{uniformization-moduli-dgf}\mr M_s^{\RR} \cong \Gamma_{\rm{eis}}^4 \setminus \RR H^4 \end{align} restricting to an orbifold isomorphism $\mr M_0^{\RR} \cong \coprod_{i = 0}^4  \rm{PO}(\Psi_i^4,\ZZ) \setminus \left(\RR H^4 - \mr H_i \right)$.
\item When $\mr M_s^\RR$ is given the orbifold structure induced by \eqref{align:ACT-stable}, then \eqref{uniformization-moduli-dgf} is an isomorphism of orbifolds. In particular, the lattices $P\Gamma^\RR$ and $\Gamma_{\rm{eis}}^4$ are conjugate. 
\end{enumerate}
\end{theorem}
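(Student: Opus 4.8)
The statement has two parts: a homeomorphism $\mr M_s^\RR \cong \Gamma^4_{\rm{eis}} \setminus \RR H^4$ refining the Allcock--Carlson--Toledo (ACT) period map on the smooth locus, and the stronger assertion that this homeomorphism is an orbifold isomorphism for the ACT orbifold structure, hence that $P\Gamma^\RR$ and $\Gamma^4_{\rm{eis}}$ are conjugate in $\PO(4,1)$. The guiding principle is that the ACT construction of \eqref{align:ACT-stable} is, after unwinding, exactly an instance of our gluing construction applied to the Eisenstein lattice $\mr L_{\rm{eis}}^4$. So the plan is to match the two constructions piece by piece rather than to reprove the homeomorphism from scratch.

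\textbf{Step 1: Recall the ACT decomposition.} In \cite{realACTsurfaces}, the moduli space $\mr M_s^\RR$ of stable real cubic surfaces decomposes, according to the topological type of the real locus (equivalently, the conjugacy class of an anti-holomorphic involution acting on the cubic threefold cyclically branched over the surface), into finitely many pieces, each of which is an arithmetic real hyperbolic orbifold $\Gamma_j \setminus \RR H^4$, and these pieces are glued along geodesic hyperplanes coming from the discriminant. The framed period domain for a cubic surface is $\CC H^4$ attached to the Eisenstein Hodge structure on the cubic threefold, with monodromy lattice $\ZZ[\zeta_3]^{4,1}$, and the relevant arithmetic group is $L = \Aut(\ZZ[\zeta_3]^{4,1})/\langle -\zeta_3\rangle$. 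The anti-holomorphic involutions on $\CC H^4$ induced by real structures on cubic surfaces are precisely (up to conjugacy) the anti-unitary involutions $\alpha \in \mr A$ of $\ZZ[\zeta_3]^{4,1}$, and the fixed locus $\RR H^4_\alpha = (\CC H^4)^\alpha$ is the period domain for real cubics of the corresponding type. The hyperplane arrangement $\mr H$ of norm-$1$ vectors is the discriminant (nodal) locus.

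\textbf{Step 2: Identify the gluing.} The key point is to check that the ACT gluing relation among the pieces $\Gamma_j \setminus \RR H^4$ coincides with our relation $\sim$ of Definition \ref{definition:introduction:relation}. On the ACT side, two real cubics of different topological types are identified along the discriminant when they are connected by a path through the (complex) nodal locus; infinitesimally, passing a node changes the real structure by a reflection $\phi_r$ in the vanishing cycle $r$, $h(r,r)=1$, composed appropriately. This is exactly the condition $\beta \circ \alpha \in G(x)$ at a point $x$ lying on the walls through which one degenerates. Thus the glued space $M(\mr L_{\rm{eis}}^4)$, as in \eqref{align:M}, is canonically identified with the ACT-glued moduli space, and by Theorem \ref{theorem:introduction:uniformization}\eqref{connunif} the connected component $M(\mr L_{\rm{eis}}^4, \alpha_0)$ containing the "all real points" type $\alpha_0$ is $\Gamma^4_{\rm{eis}} \setminus \RR H^4$. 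Combined with the fact (from ACT, and from Theorem \ref{theorem:introduction:non-arithmeticity} on our side) that this component exhausts the stable moduli space — i.e.\ $\mr M_s^\RR$ is connected and all topological types of stable real cubic surfaces are linked to $\alpha_0$ through the discriminant — we get the homeomorphism \eqref{uniformization-moduli-dgf}, and its restriction to $\mr M_0^\RR \cong \coprod_i \rm{PO}(\Psi_i^4,\ZZ)\setminus(\RR H^4 - \mr H_i)$ follows from the identification of the $L_\alpha$ with the arithmetic groups $\rm{PO}(\Psi_i^4,\ZZ)$ (this identification is the content of the quadratic-form bookkeeping behind Theorem \ref{theorem:volume}, matching $\mr A_\eis^4/L$ with $\{0,\dots,4\}$ via the ACT count $\#\rm H^1(G,L^4) = 5$).

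\textbf{Step 3: Orbifold structures agree.} For part (2), I would argue that both orbifold structures are the \emph{canonical} complete hyperbolic orbifold structure on the underlying metric space: on our side this is Theorem \ref{theorem:introduction:uniformization}\eqref{oopensuborbifold}, which produces a hyperbolic orbifold structure extending the obvious one on the complement of $\mr H$; on the ACT side \eqref{align:ACT-stable} is likewise a complete hyperbolic orbifold structure restricting to the evident one on $\mr M_0^\RR$. Two complete hyperbolic orbifold structures on the same second-countable Hausdorff space that agree on a dense open set of full measure (here, the smooth locus) must coincide — completeness rigidifies the local models along the discriminant, since the stabilizer groups are forced by the metric completion. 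Hence \eqref{uniformization-moduli-dgf} is an orbifold isomorphism, and by the standard dictionary between complete hyperbolic orbifolds and conjugacy classes of lattices (uniqueness in Theorem \ref{theorem:introduction:uniformization}\eqref{connunif}), $P\Gamma^\RR$ and $\Gamma^4_{\rm{eis}}$ are conjugate in $\PO(4,1)$.

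\textbf{Main obstacle.} The technical heart is Step 2: rigorously matching our combinatorial gluing relation $\sim$ — defined purely in terms of the hermitian lattice $\ZZ[\zeta_3]^{4,1}$, anti-unitary involutions, and reflection subgroups $G(x)$ — with the geometric gluing in \cite{realACTsurfaces}, which is phrased in terms of limits of Hodge structures and real structures on degenerating cubic threefolds. One must show that the "change of real structure across a wall" in the moduli problem is implemented precisely by the reflection $\phi_r$ (with the correct root of unity $\zeta=-\zeta_3$), and that no identifications are missed or doubled — equivalently, that the natural map $M(\mr L_{\rm{eis}}^4) \to \{\text{ACT space}\}$ is a bijection on points and a local homeomorphism. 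This is where the orthogonality of the arrangement (Theorem \ref{orthogonal}) and the explicit description of $G(x)$ as a finite reflection group do the real work. I would expect to lean on the explicit enumeration of boundary types and their incidences carried out in \cite{realACTsurfaces} to pin down this correspondence, rather than re-deriving it.
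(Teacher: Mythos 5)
Your overall strategy coincides with the paper's: identify the ACT construction with the gluing construction for $\mr L_\eis^4$ via the period map, use the exhaustion of $L\setminus P\mr A_\eis^4$ by the classes of $\alpha_{4,0},\dotsc,\alpha_{4,4}$ (\cite[Theorem 4.1]{realACTsurfaces}) together with the identification $L_{\alpha_i}\cong\PO(\Psi_i^4,\ZZ)$ for part (1), and then argue that the orbifold structures must agree for part (2). The real difference lies in how your Step 2 — which you yourself flag as the technical heart and defer — is actually carried out. The paper does not match the two gluings by wall-crossing/monodromy heuristics or by an enumeration of boundary types and incidences; it proves Theorem \ref{th:realstableperiod}: the real period map $g_s^\RR\colon\coprod_{\alpha}\mr F_s^\alpha\to\wt Y=\coprod_\alpha\RR H^4_\alpha$, defined on the fixed loci of the anti-holomorphic involutions acting on the Fox completion $\mr F_s$ of the framed family, descends through the relation $\sim$ and induces a homeomorphism $G^\RR\setminus\mr F_s^\RR\cong Y(\mr L_\eis^4)$, hence $\mr M_s^\RR\cong M(\mr L_\eis^4)$. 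The two ACT inputs that make the descent, the equivariance, and the injectivity/surjectivity work are \cite[Lemma 10.3]{realACTsurfaces} (the stabilizer $L_f$ of a framed stable surface is canonically isomorphic to the reflection group $G(x)$ at its period point — this is exactly what converts ``$f\in\mr F_s^\alpha\cap\mr F_s^\beta$'' into ``$\beta\circ\alpha\in G(x)$'', i.e.\ into the relation $\sim$ of Definition \ref{definition:introduction:relation}) and \cite[Lemma 11.3]{realACTsurfaces} (the isomorphism $G^\RR\setminus\mr F_s^\alpha\cong\RR H^4_\alpha$ for every $\alpha$). If you want to close the gap in your Step 2, this is the route: no independent analysis of degenerating real structures is needed beyond these two lemmas, and no ``missed or doubled identifications'' issue arises because injectivity and openness of the induced map are checked directly.

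Two smaller corrections. Connectivity is not obtained ``from Theorem \ref{theorem:introduction:non-arithmeticity} on our side''; in the paper it flows the other way: $\mr M_s^\RR$ is connected, and the homeomorphism of Theorem \ref{th:realstableperiod} then forces $M(\mr L_\eis^4)$ to be connected, so that $M(\mr L_\eis^4)=M(\mr L_\eis^4,\alpha_0)\cong\Gamma_\eis^4\setminus\RR H^4$. For part (2) the paper simply asserts the claim; your argument — two complete hyperbolic (path-metric) orbifold structures on the same space that agree on a dense open subset with measure-zero, locally geodesic complement must coincide, and the uniformizing lattice of a complete connected hyperbolic orbifold is unique up to conjugacy — is a reasonable way to substantiate it and is consistent with the intended reading, though to be fully rigorous you should justify that the ambient path metrics (not just the Riemannian metrics on the smooth locus) are determined by their restrictions to the smooth locus.
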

Our results have similar connections with \cite[Section 5]{realACTnonarithmetic}. 
Namely, if $\mr N_s^\RR(\infty)$ is the moduli space of stable real binary sextics that have a double root at $\infty$, then $\mr N_s^\RR(\infty) \cong \Gamma_{\rm{eis}}^2 \sm \RR H^2$ and the lattice $\GamEis^2 \subset \PO(2,1)$ 
is conjugate to lattice $\Gamma^\RR_\infty$ defined in \cite[Section 5]{realACTnonarithmetic}. 
We will prove this in Theorem \ref{theorem:connection-ACT} in Section \ref{section:binary}. 

\begin{remark} \label{remark:non-cocompact}
By Theorems \ref{theorem:reprove-ACT} and \ref{theorem:connection-ACT}, the lattice $\Gamma_{\rm{eis}}^n \subset \PO(n,1)$ is not cocompact for $n = 2,4$, because the moduli spaces $\mr N_s^\RR(\infty)$ and $\mr M_s^\RR$ are not compact. 
\end{remark}

\subsection{Orthogonality of the hyperplane arrangement} 

Let $\mr L = (K, \Lambda)$ be an admissible hermitian lattice of rank $n+1 \geq 3$. Define 
$
\mr H = \cup_{r \in \mr R} H_r \subset \CC H^n
$
as in Section \ref{section:outline}. 
A crucial ingredient in the proof of Theorem \ref{theorem:introduction:uniformization} is the fact that the hyperplane arrangement $\mr H \subset \CC H^n$ is an orthogonal arrangement in the sense of \cite{orthogonalarrangements}. In other words, for $r, t \in \Lambda$ such that $h(r,r) = h(t,t) = 1$, $H_r \neq H_t$ and $H_r \cap H_t \neq \emptyset$, one has $h(r,t) = 0$. 
See Theorem \ref{orthogonal} in Section \ref{sec:orthogonalhyperplane}.

In view of \cite[Theorem 1.2]{orthogonalarrangements}, the following is a corollary of Theorem \ref{orthogonal}. 

\begin{theorem}
Let $n \geq 2$. 
Let $\mr L = (K, \Lambda)$ be an admissible hermitian lattice of rank $n+1$. Define the lattice $L \subset \rm{Isom}(\CC H^n)$ and the hyperplane arrangement $\mr H \subset \CC H^n$ as in Section \ref{section:outline}. 
Then the orbifold fundamental group $\pi_1^\textnormal{orb} \left( L \sm \left( \CC H^n - \mr H \right) \right)$ is not a lattice in any Lie group with finitely many connected components. 
\end{theorem}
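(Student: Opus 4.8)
The plan is to combine the orthogonality theorem (Theorem~\ref{orthogonal}) with the main result of \cite{orthogonalarrangements}, which presumably states that if $\mr H \subset \CC H^n$ is an orthogonal hyperplane arrangement stable under a lattice $L \subset \rm{Isom}(\CC H^n)$ and the complement $\CC H^n - \mr H$ is not the whole space, then $\pi_1^{\tn{orb}}(L \sm (\CC H^n - \mr H))$ fails to be a lattice in any Lie group with finitely many components. So the reduction is almost immediate once we know three things: (i) $\mr H$ is genuinely an orthogonal arrangement, which is exactly Theorem~\ref{orthogonal} and requires $n+1 \geq 3$, i.e.\ $n \geq 2$; (ii) $L$ is indeed a lattice in $\rm{Isom}(\CC H^n) = \rm{PU}(n,1)$, which holds because $L = \Aut(\Lambda)/\mu_K$ with $\Lambda$ an admissible hermitian lattice, and arithmeticity/finite covolume of such groups is classical (Borel--Harish-Chandra); (iii) $\mr H$ is nonempty, i.e.\ the set $\mr R$ of norm-$1$ vectors in $\Lambda$ is nonempty, and the hyperplanes $H_r$ genuinely meet $\CC H^n$. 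The bulk of the work is checking that the hypotheses of \cite[Theorem~1.2]{orthogonalarrangements} are literally met by $(L, \mr H)$.

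First I would recall the precise statement of \cite[Theorem~1.2]{orthogonalarrangements} and unwind exactly which hypotheses it imposes — in particular whether it needs the arrangement to be locally finite, nonempty, ``essential'' (not contained in a proper totally geodesic subspace), or $L$-invariant for an arithmetic $L$. Then I would verify each: local finiteness of $\mr H$ follows from discreteness of $L$ together with the fact that $\mr R/\mu_K$ meets each $L$-orbit finitely (the norm-$1$ vectors of bounded height in $\Lambda$ are finite, standard for a positive-definite form at the non-distinguished places); $L$-invariance of $\mr H$ is clear since $\phi \in \Aut(\Lambda)$ sends $\mr R$ to $\mr R$ and hence permutes the $H_r$; nonemptiness of $\mr R$ needs a short argument that an admissible hermitian lattice of signature $(n,1)$, $n \geq 2$, represents $1$ — this can be arranged by noting the form is isotropic or by an explicit vector, or one simply restricts the corollary to those $\mr L$ for which $\mr R \neq \emptyset$ (which includes $\mr L_{\eis}^n$). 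The orthogonality hypothesis is supplied verbatim by Theorem~\ref{orthogonal}.

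With the hypotheses checked, \cite[Theorem~1.2]{orthogonalarrangements} applies directly and yields the conclusion: $\pi_1^{\tn{orb}}(L \sm (\CC H^n - \mr H))$ is not a lattice in any Lie group with finitely many connected components. I would phrase the proof as: ``By Theorem~\ref{orthogonal}, $\mr H \subset \CC H^n$ is an orthogonal hyperplane arrangement. Since $L = \Aut(\Lambda)/\mu_K$ is an arithmetic lattice in $\rm{PU}(n,1) = \rm{Isom}(\CC H^n)$ and $\mr H$ is a nonempty, locally finite, $L$-invariant union of complex hyperplanes, the pair $(L, \mr H)$ satisfies the hypotheses of \cite[Theorem~1.2]{orthogonalarrangements}, and the conclusion follows.''

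The main obstacle I anticipate is purely bookkeeping: making sure the ``orthogonal arrangement'' and lattice-invariance conventions in \cite{orthogonalarrangements} match ours exactly (e.g.\ whether they require the mirrors to be cut out by reflections in the lattice — which is true here via the $\phi_r$ — and whether they need the reflection group $\langle \phi_r \rangle$ to have finite covolume complement, also true here). There is no hard new mathematics; the subtlety is entirely in verifying that the cited black box applies. A secondary, genuinely small point is the nonemptiness of $\mr H$: if one wants the corollary for \emph{all} admissible $\mr L$ of rank $\geq 3$ one must argue $\mr R \neq \emptyset$, and I would either prove this in general or restrict the statement to the arrangements that actually arise in the paper's examples.
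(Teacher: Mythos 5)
Your proposal matches the paper's own treatment: the paper derives this statement purely as a corollary of Theorem \ref{orthogonal} combined with \cite[Theorem 1.2]{orthogonalarrangements}, with no further argument given, which is exactly the reduction you describe. Your additional bookkeeping (local finiteness, $L$-invariance, nonemptiness of $\mr R$) is more detail than the paper records and is a reasonable, correct elaboration of the same route.
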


\section*{Acknowledgements}
This research has received funding from the European Union’s Horizon 2020 research and innovation programme under the Marie Skłodowska-Curie grant agreement N\textsuperscript{\underline{o}}754362 \img{EU}, from the European Research Council (ERC) under the European Union’s Horizon 2020 research and innovation programme under grant agreement N\textsuperscript{\underline{o}}948066 (ERC-StG RationAlgic), and from the ERC Consolidator Grant FourSurf N\textsuperscript{\underline{o}}101087365.

The author would like to thank Emiliano Ambrosi, Olivier Benoist and Jean Raimbault for helpful conversations concerning the content of this paper. 


\section{Admissible hermitian lattices} \label{sec:preliminaries}

The goal of this section is to introduce and study admissible hermitian lattices, who form the input of our gluing construction. 

\subsection{Admissible hermitian lattices} \label{set-up-1}  

\begin{notation} \label{notation:CMfield}
Throughout this paper, $K$ will denote a CM field with maximal totally real subfield $F \subset K$. 
We let $\sigma$ denote the unique involution $\sigma \colon K \to K$ such that for each embedding $\varphi \colon K \hookrightarrow \CC$, one has $\varphi \circ \sigma = \rm{conj} \circ \varphi$, where $\rm{conj} \colon \CC \to \CC$ denotes complex conjugation. We have $F = K^\sigma$ as subfields of $K$, and there is an element $a \in K$ with $K = F(a)$, $a^2 \in F$ and $\sigma(a) = -a$. We let $\OO_K \subset K$ and $\OO_F \subset F$ denote the respective rings of integers. 
\end{notation}

Let $K$ be a CM field and let $F \subset K$ and $\sigma \colon K \to K$ as in Notation \ref{notation:CMfield}. Let $n \geq 1$ be an integer and $\Lambda$ a free $\OO_K$-module of rank $n+1$. Let
\[
h \colon \Lambda \times \Lambda \longrightarrow \OO_K
\]
be a $\ZZ$-bilinear form which is $\OO_K$-linear in its first argument and which satisfies $h(y,x) = \sigma(h(x,y))$ for each $x,y \in \Lambda$. Note that for $\xi \in \OO_K$, we have $h(x, \xi y) = 
\sigma(\xi) h(x,y)$. In particular, $h$ is $\OO_F$-bilinear. For an embedding $\varphi \colon F \hookrightarrow \RR$, define
\begin{align*}
V_{\varphi} \coloneqq  \Lambda \otimes_{\OO_F, \varphi}  \RR.
\end{align*}
Then $V_\varphi$ is naturally a complex vector space of dimension $n+1$, equipped with a hermitian form 
$
h_{\varphi} \colon V_{\varphi} \times V_{\varphi} \to \CC
$
induced by $h$. Central in this paper is:

\begin{definition} \label{definition:introduction:admissible}
A \emph{hermitian lattice} is a pair $$\mr L = (K, \Lambda)$$ where $K$ is a CM field and where $\Lambda$ is a free $\OO_K$-module of finite rank equipped with a non-degenerate $\OO_F$-bilinear form $h \colon \Lambda \times \Lambda \to \OO_K$ which is $\OO_K$-linear in its first argument and which satisfies $h(y,x) = \sigma(h(x,y))$ for each $x,y \in \Lambda$. We say that a hermitian lattice $\mr L = (K, \Lambda)$ is \emph{admissible} if the following conditions hold:
\begin{enumerate}
\item \label{condition2.1}
The different ideal $\mf D_K \subset \OO_K$ (see e.g.\ \cite[Chapter III, \S 2]{Neukirch}) is generated by an element $\eta \in \OO_K$ that satisfies $\sigma(\eta) = -\eta$.
\item  \label{condition2.2} For one embedding $\tau \colon F \hookrightarrow \RR$, the hermitian form $h_\tau$ on the complex vector space $V_\tau = \Lambda \otimes_{\OO_F, \tau} \RR$ has signature $(n,1)$. For all other embeddings $\varphi \neq \tau$ of $F$ into $\RR$, the hermitian form $h_\varphi \colon V_\varphi \times V_{\varphi} \to \CC$ has signature $(n+1,0)$. 
\end{enumerate}
The \emph{rank} of $\mr L$ is the rank of $\Lambda$ as finite free $\OO_K$-module. 
\end{definition}

We will provide some examples of admissible hermitian lattices in Section \ref{section:examples} and in Section \ref{sec:standard}. 

\subsection{Unitary ball quotients} \label{set-up-2} 

Let $n$ be a positive integer. Let $\mr L = (K, \Lambda)$ be an admissible hermitian lattice of rank $n+1 \geq 2$. Let $F \subset K$ and  $\sigma \colon K \to K$ be as in Notation \ref{notation:CMfield}. 
For an element $x \in K$, we will sometimes use the notation $\overline{x} = \sigma(x)$. 
Let $\tau \colon F \hookrightarrow \RR$ be the embedding such that the hermitian form $h_\tau$ induced by $h$ has signature $(n,1)$, see Section \ref{set-up-1}. Define
\[
V \coloneqq V_\tau =  \Lambda \otimes_{\OO_F, \tau} \RR.\]
To simplify notation, we put $h = h_\tau$. Thus, 
\[
h \colon V \times V \longrightarrow \CC
\]
denotes the hermitian form of signature $(n,1)$ induced by the form $h \colon \Lambda \times \Lambda \to \OO_K$. 

Let $m$ be the largest positive integer for which the $m$-th cyclotomic field $\QQ(\zeta_m)$ can be embedded in $K$, where $\zeta_m = e^{2 \pi i /m} \in \CC$. Let $\zeta \in K$ be a primitive $m$-th root of unity in $K$, and define
\[
\mu_K = \langle \zeta \rangle \subset \ca O_K^\ast \subset \OO_K. 
\]
By the Dirichlet unit theorem, $\mu_K$ coincides with the torsion subgroup of $\OO_K^\ast$. 

Define $\Aut(\Lambda)$ as the unitary group of $\Lambda$ (i.e., the group of $\OO_K$-linear isomorphisms $\phi \colon \Lambda \xrightarrow{\sim} \Lambda$ that satisfy $h(\phi(x), \phi(y))$ for each $x,y \in \Lambda$) and let $L$ be the quotient of $\Aut(\Lambda)$ by $\mu_K$; thus, we have
\[
\Aut(\Lambda) = U(\Lambda)(\OO_K) \quad \tn{ and } \quad  L = \Aut(\Lambda)/\mu_K.
\]
An element $r \in \Lambda$ with $h(r,r) = 1$ is called a \textit{short root}. Let $\mr R \subset \Lambda$ be the set of short roots. For $r \in \mr R$, define isometries $\phi_r^i \colon V \to V$ as follows:
\begin{equation*}
\phi_r(x) = x-(1-\zeta)h(x,r) \cdot r, \quad \phi_r^i(x) = x-(1-\zeta^i) h(x,r) \cdot r, \quad i \in (\ZZ/m)^\ast. 
\end{equation*}
Note that $\phi^i_r \in \Aut(\Lambda)$ for $r \in \mr R$, and that $\phi_r^i = \phi_r \circ \cdots \circ \phi_r$ ($i$ times). In particular, $\phi_r^m = \id$. Let $\PP(V)$ be the space of complex lines in $V$, and let
\[
\CC H^n \coloneqq \set{ \ell = [v] \in \PP(V) \mid h(v,v) < 0} \subset \PP(V)
\] 
be the space of negative lines in $V$. Define
\begin{equation*}
H_r \coloneqq \{x \in \CC H^n: h(x,r) = 0 \} \; \tn{ for } \; r \in \mr R, \quad \tn{ and }\quad \mr H \coloneqq \bigcup_{r \in \mr R} H_r \white
 \subset \white \CC H^n. 
\end{equation*}
Here, for a point $x \in \CCH^n$, the condition $h(x,r) = 0$ in fact means that $h(x', r_\tau) = 0$ for the image $r_\tau \in V_\tau = V$ of $r$ under the canonical embedding $\Lambda \hookrightarrow \Lambda \otimes_{\OO_F,\tau} \RR = V_\tau = V$ and any non-zero representative $x' \in V$ of $x \in \CCH^n$.  

\begin{lemma} \label{lemma:beauville:finite}
The family of hyperplanes $(H_r)_{r \in \mr R}$ is locally finite. In particular, the hyperplane arrangement $\mr H \subset \CC H^n$ is a closed subset of $\CC H^n$. 
\end{lemma}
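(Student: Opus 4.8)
The goal is to show that the family $(H_r)_{r\in\mr R}$ is locally finite; the closedness of $\mr H$ then follows from the general topological fact that a locally finite union of closed sets is closed, each $H_r$ being a closed subset of $\CC H^n$ (it is a complex hyperbolic hyperplane, cut out by the vanishing of $h(\cdot\,,r)$).

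So fix $x \in \CC H^n$ and a relatively compact open neighbourhood $B$ of $x$ with $\overline B \subset \CC H^n$. The crux is to bound, uniformly in $r$, the archimedean sizes of those $r \in \mr R$ for which $H_r \cap B \neq \emptyset$. I first treat the distinguished place $\tau$. If $H_r \cap B \neq \emptyset$, there is a nonzero $w \in V = V_\tau$ with $[w] \in \overline B$, with $\|w\| = 1$ (Euclidean norm for a fixed basis of $V$), and with $h(w,r) = 0$. On the compact set $T = \{(w,u) \in V\times V : \|w\| = \|u\| = 1,\ [w]\in\overline B,\ h(w,u) = 0\}$ the continuous function $(w,u)\mapsto h(u,u)$ is strictly positive, because $h$ is positive definite on $w^\perp$ whenever $[w]\in\CC H^n$; hence it is bounded below by some $\delta > 0$. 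Writing $r = \|r\|\cdot \hat r$ with $\|\hat r\| = 1$ and using $h(r,r)=1$ and $h(\hat r, w) = 0$, we obtain $1 = \|r\|^2\, h(\hat r,\hat r)\geq \delta\,\|r\|^2$, so $\|r\|_{V_\tau}^2 \leq \delta^{-1} =: M_\tau$, a bound independent of $r$.

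Next I handle the remaining archimedean places. Since $h(r,r) = 1$ lies in $\OO_F$ and is fixed by $\sigma$, for every embedding $\varphi \colon F \hookrightarrow \RR$ the image $r_\varphi \in V_\varphi$ satisfies $h_\varphi(r_\varphi, r_\varphi) = \varphi(1) = 1$; and for $\varphi \neq \tau$ the form $h_\varphi$ is positive definite by admissibility, so $r_\varphi$ lies on a fixed compact $h_\varphi$-sphere and $\|r_\varphi\|_{V_\varphi}^2 \leq M_\varphi$ for a constant independent of $r$. Under the canonical isomorphism $\Lambda \otimes_\ZZ \RR \cong \prod_{\varphi\colon F\hookrightarrow\RR} V_\varphi$ the $\ZZ$-lattice $\Lambda$ is discrete, while the two preceding paragraphs show that every $r \in \mr R$ with $H_r\cap B\neq\emptyset$ lies in the bounded region $\{\|r_\tau\|^2\leq M_\tau\}\times\prod_{\varphi\neq\tau}\{\|r_\varphi\|^2\leq M_\varphi\}$. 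Hence only finitely many $r\in\mr R$ satisfy $H_r\cap B\neq\emptyset$, which proves local finiteness and the lemma.

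The only genuinely geometric point is the uniform positivity of $(w,u)\mapsto h(u,u)$ on the compact set $T$, that is, the fact that $h|_{w^\perp}$ is positive definite uniformly as $[w]$ ranges over the compact set $\overline B$; this is just the statement that a negative line in $V$ splits off a positive-definite hyperplane, combined with compactness. Everything else — the positive-definiteness of $h_\varphi$ for $\varphi\neq\tau$ (built into the definition of admissibility), the discreteness of $\Lambda$ inside $\Lambda\otimes_\ZZ\RR$, and the passage from local finiteness to closedness — is routine.
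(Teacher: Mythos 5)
Your proof is correct, and it reaches the key estimate by a different route than the paper. Both arguments hinge on the same bound: a norm-one vector $r$ whose hyperplane $H_r$ meets a fixed small neighbourhood of $x$ must have bounded size in $V_\tau$. The paper proves this bound explicitly, following Beauville: it fixes coordinates $V \cong \CC^{n,1}$, passes to the ball model $\BB^n(\CC)$, and runs a concrete triangle-inequality estimate (using $\n{\delta}^2 = 1 + \va{\delta_0}^2$ and a carefully chosen $\vep$ with $\n{x} < 1-2\vep$) to produce an explicit constant $M$. You instead get the bound softly, from the uniform positivity of $h$ on $w^\perp$ as $[w]$ ranges over a compact subset of $\CC H^n$ — shorter and coordinate-free, at the cost of an explicit constant. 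Where the two proofs genuinely diverge is the finiteness step: the paper concludes by asserting that the set of norm-one vectors meeting the compact region is discrete in $V_\tau$, which is immediate when $F = \QQ$ (the Eisenstein case Beauville treats) but for a general totally real $F$ requires exactly the input you supply — namely that $h_\varphi(r_\varphi,r_\varphi)=1$ with $h_\varphi$ positive definite at every $\varphi \neq \tau$ forces the other archimedean components into a compact set, so that boundedness at $\tau$ plus discreteness of $\Lambda$ in $\Lambda\otimes_\ZZ\RR \cong \prod_\varphi V_\varphi$ yields finiteness. So your write-up makes explicit an arithmetic point the paper leaves implicit, and is in that respect more self-contained for the general admissible case; the passage from local finiteness to closedness of $\mr H$ is handled the same way in both.
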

\begin{proof}
See \cite[Lemma 5.3]{beauvillecubicsurfaces} for the proof in case $\mr L = (\QQ(\zeta_3), \ZZ[\zeta_3]^{4,1})$. The general case, is similar; we will prove it for convenience of the reader. Let us first fix some notation. Let $\CC^{n,1} = (\CC^{n+1}, h_{n,1})$ be the complex vector space $\CC^{n+1}$ equipped with the hermitian form $h_{n,1}$ defined as $h_{n,1}(x,y) = -x_0\bar y_0 + x_1 \bar y_1 + \cdots + x_n \bar y_n$. We define a function $h_+ \colon \CC^{n+1} \times \CC^{n+1} \to \CC$ as  $h_+(x,y) = x_1\bar y_1 + \cdots + x_n \bar y_n$, so that $h_{n,1}(x,y) = h_+(x,y) - x_0 \bar y_0$. We put $\n{x} = \sqrt{h_+(x,x)}$ for $x \in \CC^{n+1}$. If we identify $\CC^n$ with the affine hyperplane $z_0 = 0$ in $\CC^{n,1}$, then $\n{\phantom{-}}$ induces the standard hermitian norm on $\CC^n \subset \CC^{n,1}$. 

Next, we fix an isomorphism $\varphi \colon V \cong \CC^{n,1}$ and let $M \subset \CC^{n,1}$ be the image of $\Lambda$ under $\varphi$. We define $\Delta_1 \subset M$ as $\Delta_1 = \varphi(\mr R)$, so that $\Delta_1$ is set of elements $\delta \in M$ such that $h_{n,1}(\delta,\delta) = 1$. Let $\bb H^n_\CC$ be the space of complex lines in $\CC^{n,1}$ which are negative for $h_{n,1}$. Then we have an isomorphism \begin{align*}&\psi \colon \bb H^n_\CC \xlongrightarrow{\sim} \BB^n(\CC) = \set{z \in \CC^n \colon \n{z}^2 = \va{z_1}^2 + \cdots + \va{z_n}^2 < 1},  \\
&\psi\left( [z_0 \colon \cdots \colon z_n]\right) = (z_1/z_0, \dotsc, z_n/z_0).\end{align*}
For $\delta \in M$, write $\delta = (\delta_0, \dotsc, \delta_n)$, and define
\[
H_{\delta} \coloneqq \set{(x_1, \dotsc, x_n) \in \BB^n(\CC) \mid - \bar \delta_0 + x_1 \bar \delta_1 + \cdots + x_n \bar \delta_n = 0}. 
\]
Under the composition of isomorphisms $\CC H^n \cong \HH^{n,1}_\CC \cong \BB^n(\CC)$, the hyperplane $H_r \subset \CC H^n$ for $r \in \mr R$ is identified with the hyperplane $H_\delta \subset \BB^n(\CC)$ for $\delta \in \Delta_1$. For $\delta \in \Delta_1$ and $x \in \BB^n(\CC)$, let $\wt x = (1, x_1, \dotsc, x_n) \in \CC^{n,1}$ and define
$
\va{h(x, \delta)} \coloneqq \va{h_{n,1}( \wt x, \delta)} = \va{
-\delta_0 + x_1 \bar \delta_1 + \cdots + x_n \bar \delta_n
}.
$
Observe that $H_\delta = \set{x \in \BB^n(\CC) \colon \va{h(x, \delta)}  = 0}$. 

We need to show that the family of hyperplanes $(H_\delta)_{\delta \in \Delta_1}$ is locally finite in $\BB^n(\CC)$. 
To prove this, let $x \in \BB^n(\CC)$. We want to show that for $\varepsilon$ small enough, the ball $B(x, \vep)$ meets only finitely many of the hyperplanes $H_\delta$. Since $x \in \BB^n(\CC)$, we have that $\n{x} < 1$. 
We choose $\vep \in \RR$ with $0 < \vep < 1$ so that $\n{x} < 1 - 2\vep$.

\begin{claim}\label{claim:beauville} There exists $M \in \RR_{>0}$ such that for each $\delta \in \Delta_1$ with $H_\delta \cap B(x,\vep) \neq \ES$, we have $\n{\delta} \leq M$. 
\end{claim}
\begin{proof}[Proof of Claim \ref{claim:beauville}]
Let $\delta \in \Delta_1$. 
Since $\n{\delta}^2 = \va{\delta_1}^2 + \cdots + \va{\delta_n}^2$ and $h_{n,1}(\delta,\delta) = - \va{\delta_0}^2 + \va{\delta_1}^2 + \cdots + \va{\delta_n}^2 = 1$, we get $\n{\delta}^2 =  1 + \va{\delta_0}^2$. Consequently,
\begin{align} \label{align:frac}
\frac{\va{\delta_0}}{\n{\delta}} = \sqrt{\frac{1}{\va{\delta_0}^{-2}+ 1}}.
\end{align}
If $\n{\delta}$ approaches infinity, then $\va{\delta_0} = \sqrt{\n{\delta}^2 - 1}$ approaches infinity and hence $\frac{\va{\delta_0}}{\n{\delta}} $ approaches $1$ in view of \eqref{align:frac}. 
In particular, there exists $M > 0 $ such that $$1 \geq \frac{\va{\delta_0}}{\n{\delta}} \geq 1-\vep \quad \text{ for all } \quad \delta \in \Delta_1 \quad\text{ such that } \quad\n{\delta} > M.$$ We fix this $M>0$. 

Next, let $\delta \in \Delta_1$ such that $H_\delta \cap B(x,\vep) \neq \ES$. We claim that $\n{\delta} \leq M$. 
To see this, let $x' \in H_\delta \cap B(x,\vep) $ be a point in the intersection. Define $\wt x = (1, x_1, \dotsc, x_n) \in \CC^{n,1}$ and $\wt x' = (1, x_1', \dotsc, x_n') \in \CC^{n,1}$. Then $h(x', \delta) = h_{n,1}(\wt x', \delta) = 0$ and $\n{x-x'}  
< \vep$. This gives:
\begin{align*}
\va{h(x, \delta)} = 
\va{h_{n,1}(\wt x - \wt x', \delta)}  = \va{h_+(\wt x-\wt x', \delta)}
 \leq \n{x - x'} \cdot \n{\delta}  < \vep \cdot \n{\delta}. 
\end{align*}
Moreover, we have $h(x,\delta) = h_{n,1}(\wt x, \delta) = h_+(\wt x, \delta) - \bar \delta_0$, and $$\va{h_+(\wt x, \delta)} = \va{x_1\bar \delta_1 + \cdots + x_n\bar \delta_n} \leq  \n{x} \cdot \n{\delta} < (1-2\vep) \cdot \n{\delta}.$$
For the sake of contradiction, assume that $\n{\delta} > M$. By our choice of $M$, we get $\va{\delta_0} \geq (1-\vep)\n{\delta}$. Then $$\va{h_+(\wt x, \delta)} \leq \n{x} \cdot \n{\delta} < 
(1 - 2 \vep) \cdot \n{\delta} \leq (1-2\vep) \cdot \frac{\va{\delta_0}}{(1-\vep)} < \va{\delta_0}.$$  Therefore, $ \va{h_+(\wt x, \delta)-\bar \delta_0} \geq \va{\delta_0} - \va{h_+(\wt x, \delta)}$ so that
\begin{align*}
\vep \cdot \n{\delta} &> \va{h(x, \delta)} = \va{h_+(\wt x, \delta)-\bar \delta_0} \\&\geq \va{\delta_0} - \va{h_+(\wt x, \delta)}
> (1-\vep) \cdot \n{\delta} -  (1-2\vep) \cdot \n{\delta} = \vep\cdot\n{\delta}.
\end{align*}
This yields the desired contradiction. Thus $\n{\delta} \leq M$. This proves Claim \ref{claim:beauville}.   
\end{proof}
By Claim \ref{claim:beauville}, for each $\delta \in \Delta_1$ such that $H_\delta \cap B(x,\vep) \neq \emptyset$, we have $\n{\delta} \leq M$. For such $\delta$, the equality $\n{\delta}^2 = 1 + \va{\delta_0}^2$ then implies $$\va{\delta_0} \leq \n{\delta} = \va{\delta_1}^2 + \cdots + \va{\delta_n}^2 \leq M.$$ 
Thus, the set of $\delta \in \Delta_1$ such that $H_\delta \cap B(x,\vep) \neq \emptyset$ is contained in the compact subset $K \coloneqq\set{(z_0,z_1, \dotsc, z_n) \in \CC^{n+1} \mid \va{z_0}^2 \leq \va{z_1}^2 + \cdots + \va{z_n}^2 \leq M} \subset \CC^{n+1}$. 
As the set of $\delta \in \Delta_1$ such that $H_\delta \cap B(x,\vep) \neq \emptyset$ is discrete, it is therefore finite. 
\end{proof}

\subsection{Orthogonal hyperplanes and complex reflections} \label{set-up-3} Let $\mr L = (K, \Lambda)$ be an admissible hermitian lattice of rank $n+1 \geq 2$. We continue with the notation of Sections \ref{set-up-1} and \ref{set-up-2}. 

The following result is a key technical ingredient in our gluing construction. 

\begin{theorem} \label{orthogonal}
Let $\mr L = (K, \Lambda)$ be an admissible hermitian lattice of rank $n+1 \geq 2$. 
Let $r, t \in \Lambda$ with $h(r,r) = h(t,t) = 1$, $H_r \neq H_t$ and $H_r \cap H_t \neq \emptyset$. Then $h(r,t) = 0$. 
\end{theorem}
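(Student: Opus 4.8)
The plan is to reduce the statement to an elementary fact about the hermitian form $h$ restricted to the rank-two $\OO_K$-sublattice spanned by $r$ and $t$. First I would observe that since $H_r \cap H_t \neq \emptyset$, there is a negative line $\ell \in \CC H^n$ with $h(\ell, r) = h(\ell, t) = 0$; hence $r$ and $t$ both lie in the orthogonal complement $\ell^\perp \subset V$, which is a positive-definite subspace of dimension $n$ (as $h$ has signature $(n,1)$ and $\ell$ is negative). In particular the Gram matrix of $(r,t)$ with respect to $h$ is positive semi-definite, and since $H_r \neq H_t$ the vectors $r_\tau, t_\tau \in V$ are not proportional, so the Gram matrix $\begin{pmatrix} 1 & h(r,t) \\ \overline{h(r,t)} & 1 \end{pmatrix}$ is positive definite. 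This forces $|h(r,t)|^2 < 1$ in $\CC$ (via the embedding $\tau$).

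Next I would use integrality. The element $c \coloneqq h(r,t) \in \OO_K$ satisfies $|\varphi(c)| \le 1$ for \emph{every} embedding $\varphi \colon K \hookrightarrow \CC$: for $\varphi$ conjugate to $\tau$ this is the bound just obtained, while for the other embeddings the form $h_\varphi$ is positive definite of signature $(n+1,0)$, so the Gram matrix of $(r,t)$ in $V_\varphi$ is again positive semi-definite with diagonal entries $1$, giving $|\varphi(c)| = |\varphi(h(r,t))| \le 1$. Thus all archimedean absolute values of $c$ are at most $1$. By a standard argument (Kronecker's theorem, or: the norm $N_{K/\QQ}(c)$ is a rational integer of absolute value $\prod_\varphi |\varphi(c)| \le 1$), either $c = 0$ or $c$ is a root of unity, i.e. $c \in \mu_K$.

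It remains to rule out $c \in \mu_K \setminus \{0\}$, i.e. $|c| = 1$ with $c \neq 0$. But then at the embedding $\tau$ we would have $|h(r,t)| = 1$, contradicting the strict inequality $|h(r,t)|^2 < 1$ coming from positive-definiteness of the Gram matrix of the linearly independent pair $(r_\tau, t_\tau)$ in the positive-definite space $\ell^\perp$. (Equivalently: $|h(r,t)| = 1 = \sqrt{h(r,r)h(t,t)}$ is the Cauchy–Schwarz equality case, forcing $r_\tau$ and $t_\tau$ to be proportional, hence $H_r = H_t$.) Therefore $c = h(r,t) = 0$, as claimed.

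The main obstacle I anticipate is making the "either zero or a root of unity" step clean in the CM setting — one wants to be careful that $c \in \OO_K$ (not merely in $K$), which is where the hypothesis that $h$ takes values in $\OO_K$ and that $r, t \in \Lambda$ is used, and then invoke that an algebraic integer all of whose conjugates have absolute value $\le 1$ is zero or a root of unity. The rest is linear algebra: correctly identifying that a nonempty intersection $H_r \cap H_t$ produces a common negative line whose orthogonal complement is positive definite, and that $H_r \neq H_t$ is exactly linear independence of $r_\tau, t_\tau$ in $V$, so that Cauchy–Schwarz is strict. I would double-check that the condition "$H_r \cap H_t \neq \emptyset$" is being used in the complex hyperbolic space $\CC H^n$ (negative lines), not in $\PP(V)$, so that the orthogonal complement of the common line is genuinely positive definite.
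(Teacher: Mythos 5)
Your argument is correct, and it takes a genuinely different route from the paper's. The paper proves Theorem \ref{orthogonal} via the modular interpretation of $\Aut(\Lambda)\setminus\CC H^n$ developed in Section \ref{unitaryshimura}: a point of $H_r\cap H_t$ corresponds to a polarized $\OO_K$-linear abelian variety $A$, the two short roots give two $\OO_K$-linear embeddings of the principally polarized CM abelian variety $B=\CC^g/\Phi(\OO_K)$ into $A$ (this is where admissibility condition (1), that $\mf D_K$ is generated by a totally imaginary element, enters), and Debarre's uniqueness of the decomposition of a polarized abelian variety into non-decomposable factors forces $A\cong B_1\times B_2\times C$, from which orthogonality of the corresponding pieces of $\rm H^{-1,0}$ yields $h(r,t)=0$. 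You instead argue purely at the archimedean places: at the hyperbolic place the common negative line makes $\ell^\perp$ positive definite, and $H_r\neq H_t$ is exactly non-proportionality of $r_\tau,t_\tau$, so the Gram determinant gives the strict bound $|h(r,t)|_\tau<1$; at the definite places Cauchy--Schwarz gives $|\varphi(h(r,t))|\le 1$; since $h(r,t)\in\OO_K$, the rational integer $N_{K/\QQ}(h(r,t))$ has absolute value $<1$, hence vanishes, hence $h(r,t)=0$ (your Kronecker phrasing plus the strict inequality at $\tau$ works equally well). This is precisely the generalization, from imaginary quadratic $K$ to an arbitrary CM field, of the elementary argument the paper records only as an Example at the end of Section \ref{sec:orthogonalhyperplane} following \cite[Lemma 7.29]{ACTsurfaces}, where the shortcut $|h(r,t)|^2\in\ZZ_{\geq 0}$ is available because $F=\QQ$; your use of the positive-definiteness at the remaining places together with integrality of the norm is exactly what replaces that shortcut when $F\neq\QQ$. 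What each approach buys: yours is short, uses only the signature condition (admissibility condition (2)) and none of the abelian-variety machinery, so it makes this theorem independent of Sections \ref{unitaryshimura}--\ref{sec:orthogonalhyperplane}; the paper's heavier route comes bundled with the moduli-theoretic description of the hyperplane arrangement that it develops for its own sake.
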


The proof of Theorem \ref{orthogonal} shall be provided in Section \ref{sec:orthogonalhyperplane}. It depends on the results of Section \ref{unitaryshimura}, that describe the structure of the quotient $L \setminus \CC H^n$ as a certain moduli space of abelian varieties. We highlight that Sections \ref{unitaryshimura} and \ref{sec:orthogonalhyperplane} are independent of Sections \ref{sec:preliminaries} -- \ref{section:uniformization:realcubics}. 

Theorem \ref{orthogonal} implies that if $H_{r_1}, \dotsc, H_{r_k}$ for $r_i \in \mr R$ are mutually distinct, and if their common intersection is non-empty, then $\cap_{i = 1}^k H_{r_i} \subset \CC H^n$ is a totally geodesic subspace of codimension $k$. Moreover, for any $r \in \mr R$, the element $\phi_r \in \Aut(\Lambda)$ generates a finite subgroup $\langle \phi_r \rangle \subset \Aut(\Lambda)$ of order $m$, and that the restriction of the quotient map $\Aut(\Lambda) \to L$ to this subgroup $\langle \phi_r \rangle \subset \Aut(\Lambda)$ is injective. 

\begin{lemma} \label{finitereflectionorders}
Let $\phi \in \Aut(\Lambda)$. Assume $\phi \colon \CC H^n \to \CC H^n$ has order $m$ and restricts to the identity on $H_r \subset \CC H^n$ for some $r \in \mr R$. Then $\phi = a \cdot \phi_r^i$ for some $i \in \ZZ/m$ and $a \in \mu_K$. 
\end{lemma}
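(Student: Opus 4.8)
The plan is to exploit the orthogonal splitting $\Lambda = \OO_K r \oplus \Lambda_r$, where $\Lambda_r \coloneqq \set{x \in \Lambda : h(x,r) = 0}$, and to show that $\phi$ preserves each summand and acts on it by multiplication by a root of unity. First I would record the splitting: since $h(r,r) = 1$ and $h$ is $\OO_K$-valued, each $x \in \Lambda$ decomposes as $x = h(x,r)\,r + \big(x - h(x,r)\,r\big)$ with the first term in $\OO_K r$ and the second in $\Lambda_r$; since $\OO_K r \cap \Lambda_r = 0$ (if $cr \in \Lambda_r$ then $c = h(cr, r) = 0$) and the two summands are $h$-orthogonal, this is an orthogonal decomposition of $\OO_K$-modules, with $h|_{\Lambda_r}$ non-degenerate. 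Extending scalars along $\tau$ gives $V = \CC r \oplus r^\perp$ with $r^\perp = \Lambda_r \otimes_{\OO_F, \tau}\RR$, and $H_r = \PP\big((r^\perp)_{<0}\big)$, where $(r^\perp)_{<0} \coloneqq \set{v \in r^\perp : h(v,v) < 0}$ is non-empty and open in $r^\perp$ (as $h|_{r^\perp}$ has signature $(n-1,1)$).

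Next I would show $\phi$ acts by a scalar on $r^\perp$. Since $\phi$ fixes $H_r$ pointwise it fixes every line contained in $(r^\perp)_{<0}$; these lines span $r^\perp$ over $\CC$ because $(r^\perp)_{<0}$ is a non-empty open subset of the vector space $r^\perp$, so the $\CC$-linear automorphism $\phi|_{r^\perp}$ is multiplication by a scalar $\lambda \in \CCs$. As $\phi$ is $h$-unitary it also preserves $(r^\perp)^\perp = \CC r$. Combining this with $\OO_K$-linearity and the module splitting, $\phi$ preserves $\OO_K r$ and $\Lambda_r$; thus $\phi(r) = c\,r$ for a unique $c \in \OO_K$, and since $\phi|_{\Lambda_r}$ is $\OO_K$-linear and acts on $r^\perp = \Lambda_r \otimes_{\OO_F,\tau}\RR$ as the scalar $\lambda$, it is multiplication by a single element $b \in \OO_K$ with $\tau(b) = \lambda$ (on $K$-coefficients $\phi|_{\Lambda_r\otimes F}$ is a $K$-linear map that becomes scalar after $\otimes_{F,\tau}\CC$, hence is scalar; that scalar lies in $K$ and stabilises the lattice $\Lambda_r$, so it lies in $\OO_K$ as $\OO_K$ is integrally closed). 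Unitarity now yields $c\,\sigma(c) = h(\phi r, \phi r) = h(r,r) = 1$ and, choosing $x, y \in \Lambda_r$ with $h(x,y) \neq 0$, also $b\,\sigma(b) = h(x,y)^{-1}h(\phi x, \phi y) = 1$.

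Finally I would apply Kronecker's theorem. Applying an arbitrary embedding $\varphi \colon K \hookrightarrow \CC$ to $b\,\sigma(b) = 1$ and $c\,\sigma(c) = 1$ and using $\varphi \circ \sigma = \rm{conj}\circ\varphi$ shows that all archimedean absolute values of $b$ and of $c$ equal $1$; hence $b$ and $c$ are roots of unity in $K$, i.e.\ $b, c \in \mu_K = \langle \zeta \rangle$, and there is $i \in \ZZ/m$ with $c\,b^{-1} = \zeta^i$. Setting $a \coloneqq b \in \mu_K$, the automorphism $a\,\phi_r^i \in \Aut(\Lambda)$ acts on $\Lambda_r$ by multiplication by $a = b$ (as $\phi_r$ fixes $\Lambda_r$ pointwise) and sends $r$ to $a\,\zeta^i r = b\,(cb^{-1})\,r = c\,r = \phi(r)$; since $\Lambda = \OO_K r \oplus \Lambda_r$, this forces $\phi = a\,\phi_r^i$. (If one additionally uses that $\phi$ has order $m$ on $\CC H^n$: its image $\overline{\phi_r}^{\,i} \in L$ then has order $m$, and since $\overline{\phi_r} \in L$ has order $m$ this gives $i \in (\ZZ/m)^\ast$.)

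The step I expect to require the most care is the middle one: upgrading the \emph{analytic} fact that $\phi|_{r^\perp}$ is a complex scalar to the \emph{arithmetic} fact that $\phi$ is multiplication by an element of $\OO_K$ on each summand, and then identifying that element with a root of unity via unitarity and Kronecker's theorem; the remaining bookkeeping with the orthogonal splitting $\Lambda = \OO_K r \oplus \Lambda_r$ is routine.
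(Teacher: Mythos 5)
Your proof is correct, and it takes a genuinely different route from the paper's. The paper quotes the classical description $\Stab_{U(n,1)}(\CC \HH^{n-1}) = U(n-1,1) \times U(1)$ of the stabilizer of a hyperplane, concludes from pointwise fixing that $\phi = a \cdot \phi_r^{\xi}$ with $a \in \CC^\ast$ and $\xi \in U(1)$, and then uses $\phi \in \Aut(\Lambda)$ to force $a \in \mu_K$ and the hypothesis $\phi^m = \id$ to force $\xi \in \mu_K$. You instead work integrally from the start: the orthogonal splitting $\Lambda = \OO_K r \oplus \Lambda_r$ (valid because $h(r,r)=1$), the elementary observation that a linear map having every vector of a non-empty open cone as an eigenvector is scalar on its span (replacing the Lie-theoretic stabilizer fact), descent of that scalar to an element of $\OO_K$ via faithfulness of $\otimes_{F,\tau}\CC$ and integrality, and finally $b\sigma(b)=c\sigma(c)=1$ plus Kronecker's theorem (essentially the paper's own equivalence \eqref{crucialhypo}) to land in $\mu_K$. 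A notable payoff of your argument is that the finite-order hypothesis on $\phi$ is never used: pointwise fixing of $H_r$ together with $\phi \in \Aut(\Lambda)$ already yields $\phi = a\,\phi_r^i$, so your proof establishes a slightly stronger statement, whereas the paper's proof needs $\phi^m=\id$ to pin down the $U(1)$-component; conversely, the paper's proof is shorter because it outsources the linear algebra to the standard stabilizer computation. Your bookkeeping (scalar on $r^\perp$ implies preservation of $(r^\perp)^\perp = \CC r$, hence of $\OO_K r$ and $\Lambda_r$; $\phi_r^i$ is the identity on $\Lambda_r$ and multiplication by $\zeta^i$ on $r$) is all accurate, so no gaps remain.
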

\begin{proof}
Let $\HH^n_\CC$ be the hyperbolic space attached to the standard hermitian space $\CC^{n,1}$ of dimension $n+1$. It is classical that
\begin{align} \label{align:classical-equation}
\text{Stab}_{U(n,1)}(\HH^{n-1}_\CC) = U(n-1,1) \times U(1),
\end{align}
where $U(1)=\{z \in \CC^\ast \colon \va{z}^2 = 1\}$. 
Define $W \coloneqq \langle r \rangle^\perp \subset V$, so that $V = W \oplus  \langle r \rangle$. For $\xi \in U(1)$, define $\phi_r^{\xi} \in \Isom(V)$ as $\phi_r^\xi(x) = x - (1 - \xi) h(x,r) r $, and consider the embedding 
$
\Isom(W) \times U(1) \hookrightarrow \Isom(V)$ defined as $ (\varphi, \xi) \mapsto \varphi \circ \phi_r^{\xi} = \phi_r^{\xi} \circ \varphi. 
$
By \eqref{align:classical-equation}, we get that
\begin{align} \label{align:classical-equation-2}
\Stab_{\Isom(V)}(H_r) = \Isom(W) \times U(1) \subset \Isom(V). 
\end{align}
Consider the element $\phi \in \Aut(\Lambda)$ in the statement of the lemma. 
Since $\phi$ fixes $H_r \subset \CC H^n$ pointwise, \eqref{align:classical-equation-2} implies that 
$
\phi = a \cdot \phi_r^{\xi}$ for some $ a \in \CC^\ast, \xi \in U(1)$. 
Since $\phi \in \Aut(\Lambda)$, we get $a \in \mu_K \subset \CC^\ast$, and as $\phi^m = \id \in \Isom(\CC H^n)$, we get $\xi \in \mu_K$. 
\end{proof}

\begin{proposition} \label{prop:remember}
Let $r, t \in \mr R$. The following are equivalent:
\begin{enumerate}%
\item One has $\phi_r = \phi_t \in \Aut(\Lambda)$. 
\item There exist $i,j \in \ZZ/m - \{0\}$ 
such that $\phi_r^i = \phi_t^j \in \Aut(\Lambda)$. 
\item There exist $a, b \in \OO_K - \{0\}$ with $\va{a}^2 = \va{b}^2$ such that $a \cdot r = b \cdot t \in \OO_K$. 
\item One has $H_r = H_t \subset \CC H^n$. 
\end{enumerate}
\end{proposition}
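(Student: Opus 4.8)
The plan is to prove the four equivalences by establishing a cycle of implications, taking advantage of the structural results from Sections~\ref{set-up-2} and~\ref{set-up-3}; the geometric fact (Theorem~\ref{orthogonal}) that distinct short-root hyperplanes meet orthogonally, and the stabilizer computation in Lemma~\ref{finitereflectionorders}, are the main tools. I would organize the argument as $(1)\Rightarrow(2)\Rightarrow(4)\Rightarrow(3)\Rightarrow(1)$, since $(1)\Rightarrow(2)$ is trivial (take $i=j=1$).

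For $(2)\Rightarrow(4)$: suppose $\phi_r^i=\phi_t^j$ for some nonzero $i,j\in\ZZ/m$. Since $i\neq 0$, the complex reflection $\phi_r^i$ acts on $\CC H^n$ with fixed locus exactly $H_r$ (it multiplies the line $\langle r\rangle$ by $\zeta^i\neq 1$ and fixes $\langle r\rangle^\perp$ pointwise), and likewise $\phi_t^j$ has fixed locus exactly $H_t$. Equality of the two isometries forces equality of their fixed loci, so $H_r=H_t$. I should be slightly careful to note that $\phi_r^i\neq\id$ on $\CC H^n$ for $i\in\ZZ/m-\{0\}$, which follows because $\zeta$ is a primitive $m$-th root of unity, so $\zeta^i\neq 1$, and because $r$ (being a short root, $h(r,r)=1\neq 0$) spans a nondegenerate line, hence $H_r=\langle r\rangle^\perp\cap\CC H^n$ is a \emph{proper} totally geodesic subspace — here one uses $n\geq 1$ so that $\CC H^n$ is not a single point and $H_r$ is genuinely smaller.

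For $(4)\Rightarrow(3)$: if $H_r=H_t$, then in $V=\Lambda\otimes_{\OO_K,\tau}\CC$ the orthogonal complements $\langle r\rangle^\perp$ and $\langle t\rangle^\perp$ coincide (two hyperplanes in $\PP(V)$ agreeing on a Zariski-dense open are equal), hence $\langle r\rangle=\langle t\rangle$ as complex lines, so $t=c\cdot r$ for some $c\in\CC^\ast$. From $h(t,t)=h(r,r)=1$ we get $\va{c}^2=1$. Now I need to descend this to $\OO_K$: both $r$ and $t$ lie in $\Lambda$, a free $\OO_K$-module, and $t=cr$ with the line $\langle r\rangle_K\subset\Lambda\otimes K$ a one-dimensional $K$-subspace; since $t,r$ are both in this line and both in $\Lambda$, one has $t=\mu r$ for some $\mu\in K$ (namely $\mu$ is the ratio in the $K$-line, and it must equal $c$ under $\tau$, so $\va{\mu}_\tau^2=1$). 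Clearing denominators, write $\mu=b/a$ with $a,b\in\OO_K-\{0\}$, giving $a\cdot t=b\cdot r$ — wait, I want $a\cdot r=b\cdot t$; reindexing, from $t=(b/a)r$ with $a,b\in\OO_K$ we get $a\cdot t = b\cdot r$, and applying $\tau$ and taking absolute values, $\va{a}^2\va{h(t,t)}=\va{b}^2\va{h(r,r)}$ (abusing $\va{\cdot}$ for the archimedean absolute value attached to $\tau$), i.e. $\va{a}^2=\va{b}^2$. Relabeling $(a,b)\mapsto(b,a)$ if needed yields the statement as written. I expect this descent step — getting from a complex scalar relation to an honest $\OO_K$-scalar relation with $\va{a}^2=\va{b}^2$ — to be the one place requiring a little care, but it is routine commutative algebra once phrased correctly.

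For $(3)\Rightarrow(1)$: given $a,b\in\OO_K-\{0\}$ with $\va{a}^2=\va{b}^2$ and $ar=bt$ in $\Lambda$, I compute both reflections directly. For any $x\in\Lambda$,
\[
\phi_t(x)=x-(1-\zeta)h(x,t)\cdot t,
\]
and substituting $t=(a/b)r$ (valid in $\Lambda\otimes K$) one gets $h(x,t)=h(x,(a/b)r)=\overline{(a/b)}\,h(x,r)=(\bar a/\bar b)h(x,r)$, so
\[
(1-\zeta)h(x,t)\cdot t=(1-\zeta)\frac{\bar a}{\bar b}h(x,r)\cdot\frac{a}{b}r=(1-\zeta)\frac{\va{a}^2}{\va{b}^2}h(x,r)\cdot r=(1-\zeta)h(x,r)\cdot r,
\]
using $\va{a}^2=\va{b}^2$ (as elements of $F$, after applying $\sigma$ one sees $a\bar a=b\bar b$ in $\OO_F$, which is exactly what $\va{a}^2=\va{b}^2$ means here — I should state this compatibility explicitly). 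Hence $\phi_t(x)=\phi_r(x)$ for all $x$, i.e. $\phi_r=\phi_t\in\Aut(\Lambda)$. This closes the cycle. The only genuinely delicate point is reconciling the two meanings of $\va{\cdot}^2$ — the archimedean one coming from $\tau$ (used to deduce $\va{a}^2=\va{b}^2$ in $(4)\Rightarrow(3)$) versus the algebraic norm $a\bar a\in\OO_F$ (needed in $(3)\Rightarrow(1)$) — but since $F$ is totally real and $a\bar a$ is a totally positive element of $F$ whose value under $\tau$ is $\va{a}_\tau^2$, and similarly for $b$, and the problem only ever uses the relation at the single place $\tau$ together with the fact that $a\bar a, b\bar b\in\OO_F$, I believe these match up; I would make this bridge precise in the writeup. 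The statement as phrased in the proposition writes $\va{a}^2=\va{b}^2$ as an equality in $\OO_K$ (really $\OO_F$), which is the algebraic reading, and that is the version that makes $(3)\Rightarrow(1)$ work cleanly — so the only real work is ensuring $(4)$ delivers that algebraic equality, which it does since $c\bar c=1$ forces $\mu\bar\mu=1$ in $K$ hence $a\bar a=b\bar b$ in $\OO_F$ after clearing denominators.
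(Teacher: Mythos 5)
Your proof is correct, but it follows a genuinely different route from the paper's. The paper proves the cycle $1\Rightarrow2\Rightarrow3\Rightarrow1$ by purely algebraic manipulations (applying $\phi_r^i=\phi_t^j$ to the vector $r$ to extract the relation $a\cdot t=b\cdot r$), and then handles $4$ separately: $3\Rightarrow4$ is immediate, while $4\Rightarrow2$ invokes Lemma \ref{finitereflectionorders} to write $\phi_t=\xi\cdot\phi_r^i$ with $\xi\in\mu_K$ and then runs a short contradiction argument to rule out $\xi\neq1$. You instead close a single four-cycle $1\Rightarrow2\Rightarrow4\Rightarrow3\Rightarrow1$: your $2\Rightarrow4$ identifies the fixed locus of $\phi_r^i$ ($i\neq0$) in $\CC H^n$ with $H_r$ by an eigenspace argument (in effect the $k=1$ case of Lemma \ref{intersection:complexpart}, proved inline, so there is no circularity), and your $4\Rightarrow3$ descends from $\langle r\rangle_\CC=\langle t\rangle_\CC$ to a $K$-proportionality $t=\mu r$ using that $K$-linear independence is preserved under $-\otimes_{K,\tau}\CC$, plus density of the negative lines in $\PP(\langle r\rangle^\perp)$. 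What your route buys is that it bypasses Lemma \ref{finitereflectionorders} (and the slightly delicate unit-elimination step in the paper's $4\Rightarrow2$) entirely, at the cost of the density/descent discussion; the paper's route stays entirely inside lattice-level algebra. The one point you flagged as needing care resolves cleanly: with $t=\mu r$, $\mu\in K$, the equality $h(t,t)=h(r,r)=1$ computed in the $\OO_K$-valued form gives $\mu\sigma(\mu)=1$ directly (equivalently, your archimedean relation $\tau(\mu\sigma(\mu))=1$ suffices because $\tau$ is injective on $F$), and this algebraic reading of $\va{a}^2=\va{b}^2$ as $a\sigma(a)=b\sigma(b)$ is exactly what the paper uses, so your $3\Rightarrow1$ computation matches the paper's.
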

\begin{proof}
$[1 \implies 2]:$ This is clear. \\
$[2 \implies 3]:$ 
Assume $2$. 
Consider the equality
\[
\zeta^i \cdot r = \phi_r^i(r) = \phi_t^j(r) =  r -  (1 - \zeta^j)\cdot h(r,t) \cdot t \in \Lambda.
\]
Thus, there exist $a,b \in \OO_K - \{0\}$ with $a \cdot t = b \cdot r.$ We have $\va{a}^2 = h(a\cdot t, a\cdot t) = h(b\cdot r, b\cdot r) = \va{b}^2$. This implies $3$. 

$[3 \implies 1]:$ Assume $3$. Write $\xi = b/a \in K^\ast$; then $t = \xi \cdot r$ with $\va{\xi}^2 = 1$, so that 
\[
\zeta^i \cdot  r = \phi_r^i(r) = \phi_t^j(r) = \phi_{\xi  r}^j(r) = r - (1-\zeta^j)h(r, \xi r)  \xi r= 
\zeta^j \cdot r \in V,
\]
from which we conclude that $ i = j$. 
Moreover, for $x \in V$, one gets
\[
\phi_t(x) = \phi_{\xi \cdot r}(x) = x - (1 - \zeta) \cdot h(x, \xi \cdot r) \cdot \xi \cdot r = x - (1- \zeta) \cdot h(x,r) \cdot r = \phi_r(x). 
\]
Hence $\phi_r = \phi_t$, proving $1$. We conclude that $[1 \iff 2 \iff 3]$. 

$[3 \implies 4]:$ This is clear. 

$[4 \implies 2]:$ Assume that $H_r = H_t$. Then $\phi_t = \xi \cdot \phi_r^i$ for some $i \in \ZZ/m$ and $\xi \in \mu_K$, see Lemma \ref{finitereflectionorders}. Note that $i \neq 0$. Assume for a contradiction that $\xi \neq 1$; then the equality
\[
t = \phi_t(t) = \xi\cdot \phi_r^i(t) = \xi \cdot  t - \xi \cdot (1 - \zeta^i) h(t,r) \cdot r
\] shows that if we define $a = 1 - \xi$ and  $b = (\zeta^i-1) \cdot \xi \cdot h(t,r)$, we get
$
a\cdot  t = b\cdot r$ and $a,b \neq 0$ since $\xi \neq 1$. 
We have $\va{a}^2 = h(a\cdot t, a\cdot t) = h(b\cdot r, b\cdot r) = \va{b}^2$ hence $3$ holds, and hence $1$ by the above, so that $a \cdot \phi_r^i = \phi_t = \phi_r$ which implies $a =1 $ which is absurd. We conclude that $a = 1$, so that $\phi_t = \phi_r^i$ which proves $2$ since $i \in \ZZ/m-\set{0}$. This concludes the proof of the proposition.
\end{proof}

\begin{definition} \label{fullset}
Let $\ca H = \{H_r \mid r \in \mr R\}$. For $x \in \CC H^n$, define 
\begin{align*}
\ca H(x) &= \set{H \in \ca H \mid x \in H}, \\
G' (x) &= \langle \phi_r^i \in \Aut(\Lambda) \tn{ with } r \in \mr R, i \in \ZZ/m \mid  x \in H_r  \rangle, \\
G (x) &= \langle [\phi_r^i] \in \Aut(\Lambda)/\mu_K = L \tn{ with } r \in \mr R, i \in \ZZ/m \mid  x \in H_r  \rangle,
\end{align*}
The hyperplanes $H \in \ca H(x)$ are called the \emph{nodes} of $x$. We say that \emph{$x$ has $k$ nodes} if the cardinality of $\ca H(x)$ is $k$.  
\end{definition}

\begin{lemma} \label{G-G'}
Let $x \in \CC H^n$. The composition $G(x) \hookrightarrow \Aut(\Lambda) \to L$ is injective, hence defines an isomorphism $G'(x) \xrightarrow{\sim} G(x)$. 
\end{lemma}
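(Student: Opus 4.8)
The statement to prove is Lemma~\ref{G-G'}: for $x \in \CC H^n$, the composition $G'(x) \hookrightarrow \Aut(\Lambda) \to L = \Aut(\Lambda)/\mu_K$ restricts to an injection on the subgroup $G(x) \subset L$ — or rather, that the surjection $G'(x) \twoheadrightarrow G(x)$ is an isomorphism, i.e. $G'(x) \cap \mu_K = \{1\}$ inside $\Aut(\Lambda)$. So the whole content is: no nontrivial root of unity $a \in \mu_K$, acting as a scalar on $V$, can be written as a word in the complex reflections $\phi_r^i$ with $x \in H_r$.

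**Approach.** I would argue geometrically using the fixed-point structure on $\CC H^n$. Every generator $\phi_r^i$ (with $x \in H_r$, $i \neq 0$) fixes the hyperplane $H_r$ pointwise and in particular fixes $x$; hence every element $g \in G'(x)$ fixes $x$, and $g$ lies in $\Stab_{\Isom(V)}(x)$. Now suppose $g \in G'(x)$ maps to $1$ in $L$, i.e. $g = a \cdot \id_V$ for some $a \in \mu_K$. The key point is to pin down $a$ by evaluating $g$ on a well-chosen vector. Since the hyperplanes through $x$ are $H_{r_1}, \dots, H_{r_k}$ (finitely many, by Lemma~\ref{lemma:beauville:finite}) and by Theorem~\ref{orthogonal} the corresponding roots span a subspace on which $h$ is positive-definite, I can pick a representative $v \in V$ of $x$ that is orthogonal to all of $r_1, \dots, r_k$: indeed $h(v, r_j) = 0$ precisely expresses $x \in H_{r_j}$. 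Then each generator $\phi_{r_j}^i$ acts trivially on $v$ (since $\phi_{r_j}^i(v) = v - (1-\zeta^i)h(v,r_j) r_j = v$), so $g(v) = v$. But $g(v) = a v$ and $v \neq 0$, forcing $a = 1$. Hence $g = \id$, which is exactly injectivity of $G(x) \hookrightarrow L$ restricted — wait, more precisely it shows $\ker(G'(x) \to G(x)) = G'(x) \cap \mu_K = \{1\}$, so $G'(x) \xrightarrow{\sim} G(x)$.

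**Main obstacle.** The one thing that needs care is the claim that a representative $v$ of $x$ can be chosen orthogonal to \emph{all} the roots $r_j$ with $x \in H_{r_j}$ simultaneously — but this is automatic and requires no genericity: $x \in H_{r_j}$ is \emph{defined} (see the displayed condition after the definition of $\mr H$ in Section~\ref{set-up-2}) as $h(v, r_j) = 0$ for any representative $v$ of $x$. So in fact every representative works, and the argument is immediate. The only subtlety worth a sentence is to note that $\mu_K$ is precisely the set of scalars $a \in \CC^\ast$ with $a \cdot \id_V \in \Aut(\Lambda)$ (because such $a$ must preserve $\Lambda$ and fix $h$, hence $|a|^2 = 1$ and $a \in \OO_K^\ast$ is a root of unity by Dirichlet), which is why an element of $G'(x) \subset \Aut(\Lambda)$ mapping to $1$ in $L$ has the form $a \cdot \id_V$ with $a \in \mu_K$ in the first place; that identification is already set up in the text. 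I expect the proof to be short.
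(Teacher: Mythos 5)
Your proof is correct, and it uses the same basic mechanism as the paper's: an element of $G'(x)$ in the kernel of $\Aut(\Lambda)\to L$ is a scalar $\xi\in\mu_K$, and one rules out $\xi\neq 1$ by producing a nonzero vector on which the element acts as the identity. The difference lies in which fixed vector is used. The paper writes the kernel element as $\phi_{r_1}^{n_1}\cdots\phi_{r_k}^{n_k}$ for the finitely many distinct hyperplanes through $x$ (citing Lemma~\ref{Gxlemma}, hence implicitly Theorem~\ref{orthogonal} and Proposition~\ref{prop:remember}) and then uses the signature $(n,1)$ to see that the orthogonal complement $W$ of $\langle r_1 \rangle \oplus \cdots \oplus \langle r_k\rangle$ is nonzero; since the element is the identity on $W$, the scalar is $1$. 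You instead evaluate on a representative $v$ of $x$ itself: the condition $x\in H_r$ is precisely $h(v,r)=0$, so every generator $\phi_r^i$ of $G'(x)$ fixes $v$, hence so does any word in them; then $\xi v=v$ with $v\neq 0$ forces $\tau(\xi)=1$ and thus $\xi=1$. As you note in your last paragraph, the "orthogonal representative" is automatic, so your argument needs neither Theorem~\ref{orthogonal}, nor the local finiteness of the arrangement, nor the signature count, and it also sidesteps the mutual citation between this lemma and Lemma~\ref{Gxlemma} in the paper (whose proof in turn invokes Lemma~\ref{G-G'}). Your vector $v$ lies in the paper's subspace $W$, so the two proofs are close relatives, but yours is the more economical and self-contained of the two.
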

\begin{proof}
Assume $\ca H(x) = \set{H_{r_1}, \dotsc, H_{r_k}}$, so that $G(x) = \langle \phi_{r_1}, \dotsc, \phi_{r_k} \rangle$ by Lemma \ref{Gxlemma}. Assume that for some $n_1, \dotsc, n_k \in \ZZ/m$, the element $\psi \coloneqq \phi_{r_1}^{n_1} \cdots \phi_{r_k}^{n_k}  \in G(x)$ maps to zero under the map $G(x) \to L$. Then $\psi  = \xi$ for some $\xi \in \mu_K \subset \Aut(\Lambda)$. Let $W \subset V$ be the orthogonal complement of $\langle r_1 \rangle \oplus \cdots \oplus \langle r_k \rangle$, so that  
\[
V = \langle r_1 \rangle \oplus \cdots \oplus \langle r_k \rangle \oplus W
\]
as complex hermitian vector spaces. Since the signature of $h \colon V \times V \to \CC$ is $(n,1)$, the subspace $W$ is non-zero. Moreover, $\psi$ is the identity on $W$, hence $\xi = 1 \in \mu_K$. It follows that $\psi = \id$, which shows that the map $G(x) \to L$ is injective as desired. 
\end{proof}

\begin{lemma} \label{Gxlemma}
Let $x \in \CCH^n$ be an element with $k$ nodes (see Definition \ref{fullset}). Let $\ca H(x) = \set{H_{r_1}, \dotsc, H_{r_k}}$ for some $r_i \in \mr R$. Then $G(x) = \langle [\phi_{r_1}], \dotsc, [\phi_{r_k}] \rangle \cong (\ZZ/m)^k.$
\end{lemma}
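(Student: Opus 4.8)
\textbf{Proof plan for Lemma \ref{Gxlemma}.}

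The plan is to show that the reflections $\phi_{r_1}, \dotsc, \phi_{r_k}$ attached to the $k$ distinct nodes through $x$ commute with one another and that the subgroup they generate is the direct product of the cyclic groups $\langle [\phi_{r_i}] \rangle$, each of which has order $m$. First I would invoke Theorem \ref{orthogonal}: since the hyperplanes $H_{r_1}, \dotsc, H_{r_k}$ are pairwise distinct and all contain $x$, in particular $H_{r_i} \cap H_{r_j} \neq \emptyset$ for all $i,j$, so $h(r_i, r_j) = 0$ whenever $i \neq j$. From the explicit formula $\phi_{r_i}(y) = y - (1-\zeta) h(y, r_i) r_i$ one then checks directly that $\phi_{r_i} \circ \phi_{r_j} = \phi_{r_j} \circ \phi_{r_i}$ for $i \neq j$: applying one reflection does not change the $h(-, r_j)$-component because $r_i \perp r_j$, so the two operations act on ``independent'' coordinates. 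Hence the $[\phi_{r_i}]$ pairwise commute in $L$, and the subgroup $G(x)$ they generate is a quotient of $\prod_{i=1}^k \langle [\phi_{r_i}] \rangle$.

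Next I would pin down the order of each factor and show there are no further relations. By Lemma \ref{finitereflectionorders} (or directly, since $\zeta$ has order $m$ and $\phi_{r_i}^m = \id$, $\phi_{r_i}^j \neq \id$ for $0 < j < m$ as it acts on $\langle r_i \rangle$ by multiplication by $\zeta^j$), the element $\phi_{r_i} \in \Aut(\Lambda)$ has order exactly $m$; moreover by Lemma \ref{G-G'} the map $G(x) \hookrightarrow \Aut(\Lambda) \to L$ is injective, so $[\phi_{r_i}]$ still has order $m$ in $L$. It remains to rule out relations across different indices. Here the orthogonality is again the key input: decompose $V = \langle r_1 \rangle \oplus \cdots \oplus \langle r_k \rangle \oplus W$ as an orthogonal direct sum of hermitian subspaces (this uses $h(r_i,r_j) = 0$ and that each $r_i$ is a short root, so $\langle r_i \rangle$ is non-degenerate, and $W \neq 0$ since the signature is $(n,1)$ with $k \le n$). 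Under this decomposition $\phi_{r_i}$ acts as multiplication by $\zeta$ on the $i$-th summand and as the identity on all the others and on $W$. Therefore a word $\phi_{r_1}^{n_1} \cdots \phi_{r_k}^{n_k}$ acts on the $i$-th summand as multiplication by $\zeta^{n_i}$; if it equals the identity on $V$ then $\zeta^{n_i} = 1$ for every $i$, i.e.\ $n_i \equiv 0 \bmod m$ for all $i$. This shows $G'(x) \cong (\ZZ/m)^k$, and composing with the isomorphism $G'(x) \xrightarrow{\sim} G(x)$ of Lemma \ref{G-G'} gives $G(x) = \langle [\phi_{r_1}], \dotsc, [\phi_{r_k}] \rangle \cong (\ZZ/m)^k$.

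One small point I would need to address carefully is that Definition \ref{fullset} defines $G(x)$ as generated by \emph{all} the $[\phi_r^i]$ with $x \in H_r$, not just by the $k$ ``canonical'' generators $[\phi_{r_i}]$ — but this is exactly handled by observing $\phi_r^i = \phi_r^i$ is a power of $\phi_r$, and by Proposition \ref{prop:remember} (equivalence of $H_r = H_t$ with $\phi_r = \phi_t$) each hyperplane in $\ca H(x)$ is one of the $H_{r_1}, \dotsc, H_{r_k}$, so every generator $[\phi_r^i]$ is a power of one of the $[\phi_{r_i}]$. I expect the main (modest) obstacle to be bookkeeping rather than conceptual: making sure the orthogonal decomposition of $V$ is legitimate (non-degeneracy of the span of the $r_i$, which follows from orthogonality plus $h(r_i,r_i)=1$) and that the action of each $\phi_{r_i}$ on that decomposition is as claimed. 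Everything else is a direct consequence of Theorem \ref{orthogonal}, Lemma \ref{finitereflectionorders}, Lemma \ref{G-G'} and the explicit reflection formula.
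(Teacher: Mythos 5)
Your proposal is correct and follows essentially the same route as the paper: orthogonality of the nodes via Theorem \ref{orthogonal} plus the explicit reflection formula to get commutativity, the orthogonal decomposition $V = \langle r_1 \rangle \oplus \cdots \oplus \langle r_k \rangle \oplus W$ to rule out relations, Proposition \ref{prop:remember} to reduce the generating set to the $\phi_{r_i}$, and Lemma \ref{G-G'} to pass from $G'(x)$ to $G(x)$. The only difference is cosmetic: you spell out the no-relations argument inside this proof, whereas the paper states it briefly here and places the same decomposition argument in the proof of Lemma \ref{G-G'}.
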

\begin{proof}
By Lemma \ref{G-G'}, it suffices to prove that  $G'(x) = \langle \phi_{r_1}, \dotsc, \phi_{r_k} \rangle \cong (\ZZ/m)^k.$ For this, let $r, t \in \mr R$. Then, for $z \in \Lambda$, one has 
\begin{align} \label{reflectionscommute}
\begin{split}
\phi_r^i(\phi_t^j(z))  
=z &- (1-\zeta^j)h(z,t)\cdot t  - (1 - \zeta^i) h(z, r) \cdot r \\
 &+ (1-\zeta^i)(1- \zeta^j) h(z,t)h(t,r) \cdot r.
\end{split}
\end{align} 
Suppose that $H_r, H_t \in \ca H(x)$, with $H_r \neq H_t$. By Theorem \ref{orthogonal}, we have $h(r,t) = 0$; by equation (\ref{reflectionscommute}), this implies that $\phi_r^i \circ \phi_t^j = \phi_t^j \circ \phi_r^i $ for each $i,j \in \ZZ/m$. We conclude that the group $G'(x)$ is abelian. 

Write $\ca H(x) = \set{H_{r_1}, \dotsc, H_{r_k}}$ for some $r_1, \dotsc, r_k \in \mr R$. By the above, we have $\langle \phi_{r_1}, \dotsc, \phi_{r_k} \rangle \cong (\ZZ/m)^k$. Thus, to prove the lemma, it suffices at this point to prove that $G'(x) = \langle \phi_{r_1}, \dotsc, \phi_{r_k} \rangle $. Let $\phi_r \in L$ with $r \in \mr R$ such that $h(x,r) = 0$. By definition, $G'(x)$ is generated by such elements. We claim that $\phi_r \in \langle \phi_{r_1}, \dotsc, \phi_{r_k} \rangle$. To see this, observe that $H_r \in \ca H(x)$. Thus $H_r = H_{r_i}$ for some $i \in \set{1, \dotsc, k}$. By Proposition \ref{prop:remember}, we have $\phi_r = \phi_{r_i}$ and this proves what we want. 
\end{proof}

\begin{lemma} \label{intersection:complexpart}
Let $\phi = \prod_{j  = 1}^k \phi_{r_j}^{i_j} \in \Aut(\Lambda)$ for some set $\{r_j\}$ of mutually orthogonal short roots $r_j \in \mr R$, where $i_j \not \equiv 0 \bmod m$ for each $j$. Then $\left(\CC H^n\right)^\phi = \cap_{j = 1}^k H_{r_j}$. 
\end{lemma}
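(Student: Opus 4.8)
\textbf{Proof plan for Lemma \ref{intersection:complexpart}.}

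The plan is to prove the two inclusions separately. The inclusion $\cap_{j=1}^k H_{r_j} \subseteq (\CC H^n)^\phi$ is the easy one: if $x \in \CC H^n$ satisfies $h(x, r_j) = 0$ for all $j$, then each reflection $\phi_{r_j}^{i_j}$ fixes $x$ on the nose, since $\phi_{r_j}^{i_j}(x) = x - (1-\zeta^{i_j}) h(x, r_j) r_j = x$, and hence the composition $\phi$ fixes $x$. (One should note here that the $r_j$ being mutually orthogonal guarantees the product is independent of the order and equals a genuine element of $\Aut(\Lambda)$, via equation \eqref{reflectionscommute}, but for this inclusion we only need that $\phi(x) = x$.)

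For the reverse inclusion, let $x \in \CC H^n$ with $\phi(x) = x$; I want to show $h(x, r_j) = 0$ for every $j$. Pick a nonzero representative $v \in V$ of $x$. Using mutual orthogonality of the $r_j$ and the explicit formula for each reflection, expand
\[
\phi(v) = v - \sum_{j=1}^k (1 - \zeta^{i_j}) h(v, r_j)\, r_j .
\]
Since $\phi$ acts projectively as the identity on the line $[v]$, we have $\phi(v) = \lambda v$ for some $\lambda \in \CC^\ast$, so $(1-\lambda) v = \sum_{j=1}^k (1-\zeta^{i_j}) h(v, r_j) r_j$. The key structural input is that $\langle r_1 \rangle \oplus \cdots \oplus \langle r_k \rangle$ is an orthogonal direct sum of positive-definite lines inside the signature-$(n,1)$ space $V$ (this uses Theorem \ref{orthogonal}, which forces the $r_j$ defining distinct hyperplanes to be pairwise orthogonal, and is exactly the hypothesis placed on the $r_j$ here); write $V = \langle r_1\rangle \oplus \cdots \oplus \langle r_k \rangle \oplus W$. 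Decompose $v = v_W + \sum_j c_j r_j$ accordingly, where $c_j = h(v,r_j)/h(r_j,r_j) = h(v,r_j)$. Comparing $W$-components in $(1-\lambda)v = \sum_j (1-\zeta^{i_j}) h(v,r_j) r_j$ gives $(1-\lambda) v_W = 0$, and comparing the $r_j$-component gives $(1-\lambda) h(v,r_j) = (1-\zeta^{i_j}) h(v,r_j)$ for each $j$.

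Now I finish by a case analysis. If $v_W \neq 0$, then $\lambda = 1$, and then $(1-\zeta^{i_j}) h(v, r_j) = 0$ for every $j$; since $i_j \not\equiv 0 \bmod m$ we have $\zeta^{i_j} \neq 1$, hence $h(v,r_j) = 0$ for all $j$, i.e. $x \in \cap_j H_{r_j}$, as wanted. It remains to rule out $v_W = 0$: in that case $v = \sum_j c_j r_j$ lies in the positive-definite subspace $\langle r_1\rangle \oplus \cdots \oplus \langle r_k\rangle$, so $h(v,v) = \sum_j |c_j|^2 \geq 0$, contradicting $h(v,v) < 0$ (which holds because $[v] \in \CC H^n$). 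Hence $v_W \neq 0$ necessarily, and we are done. The main obstacle — and it is a mild one — is making sure the orthogonal-direct-sum decomposition and the component comparison are set up cleanly; everything then follows from the positive-definiteness of the span of the $r_j$ together with the fact that $\zeta^{i_j} \neq 1$. $\qed$
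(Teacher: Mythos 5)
Your proof is correct and takes essentially the same route as the paper's: both start from $(1-\lambda)v=\sum_j(1-\zeta^{i_j})h(v,r_j)r_j$ and exploit that $v$ is negative while the mutually orthogonal $r_j$ span a positive definite subspace, together with $\zeta^{i_j}\neq 1$ and linear independence of the $r_j$. Your decomposition $V=\langle r_1\rangle\oplus\cdots\oplus\langle r_k\rangle\oplus W$ with the case analysis on $v_W$ is just a slightly more explicit packaging of the paper's observation that both sides of the displayed identity must vanish.
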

\begin{proof}
Let $y \in V$ be representing an element in $\left(\CC H^n\right)^\phi$. Since the $r_i$ are orthogonal, and $\phi(y) = \lambda$ for some $\lambda \in \CC^\ast$, we have
$
\phi(y) = y - \sum_{j =  1}^k \left(1 - \zeta^{i_j}\right) h(y, r_j)r_j = \lambda y,
$
hence $(1-\lambda)y = \sum_{j = 1}^k \left(1 - \zeta^{i_j}\right) h(y, r_j)r_j  \in V$. But $y$ spans a negative definite subspace of $V$ while the $r_j$ span a positive definite subspace, so that we must have $1 - \lambda = 0 = \sum_{j = 1}^k \left(1 - \zeta^{i_j}\right) h(y, r_j)r_j$. Since the $r_j$ are mutually orthogonal, they are linearly independent; since $\zeta^{i_j} \neq 1$ we find $h(y, r_j) = 0$ for each $j$. Conversely, if $x \in \cap H_{r_j}$, then $\phi_{r_j}^{i_j}(x) = x$ for each $j$. The lemma follows. 
\end{proof}

\subsection{Examples} \label{section:examples}

Observe that the condition on the different ideal $\mf D_K \subset \OO_K$ in Definition \ref{definition:introduction:admissible} above is satisfied in several natural cases. 

\begin{example} \label{proposition:discr}
Suppose that $K/\QQ$ is an imaginary quadratic extension, or that $K = \QQ(\zeta_m)$ is a cyclotomic field for some integer $m \geq 3$. 
We claim that $\mf D_K = (\eta) \subset \OO_K$ for some element $\eta \in \OO_K$ such that $\sigma(\eta) = -\eta$. 
To see this, note first that if $K/\QQ$ is an imaginary quadratic field extension with discriminant $\Delta$, 
then $\mf D_K  
= (\sqrt \Delta)$ and the assertion is immediate. 
Thus, assume $K = \QQ(\zeta_m)$ with $\zeta_m = e^{2 \pi i/m} \in \CC$. 
Consider the subfield $F \coloneqq \QQ(\mu) \subset \QQ(\zeta_m) = K$, where $\mu \coloneqq \zeta_m + \zeta_m^{-1}$. 
Since $\OO_K = \ZZ[\zeta_m]$ 
we have $ \OO_K = \OO_F[\zeta_m]$. Let $f(x) \coloneqq x^2 - \mu x + 1 \in \OO_F[x]$ and notice that $f(x)$ is the minimal polynomial of $\zeta_m$ over $F$. We have $f'(\zeta_m) = 2 \zeta_m - \mu = \zeta_m - \zeta_m^{-1}$ hence $\mf D_{M/F} = \left( f'(\zeta_m) \right) = \left(\zeta_m - \zeta_m^{-1}\right)$, see \cite[Chapter III, Proposition 2.4]{Neukirch}. 
By \cite{Liang1976}, we know that $\OO_F = \ZZ[\mu]$. Hence, if $g(x) \in \ZZ[x]$ is the minimal polynomial of $\mu$ over $\QQ$, then $\mf D_{F/\QQ} = (g'(\mu))$ by \cite[Chapter III, Proposition 2.4]{Neukirch} again. Since $\mf D_{K/\QQ} = \mf D_{K/F} \mf D_{F/\QQ}$ (c.f.~\cite[Chapter III, Proposition 2.2]{Neukirch}), we obtain 
$$
\mf D_{K/\QQ} = \mf D_{K/F} \mf D_{F/\QQ} = 
\left( (\zeta_m - \zeta_m^{-1}) \cdot g'(\mu) \right)
$$
proving the claim. 
\end{example}

\begin{example}  \label{example:lambda}
Let $K$ be a CM field, 
and let $F \subset K$ be the maximal totally real subfield of $K$. 
Let $\lambda \in \OO_F$ such that $\tau(\lambda) > 0$ for some embedding $\tau \colon F \to \RR$ and $\psi(\lambda) < 0$ for all other $\psi \neq \tau \in \Hom(F,\RR)$ (such an element exists by Lemma \ref{rho} below). If we define $\Lambda \coloneqq (\OO_K^{n+1},h),$ where $h$ is the hermitian form given by the diagonal matrix $\tn{diag}(-\lambda, 1, \dotsc, 1)$, then $\Lambda$ satisfies Condition \ref{condition2.2} in Definition \ref{definition:introduction:admissible}. 
\end{example}

\begin{lemma} \label{rho}
Let $K$ be a CM field, 
and let $F \subset K$ be the maximal totally real subfield of $K$. 
There exists an element $\lambda \in \OO_F$ with $\tau(\lambda) > 0$ for some embedding $\tau \colon F \to \RR$ and $\psi(\lambda) < 0$ for all other $\psi \neq \tau \in \Hom(F,\RR)$. 
\end{lemma}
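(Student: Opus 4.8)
The plan is to use the Minkowski (real) embedding of $F$ together with the density of $F$ in the corresponding Euclidean space. Write $d = [F:\QQ]$ and let $\tau = \tau_1, \tau_2, \dotsc, \tau_d \colon F \hookrightarrow \RR$ be the $d$ distinct real embeddings of the totally real field $F$, assembled into the map $j \colon F \to \RR^d$, $x \mapsto (\tau_1(x), \dotsc, \tau_d(x))$. The goal is to produce an element of $\OO_F$ whose image under $j$ lies in the open orthant $O = \{(x_1,\dotsc,x_d) : x_1 > 0,\ x_2 < 0, \dotsc, x_d < 0\}$.

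First I would reduce to finding such an element merely in $F$, not necessarily in $\OO_F$: if $\lambda \in F$ satisfies $\tau_1(\lambda) > 0$ and $\tau_i(\lambda) < 0$ for $i \geq 2$, then writing $\lambda = \mu/N$ with $\mu \in \OO_F$ and $N \in \ZZ_{>0}$, the element $\mu = N\lambda \in \OO_F$ has exactly the same sign under each $\tau_i$, and we are done.

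Next I would establish that $j(F) \subset \RR^d$ is dense. Fix a $\ZZ$-basis $\omega_1, \dotsc, \omega_d$ of $\OO_F$. The matrix $(\tau_i(\omega_j))_{i,j}$ is invertible, because its determinant squared is the discriminant of $F$, which is nonzero; hence $j(\omega_1), \dotsc, j(\omega_d)$ form an $\RR$-basis of $\RR^d$ (equivalently, $j(\OO_F)$ is a full-rank lattice). Since $F = \OO_F \otimes_\ZZ \QQ$, we get $j(F) = \sum_{i=1}^d \QQ \cdot j(\omega_i)$, the $\QQ$-span of an $\RR$-basis of $\RR^d$, which is dense. As $O$ is a nonempty open subset of $\RR^d$, there exists $\lambda \in F$ with $j(\lambda) \in O$, i.e.\ $\tau(\lambda) = \tau_1(\lambda) > 0$ and $\tau_i(\lambda) < 0$ for $i = 2, \dotsc, d$. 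Together with the reduction above, this proves the lemma.

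There is no genuinely hard step here; the only point that needs care is the density claim, which rests on the standard fact that the real embeddings of $F$ are $\RR$-linearly independent on $\OO_F$ (equivalently, that the Minkowski embedding sends $\OO_F$ to a full-rank lattice in $\RR^d$). If one prefers a purely lattice-theoretic endgame, one can instead argue directly that any full-rank lattice $\Gamma \subset \RR^d$ meets every open orthant: choosing a point $w$ in the orthant, for $N \gg 0$ the lattice point obtained by rounding the coordinates of $Nw$ in a fixed $\ZZ$-basis still lies in the orthant, since $\operatorname{dist}(Nw,\partial O) = N\,\operatorname{dist}(w,\partial O) \to \infty$ while the rounding error stays bounded.
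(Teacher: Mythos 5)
Your proof is correct. The reduction from $F$ to $\OO_F$ by clearing a denominator is fine, the invertibility of $(\tau_i(\omega_j))$ via the nonvanishing discriminant is the right justification for the full-rank/density claim, and either endgame (density of the $\QQ$-span of an $\RR$-basis, or the cone-plus-bounded-rounding-error argument showing a full-rank lattice meets every open orthant) closes the argument.

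It is, however, a genuinely different route from the paper's. The paper argues in one line of elementary algebra: pick $\tilde\lambda \in \OO_F \setminus \ZZ$ (implicitly one whose value under $\tau$ is strictly smaller than under all other embeddings, e.g.\ a primitive element suitably ordered), choose a rational $r$ strictly between $\tau(\tilde\lambda)$ and the next-smallest embedding value, rescale $\tilde\lambda$ by a positive integer so that $r$ may be taken in $\ZZ$, and set $\lambda = r - \tilde\lambda$; then $\tau(\lambda)>0$ while $\psi(\lambda)<0$ for all $\psi\neq\tau$. Your Minkowski-embedding argument is heavier (it invokes the nonvanishing of the discriminant and lattice geometry) but also more robust: it shows that \emph{every} sign pattern in $\set{\pm 1}^{\Hom(F,\RR)}$ is realized by an element of $\OO_F$, whereas the paper's shift trick is tailored to separating one embedding from the rest and quietly requires $\tilde\lambda$ to take its minimal value at $\tau$ alone (and, in the degenerate case $F=\QQ$, the statement is trivially handled by $\lambda = 1$, which neither write-up bothers to say). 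Both proofs are complete for the lemma as stated; yours trades elementariness for generality.
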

\begin{proof}
Let $\tilde \lambda \in \OO_F - \ZZ$. Note that we can write $\Hom(F,\RR) = \set{\tau, \psi_2, \dotsc, \psi_g}$ with $\tau(\tilde \lambda) < \psi_2(\tilde \lambda) < \cdots < \psi_g(\tilde \lambda)$. Let $r \in \QQ$ be a rational number such that $\tau(\tilde \lambda) < r < \psi_2(\tilde \lambda)$. Possibly after replacing $\tilde \lambda$ by $n \cdot \tilde \lambda$ for some $n \in \ZZ_{\geq 1}$, we may assume that $r \in \ZZ$. The element $\lambda =  r - \tilde \lambda \in \OO_F$ satisfies the requirements. 
\end{proof}

\section{Anti-unitary involutions} \label{section:anti} 

The goal of this section is to study anti-unitary involutions on $\Lambda$.  

\subsection{Anti-unitary involutions and copies of real hyperbolic space} 

Let $\mr L = (K, \Lambda)$ be an admissible hermitian lattice of rank $n+1 \geq 2$. We continue with the notation of Sections \ref{set-up-1} and \ref{set-up-2}. 

\begin{definition}
An $\ca O_F$-linear bijection $\alpha \colon \Lambda \xrightarrow{\sim} \Lambda$ is \textit{anti-unitary} if for all $x,y \in \Lambda$ and $b \in \OO_K$, one has 
$
\alpha(b \cdot x) = \sigma(b) \cdot \alpha(x)$ and $ h(\alpha(x), \alpha(y)) = \sigma(h(x,y)) \in \OO_K.
$ \end{definition}

Let $\mr A \subset \Aut(\Lambda)'$ be the set of anti-unitary involutions $\alpha \colon \Lambda \to \Lambda$. 
The group $\mu_K$ acts on $\mr A$ by multiplication; define 
\[
P\mr A \coloneqq \mu_K \setminus \mr A. 
\]

\begin{example} 
Consider Example \ref{example:lambda}. Thus, $K$ is a CM field, 
$F \subset K$ is the maximal totally real subfield of $K$, $\lambda \in \OO_F$ is an element such that $\tau(\lambda) > 0$ for some embedding $\tau \colon F \to \RR$ and $\psi(\lambda) < 0$ for all other $\psi \neq \tau \in \Hom(F,\RR)$ (cf.\ Lemma \ref{rho}), 
$\Lambda = (\OO_K^{n+1},h)$ where $h$ is the hermitian form given by $\tn{diag}(-\lambda, 1, \dotsc, 1)$. 
To obtain examples of anti-unitary involutions $\alpha \colon \Lambda \to \Lambda$, we proceed as follows. 
For each $\epsilon = (\epsilon_0, \dotsc, \epsilon_n) \in \set{\pm 1}^{n+1}$, define 
\[
\alpha_\epsilon \colon \Lambda \to \Lambda \quad \text{ as } \quad  \alpha_\epsilon \left( x_0, \dotsc, x_n \right)= \left( \epsilon_0 \bar x_0, \dotsc, \epsilon_n \bar x_n\right). 
\]
For $x,y\in \Lambda, b \in \OO_K$, one has $\alpha_\epsilon(b \cdot x) = \bar b \cdot \alpha_\epsilon(x)$ and 
$
h(\alpha_\epsilon(x), \alpha_\epsilon(y)) = 
\sigma(h(x,y) )
$ because $\sigma(\lambda) = \lambda$. Hence $\alpha_\epsilon$ is an anti-unitary involution for each $\epsilon \in \set{\pm 1}^{n+1}$. 
\end{example}

Let $\Aut(\Lambda)'$ be the group of unitary and anti-unitary $\OO_F$-linear bijections $\Lambda \xrightarrow{\sim} \Lambda$. 
Then 
\begin{align} \label{anti-iso-group}
\mu_K \subset \Aut(\Lambda) \subset \Aut(\Lambda)' \quad \tn{ and we define } \quad L' \coloneqq \Aut(\Lambda)'/\mu_K. 
\end{align}
Let $x \in K^\ast$. Observe that 
\begin{align}\label{crucialhypo}
\left(
x \in \OO_K^\ast \tn{ and }
x \cdot \sigma(x)  = 1\right) \quad \iff \quad \left(x \in \mu_K \right).
\end{align}
Indeed, for an embedding $\varphi \colon K \to \CC$, one has $
\va{\varphi(x)}^2 = \varphi(x \cdot \sigma(x))$;
if $x \in \OO_K^\ast$, then $\va{\varphi(x)} = 1$ for each $\varphi$ if and only if $x$ is a root of $1$ \cite[Corollary 5.6]{milneANT}. 
\begin{lemma} \label{Gammaembedding}
Let $\Isom(\CCH^n)$ be the group of isometries $f \colon \CC H^n \xrightarrow{\sim} \CC H^n$. The natural homomorphism $L' \to \Isom(\CCH^n)$ is injective.
\end{lemma}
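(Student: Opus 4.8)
The statement asserts that the natural map $L' = \Aut(\Lambda)'/\mu_K \to \Isom(\CC H^n)$ is injective, where $\Aut(\Lambda)'$ consists of all $\OO_F$-linear bijections $\Lambda \xrightarrow{\sim} \Lambda$ that are either unitary or anti-unitary. The plan is to compute the kernel directly: suppose $\phi \in \Aut(\Lambda)'$ acts trivially on $\CC H^n$, and show $\phi \in \mu_K$.

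\textbf{Key steps.} First I would separate into two cases according to whether $\phi$ is unitary or anti-unitary. In the unitary case, $\phi$ induces a $\CC$-linear automorphism of $V = V_\tau$ preserving $h$ (of signature $(n,1)$), and it acts trivially on the space $\CC H^n$ of negative lines, hence on the projective space $\PP(V)$ after extending scalars — indeed the negative cone spans $V$ over $\CC$, so a $\CC$-linear map fixing every negative line fixes every line, hence is a scalar $\lambda \in \CC^\ast$. Since $\phi$ preserves $h$ we get $\lambda\sigma(\lambda) = 1$ (writing $\lambda\bar\lambda=1$, equivalently $|\lambda|=1$), and since $\phi$ preserves the lattice $\Lambda$ we get $\lambda \in \OO_K^\ast$; then the equivalence \eqref{crucialhypo} forces $\lambda \in \mu_K$, so $\phi \equiv \id$ in $L'$. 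In the anti-unitary case, $\phi$ is $\sigma$-semilinear on $\Lambda$ and induces an anti-holomorphic transformation of $\CC H^n$; but the identity of $\CC H^n$ is holomorphic, so if $\phi$ acts trivially on $\CC H^n$ we obtain a contradiction — more carefully, compose $\phi$ with a fixed anti-unitary involution $\alpha$ (one exists, e.g.\ the coordinatewise conjugation in the standard presentation, or one should check $\mr A \neq \emptyset$; alternatively just argue that an anti-unitary $\phi$ acting trivially on $\CC H^n$ would make $\phi\circ\phi$, which is unitary, act trivially too, while $\phi$ itself being $\sigma$-semilinear and projectively trivial forces $\phi(v) = \lambda(v)\bar v$-type behavior incompatible with semilinearity unless $n$ degenerates). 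The cleanest route: an anti-unitary $\phi$ that is the identity on $\CC H^n$ would in particular fix three points spanning a real-$2$-dimensional configuration, and $\sigma$-semilinearity of $\phi$ together with fixing all negative lines forces $\phi$ to be conjugation in some basis, which is genuinely anti-holomorphic on $\CC H^n$ and hence not the identity (e.g.\ it reverses orientation). So no anti-unitary element lies in the kernel, and we are reduced to the unitary case already handled.

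\textbf{Main obstacle.} The only subtle point is handling the anti-unitary case cleanly: one must rule out that a $\sigma$-semilinear self-map of $\Lambda$ could act as the identity on $\CC H^n$. The conceptual reason is that such a map induces an \emph{anti}-holomorphic self-map of the complex manifold $\CC H^n$, which can never be the identity (for $n \geq 1$); making this rigorous requires noting that the action on $\CC H^n$ determines $\phi$ up to the scalar $\mu_K$ within each semilinearity type, and that the holomorphic and anti-holomorphic types are disjoint. So I expect the bulk of the writing to go into the unitary case (the scalar argument plus invoking \eqref{crucialhypo}), with a short paragraph dispatching the anti-unitary case via the holomorphic-versus-anti-holomorphic dichotomy; the latter is where care is needed to avoid circularity.
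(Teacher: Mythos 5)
Your proposal is correct and is essentially the paper's own (one-line) argument: the kernel is computed by showing a projectively trivial element must be a scalar $\lambda\in\OO_K^\ast$ with $\lambda\sigma(\lambda)=1$, and then \eqref{crucialhypo} gives $\lambda\in\mu_K$, the anti-unitary case being excluded because a $\sigma$-semilinear map induces an anti-holomorphic self-map of $\CC H^n$, which cannot be the (non-constant, holomorphic) identity for $n\geq 1$. One small caution: drop the parenthetical "it reverses orientation" — an anti-holomorphic map preserves orientation when $n$ is even — but the holomorphic-versus-anti-holomorphic dichotomy you give just before it is already sufficient.
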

\begin{proof}
This follows readily from (\ref{crucialhypo}). 
\end{proof}

Thus, any $\alpha \in P\mr A$ defines an anti-holomorphic involution 
\[
\alpha \colon \CC H^n \to \CC H^n
\]
that determines $\alpha \in P\mr A$ uniquely (whence the abuse of notation). 
We define 
\[ \RR H^n_{\alpha} \coloneqq (\CC H^n)^\alpha \subset \CC H^n.
\]
For each element $\alpha \in \mr A$, the hermitian form $h$ on $V$ induces a non-degenerate symmetric bilinear form
$$h|_{V^\alpha} \colon V^\alpha \times V^\alpha \longrightarrow \RR$$ 
on the real vector space 
\begin{align}\label{signature}
V^\alpha = \Lambda^\alpha\otimes_{\OO_F, \tau }\RR.
\end{align}
The form $h|_{V^\alpha} $ has signature $(n,1)$. The following lemma is readily proved:
 
\begin{lemma} \label{hyperbolic}
For $\alpha \in \mr A$, let $\PP(V^\alpha)$ be the space of real lines in $V^\alpha$, and let $\RR H(V^\alpha) \subset \PP(V^\alpha)$ be the subspace of lines in $V^\alpha$ negative for $h|_{V^\alpha}$. The canonical isomorphism $\PP(V^\alpha) \cong \PP(V)^\alpha$ restricts to an isomorphism $\RRH(V^\alpha) \cong \RRH^n_\alpha$. \hfill \qed
\end{lemma}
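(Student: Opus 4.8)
The plan is to realize both $\RR H(V^\alpha)$ and $\RR H^n_\alpha = (\CC H^n)^\alpha$ as subsets of $\PP(V)$ via the inclusion $V^\alpha \hookrightarrow V$, and to check the asserted identifications by elementary linear algebra. First I would record the structural input on $\alpha$: since $\alpha$ is $\OO_F$-linear and satisfies $\alpha(bx) = \sigma(b)\alpha(x)$ for $b \in \OO_K$, its $\RR$-linear extension to $V = \Lambda \otimes_{\OO_F,\tau}\RR$ is a conjugate-linear involution of the complex vector space $V$ (conjugate-linearity comes from $\sigma(a) = -a$ for the element $a$ of Notation \ref{notation:CMfield}, which acts as an imaginary unit after scaling). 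A conjugate-linear involution of an $(n+1)$-dimensional complex vector space has fixed locus a real form; combined with \eqref{signature} this yields $\dim_\RR V^\alpha = n+1$ and $V^\alpha \otimes_\RR \CC \xrightarrow{\sim} V$.

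Next I would recall why the natural map $\PP(V^\alpha) \to \PP(V)$, $\RR v \mapsto \CC v$, is injective with image $\PP(V)^\alpha$; this is the ``canonical isomorphism'' of the statement. For injectivity: if $\CC v = \CC w$ with $v,w \in V^\alpha$, write $w = \lambda v$ with $\lambda \in \CC^\ast$, and applying $\alpha$ gives $w = \bar\lambda v$, whence $\lambda \in \RR^\ast$ and $\RR v = \RR w$. That the image lies in $\PP(V)^\alpha$ is immediate from $\alpha(\CC v) = \CC\alpha(v)$. Surjectivity is the only step with content: given an $\alpha$-stable complex line, choose a spanning vector $v$, so $\alpha(v) = \mu v$ for some $\mu \in \CC^\ast$; then $v = \alpha^2(v) = |\mu|^2 v$ forces $|\mu| = 1$, and writing $\mu = e^{\mathrm{i}\theta}$ one checks that $e^{\mathrm{i}\theta/2}v \in V^\alpha$ spans the same complex line.

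Finally I would restrict this isomorphism to the hyperbolic loci. For $v \in V^\alpha$, the relation $h(\alpha(x),\alpha(y)) = \sigma(h(x,y))$ gives $h(v,v) = \sigma(h(v,v))$, i.e.\ $h(v,v) \in \RR$, and by construction $h|_{V^\alpha}(v,v) = h(v,v)$; hence the complex line $\CC v$ is $h$-negative exactly when the real line $\RR v$ is negative for $h|_{V^\alpha}$. Therefore the canonical isomorphism $\PP(V^\alpha) \cong \PP(V)^\alpha$ carries $\RR H(V^\alpha)$ bijectively onto $(\CC H^n)^\alpha = \RR H^n_\alpha$. Since $h|_{V^\alpha}$ has signature $(n,1)$ (as recorded just above the lemma), $\RR H(V^\alpha)$ is the projective model of $\RR H^n$, and the identification is compatible with the real-analytic (and metric) structures, being induced by a $\CC$-linear isomorphism that carries $h|_{V^\alpha}$ to the real part of $h$ on $V^\alpha$. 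The main — and rather mild — obstacle is the normalization argument establishing surjectivity of $\PP(V^\alpha) \to \PP(V)^\alpha$; the remaining points are bookkeeping, which is presumably why the lemma is stated as readily proved.
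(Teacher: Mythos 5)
Your proof is correct, and it supplies exactly the routine argument the paper leaves to the reader (the lemma is stated with ``readily proved'' and no written proof): the $\RR$-linear extension of $\alpha$ is a conjugate-linear involution of $V$, the map $\PP(V^\alpha)\to\PP(V)^\alpha$ is bijective (your $|\mu|=1$ normalization for surjectivity is fine; one could equivalently take $v+\alpha(v)$ or $i(v-\alpha(v))$ to avoid the square root), and negativity of $\CC v$ for $h$ coincides with negativity of $\RR v$ for $h|_{V^\alpha}$. No gaps.
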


We conclude that for each $\alpha \in P\mr A$, the space $$\RR H^n_\alpha = (\CC H^n)^\alpha \subset \CC H^n$$ is isometric to the real hyperbolic space of dimension $n$. 

\subsection{Anti-unitary involutions and arithmetic lattices in $\PO(n,1)$}

Continue with the above notation. Define \[L_\alpha \coloneqq \textnormal{Stab}_{L}(\RR H^n_\alpha) \subset L\quad \quad \tn{(the stabilizer of $\RR H^n_\alpha$ in $L$).}\] 

\begin{lemma} \label{normalisator}
Let $\alpha \in P\mr A$. Then 
$
L_\alpha = N_{L}(\alpha) \coloneqq \set{\gamma \in L \mid \gamma \circ \alpha = \alpha \circ \gamma \in L'} \subset L.
$
\end{lemma}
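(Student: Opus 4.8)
The plan is to show both inclusions. First suppose $\gamma \in L_\alpha$, i.e.\ $\gamma \colon \CC H^n \to \CC H^n$ preserves the totally geodesic subspace $\RR H^n_\alpha = (\CC H^n)^\alpha$ setwise. Consider the map $\beta \coloneqq \gamma \circ \alpha \circ \gamma^{-1}$. Since $\alpha$ is (represented by) an anti-unitary involution and $\gamma$ by a unitary one, $\beta$ lies in $L'$ and is again an anti-holomorphic involution of $\CC H^n$; moreover $\RR H^n_\beta = \gamma(\RR H^n_\alpha) = \RR H^n_\alpha$. So it suffices to show that an anti-holomorphic involution of $\CC H^n$ is determined by its fixed real hyperbolic subspace: if $\beta, \alpha \in L'$ are two anti-holomorphic involutions with $\RR H^n_\beta = \RR H^n_\alpha$, then $\beta = \alpha$. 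Granting this, $\gamma \circ \alpha \circ \gamma^{-1} = \alpha$, i.e.\ $\gamma \circ \alpha = \alpha \circ \gamma$, so $\gamma \in N_L(\alpha)$. The converse inclusion $N_L(\alpha) \subset L_\alpha$ is immediate: if $\gamma \circ \alpha = \alpha \circ \gamma$ then for $x \in \RR H^n_\alpha$ we have $\alpha(\gamma x) = \gamma(\alpha x) = \gamma x$, so $\gamma x \in \RR H^n_\alpha$; applying the same to $\gamma^{-1}$ gives $\gamma(\RR H^n_\alpha) = \RR H^n_\alpha$, whence $\gamma \in \Stab_L(\RR H^n_\alpha) = L_\alpha$.

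The key point is therefore the rigidity statement: an anti-holomorphic isometric involution of $\CC H^n$ is determined by its fixed-point set, a copy of $\RR H^n$. To prove it, I would argue as follows. The composite $g \coloneqq \beta^{-1} \circ \alpha = \beta \circ \alpha$ is a holomorphic isometry of $\CC H^n$ (an element of $L \subset \PU(n,1)$, using Lemma \ref{Gammaembedding}) which restricts to the identity on $\RR H^n_\alpha = \RR H^n_\beta$. But $\RR H^n_\alpha$ spans $\CC H^n$ over $\CC$ in the sense that a holomorphic isometry fixing it pointwise must be trivial: at the level of the underlying hermitian vector space, $V^\alpha$ is a real form of $V$ (equation \eqref{signature}), so $V^\alpha \otimes_\RR \CC = V$, and any $\CC$-linear unitary map fixing $V^\alpha$ pointwise fixes $V$ pointwise, hence is the identity in $\PU(n,1)$. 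Therefore $g = \id$ and $\beta = \alpha$.

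The main obstacle — really the only subtlety — is making the "real form spans" step fully rigorous at the level of isometry groups rather than just linear algebra: one must check that a holomorphic isometry of $\CC H^n$ restricting to the identity on the real hyperbolic subspace $\RR H^n_\alpha$ lifts to a unitary transformation of $V$ fixing the real subspace $V^\alpha$ pointwise, and then invoke $V = V^\alpha \otimes_\RR \CC$ together with the injectivity statement of Lemma \ref{Gammaembedding} to conclude it is $\id$ in $L$. This is a standard fact about real forms of complex hyperbolic space, and the lemma's phrasing — defining $N_L(\alpha)$ by the condition $\gamma \circ \alpha = \alpha \circ \gamma$ \emph{in $L'$} — is precisely set up so that this identification goes through cleanly.
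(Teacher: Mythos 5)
Your proposal is correct and is essentially the paper's own argument in a lightly repackaged form: both proofs come down to the facts that $V = V^\alpha \otimes_\RR \CC$ and that scalars are controlled by Lemma \ref{Gammaembedding} (via \eqref{crucialhypo}), the paper verifying $[g]\circ[\alpha]=[\alpha]\circ[g]$ directly by writing $z=x+iy$ with $x,y\in V^\alpha$, while you verify $\gamma\alpha\gamma^{-1}=\alpha$ through the rigidity of anti-holomorphic involutions. One small correction to your key step: a lift to $\Aut(\Lambda)$ of the holomorphic isometry $\beta\circ\alpha$ is not forced to fix $V^\alpha$ pointwise --- it only fixes each negative line of $V^\alpha$, and since the negative cone of $V^\alpha$ is a union of two open convex cones, the eigenvalue is constant there, so the lift acts on $V^\alpha$, hence on $V=V^\alpha\otimes_\RR\CC$, by a single scalar, which lies in $\mu_K$ because the lift preserves $\Lambda$ and is unitary (cf.\ \eqref{crucialhypo}); this scalar is exactly the $\xi\in\mu_K$ appearing in the paper's proof, and it dies in $L$, so your conclusion $\beta=\alpha$ in $L'$ stands.
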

\begin{proof}
The inclusion $N_L(\alpha) \subset L_\alpha$ is clear. For the other inclusion, let $g \in \Aut(\Lambda)$ with associated element $[g] \in \Aut(\Lambda)/\mu_K = L$. Assume that $[g] \in L_\alpha$ for some element $\alpha \in \mr A$ with class $[\alpha] \in P\mr A$. 
Notice that $V = V^{\alpha} \otimes_{\RR} \CC$. Hence each $z \in V$ can be written as $z = x + i \cdot y$ where $x,y \in V^\alpha$. Since $[g] \in L_\alpha$, we get $[g x] \in \RR H^n_\alpha$ for each $[x] \in \RR H^n_\alpha$. In particular, there exists an element $\xi \in \mu_K$ such that 
$
\alpha(gx) = \xi \cdot (gx) \in V
$
for each $x \in V^\alpha$. 
Consequently, for $[z] = [x+i\cdot y] \in \CC H^n$ with $x,y \in V^\alpha$, we have 
\[
[g(\alpha z)] = [g x-i \cdot g y] = [\xi \cdot gx - i \cdot \xi\cdot gy] = [\alpha(gx) - i \cdot \alpha(gy)]  = [\alpha(gz)]. 
\]
This shows that $[g] \circ [\alpha] = [\alpha] \circ [g]$ as elements of $\Isom(\CC H^n)$. Thus, in view of Lemma \ref{Gammaembedding}, we get $[g] \circ [\alpha] = [\alpha] \circ [g]$ as elements of $L'$, and we are done. 
\end{proof}

Let $\alpha \in \mr A$, consider the lattice $\Lambda^\alpha \subset V^\alpha$ (see \eqref{signature}) and the space of negative lines $\RR H(V^\alpha) \subset \PP(V^\alpha)$. We have $\tn{Isom}(\RR H(V^\alpha)) \cong \tn{Isom}(\RR H^n) = \tn{PO}(n,1)$ by Lemma \ref{Gammaembedding}. Let $\tn{O}(\Lambda^\alpha)(\OO_F)$ be the group of $\OO_F$-linear isometries $\gamma \colon \Lambda^\alpha \xrightarrow{\sim} \Lambda^\alpha$, and define $\tn{PO}(\Lambda^\alpha)(\OO_F) = \tn{O}(\Lambda^\alpha)(\OO_F)/\set{\pm 1}$. There are canonical embeddings 
\begin{align}\label{canonical-embeddings}
L_\alpha \longhookrightarrow \tn{Isom}(\RR H(V^\alpha)) = \tn{PO}(V^\alpha) \quad \tn{ and } \quad 
\tn{PO}(\Lambda^\alpha)(\OO_F)  \longhookrightarrow  \tn{PO}(V^\alpha). 
\end{align}

Explicitly, the embedding $L_\alpha \hookrightarrow \PO(V^\alpha)$ in \eqref{canonical-embeddings} is defined as follows. We consider $K$ as a subfield of $\CC$ via the embedding $\tau$. This identifies $\zeta \in K$ with a primitive $m$-th root of unity in $\CC$. Let $\zeta_{2m} = e^{2 \pi i / 2m} \in \CC$. Let $[g] \in L_\alpha$ with representative $g \in \Aut(\Lambda)$. Then $g \alpha g^{-1} = \zeta^i \cdot \alpha$ for some $i \in \ZZ_{\geq 0}$. This implies that $g(V^\alpha) = V^{\zeta^i  \alpha}$. Let $\xi \in \langle \zeta_{2m} \rangle \subset \CC^\ast$ such that $\xi^2 = \zeta^{-i}$. Then multiplication by $\xi$ defines an isomorphism of complex vector spaces $\xi \colon V \xrightarrow{\sim} V$ that identifies $V^{\zeta^i  \alpha}$ with $V^\alpha$. Sending $[g] \in L_\alpha$ to the class $[\xi \circ g] \in \PO(V^\alpha)$ of the composition $\xi \circ g$, which is an isometry $V^\alpha \xrightarrow{\sim} V^{\zeta^i  \alpha} \xrightarrow{\sim} V^\alpha$, gives a well-defined embedding $L_\alpha \hookrightarrow \PO(V^\alpha)$ which coincides with the first map appearing in equation \eqref{canonical-embeddings}. 

\begin{theorem} \label{th:crucialthm-finitevolume}
Let $\alpha \in \mr A$. The subgroups $L_\alpha \subset \PO(V^\alpha)$ and $\tn{PO}(\Lambda^\alpha)(\OO_F) \subset \PO(V^\alpha)$ are commensurable. In particular, $L_\alpha \subset \PO(V^\alpha)$ is an arithmetic lattice. 
\end{theorem}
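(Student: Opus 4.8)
The statement compares two subgroups of $\PO(V^\alpha)$: the image of $L_\alpha$ and the image of $\tn{PO}(\Lambda^\alpha)(\OO_F)$. The plan is to show that each is commensurable with the group of $\OO_F$-points of the algebraic group $\PO(q_\alpha)$ over $F$, where $q_\alpha$ is the quadratic form $h|_{V^\alpha}$; arithmeticity of $L_\alpha$ then follows from the Borel--Harish-Chandra theorem applied to $\tn{PO}(q_\alpha)$ viewed over $F$ (equivalently, restriction of scalars to $\QQ$), using Condition~\ref{condition2.2} in Definition~\ref{definition:introduction:admissible}: $q_\alpha$ has signature $(n,1)$ at $\tau$ and is definite at all other real places of $F$, so $\tn{Res}_{F/\QQ}\tn{PO}(q_\alpha)(\RR)$ is a product of $\PO(n,1)$ with compact factors, and the arithmetic lattice it defines projects to a lattice in $\PO(n,1)$.

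\textbf{Step 1: $\tn{PO}(\Lambda^\alpha)(\OO_F)$ is arithmetic.} This is essentially immediate: $\Lambda^\alpha$ is a finite free $\OO_F$-module (a direct summand of $\Lambda$ as $\OO_F$-module, since $\alpha$ is an $\OO_F$-linear involution and $2$ is not a zero-divisor after inverting it — one must be slightly careful here, but $\Lambda^\alpha \otimes_{\OO_F} F = (\Lambda \otimes_{\OO_F} F)^\alpha$ has dimension $n+1$ over $F$ and $\Lambda^\alpha$ is a lattice in $V^\alpha$), and $\tn{O}(\Lambda^\alpha)(\OO_F)$ is by definition the $\OO_F$-points of the orthogonal group scheme of $(\Lambda^\alpha, h|_{\Lambda^\alpha})$. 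By Borel--Harish-Chandra (applied to $\tn{Res}_{F/\QQ}\tn{O}(q_\alpha)$), this is an arithmetic subgroup of $\tn{O}(q_\alpha)(F\otimes\RR)$, hence its image $\tn{PO}(\Lambda^\alpha)(\OO_F)$ in $\PO(V^\alpha)$ is an arithmetic lattice. Since $\PO(V^\alpha)\cong\PO(n,1)$ up to the compact factors being quotiented out, this gives a lattice in $\PO(n,1)$.

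\textbf{Step 2: $L_\alpha$ is commensurable with $\tn{PO}(\Lambda^\alpha)(\OO_F)$.} The key point is that both groups arise from the $\OO_K$-structure versus $\OO_F$-structure on essentially the same object. Concretely, $L_\alpha = N_L(\alpha)$ by Lemma~\ref{normalisator}, so an element of $L_\alpha$ is (the class of) a $g\in\Aut(\Lambda)$ with $g\alpha g^{-1}=\zeta^i\alpha$; the explicit formula preceding the theorem produces from $g$ an isometry of $(\Lambda^\alpha \otimes \tfrac12, h|)$ — more precisely, $\xi\circ g$ preserves $V^\alpha$ and sends the lattice $\Lambda^\alpha$ into $\tfrac{1}{2m}\Lambda^\alpha$ or some fixed finite-index overlattice, because $g$ preserves $\Lambda$ and $\xi\in\langle\zeta_{2m}\rangle$. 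Thus the image of $L_\alpha$ in $\PO(V^\alpha)$ lies in the group of isometries of $V^\alpha$ commensurating the lattice $\Lambda^\alpha$, which is a group commensurable with $\tn{PO}(\Lambda^\alpha)(\OO_F)$. Conversely, to see that $L_\alpha$ has finite index in (a group commensurable with) $\tn{PO}(\Lambda^\alpha)(\OO_F)$, one notes that a finite-index subgroup of $\tn{PO}(\Lambda^\alpha)(\OO_F)$ — namely those $\OO_F$-isometries of $\Lambda^\alpha$ that extend $\OO_K$-linearly to automorphisms of $\Lambda=\Lambda^\alpha\oplus_{\OO_F}\OO_K$ preserving $h$ — lands in $\Aut(\Lambda)$ and commutes with $\alpha$, hence maps into $L_\alpha$. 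Both inclusions-up-to-finite-index combine to give commensurability. The last sentence of the theorem is then Step~1 plus Step~2.

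\textbf{Main obstacle.} The delicate part is Step~2, specifically controlling denominators: the map $L_\alpha\to\PO(V^\alpha)$ involves multiplication by a $2m$-th root of unity $\xi$, which does not preserve $\Lambda$ or even $\Lambda^\alpha$ on the nose, so one gets isometries commensurating $\Lambda^\alpha$ rather than preserving it — one must argue that the commensurator of an arithmetic lattice in $\PO(q_\alpha)$ meets $\PO(V^\alpha)$ in a group still commensurable with $\tn{PO}(\Lambda^\alpha)(\OO_F)$ (true since $\tn{PO}(q_\alpha)$ is not of the exceptional type where the commensurator is dense — here $q_\alpha$ has dimension $n+1\geq 3$, wait for $n=1$ one should check separately, but the statement assumes the setup of an admissible lattice of rank $n+1\ge 2$, and the $n=1$ case can be handled directly). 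Equivalently one can avoid the commensurator issue entirely by working with the $F$-algebraic groups: show $L_\alpha$ and $\tn{PO}(\Lambda^\alpha)(\OO_F)$ are both contained in and contain finite-index subgroups of $\tn{PO}(q_\alpha)(\OO_F[\tfrac{1}{2m}])$ — an $S$-arithmetic group — but this is not a lattice, so instead one intersects back down, which is where the real bookkeeping lives. I expect this denominator-chasing to be the only genuine difficulty; the arithmeticity conclusion itself is a black-box application of Borel--Harish-Chandra once the signature condition from admissibility is invoked.
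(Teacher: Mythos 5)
Your Step 1 is fine and is exactly what the paper relies on: admissibility gives signature $(n,1)$ at $\tau$ and definiteness at the other real places, so $\tn{PO}(\Lambda^\alpha)(\OO_F)$ is an arithmetic lattice in $\PO(V^\alpha)$ by Borel--Harish-Chandra. The gap is in Step 2, and it is the part you yourself flag as "denominator-chasing". First, the commensurator argument is backwards: since $\tn{PO}(\Lambda^\alpha)(\OO_F)$ is arithmetic, Margulis' criterion says its commensurator in $\PO(V^\alpha)$ is \emph{dense} (this has nothing to do with the dimension being $\geq 3$), so knowing that the image of $L_\alpha$ "commensurates $\Lambda^\alpha$" gives no finite-index information whatsoever; your parenthetical claim that one is "not in the exceptional type where the commensurator is dense" is the opposite of the truth in precisely the arithmetic situation you are in. Second, the bounded-denominator claim itself is unjustified: the scalar $\xi\in\langle\zeta_{2m}\rangle$ with $\xi^2=\zeta^{-i}$ need not lie in $K$, so multiplication by $\xi$ does not respect the $K$- or $F$-rational structures, and there is no a priori inclusion $\xi\circ g(\Lambda^\alpha)\subset\frac{1}{2m}\Lambda^\alpha$ or into any fixed overlattice. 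Third, your reverse inclusion quietly uses $\Lambda=\Lambda^\alpha\otimes_{\OO_F}\OO_K$, which is false in general; the natural map $\Lambda^\alpha\otimes_{\OO_F}\OO_K\to\Lambda$ only has finite cokernel, so "extends $\OO_K$-linearly to an automorphism of $\Lambda$" cuts out a subgroup whose finite-index property still has to be proved, not asserted.

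The paper's proof avoids all of this with one reduction you are missing: it passes to $L_\alpha^I\subset L_\alpha$, the (normal, index at most two) subgroup of classes $[g]$ whose representatives satisfy $h\alpha h^{-1}=\zeta^i\alpha$ with $i$ even. Such a class has a representative, unique up to sign, that commutes with $\alpha$ \emph{exactly} (rescale $h$ by $\zeta^{-i/2}\in\mu_K$), hence preserves $\Lambda^\alpha$ on the nose; so one gets an honest embedding $L_\alpha^I\hookrightarrow\tn{PO}(\Lambda^\alpha)(\OO_F)$ with no root-of-unity scalar and no denominators at all. For the converse, letting $d$ be the order of the cokernel of $\Lambda^\alpha\otimes_{\OO_F}\OO_K\to\Lambda$, the image is identified with the stabilizer in $\tn{PO}(\Lambda^\alpha)(\OO_F)$ of the finite-index submodule $\varphi(\Lambda)\subset\Lambda^\alpha\otimes_{\OO_F}\OO_K$, and this stabilizer contains the congruence subgroup $\Ker\bigl(\tn{PO}(\Lambda^\alpha)(\OO_F)\to\PGL(\Lambda^\alpha/d\Lambda^\alpha)\bigr)$, hence has finite index. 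That congruence-subgroup step is the correct, precise version of your "those that extend to automorphisms of $\Lambda$" remark, and the index-two reduction is the correct replacement for your commensurator/$S$-arithmetic detour; without both, your Step 2 does not close.
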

%
\begin{proof}
We follow the ideas of the proof of \cite[Theorem 4.10]{heckman2016hyperbolic}. Let $g \in \Aut(\Lambda)$ such that $[g] \in L_\alpha$. Recall that $\zeta \in K$ is a primitive $m$-th root of unity in $K$ that generates the torsion subgroup $\mu_K \subset \OO_K^\ast$, see Section \ref{set-up-2}. 
Define 
\[
L_\alpha^I \coloneqq \set{[g] \in L_\alpha \mid h  \alpha h^{-1} = \zeta^i \cdot  \alpha \tn{ with $i \in 2\ZZ$, for some (equivalently, any) } h \in [g]}.
\]
Note that $L_\alpha^I \subset L_\alpha$ is a normal subgroup of $L_\alpha$. Also note that for each $[g] \in L_\alpha^I$ there is an element $h \in [g]$, unique up to sign, such that $h \alpha h^{-1} = \alpha$. 
Consequently, there is a natural embedding 
\begin{align}\label{naturalembedding}
f \colon L_\alpha^I \longhookrightarrow \tn{PO}(\Lambda^\alpha)(\OO_F).
\end{align}
In fact, one has $
L_\alpha \cap \tn{PO}(\Lambda^\alpha)(\OO_F) = L_\alpha^I \subset \tn{PO}(V^\alpha),
$
and this intersection has index at most two in $L_\alpha$. Indeed, either $L_\alpha^I = L_\alpha$ or there exists an element $[g] \in L_\alpha$ such that for some $h \in [g]$ one has $h \alpha h^{-1} = \zeta^i \cdot \alpha$ for some odd $i \in \ZZ$, and for any two such elements $[g_i] \in L_\alpha$ ($i = 1,2$), one has $[g_1 g_2] \in L_\alpha^I \subset L_\alpha$. 

It remains to show that $L_\alpha^I$ has finite index in $\tn{PO}(\Lambda^\alpha)(\OO_F)$. To prove this, let $i \colon \Lambda^\alpha \hookrightarrow \Lambda$ denote the inclusion. The induced map $i \colon \Lambda^\alpha \otimes_{\OO_F}\OO_K \hookrightarrow \Lambda$ has finite cokernel; let $d$ be its order. There is a map $\varphi \colon \Lambda \to \Lambda^{\alpha} \otimes_{\OO_F} \OO_K$ 
such that $i \circ \varphi = d$ and $\varphi \circ i = d$, 
and such that the induced map $\varphi_K \colon \Lambda \otimes_{\OO_K} K \to \Lambda^\alpha \otimes_{\OO_F}K$ is $d$ times the inverse $i^{-1}$ of $i \colon \Lambda^\alpha \otimes_{\OO_F}K \xrightarrow{\sim} \Lambda \otimes_{\OO_K}K$. For an element $M \in \rm{O}(\Lambda^\alpha)(\OO_F)$, let $M_{\OO_K}$ be the induced map $\Lambda^\alpha \otimes_{\OO_F}\OO_K \xrightarrow{\sim} \Lambda^\alpha \otimes_{\OO_F}\OO_K$. Define
\begin{align*}
\tn{PO}(\Lambda^\alpha)(\OO_F)^I  \coloneqq  \set{[M] \in \tn{PO}(\Lambda^\alpha)(\OO_F) \mid i \circ M_{\OO_K} \circ \varphi = d \cdot \overline M \tn{ for } \overline M \in \Aut(\Lambda)}. 
\end{align*}
We claim that the embedding $f \colon L_\alpha^I \hookrightarrow \tn{PO}(\Lambda^\alpha)(\OO_F)$ defined in \eqref{naturalembedding} factors as 
\begin{align} \label{align:first-inclusion-equality}
L_\alpha^I \longhookrightarrow \tn{PO}(\Lambda^\alpha)(\OO_F)^I  \longhookrightarrow \tn{PO}(\Lambda^\alpha)(\OO_F). 
\end{align}
To prove this, let $[g] \in L_\alpha^I$ with $g \in [g]$ such that $g \alpha = \alpha g$. Let $M_g = g|_{\Lambda^\alpha}$ be the restriction $\Lambda^\alpha \to \Lambda^\alpha$ of $g$ to $\Lambda^\alpha$. Then $f([g]) = [M_g] \in \tn{PO}(\Lambda^\alpha)(\OO_F)$. As we have $
i \circ (M_g)_{\OO_K} \circ \varphi = g \circ i \circ \varphi = d \cdot g$ as maps $ \Lambda \to \Lambda$, we get $[M_g] \in \tn{PO}(\Lambda^\alpha)(\OO_F)^I$. This proves the factorization \eqref{align:first-inclusion-equality}. 

Next, we claim that the first inclusion in \eqref{align:first-inclusion-equality} is an equality. To see this, let $[M] \in \tn{PO}(\Lambda^\alpha)(\OO_F)^I$. Let $\overline M \in \Aut(\Lambda)$ such that $\overline M(\Lambda^\alpha) = \Lambda^\alpha$ and $\overline M|_{\Lambda^\alpha} = M$. 
It suffices to prove that the class $[\overline M] \in L$ is contained in $L_\alpha^I \subset L_\alpha \subset L$. Since $[\overline M] \in L$ stabilizes $\RR H^n_\alpha$, we have $[\overline M] \in L_\alpha = N_L(\alpha)$ by Lemma \ref{normalisator}. Consequently, $\overline M \alpha (\overline M)^{-1} = \zeta^i \cdot \alpha$ for some $i \in \ZZ_{\geq 0}$. 
As $\overline M(\Lambda^\alpha) = \Lambda^\alpha$, we get $i \equiv 0 \bmod m$. Therefore, $[\overline M] \in L_\alpha^I$, proving the claim. We conclude that 
\begin{align} \label{last}
L_\alpha^I = \tn{PO}(\Lambda^\alpha)(\OO_F)^I  \subset \tn{PO}(\Lambda^\alpha)(\OO_F). 
\end{align}
To finish the proof, it remains to show $ \tn{PO}(\Lambda^\alpha)(\OO_F)^I \subset  \tn{PO}(\Lambda^\alpha)(\OO_F)$ is a subgroup of finite index. Consider the submodule $\varphi(\Lambda) \subset \Lambda^{\alpha} \otimes_{\OO_F}\OO_K$, and note that
\begin{align} \label{align:other-description-PO}
\tn{PO}(\Lambda^\alpha)(\OO_F)^I  =  \set{[M] \in \tn{PO}(\Lambda^\alpha)(\OO_F) \mid M_{\OO_K}(\varphi(\Lambda)) = \varphi(\Lambda) }. 
\end{align}
Let 
\[
H(\Lambda^\alpha) \coloneqq \Ker\left( \tn{PO}(\Lambda^\alpha)(\OO_F) \to \PGL(\Lambda^\alpha  / d\Lambda^\alpha) \right). 
\]
Thus, $H(\Lambda^\alpha)  \subset \tn{PO}(\Lambda^\alpha)(\OO_F)$ is the subgroup of $[M] \in \tn{PO}(\Lambda^\alpha)(\OO_F)$ such that there exists $\varepsilon \in \set{\pm 1}$, such that for each $x \in \Lambda^\alpha$, we have $M\cdot x = \varepsilon \cdot x + dy$ for some $y \in \Lambda^\alpha$. 
Thus, if $x \in \varphi(\Lambda)$, we get that $M_{\OO_K}\cdot x = \varepsilon \cdot x + dy  \in \varphi(\Lambda)$. Consequently, $M_{\OO_K}(\varphi(\Lambda)) = \varphi(\Lambda)$, so that $[M] \in \tn{PO}(\Lambda^\alpha)(\OO_F)^I  $ by \eqref{align:other-description-PO}. We conclude that 
\[
H(\Lambda^\alpha)  \subset \tn{PO}(\Lambda^\alpha)(\OO_F)^I \subset  \tn{PO}(\Lambda^\alpha)(\OO_F). 
\]
This implies that $ \tn{PO}(\Lambda^\alpha)(\OO_F)^I \subset  \tn{PO}(\Lambda^\alpha)(\OO_F)$ is a subgroup of finite index. 
\end{proof}



We will also need the following finiteness result. For a finite group $G$ and a $G$-group $A$, let $\rm H^1(G, A)$ denote the first cohomology group of $G$ with coefficients in $A$ in the sense of nonabelian group cohomology. See \cite[Chapitre I, \S 5.1]{serre-galoisienne}. 

\begin{proposition} \label{proposition:galois} Suppose that $\mr A \neq \emptyset$. Let $\alpha_0  \in \mr A$, and define $G = \Gal(K/F)$. 
\begin{enumerate}
\item \label{itemones}
The group $G$ acts on $\Aut(\Lambda)$, and there is a bijection $\Aut(\Lambda) \backslash \mr A \cong \rm H^1(G, \Aut(\Lambda))$. 
\item 
The sets $\Aut(\Lambda) \backslash \mr A$ and $C\mr A = L \backslash P\mr A$ are finite. 
\end{enumerate}
\end{proposition}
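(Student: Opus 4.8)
The plan is, after fixing the base point $\alpha_0\in\mr A$, to identify $\mr A$ with the set of $1$-cocycles of $G=\Gal(K/F)=\{1,\sigma\}$ valued in $\Aut(\Lambda)$, and then to deduce the finiteness in (2) from the standard finiteness theorem for the first Galois cohomology of an arithmetic group.

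\emph{Part (1).} First I would equip $\Aut(\Lambda)$ with the $G$-action in which $\sigma$ acts by conjugation by $\alpha_0$, i.e.\ ${}^{\sigma}g\coloneqq\alpha_0\circ g\circ\alpha_0^{-1}$ for $g\in\Aut(\Lambda)$; using that $\alpha_0$ is $\sigma$-semilinear with $h(\alpha_0x,\alpha_0y)=\sigma(h(x,y))$ one checks directly that ${}^{\sigma}g\in\Aut(\Lambda)$, that $g\mapsto{}^{\sigma}g$ is a group automorphism, and that it is an involution because $\alpha_0^2=\id$. Then I would introduce $\Phi\colon\mr A\to\Aut(\Lambda)$, $\Phi(\alpha)\coloneqq\alpha\circ\alpha_0$. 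A composite of two anti-unitary $\OO_F$-bijections is unitary, so $\Phi(\alpha)\in\Aut(\Lambda)$; and since $\alpha,\alpha_0$ are involutions one has $\Phi(\alpha)\cdot{}^{\sigma}\Phi(\alpha)=(\alpha\alpha_0)(\alpha_0\alpha)=\id$, so $\Phi(\alpha)$ is a $1$-cocycle (for $|G|=2$ the cocycle set is $Z^1(G,\Aut(\Lambda))=\{g:g\cdot{}^{\sigma}g=\id\}$). Conversely, for such a $g$ the map $g\alpha_0$ is anti-unitary with $(g\alpha_0)^2=g\cdot{}^{\sigma}g=\id$, so $g\alpha_0\in\mr A$ and $\Phi(g\alpha_0)=g$; hence $\Phi$ is a bijection $\mr A\xrightarrow{\sim}Z^1(G,\Aut(\Lambda))$. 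The key computation is that for $\gamma\in\Aut(\Lambda)$,
\[
\Phi(\gamma\alpha\gamma^{-1})=\gamma\,(\alpha\alpha_0)\,(\alpha_0\gamma^{-1}\alpha_0)=\gamma\cdot\Phi(\alpha)\cdot({}^{\sigma}\gamma)^{-1},
\]
which is exactly the relation defining cohomologous cocycles; so $\Phi$ descends to a bijection $\Aut(\Lambda)\backslash\mr A\xrightarrow{\sim}\rm H^1(G,\Aut(\Lambda))$, where $\Aut(\Lambda)$ acts on $\mr A$ by conjugation. (That the $G$-action, hence the cohomology set, depends on $\alpha_0$ only up to the usual twisting bijections is harmless, as only existence of a bijection is claimed.)

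\emph{Part (2).} Since the $\mu_K$-multiplication action on $\mr A$ commutes with the $\Aut(\Lambda)$-conjugation action, $C\mr A=L\backslash P\mr A$ is the quotient of $\Aut(\Lambda)\backslash\mr A$ by the residual action of $\mu_K$; so it suffices to show $\Aut(\Lambda)\backslash\mr A$, equivalently $\rm H^1(G,\Aut(\Lambda))$, is finite. As $\Aut(\Lambda)=\rm U(\Lambda)(\OO_K)$ is an arithmetic group and the $G$-action above is induced by a semilinear algebraic action, this is an instance of the general finiteness theorem for the first Galois cohomology of an arithmetic group under a finite group action. Concretely, I would argue that $\alpha\in\mr A$ determines the quadratic $\OO_F$-lattice $Q_\alpha\coloneqq(\Lambda^\alpha,h|_{\Lambda^\alpha})$ of rank $n+1$ (note $h$ takes values in $F\cap\OO_K=\OO_F$ on $\Lambda^\alpha$), which is definite at every real place $\neq\tau$ and of signature $(n,1)$ at $\tau$, and whose discriminant divides a fixed integer: indeed $\Lambda^\alpha\oplus\Lambda^{-\alpha}$ (with $\Lambda^{-\alpha}=\{x:\alpha x=-x\}$) is sandwiched between $2\Lambda$ and $\Lambda$, and $\eta\Lambda^\alpha\subseteq\Lambda^{-\alpha}\subseteq\eta^{-1}\Lambda$ where $(\eta)=\mf D_K$, so $[\Lambda:\Lambda^\alpha\otimes_{\OO_F}\OO_K]$ is bounded in terms of $[K:\QQ]$, $n$ and $|d_K|$ alone. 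Then $Q_\alpha$ ranges over finitely many genera, each of finite class number, and two anti-unitary involutions with $\OO_F$-isometric $Q_\alpha$ are conjugate by an element of $\rm U(\Lambda\otimes K)(F)$ preserving $\Lambda$ up to a bounded commensurability, which by finiteness of the class number of $\rm U(\Lambda)$ gives only finitely many $\Aut(\Lambda)$-orbits. Hence $\Aut(\Lambda)\backslash\mr A$ and $C\mr A$ are finite.

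\emph{Main obstacle.} Part (1) is formal bookkeeping with nonabelian cohomology. The real work is the finiteness in part (2): one must either invoke the correct finiteness theorem for Galois cohomology of the arithmetic group $\Aut(\Lambda)$, or, in the hands-on approach, uniformly control the fibers of $\alpha\mapsto Q_\alpha$ --- both the discriminant bound and the passage from an abstract $\OO_F$-isometry $Q_\alpha\cong Q_\beta$ back to an honest element of $\Aut(\Lambda)$ conjugating $\alpha$ to $\beta$ require care with the ramification of $\OO_K/\OO_F$ and with reduction theory (finiteness of class numbers of unitary and quadratic lattices).
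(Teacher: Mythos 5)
Your proposal is correct and follows essentially the same route as the paper: part (1) is the same identification of $\mr A$ with twisted $1$-cocycles via $\alpha\mapsto\alpha\circ\alpha_0$ (with $G$ acting on $\Aut(\Lambda)$ by conjugation by $\alpha_0$), and part (2) reduces, exactly as in the paper, to the finiteness of $\rm H^1(G,\Aut(\Lambda))$, which the paper obtains from Borel--Serre's finiteness theorem for arithmetic $G$-groups --- precisely the ``general finiteness theorem'' you invoke. Your alternative hands-on sketch via the quadratic $\OO_F$-lattices $Q_\alpha$ is a plausible backup but is not needed once that result is cited.
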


\begin{proof}
1. We can let $G$ act on $\Aut(\Lambda)$ via the involution $ \sigma \colon \gamma \mapsto \alpha_0 \circ \gamma \circ \alpha_0$. 
Consider 
\begin{align*}
\rm H^1(G, \Aut(\Lambda)) &= \rm Z^1(G, \Aut(\Lambda))/_{\sim} = \set{\tn{$1$-cocycles } G \to \Aut(\Lambda), f \colon s \mapsto a_s}/_{\sim}
\end{align*}
where $f \sim f'$ if there exists $b \in \Aut(\Lambda)$ such that $f'(s) = b^{-1}f(s) ^sb \in \Aut(\Lambda)$ for all $s \in G$. See \cite[Chapitre I, \S 5.1]{serre-galoisienne}. If $g$ denotes the generator of $G$, the map 
$
f \mapsto f(g)
$ defines a bijection between the set of $1$-cocycles $\rm Z^1(G, \Aut(\Lambda))$ and the set of elements $\gamma \in \Aut(\Lambda)$ that satisfy $\gamma \cdot \sigma(\gamma) = \id$. This yields a bijection
\[
\mr A \xrightarrow{\sim} \rm Z^1(G, \Aut(\Lambda)), \quad \alpha \mapsto \alpha \circ \alpha_0,
\]
with inverse given by $\gamma \mapsto \gamma \circ \alpha_0$. Let $\alpha , \beta \in \mr A$ and $g \in \Aut(\Lambda)$. Then $g^{-1} \beta g = \alpha$ if and only if $g^{-1} \left(\beta \circ \alpha_0\right) \sigma(g) = \alpha \circ \alpha_0$, hence item \ref{itemones} is proved. 

2. Since the canonical map $\Aut(\Lambda) \backslash \mr A \to L \setminus P\mr A$ is surjective, it suffices to prove that $\Aut(\Lambda) \backslash \mr A$ is finite. By item \ref{itemones}, we need to show that $\rm H^1(G, \Aut(\Lambda))$ is finite. Note that $\Aut(\Lambda) = \Aut_{\OO_K}(\Lambda) = U(\Lambda)(\OO_K)$ arises as the group of $\OO_K$-rational points of a linear algebraic group $U(\Lambda)_K \subset \GL(\Lambda \otimes_{\OO_K} K)$ over $K$. Moreover, the involution $\sigma \colon \Aut(\Lambda) \to \Aut(\Lambda)$ defined in item \ref{itemones} extends to an involution $\sigma \colon U(\Lambda)_K \to U(\Lambda)_K$, which implies that $\Aut(\Lambda)$ is an \emph{arithmetic $G$-group} in the sense of \cite[Section 3.4]{borel-serre}. In particular, the set $\rm H^1(G, \Aut(\Lambda))$ is finite by \cite[Proposition 3.8]{borel-serre}. The proof is finished. 
\end{proof}

\subsection{Anti-unitary involutions and the hyperplane arrangement}

Continue with the above notation. 

\begin{lemma}\label{anti-involution}
Let $\alpha \in \mr A$ and $r \in \mr R$. Then $\alpha \circ \phi_r^i = \phi_{\alpha(r)}^{-i} \circ \alpha$. 
\end{lemma}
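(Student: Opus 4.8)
The claim is a compatibility between an anti-unitary involution $\alpha$ and a complex reflection $\phi_r^i$, and the natural strategy is a direct computation that reduces everything to the defining identities of an anti-unitary map. The plan is as follows. First I would fix $\alpha \in \mr A$, $r \in \mr R$, and $i \in \ZZ/m$, and write $\zeta$ for the fixed primitive $m$-th root of unity generating $\mu_K$. The key point is that $\alpha$ is conjugate-linear: $\alpha(\lambda x) = \sigma(\lambda)\alpha(x) = \bar\lambda\,\alpha(x)$, and semi-hermitian: $h(\alpha(x),\alpha(y)) = \sigma(h(x,y)) = \overline{h(x,y)}$. I would also record that $\alpha(r) \in \mr R$, since $h(\alpha(r),\alpha(r)) = \overline{h(r,r)} = \bar 1 = 1$, so $\phi_{\alpha(r)}^{-i}$ makes sense, and that $\sigma(\zeta) = \bar\zeta = \zeta^{-1}$ because $\zeta$ is a root of unity in the CM field $K$.

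The main computation is to expand both sides applied to an arbitrary $x \in \Lambda$. On the left,
\[
\alpha(\phi_r^i(x)) = \alpha\bigl(x - (1-\zeta^i)h(x,r)\, r\bigr) = \alpha(x) - \sigma\bigl((1-\zeta^i)h(x,r)\bigr)\,\alpha(r),
\]
using conjugate-linearity, and $\sigma\bigl((1-\zeta^i)h(x,r)\bigr) = (1-\zeta^{-i})\,\overline{h(x,r)} = (1-\zeta^{-i})\,h(\alpha(x),\alpha(r))$ by the semi-hermitian property. On the right,
\[
\phi_{\alpha(r)}^{-i}(\alpha(x)) = \alpha(x) - (1-\zeta^{-i})\,h(\alpha(x),\alpha(r))\,\alpha(r).
\]
These two expressions are visibly equal, which is exactly the identity $\alpha \circ \phi_r^i = \phi_{\alpha(r)}^{-i} \circ \alpha$. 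I would present this as a short chain of equalities with the two crucial substitutions (conjugate-linearity, then the semi-hermitian identity, then $\sigma(\zeta^i) = \zeta^{-i}$) flagged explicitly.

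I do not anticipate a genuine obstacle here — the statement is essentially a bookkeeping lemma — but the one point that deserves a sentence of care is the sign/exponent: the reflection $\phi_{\alpha(r)}^{-i}$ appears with exponent $-i$ precisely because $\sigma$ inverts roots of unity, so $\sigma(\zeta^i) = \zeta^{-i}$; if one forgot this, one would wrongly get $\phi_{\alpha(r)}^{i}$. It is also worth noting that since $\phi_r^i$ lies in $\Aut(\Lambda)$ and $\alpha$ is an $\OO_F$-linear bijection of $\Lambda$, both sides are genuine $\OO_F$-linear endomorphisms of $\Lambda$, so verifying the identity on a general $x \in \Lambda$ suffices; no passage to $V$ or to $\CC H^n$ is needed, though the identity of course descends to those. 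Finally, one can remark that specializing to $i$ with $\zeta^i$ a primitive unit recovers the statement as used in Lemma~\ref{Gxlemma} and in the analysis of the fixed loci $\RR H^n_\alpha \cap \mr H$.
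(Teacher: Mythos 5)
Your proof is correct and follows exactly the same route as the paper's (which compresses the same expansion of $\alpha(\phi_r^i(x))$ into a single line): apply conjugate-linearity, the identity $h(\alpha(x),\alpha(r))=\sigma(h(x,r))$, and $\sigma(\zeta^i)=\zeta^{-i}$, yielding $\phi_{\alpha(r)}^{-i}(\alpha(x))$. Your extra remarks (that $\alpha(r)\in\mr R$ and that checking on $\Lambda$ suffices) are accurate and only make explicit what the paper leaves implicit.
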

\begin{proof}
Indeed, 
$
\alpha(\phi_r^i(x)) = \alpha( x - (1-\zeta^i)h(x,r)\cdot r ) = 
\phi_{\alpha(r)}^{-i}(\alpha(x))$ for $x \in \Lambda$. \end{proof}

\begin{lemma}\label{alphaswitch}
Let $x \in \RRH^n_\alpha$ and write $\ca H(x) = \set{H_{r_1}, \dotsc, H_{r_k}}$, $r_i \in \mr R$. 
\begin{enumerate}
\item
Let $i \in \{1, \dotsc, k\}$. Then $
\alpha(H_{r_i}) = H_{\alpha(r_i)} =  H_{r_j}$ for some  $j \in \{1, \dotsc, k\}$. 
\item Let $i,j \in \set{1, \dotsc, k}$ such that $H_{\alpha(r_i)} = H_{r_j}$. Then is some element $\lambda_i \in K^\ast$ with $\va{\lambda_i} = 1$, such that $\alpha(r_i) = \lambda_i \cdot r_j$. 
\end{enumerate}
\end{lemma}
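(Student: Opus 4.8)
The plan is to derive both parts directly from the interaction between $\alpha$ and reflections recorded in Lemma \ref{anti-involution}, together with the bookkeeping of short roots in Proposition \ref{prop:remember}. First I would handle part (1). Since $x \in \RR H^n_\alpha = (\CC H^n)^\alpha$, the point $x$ is fixed by $\alpha$. For $i \in \{1,\dots,k\}$ we have $x \in H_{r_i}$, i.e.\ $h(x, r_i) = 0$. Applying $\alpha$ and using the defining property $h(\alpha(u),\alpha(v)) = \sigma(h(u,v))$ together with $\alpha(x) = x$ (working with a representative in $V^\alpha$), I get $h(x, \alpha(r_i)) = \sigma(h(x,r_i)) = 0$, so $x \in H_{\alpha(r_i)}$. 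Also $\alpha(r_i) \in \mr R$ because $h(\alpha(r_i),\alpha(r_i)) = \sigma(h(r_i,r_i)) = \sigma(1) = 1$. Hence $H_{\alpha(r_i)}$ is one of the nodes of $x$, which by the definition $\ca H(x) = \{H_{r_1},\dots,H_{r_k}\}$ means $H_{\alpha(r_i)} = H_{r_j}$ for some $j$. The identity $\alpha(H_{r_i}) = H_{\alpha(r_i)}$ itself follows since $y \in H_{r_i} \iff h(y,r_i) = 0 \iff \sigma(h(y,r_i)) = 0 \iff h(\alpha(y),\alpha(r_i)) = 0 \iff \alpha(y) \in H_{\alpha(r_i)}$, so $\alpha$ carries $H_{r_i}$ bijectively onto $H_{\alpha(r_i)}$.

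For part (2), suppose $H_{\alpha(r_i)} = H_{r_j}$. Both $\alpha(r_i)$ and $r_j$ lie in $\mr R$ and cut out the same hyperplane, so by the equivalence $[4 \iff 3]$ in Proposition \ref{prop:remember} there exist $a, b \in \OO_K \setminus \{0\}$ with $\va{a}^2 = \va{b}^2$ and $a \cdot \alpha(r_i) = b \cdot r_j$. Setting $\lambda_i \coloneqq b/a \in K^\ast$ gives $\alpha(r_i) = \lambda_i \cdot r_j$, and $\va{\lambda_i}^2 = \va{b}^2/\va{a}^2 = 1$, so $\va{\lambda_i} = 1$ as claimed. (Alternatively one can argue directly: since $\alpha(r_i)$ and $r_j$ span the same line $\langle r_i\rangle^\perp{}^\perp$ after complexification and both have norm $1$, they differ by a unit-modulus scalar in $K^\ast$.)

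I do not anticipate a serious obstacle here; the only point requiring a little care is the passage between the lattice-level statement ($h \colon \Lambda \times \Lambda \to \OO_K$, $\alpha$ an $\OO_F$-linear bijection of $\Lambda$) and the statement about hyperplanes in $\CC H^n$, which lives in $V = V_\tau$. Concretely one should note that $\alpha$ extends $\RR$-linearly to $V^\alpha$ and then $\CC$-semilinearly to $V = V^\alpha \otimes_\RR \CC$, that this extension is the anti-holomorphic involution of $\CC H^n$ also denoted $\alpha$ (via Lemma \ref{Gammaembedding}), and that the node condition $x \in H_r$ is by convention the condition $h(x', r_\tau) = 0$ for the image $r_\tau \in V$ of $r \in \Lambda$. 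Once this identification is in place, the two displayed computations above go through verbatim, and the scalar $\lambda_i$ produced in part (2) automatically lies in $K^\ast$ rather than merely $\CC^\ast$ because both $\alpha(r_i)$ and $r_j$ are images of elements of $\Lambda$ and $\OO_K$ is an integral domain with fraction field $K$.
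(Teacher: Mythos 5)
Your proposal is correct and takes essentially the same route as the paper: part (1) follows from the identity $\alpha(H_r)=H_{\alpha(r)}$ together with $\alpha(x)=x$, which forces $H_{\alpha(r_i)}\in\ca H(x)$, and part (2) is exactly the implication $4\Rightarrow 3$ of Proposition \ref{prop:remember}. You simply spell out a few details (that $\alpha(r_i)\in\mr R$, and the lattice-versus-$V$ bookkeeping) that the paper leaves implicit.
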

\begin{proof}
1. For any $\beta \in \mr A$ and $r \in \mr R$, we have
$
\beta(H_r) = H_{\beta(r)}. 
$
Since $x \in H_{r_i}$, we have $x = \alpha(x) \in \alpha(H_{r_i}) = H_{\alpha(r_i)}$ for every $i$. In particular, we have $H_{\alpha(r_i)} \in \ca H(x)$ (see Definition \ref{fullset}), so that $H_{\alpha(r_i)} = H_{r_j}$ for some $j \in \set{1, \dotsc, k}$. 

2. This follows from Proposition \ref{prop:remember}. 
\end{proof}

\begin{definition} \label{alphainvolution}
Let $\alpha \in P\mr A$ and $x \in \RRH^n_\alpha$. Write $\ca H(x) = \set{H_{r_1}, \dotsc, H_{r_k}}$, see Definition \ref{fullset}. By Lemma \ref{alphaswitch}, the involution $\alpha$ induces an involution on the set $\ca H(x)$. Define $\alpha \colon I \to I$ as the resulting involution on the set $I = \set{1, \dotsc, k}$. 
\end{definition}

\begin{proposition}\label{preliminaryproposition}
Let $\alpha \in P\mr A$ and $x \in \RRH^n_\alpha$. Write $\ca H(x) = \set{H_{r_1}, \dotsc, H_{r_k}}$, and let $g = \phi_{r_1}^{i_1} \circ \cdots \circ \phi_{r_k}^{i_k} \in G(x)$ for some $i_j \in \ZZ/m$. The following are equivalent:
\begin{enumerate}
\item We have $g \circ \alpha \in P\mr A$. Equivalently: $g\circ \alpha \in L'$ is an involution.
\item For each $j \in \set{1, \dotsc, k}$, we have $i_{j} \equiv i_{\alpha(j)} \bmod m$. 
\end{enumerate}
\end{proposition}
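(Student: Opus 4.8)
The plan is to introduce $\psi \coloneqq g \circ \alpha \in \Aut(\Lambda)'$ and to prove that \emph{both} conditions in the proposition are equivalent to the single identity $\psi^2 = \id$. Since $g$ is unitary and $\alpha$ anti-unitary, $\psi$ is anti-unitary and $\psi^2 = g \circ (\alpha g \alpha)$ is unitary. The two tasks are then: (i) check that $[\psi] \in L'$ being an involution forces $\psi^2 = \id$, and conversely that $\psi^2 = \id$ puts $g\circ\alpha$ in $\mr A$, so that all three assertions ``$[\psi]$ is an involution in $L'$'', ``$g\circ\alpha \in P\mr A$'', and ``$\psi^2 = \id$'' coincide; and (ii) compute $\psi^2$ as an explicit product of complex reflections and read off condition $2$.

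For (i), the point is to exhibit a genuine fixed vector of $\psi$. By Lemma \ref{hyperbolic} we may choose a representative $\tilde x \in V^\alpha$ of $x \in \RR H^n_\alpha$, so $\alpha(\tilde x) = \tilde x$. Because $x \in H_{r_j}$ for every $j$, we have $h(\tilde x, r_j) = 0$, hence $\phi_{r_j}^{i_j}(\tilde x) = \tilde x$ and therefore $g(\tilde x) = \tilde x$; consequently $\psi(\tilde x) = g(\tilde x) = \tilde x$. Now if $[\psi]$ is an involution in $L'$, then $\psi^2 = \zeta^a$ for some $a$, and evaluating at $\tilde x$ gives $\zeta^a \tilde x = \tilde x$, i.e. $\psi^2 = \id$; thus $g\circ\alpha \in \mr A$ and $[\psi] \in P\mr A$. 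The reverse implication $[\psi]\in P\mr A \Rightarrow \psi^2 = \id$ is immediate (any $\mu_K$-translate $\zeta^a\psi$ of an anti-unitary map satisfies $(\zeta^a\psi)^2 = \psi^2$, since $\mu_K$ consists of roots of unity). So item $1$ is equivalent to $\psi^2 = \id$.

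For (ii), note that the hyperplanes $H_{r_1}, \dots, H_{r_k}$ are pairwise distinct and all pass through $x$, so Theorem \ref{orthogonal} gives $h(r_i,r_j) = 0$ for $i\neq j$; as in the proof of Lemma \ref{Gxlemma} this makes the $\phi_{r_j}$ pairwise commuting, and by Lemmas \ref{G-G'} and \ref{Gxlemma} the subgroup they generate in $\Aut(\Lambda)$ is isomorphic to $(\ZZ/m)^k$ via the evident map. Applying Lemma \ref{anti-involution} together with $\alpha^2 = \id$ gives $\alpha g \alpha = \phi_{\alpha(r_1)}^{-i_1}\circ\cdots\circ\phi_{\alpha(r_k)}^{-i_k}$; using Lemma \ref{alphaswitch}, which says $\alpha(r_j) = \lambda_j r_{\alpha(j)}$ with $\va{\lambda_j} = 1$, the elementary identity $\phi_{\lambda r}^i = \phi_r^i$ for $\va{\lambda} = 1$, and a reindexing by the involution $\alpha$ of $\{1,\dots,k\}$ (legitimate because the $\phi_{r_\ell}$ commute), I obtain $\alpha g \alpha = \prod_{j=1}^k \phi_{r_j}^{-i_{\alpha(j)}}$, hence
\[
\psi^2 \;=\; g\circ(\alpha g \alpha) \;=\; \prod_{j=1}^k \phi_{r_j}^{\,i_j - i_{\alpha(j)}}.
\]

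Finally, since $\langle \phi_{r_1},\dots,\phi_{r_k}\rangle \cong (\ZZ/m)^k$, this product equals $\id$ if and only if $i_j - i_{\alpha(j)} \equiv 0 \bmod m$ for every $j$, i.e. if and only if condition $2$ holds; combined with (i) this proves $1 \Leftrightarrow 2$. The one genuinely delicate step is part (i) — upgrading $\psi^2 \in \mu_K$ to $\psi^2 = \id$ — which is precisely why one extracts the honest fixed vector $\tilde x \in V^\alpha \cap \bigcap_j r_j^\perp$; everything after that is bookkeeping with the commuting reflections and the index-set involution.
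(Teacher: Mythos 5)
Your proof is correct and follows essentially the same route as the paper: the core of both arguments is the computation $(g\circ\alpha)^2=\phi_{r_1}^{i_1-i_{\alpha(1)}}\circ\cdots\circ\phi_{r_k}^{i_k-i_{\alpha(k)}}$ via Lemma \ref{anti-involution}, Lemma \ref{alphaswitch} and Proposition \ref{prop:remember}, followed by the identification $\langle\phi_{r_1},\dotsc,\phi_{r_k}\rangle\cong(\ZZ/m)^k$. Your fixed-vector step upgrading $\psi^2\in\mu_K$ to $\psi^2=\id$ simply makes explicit what the paper delegates to Lemmas \ref{G-G'} and \ref{Gxlemma} (injectivity of $G'(x)\to L$), so it is the same mechanism.
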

\begin{proof}
By Lemma \ref{alphaswitch} and Definition \ref{alphainvolution}, for each $i \in \set{1, \dotsc, k}$, there exists $\xi_i \in K^\ast$ with $\va{\lambda_i} = 1$ such that $\alpha(r_i) = \lambda_i \cdot r_{\alpha(i)}$. In particular, $\phi_{\alpha(r_i)} = \phi_{r_{\alpha(i)}}$, see Proposition \ref{prop:remember}. Therefore, we have:
\begin{align*}
g \circ \alpha \circ g \circ \alpha &=  \phi_{r_1}^{i_1} \circ \cdots \circ \phi_{r_k}^{i_k} \circ \alpha \circ  \phi_{r_1}^{i_1} \circ \cdots \circ \phi_{r_k}^{i_k} \circ \alpha  \\
 & =  \phi_{r_1}^{i_1} \circ \cdots \circ \phi_{r_k}^{i_k} \circ 
 \phi_{r_1}^{-i_{\alpha(1)}} \circ \cdots \circ \phi_{r_k}^{-i_{\alpha(k)}}  \\
 & = \phi_{r_1}^{i_1-i_{\alpha(1)}} \circ \cdots \circ \phi_{r_k}^{i_k - i_{\alpha(k)}} \in G(x). 
 \end{align*}
 The proposition follows. 
 \end{proof}

\section{Attaching the hyperbolic pieces} \label{gluing} In this section, we construct a topological space by gluing together arithmetic hyperbolic pieces. Afterwards, in Section \ref{section-section-section}, we provide this topological space first with a path metric and then with a complete hyperbolic orbifold structure.

\subsection{Definition of the attachment}

Let $\mr L = (K, \Lambda)$ be an admissible hermitian lattice of rank $n+1$, for some $n \geq 1$. Continue with the notation of Sections \ref{sec:preliminaries} and \ref{section:anti}. Define 
\[
\widetilde Y \coloneqq \coprod_{\alpha \in P\mr A} \RR H^n_\alpha. 
\]
We will glue the different copies of hyperbolic space $\RR H^n_\alpha$ by defining an equivalence relation $\sim$ on $\widetilde Y$. 


\begin{definition} \label{def:conditions}
Define a relation $\sim$ on $\wt Y$ as follows. Let
 \[
(x, \alpha), (y,\beta) \in \wt Y
 \]
 with $\alpha, \beta \in P\mr A$,  $x \in \RR H^n_\alpha$ and $y \in \RR H^n_\beta$. Then $(x, \alpha) \sim (y,\beta)$ if the following conditions are satisfied:
\begin{enumerate} 
\item \label{item1}  We have $x = y \in \CC H^n$. 
\item \label{item2} If $\alpha \neq \beta$, then $x = y \in \CC H^n$ lies in $ \mr H \subset \CC H^n$, and $\beta \circ \alpha \in G(x) = G(y) \subset L$. 
\end{enumerate}
\end{definition}

\begin{remark} \label{rem:gluing}
We will see below (see Lemma \ref{intersection}) that the gluing rules can be rephrased as follows: we glue $\RR H^n_\alpha$ and $\RR H^n_\beta$ along their intersection, provided that 
for some (equivalently, any) $x \in \RR H^n_\alpha \cap \RR H^n_\beta$, the real structures $\alpha$ and $\beta$ differ by a product of reflections in hyperplanes $H_r \subset \mr H$ that pass through $x$. 
\end{remark}

\begin{lemma} \label{eqrel}
The relation $\sim$ on $\wt Y$ defined above is an equivalence relation. 
\end{lemma}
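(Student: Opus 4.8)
The plan is to verify the three axioms of an equivalence relation for $\sim$ directly from Definition \ref{def:conditions}, using basic properties of the groups $G(x)$ established earlier.

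\textbf{Reflexivity.} For $(x,\alpha) \in \wt Y$, we need $(x,\alpha)\sim(x,\alpha)$. Condition \ref{item1} holds trivially since $x = x$. Condition \ref{item2} is vacuous because it only applies when $\alpha \neq \beta$, and here $\alpha = \beta$. (If one prefers to check it anyway: $\beta\circ\alpha = \alpha\circ\alpha = \id \in G(x)$, since $\alpha$ is an involution and the identity lies in any subgroup.) Hence $\sim$ is reflexive.

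\textbf{Symmetry.} Suppose $(x,\alpha)\sim(y,\beta)$. Then $x = y \in \CC H^n$, which is a symmetric condition, so \ref{item1} holds for $(y,\beta)$ and $(x,\alpha)$. For \ref{item2}, if $\alpha\neq\beta$ we know $x = y \in \mr H$ and $\beta\circ\alpha \in G(x)$. Since $G(x) \subset L$ is a subgroup, it is closed under inverses; as $\alpha,\beta$ are involutions, $(\beta\circ\alpha)^{-1} = \alpha^{-1}\circ\beta^{-1} = \alpha\circ\beta$, so $\alpha\circ\beta \in G(x) = G(y)$. Thus $(y,\beta)\sim(x,\alpha)$, and $\sim$ is symmetric.

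\textbf{Transitivity.} Suppose $(x,\alpha)\sim(y,\beta)$ and $(y,\beta)\sim(z,\gamma)$. From \ref{item1} applied twice we get $x = y = z \in \CC H^n$, so \ref{item1} holds for $(x,\alpha)$ and $(z,\gamma)$. For \ref{item2}, assume $\alpha\neq\gamma$. If $\alpha = \beta$, then $(y,\beta)\sim(z,\gamma)$ already gives, since $\beta\neq\gamma$, that $y \in \mr H$ and $\gamma\circ\beta = \gamma\circ\alpha \in G(y) = G(x)$, as required; similarly if $\beta = \gamma$. So assume $\alpha\neq\beta$ and $\beta\neq\gamma$. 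Then $x = y \in \mr H$ with $\beta\circ\alpha \in G(x)$, and $y = z \in \mr H$ with $\gamma\circ\beta \in G(y) = G(x)$. Since $G(x)$ is a group, the product $(\gamma\circ\beta)\circ(\beta\circ\alpha) = \gamma\circ\alpha$ (using $\beta\circ\beta = \id$) lies in $G(x)$, and $x \in \mr H$. Hence $(x,\alpha)\sim(z,\gamma)$, and $\sim$ is transitive. This completes the proof.

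The only genuinely non-formal point — and the step I would regard as the main (mild) subtlety — is the bookkeeping in transitivity when some of $\alpha,\beta,\gamma$ coincide: one must handle the cases $\alpha = \beta$ and $\beta = \gamma$ separately so that Condition \ref{item2}, which is conditional on the two real structures being distinct, is invoked correctly. Everything else reduces to the single fact that each $G(x)$ is a subgroup of $L$ together with the observation that anti-unitary involutions satisfy $\alpha\circ\alpha = \id$.
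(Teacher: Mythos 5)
Your proof is correct and follows essentially the same argument as the paper: reflexivity is trivial, symmetry uses that $G(x)$ is a group so $\alpha\circ\beta=(\beta\circ\alpha)^{-1}\in G(x)$, and transitivity composes $\gamma\circ\beta$ with $\beta\circ\alpha$ using $\beta^2=\id$. The only difference is that you spell out the degenerate cases ($\alpha=\beta$ or $\beta=\gamma$) in transitivity, which the paper handles by reducing at once to pairwise distinct triples.
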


\begin{proof}
Let $(x,\alpha), (y,\beta), (z, \gamma) \in \wt Y$. Clearly $(x,\alpha) \sim (x,\alpha)$. If $(x,\alpha) \sim (y,\beta)$ and $(x,\alpha) \neq (y,\beta)$, then $ x = y \in \mr H$ and $\beta \circ \alpha \in G(x) = G(y)$. Thus, since $G(x)$ is a group, we get $\alpha \circ \beta = (\beta \circ \alpha)^{-1} \in G(x) = G(y)$, hence $(y,\beta) \sim (x,\alpha)$. Finally, assume $(x,\alpha) \sim (y,\beta)$ and $(y,\beta) \sim (z, \gamma)$. We claim that $(x,\alpha) \sim (z,\gamma)$. We may assume that $(x,\alpha), (y,\beta), (z, \gamma) \in \wt Y$ are pairwise distinct. This implies that $x = y = z$ and that $\beta \circ \alpha \in G(x) = G(y)$ and $\gamma \circ \beta \in G(y) = G(z)$. Thus, we get $\gamma \circ \beta \circ \beta \circ \alpha = \gamma \circ \alpha \in G(x) = G(z)$, proving the claim and hence the lemma.
\end{proof}

\begin{lemma} \label{lemma:pgammaaction}
The action of $L$ on $\CC H^n$ induces an action of $L$ on $\widetilde Y$ which is compatible with the equivalence relation $R$. Therefore, $L$ acts naturally on $Y$. Moreover, $$L \setminus \widetilde Y =  \coprod_{\alpha \in C\mr A} L_\alpha \setminus \RR H^n_\alpha$$
for any set $C \mr A \subset P\mr A$ of representatives for the quotient $L \setminus P\mr A$. 
\end{lemma}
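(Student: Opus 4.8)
The plan is to define the $L$-action on $\wt Y$ by the formula $g\cdot(x,\alpha)=(g x,\, g\alpha g^{-1})$ for $g\in L$ and $(x,\alpha)\in\wt Y$, and then check the three assertions in turn. First I would verify that this formula makes sense: for $g\in\Aut(\Lambda)$ and $\alpha\in\mr A$ the map $g\alpha g^{-1}$ is again $\OO_F$-linear, an involution, and anti-unitary (using that $g$ is $\OO_K$-linear and unitary), and its class in $P\mr A=\mu_K\backslash\mr A$ depends only on the classes of $g$ in $L$ and of $\alpha$ in $P\mr A$ — so the conjugation action of $L$ on $P\mr A$ is well defined, as already used in the paper. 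Since $(g\alpha g^{-1})(g x)=g(\alpha x)$, the isometry $g$ carries $\RR H^n_\alpha=(\CC H^n)^\alpha$ bijectively onto $(\CC H^n)^{g\alpha g^{-1}}=\RR H^n_{g\alpha g^{-1}}$, so $g\cdot(x,\alpha)\in\wt Y$; associativity and the fact that this lifts the action on $\CC H^n$ are then immediate.

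Next I would prove compatibility with the equivalence relation $\sim$ of Definition \ref{def:conditions}. Given $(x,\alpha)\sim(y,\beta)$ and $g\in L$, the only case needing an argument is $x=y\in\mr H$ with $\beta\circ\alpha\in G(x)$. Here $gx=gy$; and $g$ preserves $\mr H$ because it permutes the short roots $\mr R$ (as $h(g(r),g(r))=h(r,r)$) and $g(H_r)=H_{g(r)}$, so $gx\in\mr H$. For the reflection condition the key computation is $g\circ\phi_r\circ g^{-1}=\phi_{g(r)}$, which follows directly from $\phi_r(v)=v-(1-\zeta)h(v,r)r$ and the identity $h(g^{-1}v,r)=h(v,g(r))$ coming from unitarity of $g$; combined with $x\in H_r\iff gx\in H_{g(r)}$ this yields $g\,G(x)\,g^{-1}=G(gx)$. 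Therefore $(g\beta g^{-1})\circ(g\alpha g^{-1})=g(\beta\circ\alpha)g^{-1}\in g\,G(x)\,g^{-1}=G(gx)$, so $g\cdot(x,\alpha)\sim g\cdot(y,\beta)$; hence the $L$-action descends to $Y=\wt Y/\!\sim$.

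Finally, for the description of $L\backslash\wt Y$ I would argue as follows. The $L$-action permutes the connected components $\RR H^n_\alpha$ ($\alpha\in P\mr A$) of $\wt Y$ exactly via the conjugation action of $L$ on $P\mr A$. Fixing representatives $C\mr A\subset P\mr A$ for $L\backslash P\mr A$ and writing $O_\alpha$ for the $L$-orbit of $\alpha$, the subset $\wt Y_\alpha\coloneqq\coprod_{\beta\in O_\alpha}\RR H^n_\beta$ is $L$-stable and $\wt Y=\coprod_{\alpha\in C\mr A}\wt Y_\alpha$, so $L\backslash\wt Y=\coprod_{\alpha\in C\mr A}L\backslash\wt Y_\alpha$. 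An orbit–stabilizer argument then identifies $L\backslash\wt Y_\alpha$ with $L_\alpha\backslash\RR H^n_\alpha$: the composite $\RR H^n_\alpha\hookrightarrow\wt Y_\alpha\to L\backslash\wt Y_\alpha$ is surjective because any $(y,\beta)$ with $\beta=g\alpha g^{-1}$ satisfies $g^{-1}\cdot(y,\beta)\in\RR H^n_\alpha$, and two points $(x,\alpha),(x',\alpha)$ have the same image only if $g\cdot(x,\alpha)=(x',\alpha)$ for some $g\in L$, forcing $g\alpha g^{-1}=\alpha$, i.e.\ $g\in N_L(\alpha)=L_\alpha$ by Lemma \ref{normalisator}, and $gx=x'$. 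Summing over $\alpha\in C\mr A$ gives the claimed equality (in fact a homeomorphism, since $\wt Y$ carries the disjoint-union topology and $L$ merely permutes components). I do not anticipate a genuine obstacle here: the whole statement is bookkeeping, and the only point demanding a short computation rather than formal manipulation is the conjugation formula $g\,G(x)\,g^{-1}=G(gx)$, that is, $g\phi_r g^{-1}=\phi_{g(r)}$.
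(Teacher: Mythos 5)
Your proposal is correct and follows essentially the same route as the paper: the action $g\cdot(x,\alpha)=(gx,\,g\alpha g^{-1})$, compatibility with $\sim$ via $\phi\, G(x)\,\phi^{-1}=G(\phi x)$, and the identification $L\setminus\wt Y=\coprod_{\alpha\in C\mr A}L_\alpha\setminus\RR H^n_\alpha$ using $L_\alpha=N_L(\alpha)$ (Lemma \ref{normalisator}). You merely spell out details the paper leaves implicit, such as the computation $g\phi_r g^{-1}=\phi_{g(r)}$ and the orbit--stabilizer argument for the quotient decomposition.
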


\begin{proof}
If $\phi \in L$, then $
\phi \left( \RR H^n_\alpha \right) = \RR H^n_{\phi \alpha \phi^{-1}}$ hence $L$ acts on $\widetilde Y = \coprod_{\alpha \in P\mr A} \RR H^n_\alpha$, and
$$
L \setminus \widetilde Y = L \setminus \coprod_{\alpha \in P\mr A} \RR H^n_\alpha = \coprod_{\alpha \in C\mr A} L_\alpha \setminus \RR H^n_\alpha. 
$$
Now suppose that $(x,\alpha) \sim (y,\beta) \in \widetilde Y$ and let $\phi \in L$. Then $\phi\cdot (x,\alpha) = (\phi x, \phi \alpha \phi^{-1}) \in \RR H^n_{\phi \alpha \phi^{-1}}$ and similarly, $\phi\cdot (y,\beta ) \in \RRH^n_{\phi\beta \phi^{-1}}$. We claim that 
$
(\phi x, \phi \alpha \phi^{-1}) \sim (\phi y , \phi \beta \phi^{-1}). 
$
For this, we may and do assume that $(x,\alpha)\neq (y,\beta)$, hence $x = y \in \mr H$ and $\beta = g \circ \alpha$ for some $g \in G(x)$ as in Definition \ref{def:conditions}. We get
\[
\phi \beta \phi^{-1} \circ \phi \alpha \phi^{-1} = \phi \circ (\beta \circ \alpha) \circ \phi^{-1} \in \phi G(x)\phi^{-1} = G(\phi (x)). 
\]
This proves the lemma. 
\end{proof}

\begin{definition} \label{gluedspace}
Let $\mr L = (K, \Lambda)$ be an admissible hermitian lattice of rank $n+1 \geq 2$. 
Define $$Y(\mr L) = \wt Y/ \sim$$ as the quotient of $\widetilde Y$ by the equivalence relation $\sim$ introduced in Definition \ref{def:conditions}, and equip $Y(\mr L)$ with the quotient topology. Consider the natural action of the group $L$ on $Y(\mr L)$, cf.\ Lemma \ref{lemma:pgammaaction}. Define$$
M(\mr L) \coloneqq L \setminus Y(\mr L).$$ Let $C\mr A \subset P\mr A$ be a set of representatives for the action of $L$ on $P\mr A$ by conjugation. We call $M(\mr L)$ the \textit{attachment of the spaces $L_\alpha \sm \RR H^n_\alpha$ for $\alpha \in C\mr A$}. 
\end{definition}

\subsection{Basic properties of the attachment}

\begin{lemma} \label{intersection} The following assertions are true.
\begin{enumerate}
\item \label{dos}
Let $\alpha, \beta \in P\mr A$ and $x \in \RR H^n_\alpha \cap \RR H^n_\beta$ such that $(x, \alpha) \sim (x,\beta)$. Then $(y,\alpha) \sim (y,\beta)$ for every $y \in \RR H^n_\alpha \cap \RR H^n_\beta$. 
\item \label{tres} The natural map $\widetilde Y \to \CC H^n$ descends to a $L$-equivariant map $Y(\mr L) \to \CC H^n$. 
\end{enumerate}
\end{lemma}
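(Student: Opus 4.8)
The plan is to dispatch the two assertions in turn; the second, \ref{tres}, is essentially formal, while the first, \ref{dos}, carries the content. For \ref{tres}: the natural map $\wt Y = \coprod_{\alpha \in P\mr A} \RR H^n_\alpha \to \CC H^n$, which on the component indexed by $\alpha$ is the inclusion $\RR H^n_\alpha = (\CC H^n)^\alpha \hookrightarrow \CC H^n$, is continuous. By condition \ref{item1} of Definition \ref{def:conditions}, any two $\sim$-equivalent points of $\wt Y$ have the same image in $\CC H^n$, so this map factors through $Y(\mr L) = \wt Y/\!\sim$; by the universal property of the quotient topology the induced map $Y(\mr L) \to \CC H^n$ is continuous. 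For $L$-equivariance I would use Lemma \ref{lemma:pgammaaction}: $\phi \cdot (x,\alpha) = (\phi x, \phi\alpha\phi^{-1})$ is sent to $\phi x$, so the map $\wt Y \to \CC H^n$, and hence its factorization through $Y(\mr L)$, is $L$-equivariant.

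For \ref{dos}, I may assume $\alpha \neq \beta$, the case $\alpha = \beta$ being trivial by reflexivity of $\sim$. Set $g \coloneqq \beta \circ \alpha \in L$; since $\alpha$ and $\beta$ are involutions and $\alpha \neq \beta$, we have $g \neq \id$. The hypothesis $(x,\alpha) \sim (x,\beta)$ together with condition \ref{item2} of Definition \ref{def:conditions} gives $x \in \mr H$ and $g \in G(x)$. Write $\ca H(x) = \{H_{r_1}, \dots, H_{r_k}\}$, a finite set by Lemma \ref{lemma:beauville:finite}; the $r_j$ are mutually orthogonal by Theorem \ref{orthogonal} (all of these hyperplanes pass through $x$), and by Lemma \ref{Gxlemma} we have $G(x) = \langle [\phi_{r_1}], \dots, [\phi_{r_k}] \rangle \cong (\ZZ/m)^k$. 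Hence $g$ is the class of $\phi_{r_1}^{i_1} \circ \cdots \circ \phi_{r_k}^{i_k} \in \Aut(\Lambda)$ for unique $i_j \in \ZZ/m$, not all zero; put $S = \{ j : i_j \not\equiv 0 \bmod m \} \neq \emptyset$.

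The key step is then the following. By Lemma \ref{intersection:complexpart} applied to the mutually orthogonal family $\{r_j : j \in S\}$, the fixed locus in $\CC H^n$ of the lift $\prod_{j \in S} \phi_{r_j}^{i_j}$ of $g$ is $\bigcap_{j \in S} H_{r_j}$; since $\mu_K$ acts trivially on $\PP(V)$, this coincides with $(\CC H^n)^g$. On the other hand, every $y \in \RR H^n_\alpha \cap \RR H^n_\beta$ satisfies $\alpha(y) = y = \beta(y)$, hence $g(y) = \beta(\alpha(y)) = y$, so $y \in (\CC H^n)^g = \bigcap_{j \in S} H_{r_j}$. Thus $H_{r_j} \in \ca H(y)$ for each $j \in S$, so $[\phi_{r_j}] \in G(y)$ for each $j \in S$, and therefore $g = \prod_{j \in S} [\phi_{r_j}]^{i_j} \in G(y)$; as $g \neq \id$ this forces $G(y) \neq \{\id\}$, whence $y \in \mr H$. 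So conditions \ref{item1} and \ref{item2} of Definition \ref{def:conditions} hold for $(y,\alpha)$ and $(y,\beta)$, i.e.\ $(y,\alpha) \sim (y,\beta)$, which is what we wanted.

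I do not expect a serious obstacle here: the only real content is the identification, via Lemma \ref{intersection:complexpart}, of $(\CC H^n)^g$ with the intersection of exactly those hyperplanes $H_{r_j}$ whose reflection enters $g$ nontrivially, combined with the elementary inclusion $\RR H^n_\alpha \cap \RR H^n_\beta \subseteq (\CC H^n)^g$. The only point to watch is the bookkeeping between $\Aut(\Lambda)$ and $L = \Aut(\Lambda)/\mu_K$ when invoking Lemma \ref{intersection:complexpart} (which is phrased for elements of $\Aut(\Lambda)$) for the class $g \in L$ — harmless, since $\mu_K$ scales vectors and hence acts trivially on $\CC H^n$, so the fixed locus does not depend on the chosen lift.
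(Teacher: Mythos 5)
Your proof is correct and follows essentially the same route as the paper: writing $g=\beta\circ\alpha\in G(x)$ as a product of commuting reflections via Lemma \ref{Gxlemma} and identifying its fixed locus with $\bigcap_{j\in S}H_{r_j}$ by Lemma \ref{intersection:complexpart}, so that any $y\in\RR H^n_\alpha\cap\RR H^n_\beta$ lies on those hyperplanes and hence $g\in G(y)$. Your extra care with the $\Aut(\Lambda)$ versus $L=\Aut(\Lambda)/\mu_K$ lift and with the trivial case $\alpha=\beta$ is fine but does not change the argument.
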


\begin{proof}
Let us prove item \ref{dos}. Since $(x,\alpha)  \sim (x,\beta)$, there exists $g \in G(x)$ such that $\beta = g \circ \alpha$. Write $\ca H(x) = \set{H_{r_1}, \dotsc, H_{r_k}}$, so that $$G(x) = \langle \phi_{r_1}, \dotsc, \phi_{r_k} \rangle, \quad \quad g = \phi_{r_1}^{n_1} \circ \cdots \circ \phi_{r_k}^{n_k}, \quad \quad n_1, \dotsc, n_k \in \ZZ/m.$$
Let $I \subset \set{1, \dotsc, k}$ be the subset of $i \in \set{1, \dotsc, k}$ such that $n_i \not \equiv 0 \bmod m$. Then, since $\beta = g \circ \alpha$, we have
\begin{align} \label{align:intersection-betagamma}
\RR H^n_\alpha \cap \RR H^n_\beta = \RR H^n_\alpha  \cap (\CC H^n)^g = \RR H^n_\alpha \cap \bigcap_{i \in I} H_{r_i}
\end{align}
where the second equality holds by Lemma \ref{intersection:complexpart}. In particular, item \ref{dos} follows. 
Item \ref{tres} is clear: if $(x, \alpha) \sim (y,\beta) \in \wt Y$, then $x = y \in \CC H^n$. 
\end{proof}


\begin{lemma} \label{lemma:openembeddingtopological}
In the above notation, the following assertions are true. 
\begin{enumerate}
\item \label{item:above-not-1}For each $\alpha \in P\mr A$, the image of the natural continuous map 
\[
L_\alpha \setminus \RR H^n_\alpha \longrightarrow M(\mr L)
\]
is closed in $M(\mr L)$. 
\item \label{item:openembedding}
Let $C\mr A \subset P\mr A$ be a set of representatives for the action of $L$ on $P\mr A$ by conjugation.
The composition
\[
\coprod_{\alpha \in C\mr A} L_\alpha \setminus \left(\RR H^n_\alpha - \mr H\right) \xlongrightarrow{\sim}L \setminus \left( \coprod_{\alpha \in P\mr A} \RR H^n_\alpha - \mr H \right) \longrightarrow L  \setminus Y(\mr L) = M(\mr L)
\]
is an open embedding of topological spaces. 
\end{enumerate}
\end{lemma}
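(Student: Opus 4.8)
The plan is to test everything on preimages in $\wt Y = \coprod_{\gamma \in P\mr A} \RR H^n_\gamma$. Write $\pi \colon \wt Y \to Y(\mr L)$ and $\bar\pi \colon Y(\mr L) \to M(\mr L)$ for the two quotient maps, so that $q := \bar\pi \circ \pi$ is again a quotient map; then a subset of $M(\mr L)$ is closed (resp.\ open) if and only if its $q$-preimage is closed (resp.\ open) in $\wt Y$, and the whole proof is a computation of $\sim$-saturations of subsets of $\wt Y$, the essential input being the local finiteness of the family $(H_r)_{r \in \mr R}$ (Lemma \ref{lemma:beauville:finite}).

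For part~\ref{item:above-not-1}, the first step is to identify the $q$-preimage $S_\alpha$ of the image of the natural map $L_\alpha \setminus \RR H^n_\alpha \to M(\mr L)$. Using that every element of $G(y)$ fixes $y$ and that elements of $P\mr A$ are involutions, one unwinds Definition \ref{def:conditions} together with the $L$-action to find: a point $(y,\gamma) \in \wt Y$ lies in $S_\alpha$ precisely when either $\gamma$ lies in the $L$-orbit of $\alpha$ (in which case the entire component $\RR H^n_\gamma$ lies in $S_\alpha$), or $\gamma$ is not $L$-conjugate to $\alpha$, $y \in \mr H$, and there exists $g \in G(y)$ with $g \circ \gamma \in P\mr A$ again lying in the $L$-orbit of $\alpha$. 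Since $\wt Y$ is a topological disjoint union of the $\RR H^n_\gamma$, it now suffices to show that for each $\gamma$ not $L$-conjugate to $\alpha$ the set $T_{\gamma,\alpha} := S_\alpha \cap \RR H^n_\gamma$ is closed in $\RR H^n_\gamma$. I would do this locally: given $y_0 \in \RR H^n_\gamma$, Lemma \ref{lemma:beauville:finite} supplies a neighbourhood $U_0$ of $y_0$ meeting only a finite set $\ca N$ of hyperplanes, so that $\ca H(y) \subseteq \ca N$ and $G(y) = \langle \phi_H : H \in \ca H(y) \rangle$ for every $y \in U_0$, where $\phi_H$ is the class of $\phi_r$ for any short root $r$ with $H_r = H$ (well defined by Proposition \ref{prop:remember}) and the description of $G(y)$ is Lemma \ref{Gxlemma}. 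For a subset $S \subseteq \ca N$ the property
\[
(\star_S)\colon\quad \exists\, g \in \langle \phi_H : H \in S \rangle\ \text{ with }\ g \circ \gamma \in P\mr A\ \text{ and }\ g \circ \gamma\ \text{ in the } L\text{-orbit of } \alpha
\]
depends only on $S$, and the key identity is
\[
T_{\gamma,\alpha} \cap U_0 \;=\; \bigcup_{\substack{S \subseteq \ca N \\ (\star_S)\ \text{holds}}} \Big( U_0 \cap \bigcap_{H \in S} H \Big),
\]
which holds because $\ca H(y) \supseteq S$ forces $G(y) \supseteq \langle \phi_H : H \in S \rangle$, while $(\star_\emptyset)$ fails (so every point of the right-hand side lies on some hyperplane, hence in $\mr H$). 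The right-hand side is a \emph{finite} union of closed subsets of $U_0$; hence $T_{\gamma,\alpha}$ is closed, $S_\alpha$ is closed in $\wt Y$, and the image of $L_\alpha \setminus \RR H^n_\alpha$ in $M(\mr L)$ is closed.

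For part~\ref{item:openembedding}, set $U := \coprod_{\alpha \in P\mr A} (\RR H^n_\alpha - \mr H) \subseteq \wt Y$; it is open, because $\RR H^n_\alpha \cap \mr H$ is closed in $\RR H^n_\alpha$ by Lemma \ref{lemma:beauville:finite}, and it is $L$-invariant because $\mr H$ is. Running the argument of Lemma \ref{lemma:pgammaaction} with $U$ in place of $\wt Y$ identifies the first arrow in the statement with the homeomorphism $\coprod_{\alpha \in C\mr A} L_\alpha \setminus (\RR H^n_\alpha - \mr H) \xrightarrow{\ \sim\ } L \setminus U$. The second step is to observe that $\sim$ restricts trivially to $U$: if $(x,\alpha) \in U$ and $(x,\alpha) \sim (y,\beta)$ then $x = y$, and $\alpha \neq \beta$ is impossible since it would force $x \in \mr H$ by Definition \ref{def:conditions}. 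Thus $U$ — and likewise any open subset of $U$ — is $\sim$-saturated, so $\pi|_U \colon U \to \pi(U)$ is a continuous open bijection onto the open, $L$-invariant subset $\pi(U) \subseteq Y(\mr L)$, i.e.\ an open embedding. Passing to $L$-quotients, $\bar\pi(\pi(U))$ is open in $M(\mr L)$ (its $\bar\pi$-preimage is $\pi(U)$) and $L \setminus U \to \bar\pi(\pi(U))$ is a homeomorphism, because $\bar\pi$ restricted to the $L$-invariant open set $\pi(U)$ is an open surjection whose fibres are exactly the $L$-orbits. Composing, the map of the statement is a homeomorphism onto an open subset of $M(\mr L)$, i.e.\ an open embedding.

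The step I expect to be the main obstacle is the closedness assertion in part~\ref{item:above-not-1}. A priori $S_\alpha$ is an infinite union of strata of the hyperplane stratifications of the (possibly infinitely many) components $\RR H^n_\gamma$, twisted by all $L$-conjugates of $\alpha$, and there is no reason such a union should be closed. What makes it work is that local finiteness of $(H_r)_{r \in \mr R}$ collapses $S_\alpha$, in a neighbourhood of any point, to a \emph{finite} union of closed ``linear'' slices $\bigcap_{H \in S} H$, the combinatorial datum $(\star_S)$ being locally constant. (It is immaterial that $(\star_S)$ may be hard to decide when $\langle \phi_H : H \in S \rangle$ is infinite; one may instead restrict the union to those $S$ that arise as some $\ca H(y)$, for which the group is finite by Lemma \ref{Gxlemma}.)
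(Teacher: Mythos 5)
Your proof is correct, and its overall strategy is the paper's: test closedness and openness on preimages in $\wt Y$, with Lemma \ref{lemma:beauville:finite} as the decisive input. For part \ref{item:openembedding} your argument is essentially identical to the paper's: the paper also observes that $\wt Y_0 = \wt Y - \coprod_\alpha(\RR H^n_\alpha \cap \mr H)$ is open and $\sim$-saturated (any point equivalent to a point off $\mr H$ equals it), so that $p|_{\wt Y_0}$ is an open embedding, and then passes to $L$-quotients via Lemma \ref{lemma:pgammaaction}; you merely spell out the quotient step in more detail. For part \ref{item:above-not-1} you organize the computation differently. The paper fixes one point $z$ of the trace of the saturation on $\RR H^n_\beta$, uses Lemma \ref{intersection} (equation \eqref{align:intersection-betagamma}) to write $\RR H^n_\gamma \cap \RR H^n_\beta$ as $\RR H^n_\beta \cap H_{r_1}\cap\cdots\cap H_{r_p}$ for a single conjugate $\gamma$ of $\alpha$, expresses the trace as the $L_\beta$-orbit of this set, and concludes closedness from local finiteness of the translated hyperplanes. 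You instead characterize membership in the saturation pointwise via $G(y)$ and then, around each point, decompose the trace as a finite union over node sets $S \subseteq \ca N$ satisfying your condition $(\star_S)$, each contributing the closed slice $\bigcap_{H\in S}H$. Your local stratification is somewhat longer but uniformly accounts for the possibility that several conjugates of $\alpha$, not all in one $L_\beta$-orbit, glue onto the same $\RR H^n_\beta$ — a point the paper's single-$\gamma$ orbit formula treats more briskly — while the paper's route is shorter by leaning on Lemma \ref{intersection}. Both arguments are valid, and your observation that $(\star_\emptyset)$ fails (so that the union automatically lands in $\mr H$) correctly closes the only delicate inclusion.
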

\begin{proof}
1. Let $\varphi \colon L_\alpha \setminus \RR H^n_\alpha \to M(\mr L)$ denote the natural map. 
Let $p \colon \wt Y \to Y(\mr L)$ and $q \colon Y(\mr L) \to L\setminus Y(\mr L) = M(\mr L)$ be the respective quotient maps; it suffices to show that $p^{-1}(q^{-1}(\varphi(L_\alpha \sm \RR H^n_\alpha)))$ is closed in $\wt Y$, and hence that \begin{align} \label{align:int-nonempty}p^{-1}(q^{-1}(\varphi(L_\alpha \sm \RR H^n_\alpha))) \cap \RR H^n_\beta\end{align} is closed in $\RR H^n_\beta$ for each $\beta \in P\mr A$. We first consider the case $\beta = \alpha$. 
Note that $$p^{-1}(q^{-1}(\varphi(L_\alpha \sm \RR H^n_\alpha))) \cap \RR H^n_\alpha = \RR H^n_\alpha$$
which is indeed closed in $\RR H^n_\alpha$. For $\beta \neq \alpha \in P\mr A$, assume that \eqref{align:int-nonempty} is non-empty, and let $z$ be a point in the intersection. Then there exists $g \in L$ such that for $y \coloneqq g \cdot x$ and $\gamma \coloneqq g \alpha g^{-1}$, we have $(y, \gamma) \sim (z, \beta)$. In particular, $y = z \in \RR H^n_\gamma \cap \RR H^n_\beta$, and by Lemma \ref{intersection} and its proof (see equation \eqref{align:intersection-betagamma}), we have 
$$p^{-1}(q^{-1}(\varphi(L_\alpha \sm \RR H^n_\alpha))) \cap \RR H^n_\beta = L_\beta \cdot \left(\RR H^n_\gamma \cap \RR H^n_\beta \right) = L_\beta \cdot  \left(\RR H^n_\beta \cap H_{r_1} \cap \cdots \cap H_{r_p} \right)
$$
for suitable $r_1, \dotsc, r_p \in \mr R$. Moreover, we have
\begin{align} \label{align:locallyfiniteRHbeta}
L_\beta \cdot  \left(\RR H^n_\beta \cap \bigcap_{i =1}^p H_{r_i} \right) = \bigcup_{g \in L_\beta}\left( \RR H^n_\beta \cap  H_{g r_1} \cap \cdots \cap H_{g r_p } \right).
\end{align}
Since the set $\set{H_r}_{r \in \mr R}$ of subsets of $\CC H^n$ is locally finite (see Lemma \ref{lemma:beauville:finite}), the set $$\set{\RR H^n_\beta \cap  H_{g r_1 } \cap \cdots \cap H_{g r_p }}_{g \in L_\beta}$$ of subsets of $\RR H^n_\beta$ is locally finite. In particular, the set on the right hand side of the equality in \eqref{align:locallyfiniteRHbeta} is closed in $\RR H^n_\beta$, and we are done. 

2. Let $\mr H_\RR \coloneqq \sqcup_{\alpha \in P\mr A} (\RR H^n_\alpha \cap \mr H) \subset \wt Y$ and let $\wt Y_0 \coloneqq \wt Y - \mr H_\RR$. 
 Let $p \colon \wt Y \to Y(\mr L)$ be the quotient map. We claim that the restriction $\psi \coloneqq p|_{\wt Y_0} $, which is a map 
\[
\psi \colon \wt Y_0 \longrightarrow Y(\mr L),
\]
is an open embedding. Note that $\psi$ is injective, because if $\psi(x,\alpha) = \psi(y,\beta)$ then $(x, \alpha) \sim (y,\beta)$ which implies that $(x,\alpha) = (y,\beta)$ as $x, y \not \in \mr H$. Moreover, for any open subset $U \subset \wt Y_0$, the subset $\psi(U)$ is open in $Y(\mr L)$ because $p^{-1}(\psi(U)) = U \subset \wt Y_0 \subset \wt Y$ and $ \wt Y_0$ is open in $\wt Y$ by Lemma \ref{lemma:beauville:finite}. We are done. 
\end{proof}

%

\section{Hyperbolic orbifold structure of the attachment} \label{section-section-section}


Section \ref{section-section-section} is devoted to the proof of Theorem \ref{theorem:introduction:uniformization}. 
The key is the following result. 

\begin{proposition} \label{glueingtheorem1} Let $\mr L = (K, \Lambda)$ be an admissible hermitian lattice of rank $n+1 \geq 2$. With the notation of Section \ref{gluing}, the following assertions are true. 
\begin{enumerate}%
\item \label{completepart}
Each path-connected component $Y(\mr L)_i$ of the space $Y(\mr L)$ admits a path metric such that the natural map $Y(\mr L)_i \to \CC H^n$ is a local isometric embedding. 
\item  \label{completepart:2} The above path metrics on the path-connected components of $Y(\mr L)$ induce complete path metrics on the path-connected components of $M(\mr L) = L \setminus Y(\mr L)$. 
\item \label{item:importanttheoremitem}
Each $x \in M(\mr L)$ admits an open neighbourhood $U \subset M(\mr L)$ which is isometric to the quotient of a connected open $W \subset \RR H^n$ by a finite group of isometries.
\end{enumerate}
\end{proposition}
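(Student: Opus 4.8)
The plan is to build a path metric on $Y(\mr L)$ for which the tautological map to $\CC H^n$ is a local isometry, analyse the local structure of $M(\mr L)=L\backslash Y(\mr L)$ at an arbitrary point, and deduce completeness from the arithmeticity already in hand. For part~\ref{completepart}: let $\pi\colon Y(\mr L)\to\CC H^n$ be the $L$-equivariant map of Lemma~\ref{intersection}(\ref{tres}) and give $\CC H^n$ its invariant metric. For a path $\gamma$ in $Y(\mr L)$ set $\operatorname{length}(\gamma):=\operatorname{length}_{\CC H^n}(\pi\circ\gamma)$; since every $\RR H^n_\alpha\hookrightarrow\CC H^n$ is totally geodesic this is the intrinsic length inside a single sheet. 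On a path component $Y(\mr L)_i$ put $d_i(p,q)=\inf_\gamma\operatorname{length}(\gamma)$ over paths from $p$ to $q$. Finiteness holds because, by Lemma~\ref{lemma:beauville:finite} and Theorem~\ref{orthogonal}, $Y(\mr L)_i$ is locally a finite union of totally geodesic subspaces of $\CC H^n$, hence locally connected by rectifiable paths; symmetry and the triangle inequality are immediate; and $d_i(p,q)=0$ forces $p=q$, since $\pi$ is $1$-Lipschitz (so $\pi(p)=\pi(q)=:x$) and, by local finiteness of $\mr H$, near $x$ the sheets are glued only along coordinate subspaces of the arrangement, so a path of small length cannot join two distinct points of the finite fibre $\pi^{-1}(x)$. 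By construction $\pi$ restricts to an isometric embedding on each sheet and is, near every point, injective and length-preserving; and $L$ acts by isometries, so $d_i$ descends to a metric on each component of $M(\mr L)$.

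\emph{Local structure at a node.} Fix $x\in\CC H^n$ in the image of $\wt Y$ with nodes $H_{r_1},\dots,H_{r_k}$. By Theorem~\ref{orthogonal} the $r_j$ are pairwise orthogonal, so a basis of $\ell^\perp$ adapted to $\langle r_1\rangle\oplus\dots\oplus\langle r_k\rangle\oplus W$ identifies a neighbourhood of $x$ in $\CC H^n$ with an open set in $\CC_{z_1}\times\dots\times\CC_{z_k}\times W$ in which $H_{r_j}=\{z_j=0\}$ and $[\phi_{r_j}^{\,i}]$ acts by $z_j\mapsto\zeta^i z_j$, fixing the other coordinates (Lemma~\ref{intersection:complexpart}). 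Since $L$ acts properly discontinuously with finite stabilisers on $\CC H^n$ and $\pi$ is equivariant, a neighbourhood of the image of $x$ in $M(\mr L)$ is $\pi^{-1}(B)/L_x$, where $B\ni x$ is a small ball and $L_x=\Stab_L(x)$ is finite; and $\pi^{-1}(B)$ is a finite disjoint union of ``books'', one per $\sim$-class of pairs $(x,\alpha)$, a book being the union of the sheets $\RR H^n_\beta$ with $\beta$ in that class, glued along the coordinate subspaces prescribed by Lemma~\ref{intersection} and Proposition~\ref{preliminaryproposition}. In these coordinates the class of $(x,\alpha)$ consists of the $\RR H^n_{g\alpha}$ with $g=\prod_j\phi_{r_j}^{\,i_j}$ and $i_j\equiv i_{\alpha(j)}\ (\mathrm{mod}\ m)$, which in each $\CC_{z_j}$-factor is a real line through the origin, so the book is a ``star'' bundle over the real form of the $W$-factor.

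\emph{Unfolding the books (part~\ref{item:importanttheoremitem}): the crux.} Each $[\phi_{r_j}]$ fixes $x$, and since $G(x)$ is abelian (Lemma~\ref{Gxlemma}) it centralises $G(x)$; combining this with $\alpha\circ\phi_{r_j}^{-1}=\phi_{\alpha(r_j)}\circ\alpha$ (Lemma~\ref{anti-involution}) and $\phi_{\alpha(r_j)}=\phi_{r_{\alpha(j)}}$ (Lemma~\ref{alphaswitch}, Proposition~\ref{prop:remember}) one checks that $[\phi_{r_j}]$ stabilises the $\sim$-class of $(x,\alpha)$. Thus $\langle[\phi_{r_1}],\dots,[\phi_{r_k}]\rangle$ lies in the stabiliser of each class in $L_x$ and acts on the corresponding book by the coordinate rotations $z_j\mapsto\zeta z_j$. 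A direct computation in the local model, which reduces to the one-variable fact that a $2m$-pointed metric star modulo rotation by $2\pi/m$ is isometric to a line, shows that the quotient of a book by these rotations is an open ball in $\RR H^n$; dividing further by the finite residual group gives an isometry with $W'/\Gamma$ for a connected open $W'\subset\RR H^n$ and a finite group $\Gamma$ of isometries, which is part~\ref{item:importanttheoremitem}. This local identification, together with the case analysis governed by the way $\alpha$ permutes the nodes $\{H_{r_1},\dots,H_{r_k}\}$, is the step I expect to be the main obstacle.

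\emph{Completeness (part~\ref{completepart:2}) and conclusion.} By Theorem~\ref{th:crucialthm-finitevolume} each $L_\alpha$ is a lattice in $\PO(n,1)$, so each $L_\alpha\backslash\RR H^n_\alpha$ is a complete, hence proper, hyperbolic orbifold, and by Lemma~\ref{lemma:openembeddingtopological}(\ref{item:above-not-1}) its image in $M(\mr L)$ is closed; since $C\mr A$ is finite (Proposition~\ref{proposition:galois}) and $L\backslash\CC H^n$ is a complete finite-volume complex hyperbolic orbifold, the induced map $M(\mr L)\to L\backslash\CC H^n$ is proper. Hence closed metric balls in $M(\mr L)$ are compact, so each component of $M(\mr L)$ is a proper length space and is complete by the Hopf--Rinow theorem for length spaces. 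By the previous paragraph it is moreover locally modelled on $\RR H^n$ modulo finite groups of isometries, so the developing map identifies each component with $\Gamma_i\backslash\RR H^n$ for a lattice $\Gamma_i\subset\PO(n,1)$, completing all three parts.
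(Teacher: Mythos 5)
Your parts \ref{completepart} and \ref{completepart:2} are essentially sound and follow the same lines as the paper (pull back the length structure along $\mr P$, use properness of $\mr P$ together with completeness of $\CC H^n$, then pass to the quotient), but the step you yourself flag as the crux of part \ref{item:importanttheoremitem} contains a genuine error. You claim that the quotient of a book by the coordinate rotations $\langle[\phi_{r_1}],\dots,[\phi_{r_k}]\rangle$ is an open ball in $\RR H^n$, reducing to the one-variable fact that a $2m$-pointed star modulo rotation by $2\pi/m$ is a line. That reduction only works in the real-node coordinates. For a pair of complex conjugate nodes, in the corresponding coordinates $(t_1,t_2)$ the book is $\{t_2^m=\bar t_1^m\}$, i.e.\ a union of $m$ totally real $2$-planes $\{t_2=\bar t_1\zeta^j\}$ in $\CC^2$, not a product of stars. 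The group $\langle\phi_{r_1},\phi_{r_2}\rangle\cong(\ZZ/m)^2$ sends the sheet $t_2=\bar t_1\zeta^j$ to $t_2=\bar t_1\zeta^{j+a+b}$ under $(t_1,t_2)\mapsto(\zeta^a t_1,\zeta^b t_2)$, so it permutes the $m$ sheets transitively while the stabilizer of each sheet acts on it by a rotation of order $m$; the quotient is therefore a metric cone of angle $2\pi/m$, not $\RR^2$. So whenever $f$ has at least one pair of complex conjugate nodes, your intermediate quotient is not an open subset of $\RR H^n$, and the subsequent sentence "dividing further by the finite residual group gives an isometry with $W'/\Gamma$" has nothing to act on by isometries of $\RR H^n$: you would still have to show that this singular space, together with the residual action, is a quotient of a ball by a finite group, which is exactly the content you have not supplied.

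The paper's proof of Proposition \ref{localmodel} avoids this by quotienting only by $B_f$, the group generated by the reflections in the \emph{real} nodes (where the star-modulo-rotation argument is valid), obtaining $B_f\setminus Y_f$ as a union of $m^a$ copies of $\BB^n(\RR)$ glued along common $\BB^{2c}(\RR)$'s; the conjugate-pair rotations are kept in the residual group $L_f/B_f$. One then proves that $L_f$ acts transitively on these copies \emph{and} that the stabilizer of each point acts transitively on the copies through that point (Lemma \ref{lemma:elementary-lemma} and the explicit computation in Proposition \ref{localmodel}.\ref{casefive}); only this stabilizer-transitivity lets one conclude $L_f\setminus Y_f\cong\Gamma_f\setminus\BB^n(\RR)$ with $\Gamma_f$ the stabilizer of a single copy, which is the required model "ball modulo finite group" (the cone points you produced prematurely then appear legitimately inside $\Gamma_f\setminus\BB^n(\RR)$, since $\Gamma_f$ contains the diagonal rotations). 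A smaller but related omission: you take the neighbourhood in $M(\mr L)$ to be $\pi^{-1}(B)/L_x$ without checking that $L_x\setminus\pi^{-1}(B)\to M(\mr L)$ is injective; identifications coming from the equivalence relation and from elements of $L$ outside $L_x$ must be ruled out, which is what Lemma \ref{localisometry}.\ref{trois} does via an $L_x$-equivariant chart in $\CC H^n$ and the compatibility of $\sim$ with the $L$-action.
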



\subsection{Preliminaries on path metrics} \label{section:prelim-path}

In this section, we provide some preliminary results regarding pull-back metrics and quotient metrics. These results are certainly well-known; we state and prove them for lack of suitable reference. 

In this paper, a continuous map of topological spaces $f \colon X \to Y$ is called \emph{proper} if the inverse image $f^{-1}(K)$ of any compact subset $K \subset Y$ is compact. 
\begin{lemma}\label{lemma:lee}
Let $X$ and $Y$ be topological spaces such that $Y$ is locally compact and Hausdorff. Let $f \colon X \to Y$ be a proper continuous map. Then $f$ is closed.  
\end{lemma}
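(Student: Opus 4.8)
The plan is to fix a closed subset $C \subseteq X$ and prove that its image $f(C)$ is closed in $Y$ by showing that $f(C)$ contains each of its closure points. So first I would pick an arbitrary point $y \in \overline{f(C)}$ and aim to show $y \in f(C)$.

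The main tool is local compactness of $Y$: it lets me choose an open neighbourhood $V$ of $y$ whose closure $\overline{V}$ is compact. Then properness of $f$ gives that $f^{-1}(\overline{V})$ is compact. Consequently $C \cap f^{-1}(\overline{V})$ is a closed subset of a compact space, hence compact, and its continuous image $f\bigl(C \cap f^{-1}(\overline{V})\bigr)$ is a compact — and therefore, $Y$ being Hausdorff, closed — subset of $Y$.

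It then remains to observe that $y$ already lies in this compact set. This follows from the chain of inclusions
\[
f(C) \cap V \subseteq f\bigl(C \cap f^{-1}(V)\bigr) \subseteq f\bigl(C \cap f^{-1}(\overline{V})\bigr),
\]
where the first holds because $f(c) \in f(C) \cap V$ with $c \in C$ forces $c \in f^{-1}(V)$, and the second because $V \subseteq \overline{V}$. Since $y \in \overline{f(C)}$ and $V$ is an open neighbourhood of $y$, every neighbourhood of $y$ meets $f(C) \cap V$, so $y \in \overline{f(C) \cap V} \subseteq \overline{f\bigl(C \cap f^{-1}(\overline{V})\bigr)} = f\bigl(C \cap f^{-1}(\overline{V})\bigr) \subseteq f(C)$, using in the last line that the set in question was shown to be closed. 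Hence $y \in f(C)$ and $f(C)$ is closed.

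I do not expect a genuine obstacle; the one subtlety worth flagging is that local compactness must be used in the form ``$y$ has an open neighbourhood with compact closure'' rather than merely ``$y$ has a compact neighbourhood'', since we need both to apply properness to $\overline{V}$ and to be certain that $y$ itself sits inside the compact piece $f\bigl(C \cap f^{-1}(\overline{V})\bigr)$ whose closedness drives the argument. Note also that Hausdorffness of $Y$ enters only to upgrade ``compact image'' to ``closed image''.
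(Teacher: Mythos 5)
Your argument is correct, and every step checks out: the passage from local compactness plus Hausdorffness to an open neighbourhood $V$ of $y$ with $\overline{V}$ compact, the compactness of $C \cap f^{-1}(\overline{V})$ and hence closedness of its image, and the inclusion $f(C)\cap V \subseteq f\bigl(C\cap f^{-1}(\overline{V})\bigr)$ that forces $y \in f(C)$. The paper itself gives no argument here — it only cites the result as well known (Lee, \emph{Introduction to Topological Manifolds}, Theorem A.57) — and your proof is precisely the standard one underlying that citation, so there is nothing to compare beyond noting that you have supplied the details the paper omits.
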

\begin{proof}
This is well-known (see e.g.\ \cite[Theorem A.57]{MR2954043}).  
\end{proof}

\begin{lemma} \label{lemma:pathmetrics-pullback}
Let $f \colon X \to Y$ a local embedding of path-connected topological spaces. Assume $Y$ is given a path metric $d_Y$, and $X$ is Hausdorff. For a path $\gamma \colon [a,b] \to X$, define its length $\ell(\gamma)$ as $\ell(\gamma) = \ell(f \circ \gamma)$, where the latter denotes the length of the path $f_\ast(\gamma) \colon [a,b] \to Y$, $f_\ast(\gamma) = f \circ \gamma$. 
For $x,y \in X$, let $d_X(x,y)$ be the infimum of the lengths of paths which join $x$ and $y$. Then the following assertions are true.
\begin{enumerate}
\item $d_X$ defines a path metric on $X$ which is compatible with the topology of $X$. 
\item With respect to the path metric $d_X$, the map $f \colon X \to Y$ is a local isometric embedding. 
\item For each $x,y \in X$, we have $d_Y(f(x), f(y)) \leq d_X(x,y)$. In particular, the map $f$ is Lipschitz continuous. 
\end{enumerate}
\end{lemma}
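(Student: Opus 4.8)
The plan is to verify directly that $d_X$ is a finite-valued path metric inducing the topology of $X$, transporting the local structure of $(Y,d_Y)$ along $f$, after which parts (2) and (3) follow with little extra work. First I would fix, for each $x\in X$, an open neighbourhood $U$ on which $f$ restricts to a homeomorphism $f|_U\colon U\xrightarrow{\sim}f(U)\subseteq Y$. Symmetry of $d_X$ is immediate (reversing a path does not change the length of its image), non-negativity is clear, and the triangle inequality follows from concatenation of paths together with the additivity of $\ell_Y$ — exactly as for any length metric. Finiteness $d_X(x,y)<\infty$ requires a small argument because a general continuous path need not have rectifiable image: I would take a path $\gamma$ from $x$ to $y$ (possible since $X$ is path-connected), cover its compact image by finitely many embedding neighbourhoods $U_i$, subdivide $[0,1]$ finely enough, and replace $\gamma$ on each subinterval by the $(f|_{U_i})^{-1}$-lift of a finite-length path in $Y$ joining the images of the endpoints (such a path exists since $(Y,d_Y)$ is a path metric space, and after sufficient subdivision it can be taken to remain in $f(U_i)$), then concatenate.

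The delicate axiom, and the step I expect to be the main obstacle, is non-degeneracy — that $d_X(x,y)>0$ for $x\neq y$ — together with the claim that $d_X$ induces the topology of $X$. My approach would be to prove that for each $x$ and each neighbourhood $W$ of $x$ with $\overline W\subseteq U$, there is $\rho>0$ such that every path $\gamma$ starting at $x$ with $\ell(\gamma)<\rho$ stays inside $W$; equivalently $B_{d_X}(x,\rho)\subseteq W$. The point is that the image $f\circ\gamma$ of such a path is a short path in $Y$ issuing from $f(x)$, hence stays in a small $d_Y$-ball around $f(x)$. If $\gamma$ nonetheless left $W$, I would set $s=\inf\{t:\gamma(t)\notin W\}$, so that $\gamma([0,s))\subseteq W\subseteq U$ and $\gamma(s)\in\overline W\subseteq U$, and analyse the exit point $\gamma(s)$: since $f|_U$ is injective and $f(\gamma(s))=\lim_{t\to s^-}f(\gamma(t))$ is forced into the small ball, comparing $\gamma(s)$ with the unique $U$-preimage of $f(\gamma(s))$ and using that $\gamma(t)=(f|_U)^{-1}(f(\gamma(t)))\to\gamma(s)$ yields $\gamma(s)\in W$, a contradiction. (Extracting, if necessary, a convergent subsequence of exit points from the compact set $\overline W$, and invoking the Hausdorff property of $X$ to identify limits, is where the hypotheses — and in the intended applications the local compactness of $Y=\CC H^n$ — really enter.) This gives non-degeneracy, and the same estimate, combined with lifting short $d_Y$-paths back through $(f|_U)^{-1}$ to produce short paths in $X$ (using that $Y$ is a length space), shows that the $d_X$-balls around $x$ form a neighbourhood basis, so $d_X$ is compatible with the topology of $X$.

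Part (3) is then immediate: any path $\gamma$ from $x$ to $y$ in $X$ satisfies $d_Y(f(x),f(y))\leq\ell_Y(f\circ\gamma)=\ell(\gamma)$, so taking the infimum gives $d_Y(f(x),f(y))\leq d_X(x,y)$ and $f$ is $1$-Lipschitz. For part (2), shrinking a neighbourhood $U$ of $x$ so that for pairs of points in $f(U)$ the distance $d_Y$ is realized up to arbitrarily small error by paths remaining inside $f(U)$ (possible because $Y$ is a length space), one lifts through $(f|_U)^{-1}$ to get $d_X(a,b)=d_Y(f(a),f(b))$ for $a,b$ in that neighbourhood, i.e.\ $f$ is a local isometric embedding. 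Finally, to confirm that $d_X$ really is a path metric, I would compare Riemann-type sums over partitions: on one hand $\ell_{d_X}(\gamma)\leq\ell(\gamma)$ by applying the definition of $d_X$ to subsegments, and on the other hand $\ell_{d_X}(\gamma)\geq\ell_{d_Y}(f\circ\gamma)=\ell(\gamma)$ by part (3) applied to subsegments, so the two length structures agree. Throughout, the core difficulty is that a local embedding need not be globally injective, so one cannot separate points or control neighbourhoods using $d_Y(f(x),f(y))$; the Hausdorff hypothesis on $X$ is precisely what substitutes for this in the limiting argument.
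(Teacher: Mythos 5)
Your part (3) and the closing comparison of the two length structures are fine and coincide with what the paper does, but the core of your argument rests on a step that fails. At several places (the finiteness of $d_X$, the claim that $d_X$-balls form a neighbourhood basis, and above all part (2)) you assume that two nearby points of $f(U)$ can be joined, up to small error in length, by a path of $Y$ that \emph{remains inside} $f(U)$, so that it can be lifted through $(f|_U)^{-1}$. Since $f$ is only a local embedding, $f(U)$ carries the subspace topology but need not be open (nor locally convex) in $Y$, and the fact that $(Y,d_Y)$ is a length space produces near-optimal paths in $Y$ with no control whatsoever on whether they meet $f(U)$. This is precisely the situation in which the lemma is used in the paper: there $f=\mr P\colon Y(\mr L)_i\to\CC H^n$, and the image of a small neighbourhood is a finite union of totally geodesic copies of $\RR H^n$ glued along smaller geodesic subspaces, a closed nowhere dense subset of $\CC H^n$, so short ambient paths between its points typically leave it entirely.

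As a consequence, what you prove in part (2), namely $d_X(a,b)=d_Y(f(a),f(b))$ for $a,b$ near $x$, is not only unobtainable from the hypotheses but false in the intended application: for two points lying on different real forms near the gluing locus, the ambient $\CC H^n$-distance is strictly smaller than the infimum of lengths of paths inside the union (the two sheets meet at an angle, and the ambient geodesic cuts the corner), which is exactly consistent with the inequality of part (3) being strict. The statement of (2) must be read, as in the paper's proof, in the length-metric sense: for $U$ small enough that $f|_U$ is a homeomorphism onto $f(U)$, the map $f|_U$ is an isometry onto $f(U)$ equipped with its \emph{induced path metric}; this follows at once from the definition $\ell(\gamma)=\ell(f\circ\gamma)$ together with the bijection between paths in $U$ and paths in $f(U)$, with no lifting of ambient short paths. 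The same remark affects your treatment of part (1): the paper handles it by observing that $d_X$ is a pseudometric, that the Hausdorff hypothesis gives definiteness, and that compatibility of the topologies is a local question settled through the homeomorphism $U\cong f(U)$, whereas your exit-point argument additionally invokes compactness of $\overline W$ and local compactness, which are not among the hypotheses (as you acknowledge), and your route to producing short paths in $X$ from short paths in $Y$ again uses the unjustified lifting.
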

\begin{proof}
1. It is clear that $d_X$ is a pseudometric. It is a path metric because $X$ is Hausdorff. We claim that the topology that $d_X$ induces on the set underlying $X$ agrees with the topology of $X$. To see this, let $x \in X$ and let $x \in U \subset X$ be an open neighbourhood such that the induced map $U \to f(U)$ is a homeomorphism. It suffices to show that the topology of $U$ agrees with the topology given by the metric restriction $d_U = d_X|_U$. This follows from the fact that the topology of $f(U) \subset Y$ agrees with the topology given by the metric restriction $d_{f(U)}= d_Y|_{f(U)}$. 

2. Let $x \in X$ and let $U \subset X$ be an open neighbourhood of $x$ such that the induced map $f \colon U \to f(U)$ is a homeomorphism. Then the paths in $U$ are in canonical bijection with the paths in $f(U)$, and also their lengths coincide by construction. Thus $f \colon U \to f(U)$ is an isometry. 

3. Let $x,y \in X$. It suffices to show that for any path $\gamma \colon [a,b] \to X$ from $x$ to $y$, we have $\ell(\gamma) \geq d_Y(f(x), f(y))$. Notice that $f_\ast(\gamma) \colon [a,b] \to Y$ is a path on $Y$ of length $\ell(f_\ast(\gamma)) = \ell(\gamma)$. Hence $\ell(\gamma) = \ell(f_\ast(\gamma))  \geq d_Y(f(x), f(y))$ as desired. 
\end{proof}

\begin{lemma} \label{lemma:uniform}
Let $X$ be a metric space and $Y$ a locally compact path metric space. 
Let $f \colon X \to Y$ be a uniformly continuous map. Assume that $f$ is proper and that $Y$ is complete. 
Then $X$ is complete. 
\end{lemma}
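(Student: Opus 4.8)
The plan is to verify directly that every Cauchy sequence in $X$ converges. So I would start with an arbitrary Cauchy sequence $(x_n)_{n \geq 1}$ in $X$. Since $f \colon X \to Y$ is uniformly continuous, it carries Cauchy sequences to Cauchy sequences: given $\varepsilon > 0$, choose $\delta > 0$ with $d_Y(f(a),f(b)) < \varepsilon$ whenever $d_X(a,b) < \delta$, and then invoke Cauchyness of $(x_n)$ at scale $\delta$. Hence $(f(x_n))_n$ is Cauchy in $Y$, and by completeness of $Y$ there is a point $y \in Y$ with $f(x_n) \to y$.

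Next I would bring in properness of $f$. The subset $S \coloneqq \set{ f(x_n) : n \geq 1 } \cup \set{y}$ of $Y$ is compact: for any open cover of $S$, some member $U$ contains $y$, hence contains $f(x_n)$ for all $n$ outside a finite set $N_0$, and the finitely many points $\set{f(x_n) : n \in N_0}$ are covered by finitely many further members of the cover. Since $f$ is proper, $K \coloneqq f^{-1}(S) \subset X$ is compact, and by construction $x_n \in K$ for every $n$.

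Finally, $(x_n)$ is a Cauchy sequence lying in the compact metric space $K$. A compact metric space is sequentially compact, so $(x_n)$ has a subsequence converging to some $x \in K$; and a Cauchy sequence which possesses a convergent subsequence converges to the same limit. Thus $x_n \to x$ in $X$, which proves that $X$ is complete.

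I do not expect a genuine obstacle here; the only point deserving a word of care is the compactness of $S$ (a convergent sequence together with its limit), which is handled above. I note also that the hypotheses that $Y$ be locally compact and a path metric space play no role in this argument — they are simply the standing assumptions of the situation in which the lemma is applied in the proof of Proposition \ref{glueingtheorem1}.
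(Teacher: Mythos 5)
Your proof is correct, but it takes a different route from the paper at the key compactness step. The paper does not pass to the limit of $\set{f(x_n)}_n$ at all: it only observes that this image sequence is Cauchy, hence eventually contained in a closed ball of some fixed radius, and then invokes the Hopf--Rinow theorem (a complete, locally compact path metric space has compact closed balls -- this is precisely where the hypotheses ``locally compact'' and ``path metric'' are used) to conclude that this ball is compact, before pulling back along the proper map $f$. You instead use completeness of $Y$ to produce the limit $y$ of $\set{f(x_n)}_n$ and then use the elementary fact that a convergent sequence together with its limit is compact, again pulling back by properness; the conclusion in both cases is that the Cauchy sequence lies in a compact subset of $X$ and hence converges. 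Your argument is the more economical one: it bypasses Hopf--Rinow entirely and, as you correctly point out, never uses that $Y$ is locally compact or a path metric space, so it in fact proves the stronger statement that a uniformly continuous proper map to a complete metric space has complete source. The only caveat is cosmetic: since the paper defines properness for continuous maps, it is worth noting (as you implicitly do) that uniform continuity supplies the continuity needed for that definition to apply.
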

\begin{proof}
This is well-known; we provide a proof for convenience of the reader. Let $\set{x_n}_n$ be a Cauchy sequence in $X$. We must show that $\set{x_n}_n$ has a limit $x \in X$. 
Since $f$ is uniformly continuous, $\set{f(x_n)}_n$ is a Cauchy sequence in $Y$. In particular, for some $N>0$, the elements $f(x_n)$ for $n \geq N$ are all contained in the closed ball $\overline{B}_\epsilon(x_N) = \set{y \in Y \mid d(y,x_N) \leq \epsilon} \subset Y$ of radius $\epsilon$ for some $\epsilon>0$ (where $d$ is the metric on $Y$). Since $Y$ is a complete locally compact path metric space, the closed balls in $Y$ are compact (Hopf--Rinow theorem, see e.g.\ \cite[Section 1.9]{Gromov2007}). Thus, replacing $\set{x_n}_n$ by $\set{x_n}_{n \geq N}$, we may assume that $\set{f(x_n)}_n$ is contained in a compact subset $K \subset Y$. Since $f$ is proper, the subset $f^{-1}(K) \subset X$ is compact. Therefore, each $x_n \in X$ is contained in the compact subset $f^{-1}(K) \subset X$, hence the sequence $\set{x_n}_n$ of points $x_n \in f^{-1}(K)$ converges to some point $x \in f^{-1}(K)$. 
\end{proof}

\begin{lemma} \label{lemma:metricquotient}
Let $X$ be a path-connected topological space equipped with a metric $d_X$, and let $\Gamma$ be a group that acts isometrically on $X$ with closed orbits. Define a pseudometric $d$ on the quotient space $Y = \Gamma \sm X$ as follows: for orbits $[x_1], [x_2] \in Y$ of points $x_1, x_2 \in X$, put $d([x_1], [x_2]) = \inf_{\gamma \in \Gamma} d_X(x_1, \gamma x_2)$. The following holds. 
\begin{enumerate}
\item The pseudometric $d$ is a metric, and if $d_X$ is a path metric, then so is $d$. 
\item If the metric space $X$ is complete, then so is $Y$. 
\end{enumerate}
\end{lemma}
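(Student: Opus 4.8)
The plan is to verify the metric axioms directly and then deduce completeness by lifting Cauchy sequences from $Y$ to $X$.

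First I would check that $d$ is a pseudometric. Symmetry follows from $d_X(x_1,\gamma x_2) = d_X(\gamma^{-1}x_1, x_2)$ together with the fact that $\gamma \mapsto \gamma^{-1}$ is a bijection of $\Gamma$. For the triangle inequality, for any $\gamma_1,\gamma_2 \in \Gamma$ one has, using that $\gamma_1$ acts by isometries,
\[
d_X(x_1,\gamma_1\gamma_2 x_3) \leq d_X(x_1,\gamma_1 x_2) + d_X(\gamma_1 x_2, \gamma_1\gamma_2 x_3) = d_X(x_1,\gamma_1 x_2) + d_X(x_2,\gamma_2 x_3),
\]
and since $d([x_1],[x_3]) \leq d_X(x_1,\gamma_1\gamma_2 x_3)$ for the particular group element $\gamma_1\gamma_2$, taking the infimum over $\gamma_1$ and then over $\gamma_2$ gives $d([x_1],[x_3]) \leq d([x_1],[x_2]) + d([x_2],[x_3])$. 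To see that $d$ is a genuine metric, suppose $d([x_1],[x_2]) = 0$; then there is a sequence $\gamma_n \in \Gamma$ with $\gamma_n x_2 \to x_1$, so $x_1$ lies in the closure of the orbit $\Gamma x_2$, which equals $\Gamma x_2$ by hypothesis; hence $[x_1] = [x_2]$. This is the only place where closedness of orbits enters, and it is the crux of part~(1).

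Next, assume $d_X$ is a path metric, and let $\pi\colon X \to Y$ be the quotient map. Taking $\gamma = \id$ in the definition of $d$ shows that $\pi$ is $1$-Lipschitz, so the $\pi$-image of any path in $X$ has length at most that of the original path. Given $[x_1],[x_2] \in Y$ and $\epsilon > 0$, choose $\gamma \in \Gamma$ with $d_X(x_1,\gamma x_2) < d([x_1],[x_2]) + \epsilon$; since $d_X$ is a path metric, there is a path in $X$ from $x_1$ to $\gamma x_2$ of length $< d([x_1],[x_2]) + 2\epsilon$, whose $\pi$-image is a path in $Y$ from $[x_1]$ to $[x_2]$ of length $< d([x_1],[x_2]) + 2\epsilon$. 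Conversely, in any metric space the length of a path joining two points is at least the distance between them. Letting $\epsilon \to 0$, we conclude that $d([x_1],[x_2])$ is the infimum of the lengths of paths in $Y$ joining $[x_1]$ to $[x_2]$, i.e.\ $d$ is a path metric.

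For part~(2), let $([x_n])_n$ be a Cauchy sequence in $(Y,d)$. After passing to a subsequence we may assume $d([x_n],[x_{n+1}]) < 2^{-n}$ for all $n$. I would then lift to $X$: set $y_1 \coloneqq x_1$ and, inductively, given a representative $y_n$ of $[x_n]$, use $\inf_{\gamma} d_X(y_n, \gamma x_{n+1}) = d([x_n],[x_{n+1}]) < 2^{-n}$ to choose $\gamma_{n+1} \in \Gamma$ with $d_X(y_n, \gamma_{n+1}x_{n+1}) < 2^{-n}$, and put $y_{n+1} \coloneqq \gamma_{n+1}x_{n+1}$. Then $(y_n)_n$ is Cauchy in $X$ because $\sum_n 2^{-n} < \infty$, so it converges to some $y \in X$ by completeness of $X$; since $\pi$ is $1$-Lipschitz and $[y_n] = [x_n]$, we get $[x_n] \to [y]$ in $Y$. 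Thus the Cauchy sequence $([x_n])_n$ has a convergent subsequence and therefore converges. The whole argument is elementary; the only points requiring any care are the appeal to closedness of orbits in part~(1) and, for the path-metric statement, the standard fact that a path is never shorter than the distance between its endpoints.
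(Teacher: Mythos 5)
Your proof is correct, and for part (2) it is essentially the argument in the paper: pass to a subsequence with $d([x_n],[x_{n+1}])<2^{-n}$, recursively choose representatives $y_n\in[x_n]$ with $d_X(y_n,y_{n+1})$ summable (valid because, the action being isometric, $\inf_\gamma d_X(y_n,\gamma x_{n+1})=d([x_n],[x_{n+1}])$), and use completeness of $X$ plus the $1$-Lipschitz quotient map. The only difference is in part (1): the paper simply cites Gromov for the facts that $d$ is a metric (via closed orbits) and a path metric, whereas you prove them directly; your verification — symmetry and the triangle inequality via $\gamma_1\gamma_2$, closedness of orbits forcing $d([x_1],[x_2])=0\Rightarrow[x_1]=[x_2]$, and the two-sided comparison of $d$ with infima of lengths of projected paths — is exactly the standard argument and is sound.
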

\begin{proof}
For the first assertion, see \cite[Section 1.16]{Gromov2007}. For the second assertion, assume that $X$ is complete. Let $\set{[x_n]}_n$ be a Cauchy sequence in $Y$. We aim to show that $\set{[x_n]}_n$ has a limit in $Y$. Up to replacing $\set{[x_n]}_n$ by a subsequence $\set{[x_{n_i}]}_{i}$  that satisfies $d([x_{n_i}], [x_{n_{i+1}}]) \leq 2^{-i}$, we may assume that $d([x_n], [x_{n+1}]) \leq 2^{-n}$. Let $p_1 \in [x_1]$ such that $d_X(p_1, [x_2]) = d([x_1], [x_2]) \leq \frac{1}{2}$. 
Recursively choose $p_n \in [x_n]$ for $n \geq 2$ such that $d_X(p_{n-1}, p_{n}) \leq d_X(p_{n-1}, [x_{n}]) + 2^{-(n-1)}$. This gives \begin{align*}d_X(p_{n-1}, p_{n}) &\leq d_X(p_{n-1}, [x_{n}]) + 2^{-(n-1)} \\
&= d_X([x_{n-1}], [x_{n}]) + 2^{-(n-1)} \leq 2^{-(n-1)} + 2^{-(n-1)} = 2^{-(n-2)}\end{align*} for each $n \geq 2$. Hence our sequence $\set{p_n}_n$ of elements in $X$ is Cauchy; since $X$ is complete, it has a limit $p \in X$. The orbit $[p] \in Y$ of $p \in X$ is the limit of the sequence $\set{[x_n]}_n$, and we are done.
\end{proof}

\subsection{Path metric of the attachment} \label{prooftheorem} 
Let 
$
p \colon \wt Y \to Y(\mr L) = \wt Y/\sim
$
be the projection map, and let 
$
\wt{\mr P} \colon \wt Y \to \CC H^n
$
be the map defined as $\wt{P}(x,\alpha) = x$. 
By Lemma \ref{intersection}, we obtain continuous maps \[
\mr P  \colon Y(\mr L) \longrightarrow \CC H^n \quad \tn{ and } \quad \overline{\mr P} \colon M(\mr L) = L \setminus Y(\mr L) \lr L \setminus \CC H^n
\] 
such that the following diagram commutes:
\begin{align} \label{align:diagram-P}
\begin{split}
\xymatrixcolsep{3pc}
\xymatrix{
\wt Y \ar[d]^-p \ar@/^/[dr]^-{\wt{\mr P}} & \\
Y(\mr L) \ar[r]^-{\mr P} \ar[d]& \CC H^n \ar[d] \\
M(\mr L) \ar[r]^-{\overline{\mr P}} & L \sm \CC H^n.
}
\end{split}
\end{align}


\begin{lemma} \label{lemma:compactfinite}
For each compact subset $Z \subset \CC H^n$ there are only finitely many $\alpha \in P\mr A$ with $Z \cap \RR H^n_\alpha \neq \emptyset$. 
\end{lemma}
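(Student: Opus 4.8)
The plan is to deduce the statement from the proper discontinuity of the arithmetic group action on $\CC H^n$, using that each $\RR H^n_\alpha$ is the fixed-point locus of $\alpha$ viewed, via Lemma~\ref{Gammaembedding}, as an element of $L' \subset \Isom(\CC H^n)$.

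First I would record that $L'$ is a discrete subgroup of $\Isom(\CC H^n)$. This holds because $\Aut(\Lambda)$ preserves the $\ZZ$-lattice $\Lambda$ sitting inside the real vector space $\Lambda\otimes_\ZZ\RR = \bigoplus_{\varphi\colon F\hookrightarrow\RR} V_\varphi$, hence is a discrete subgroup of $\GL(\Lambda\otimes_\ZZ\RR)$, and it lies in the closed subgroup $\prod_{\varphi} U(V_\varphi)$. By admissibility (Condition~\ref{condition2.2}) the hermitian forms $h_\varphi$ with $\varphi\neq\tau$ are positive definite, so every factor of this product except $U(V_\tau)=U(n,1)$ is compact; therefore the projection of $\Aut(\Lambda)$ to $U(n,1)$ remains discrete, and so does its image $L=\Aut(\Lambda)/\mu_K$ in $\Isom(\CC H^n)$, and hence also $L'$, which contains $L$ with index at most two. (Alternatively one may simply invoke that $L$, being the image of an arithmetic group, is a lattice in $\Isom^+(\CC H^n)$, as is implicit in Theorem~\ref{th:crucialthm-finitevolume}.) Since $\CC H^n$ is a complete Riemannian manifold, its closed balls are compact by the Hopf--Rinow theorem, so $\CC H^n$ is a proper metric space; consequently the discrete group $L'$ acts properly discontinuously on it, and in particular the set $S\coloneqq\set{h\in L'\mid h(Z)\cap Z\neq\emptyset}$ is finite for the given compact subset $Z\subset\CC H^n$.

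To conclude, I would use the identification of $P\mr A$ with a subset of $L'$ furnished by Lemma~\ref{Gammaembedding} and the discussion after it, under which $\alpha\in P\mr A$ becomes an anti-holomorphic involution of $\CC H^n$ with fixed-point set $\RR H^n_\alpha$. If $\RR H^n_\alpha\cap Z\neq\emptyset$, pick $x\in\RR H^n_\alpha\cap Z$; then $\alpha(x)=x\in Z$, so $x\in\alpha(Z)\cap Z$ and hence $\alpha\in S$. Since $P\mr A\hookrightarrow L'$ and $S$ is finite, only finitely many $\alpha\in P\mr A$ can satisfy $\RR H^n_\alpha\cap Z\neq\emptyset$, which is the assertion. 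The only point needing any care is the discreteness of $L'$ in the first step; granting that (and its standard proper-discontinuity consequence), the rest is immediate.
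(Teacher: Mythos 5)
Your proof is correct and follows essentially the same route as the paper: the paper likewise deduces the statement from the proper discontinuity of $L' \subset \Isom(\CC H^n)$ acting on $\CC H^n$, observing that $Z \cap \RR H^n_\alpha \neq \emptyset$ forces $\alpha Z \cap Z \neq \emptyset$, so only finitely many $\alpha \in P\mr A$ qualify. The only difference is that you spell out why $L'$ is discrete (compactness of the factors $U(V_\varphi)$, $\varphi \neq \tau$, and the finite-index relation between $L$ and $L'$), a point the paper simply asserts; this added justification is sound.
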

\begin{proof}
The subgroup $L' \subset \textnormal{Isom}(\CC H^n)$ (see (\ref{anti-iso-group}) and Lemma \ref{Gammaembedding}) 
acts properly discontinuously on $\CC H^n$. So if $S$ is the set of $\alpha \in P\mr A$ such that $\alpha Z \cap Z \neq \emptyset$, then $S$ is finite. Moreover, if $\alpha \in P\mr A$ is such that $Z \cap \RR H^n_\alpha \neq \emptyset$, then there exists $z \in Z$ with $\alpha(z) = z$, so that 
$\alpha \in S$. 
In particular, there are only finitely many such $\alpha$. 
\end{proof}

\begin{lemma}
The map $\widetilde{ \mr P} \colon \widetilde Y \to \CC H^n$ is proper with finite fibres. In particular, the map $\widetilde{ \mr P}$ is closed. 
\end{lemma}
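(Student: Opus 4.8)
The plan is to deduce everything from the local finiteness of the family $(\RR H^n_\alpha)_{\alpha \in P\mr A}$ already recorded in Lemma \ref{lemma:compactfinite}. First I would note that $\wt{\mr P}$ is continuous: on each summand $\RR H^n_\alpha$ of $\wt Y = \coprod_{\alpha \in P\mr A} \RR H^n_\alpha$ it is the inclusion $\RR H^n_\alpha \hookrightarrow \CC H^n$, and a map out of a topological coproduct is continuous as soon as its restriction to each summand is. I would also record that each $\RR H^n_\alpha = (\CC H^n)^\alpha$ is closed in $\CC H^n$, being the fixed-point set of the continuous involution $\alpha$ on the Hausdorff space $\CC H^n$.

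For finiteness of the fibres: the fibre $\wt{\mr P}^{-1}(x)$ over a point $x \in \CC H^n$ is $\set{(x,\alpha) \mid \alpha \in P\mr A,\ x \in \RR H^n_\alpha}$, so it is in bijection with the set of those $\alpha \in P\mr A$ whose fixed locus contains $x$. Applying Lemma \ref{lemma:compactfinite} to the compact subset $Z = \set{x} \subset \CC H^n$ shows this set is finite, hence so is $\wt{\mr P}^{-1}(x)$.

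For properness: given a compact subset $Z \subset \CC H^n$, write $\wt{\mr P}^{-1}(Z) = \coprod_{\alpha \in P\mr A} \left( \RR H^n_\alpha \cap Z \right)$. By Lemma \ref{lemma:compactfinite}, only finitely many summands are non-empty. Since $\RR H^n_\alpha$ is closed in $\CC H^n$, each $\RR H^n_\alpha \cap Z$ is a closed subset of the compact space $Z$, hence compact; a finite disjoint union of compact spaces is compact, so $\wt{\mr P}^{-1}(Z)$ is compact. This gives properness. Finally, since $\CC H^n$ is locally compact and Hausdorff and $\wt{\mr P}$ is proper and continuous, Lemma \ref{lemma:lee} shows $\wt{\mr P}$ is closed.

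There is no real obstacle here: the argument is a direct application of Lemma \ref{lemma:compactfinite} and Lemma \ref{lemma:lee}. The only points needing a line of justification are that Lemma \ref{lemma:compactfinite} may be applied to singletons and that the copies $\RR H^n_\alpha$ are closed subsets of $\CC H^n$, both of which are immediate.
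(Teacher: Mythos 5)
Your proof is correct and follows essentially the same route as the paper: properness via $\wt{\mr P}^{-1}(Z) = \coprod_{\alpha} (Z \cap \RR H^n_\alpha)$ with only finitely many non-empty, compact summands by Lemma \ref{lemma:compactfinite}, finiteness of fibres by applying that lemma to a singleton, and closedness from Lemma \ref{lemma:lee}. Your extra remarks (continuity on the coproduct, closedness of each $\RR H^n_\alpha$ as a fixed locus) are correct and only make explicit what the paper leaves implicit.
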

\begin{proof}
Let $Z \subset \CC H^n$ be a compact set. 
Then $Z$ meets only finitely many of the spaces $\RR H^n_\alpha$ for $\alpha \in P\mr A$, see Lemma \ref{lemma:compactfinite}. Each $Z \cap \RR H^n_\alpha$ is closed in $Z$ as $\RR H^n_\alpha$ is closed in $\CC H^n$, so $Z \cap \RR H^n_\alpha$ is compact for each $\alpha \in P\mr A$. We conclude that ${\widetilde{ \mr P}}^{-1}(Z) = \coprod_{\alpha \in P\mr A} (Z \cap \RR H^n_\alpha)$ is compact. 
In particular, $\widetilde{\mr P}$ is proper. Let $z \in \CCH^n$. Applying Lemma \ref{lemma:compactfinite} to the case where $Z = \set{z} \subset \CCH^n$ is a singleton shows that $z \in \RRH^n_\alpha$ for only finitely many $\alpha \in P\mr A$. Since $\wt{\mr P}^{-1}(z) = \{(x, \alpha) \in \wt Y \mid x = z\}$, this proves that $\wt{\mr P}^{-1}(z)$ is finite. As $\wt{\mr P}$ is proper, it is closed by Lemma \ref{lemma:lee}.
\end{proof}


\begin{corollary} \label{corollary:proper}
The map $\mr P \colon Y(\mr L) \to \CC H^n$ is proper with finite fibers, hence in particular, $\mr P$ is closed. Moreover, the map $\overline{\mr P} \colon M(\mr L) \to L \setminus \CC H^n$ has finite fibres. 
\end{corollary}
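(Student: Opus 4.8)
The plan is to derive the corollary formally from the preceding lemma, which asserts that $\wt{\mr P}\colon \wt Y \to \CC H^n$ is proper with finite fibres (hence closed), together with the factorisation $\wt{\mr P} = \mr P \circ p$ through the surjective quotient map $p\colon \wt Y \to Y(\mr L)$ and the continuity of $\mr P$ recorded above.

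First I would prove that $\mr P$ is proper with finite fibres. For any subset $Z \subset \CC H^n$, surjectivity of $p$ gives $\mr P^{-1}(Z) = p\big(p^{-1}(\mr P^{-1}(Z))\big) = p\big(\wt{\mr P}^{-1}(Z)\big)$. If $Z$ is compact, then $\wt{\mr P}^{-1}(Z)$ is compact by the preceding lemma, so its continuous image $\mr P^{-1}(Z)$ is compact; if $Z = \{z\}$ is a point, then $\wt{\mr P}^{-1}(z)$ is finite, so $\mr P^{-1}(z)$ is finite. Since $\CC H^n$ is locally compact and Hausdorff and $\mr P$ is continuous and proper, Lemma \ref{lemma:lee} shows $\mr P$ is closed.

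Next I would treat $\overline{\mr P}\colon M(\mr L) \to L\setminus \CC H^n$ using the commutative square in \eqref{align:diagram-P}. Write $q\colon Y(\mr L)\to M(\mr L)$ and $\pi\colon \CC H^n \to L\setminus \CC H^n$ for the quotient maps, so that $\pi\circ\mr P = \overline{\mr P}\circ q$, with $q$ surjective and $L$-invariant and with $\mr P$ being $L$-equivariant by item~\ref{tres} of Lemma~\ref{intersection}. Fix $\bar z \in L\setminus\CC H^n$ and a representative $z\in\CC H^n$. Then $q(y)\in\overline{\mr P}^{-1}(\bar z)$ if and only if $\pi(\mr P(y)) = \bar z$, i.e.\ $\mr P(y)\in L\cdot z$, so $q^{-1}(\overline{\mr P}^{-1}(\bar z)) = \mr P^{-1}(L\cdot z) = L\cdot \mr P^{-1}(z)$ by $L$-equivariance of $\mr P$; applying $q$ and using that $q$ is surjective and $L$-invariant yields $\overline{\mr P}^{-1}(\bar z) = q\big(\mr P^{-1}(z)\big)$, which is finite since $\mr P^{-1}(z)$ is.

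I do not expect a genuine obstacle here: the argument is purely formal. The only points requiring a little care are the bookkeeping with preimages under the surjective quotient maps $p$ and $q$ (so that the identities $\mr P^{-1}(Z) = p(\wt{\mr P}^{-1}(Z))$ and $q^{-1}(\overline{\mr P}^{-1}(\bar z)) = L\cdot\mr P^{-1}(z)$ are justified, the latter using $L$-equivariance of $\mr P$), and the observation that Lemma \ref{lemma:lee} applies even though $Y(\mr L)$ is not yet known to be Hausdorff, since that lemma only constrains the target space.
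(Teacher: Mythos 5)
Your proposal is correct and follows essentially the same route as the paper: properness and finiteness of fibres of $\mr P$ are transferred from $\wt{\mr P}$ via the surjection $p$ (with closedness via Lemma \ref{lemma:lee}), and the fibre computation $\overline{\mr P}^{-1}(\bar z) = q(\mr P^{-1}(z))$ using $L$-equivariance of $\mr P$ is exactly the paper's argument. Your remark that Lemma \ref{lemma:lee} only requires the target to be locally compact Hausdorff is a nice explicit observation, but does not constitute a different approach.
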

\begin{proof}
Consider diagram \eqref{align:diagram-P}. 
Since $p \colon \wt Y \to Y(\mr L)$ is surjective and $\wt{\mr P}$ is proper with finite fibres, the same holds for $\mr P$, and hence $\mr P$ is closed by Lemma \ref{lemma:lee}. 

Next, we show that the induced map $\overline{\mr P} \colon M(\mr L) \to L \backslash \CC H^n$ has finite fibres. To see this, let $x \in \CCH^n$ with image $[x] \in L \sm \CCH^n$. Let $\pi \colon \CCH^n \to L \sm \CCH^n$ be the quotient map. Then $\pi^{-1}([x]) = L\cdot x = \set{\gamma \cdot x}_{\gamma \in L}$ and hence $\mr P^{-1}(\pi^{-1}([x])) = \cup_{\gamma \in L} (\gamma \cdot \mr P^{-1}(x))$. Let $q \colon Y(\mr L) \to M(\mr L)$ be the quotient map. Then $\overline{\mr P}^{-1}([x]) = q(q^{-1}(\overline{\mr P}^{-1}([x]) )) = q\left(\mr P^{-1}(\pi^{-1}([x])) \right)  = q \left( \cup_\gamma \gamma \cdot \mr P^{-1}(x) \right) = q(\mr P^{-1}(x))$, and this is a finite set because the map $\mr P \colon Y(\mr L) \to \CCH^n$ has finite fibres. 
\end{proof} 

Our next goal is to prove that each point $x \in Y(\mr L)$ has a neighbourhood $V \subset Y(\mr L)$ that maps homeomorphically onto a finite union $\cup_{i = 1}^\ell \RR H^n_{\alpha_i} \subset \CC H^n$. Hence $x$ has an open neighourhood $x \in U \subset V$ that identifies with an open set in a union of copies of $\RR H^n$ in $\CC H^n$ under the map $\mr P$. This allows us to define a metric on $Y(\mr L)$ by pulling back the metric on $\CC H^n$, so that $Y(\mr L)$ is piecewise isometric to $\RRH^n$. 

Fix a point $f \in Y(\mr L)$ and a point $(x,\alpha) \in \widetilde Y$ such that $p(x,\alpha) = f$. Let $\alpha_1, \dotsc, \alpha_\ell$ be the elements in $P\mr A$ such that $(x, \alpha_i) \sim (x,\alpha)$ for each $i \in \set{1, \dotsc, \ell}$ (since the group $G(x)$ is finite by Lemma \ref{Gxlemma}, these are finite in number). Consider the quotient map $$p\colon \widetilde Y \longrightarrow Y(\mr L)$$ and define
\begin{equation} \label{Kf}
Y_f \coloneqq p\left(\coprod_{i = 1}^\ell \RR H^n_{\alpha_i}\right) \subset Y(\mr L). 
\end{equation}


\begin{lemma} \label{lemma:stabilizer-preserves}
The following assertions are true.
\begin{enumerate}
\item 
The stabilizer $L_f$ of $f \in Y(\mr L)$ acts on the set $\set{\alpha_1, \dotsc, \alpha_\ell}$ by conjugation. 
\item
The stabilizer $L_f$ of $f \in Y(\mr L)$ preserves the subset $Y_f \subset Y(\mr L)$.
\end{enumerate}
\end{lemma}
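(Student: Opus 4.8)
The plan is to reduce everything to two facts already available in Section~\ref{gluing}: the $L$-action on $\widetilde Y$ is compatible with $\sim$ (Lemma~\ref{lemma:pgammaaction}), and $\sim$ is an equivalence relation, in particular transitive (Lemma~\ref{eqrel}). The only preliminary observation I need is that membership $g\in L_f$ already pins down $g$ on the point $x$: since the $L$-action on $Y(\mr L)$ is induced from the one on $\widetilde Y$ via $p$, and $g\cdot(x,\alpha)=(gx,g\alpha g^{-1})$, the equality $g\cdot f=f$ is precisely $(gx,g\alpha g^{-1})\sim(x,\alpha)$, which by Definition~\ref{def:conditions}\eqref{item1} forces $gx=x$ and hence also $(x,g\alpha g^{-1})\sim(x,\alpha)$.

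For part (1) I would then fix $i\in\set{1,\dotsc,\ell}$ and note that $g\alpha_i g^{-1}$ is again a class in $P\mr A$ (conjugating an anti-unitary involution by a unitary automorphism yields an anti-unitary involution, and this is well defined modulo $\mu_K$; this is the conjugation action of $L$ on $P\mr A$). Applying the $\sim$-compatibility of the action to $(x,\alpha_i)\sim(x,\alpha)$ and using $gx=x$ gives $(x,g\alpha_i g^{-1})\sim(x,g\alpha g^{-1})$. Chaining this with $(x,g\alpha g^{-1})\sim(x,\alpha)$ via transitivity yields $(x,g\alpha_i g^{-1})\sim(x,\alpha)$, so $g\alpha_i g^{-1}$ lies in $\set{\alpha_1,\dotsc,\alpha_\ell}$ by the defining property of that set. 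Since conjugation is a genuine $L$-action on $P\mr A$, this exhibits $L_f$ as acting on $\set{\alpha_1,\dotsc,\alpha_\ell}$ by conjugation.

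For part (2) I would take a general point of $Y_f$, of the form $p(y,\alpha_i)$ with $y\in\RR H^n_{\alpha_i}$, and compute $g\cdot p(y,\alpha_i)=p(gy,g\alpha_i g^{-1})$; since $g(\RR H^n_{\alpha_i})=\RR H^n_{g\alpha_i g^{-1}}$ (again Lemma~\ref{lemma:pgammaaction}) and $g\alpha_i g^{-1}=\alpha_j$ for some $j$ by part (1), this point lies in $p(\RR H^n_{\alpha_j})\subseteq Y_f$. Hence $g\cdot Y_f\subseteq Y_f$, and applying the same argument to $g^{-1}\in L_f$ gives the reverse inclusion, so $g\cdot Y_f=Y_f$.

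I do not expect a genuine obstacle: both statements are formal once one extracts $gx=x$ from $g\in L_f$, after which compatibility of the action with $\sim$ together with transitivity does all the work. The only points demanding a line of care are the routine bookkeeping that the $L$-action descends correctly along $p$, and the remark that $\set{\alpha_1,\dotsc,\alpha_\ell}$ was defined as the set of \emph{all} classes in $P\mr A$ glued to $\alpha$ at $x$ — which is exactly what makes it stable under conjugation by $L_f$.
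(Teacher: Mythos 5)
Your proof is correct and follows essentially the same route as the paper: extract $gx=x$ from $g\cdot f=f$, use compatibility of the $L$-action with $\sim$ plus transitivity to see that $g\alpha_i g^{-1}$ again satisfies $(x,g\alpha_i g^{-1})\sim(x,\alpha)$, and then part (2) is immediate from $g(\RR H^n_{\alpha_i})=\RR H^n_{g\alpha_i g^{-1}}$. The only difference is cosmetic — you spell out the intermediate equivalences and add the $g^{-1}$ step to get equality $g\cdot Y_f=Y_f$, which the paper leaves implicit.
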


\begin{proof}
Let $\psi \in L_f$ and $i \in \set{1, \dotsc, \ell}$. Notice that 
$\psi\cdot f= f$ implies that $\psi \cdot (x, \alpha_i) \sim (x, \alpha)$ hence $\psi\alpha_i\psi^{-1} = \alpha_j$ for some $j \in \set{1, \dotsc, \ell}$. This proves the first assertion. 

To prove the second assertion, let $g = p(y, \alpha_i) \in Y_f$ for some $i \in \set{1, \dotsc, \ell}$ and $y \in \RRH^n_{\alpha_i}$. Then $\psi \cdot p(y, \alpha_i) = p(\psi(y), \psi \alpha_i \psi^{-1}) = p(\psi(y), \alpha_j)$ for some $j \in \set{1, \dotsc, \ell}$ by the above. Hence $\psi \cdot p(y,\alpha_i) \in Y_f$, proving what we want. 
\end{proof}

\begin{lemma} \label{localisometry}
Consider the quotient map $p \colon \wt Y \to Y(\mr L)$. 
Let $f \in Y(\mr L)$ and let $(x,\alpha) \in \widetilde Y$ such that $p(x,\alpha) = f$. Let $\alpha_1, \dotsc, \alpha_\ell$ be the elements in $P\mr A$ such that $(x, \alpha_i) \sim (x,\alpha)$ for each $i \in \set{1, \dotsc, \ell}$.  The following assertions are true.
\begin{enumerate}
\item \label{un}The set $Y_f$ defined in \eqref{Kf} is closed in $Y(\mr L)$. 
\item \label{deux}We have $\mr P\left( Y_f \right) = \cup_{i = 1}^\ell \RR H^n_{\alpha_i} \subset \CC H^n$, and the map 
\begin{equation} \label{align:Pfmap}
\mr P_f \colon Y_f \longrightarrow \bigcup_{i = 1}^\ell \RR H^n_{\alpha_i} 
\end{equation}
induced by $\mr P$ is a homeomorphism. 
    \item \label{trois}The set $Y_f \subset Y(\mr L)$ contains an $L_f$-equivariant open neighbourhood $U_f$ of $f$ in $Y(\mr L)$, where $L_f \subset L$ is the stabilizer of $f \in Y(\mr L)$, such that the natural map 
    \[
    L_f \sm U_f \longrightarrow L \sm Y(\mr L) = M(\mr L)
    \]
is injective. 
\end{enumerate}

\end{lemma}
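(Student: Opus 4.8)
Write $p\colon\wt Y\to Y(\mr L)$ for the quotient map, recall from Section~\ref{prooftheorem} that $\mr P\circ p=\wt{\mr P}$ with $\wt{\mr P}(y,\beta)=y$, and note that $\{\alpha_1,\dots,\alpha_\ell\}$ is precisely the set of $\beta\in P\mr A$ with $(x,\beta)\sim(x,\alpha)$, so $Y_f=p\bigl(\coprod_{i=1}^\ell\RR H^n_{\alpha_i}\bigr)$. For \eqref{un}, the plan is to show that the saturated set $p^{-1}(Y_f)=\{(y,\beta)\in\wt Y:(y,\beta)\sim(y,\alpha_i)\text{ for some }i\}$ is closed, which by the definition of the quotient topology forces $Y_f$ closed. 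Since $\wt Y=\coprod_{\beta}\RR H^n_\beta$ carries the coproduct topology it suffices to prove $p^{-1}(Y_f)\cap\RR H^n_\beta$ closed in $\RR H^n_\beta$ for each $\beta$: for $\beta\in\{\alpha_1,\dots,\alpha_\ell\}$ it is all of $\RR H^n_\beta$, and for the other $\beta$ it is a finite union of sets $\RR H^n_\beta\cap\RR H^n_{\alpha_i}$ by Lemma~\ref{intersection}\eqref{dos} (each such term being either empty or the whole closed intersection), hence closed.

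For \eqref{deux}, the equality $\mr P(Y_f)=\wt{\mr P}\bigl(\coprod_i\RR H^n_{\alpha_i}\bigr)=\bigcup_i\RR H^n_{\alpha_i}$ is immediate, so $\mr P_f$ is a continuous surjection; I would get injectivity from transitivity of $\sim$ (Lemma~\ref{eqrel}), which turns $\mr P_f(p(z,\alpha_i))=\mr P_f(p(z,\alpha_j))$ into $(x,\alpha_i)\sim(x,\alpha_j)$, and then Lemma~\ref{intersection}\eqref{dos} propagates this to the point $z\in\RR H^n_{\alpha_i}\cap\RR H^n_{\alpha_j}$, i.e.\ $p(z,\alpha_i)=p(z,\alpha_j)$. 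Finally $\mr P_f$ is a closed map: a closed subset of $Y_f$ is closed in $Y(\mr L)$ by \eqref{un}, and $\mr P$ is closed by Corollary~\ref{corollary:proper}, so its image is a closed subset of $\CC H^n$ lying inside $\bigcup_i\RR H^n_{\alpha_i}$; a continuous closed bijection is a homeomorphism.

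For \eqref{trois}, I would first build $U_f$ and then check equivariance and injectivity. Using proper discontinuity of $L$ on $\CC H^n$ together with the local finiteness of $(H_r)_{r\in\mr R}$ and of $(\RR H^n_\beta)_{\beta\in P\mr A}$ (Lemmas~\ref{lemma:beauville:finite} and \ref{lemma:compactfinite}), choose a small open metric ball $B\ni x$, invariant under the finite group $L_x\coloneqq\Stab_L(x)$, with $B\cap\RR H^n_\beta\neq\emptyset\Rightarrow x\in\RR H^n_\beta$, with $B\cap H_r\neq\emptyset\Rightarrow x\in H_r$, and with $gB\cap B\neq\emptyset\Rightarrow g\in L_x$ for $g\in L$. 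Set $\wt U\coloneqq\coprod_{i=1}^\ell(\RR H^n_{\alpha_i}\cap B)$; this is open in $\wt Y$ because $\coprod_{i=1}^\ell\RR H^n_{\alpha_i}$ is clopen in $\wt Y$, and the key point is that $\wt U$ is \emph{saturated}: if $(y,\alpha_i)\in\wt U$ and $(y,\gamma)\sim(y,\alpha_i)$ with $\gamma\in P\mr A$, then $y\in B\cap\RR H^n_\gamma$ forces $x\in\RR H^n_\gamma$, so Lemma~\ref{intersection}\eqref{dos} gives $(x,\gamma)\sim(x,\alpha_i)\sim(x,\alpha)$ and hence $\gamma\in\{\alpha_1,\dots,\alpha_\ell\}$. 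Thus $U_f\coloneqq p(\wt U)$ is open in $Y(\mr L)$, contains $f$, and lies in $Y_f$. It is $L_f$-invariant because any $g\in L_f$ fixes $f$, hence fixes $x=\mr P(f)$ so $gB=B$, and permutes $\{\alpha_1,\dots,\alpha_\ell\}$ by conjugation (first part of Lemma~\ref{lemma:stabilizer-preserves}), whence $g\wt U=\wt U$. For injectivity of $L_f\sm U_f\to M(\mr L)$, if $g\cdot u=u'$ with $u,u'\in U_f$ and $g\in L$, then $g\cdot\mr P(u)=\mr P(u')$ with both points in $B$ gives $g\in L_x$; writing $u=p(y,\alpha_i)$ and $u'=p(y',\alpha_j)$ with $y,y'\in B$, the relation $(gy,g\alpha_ig^{-1})\sim(y',\alpha_j)$, together with $G(y')\subseteq G(x)$ and Lemma~\ref{intersection}\eqref{dos}, forces $g\alpha_ig^{-1}\in\{\alpha_1,\dots,\alpha_\ell\}$; since conjugation by $g\in L_x$ preserves $\sim$-equivalence at $x$, it maps the class $\{\alpha_1,\dots,\alpha_\ell\}$ onto itself, so $g\cdot f=p(x,g\alpha g^{-1})=f$ and $g\in L_f$.

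The delicate point throughout is the saturation claim in \eqref{trois}: the ball $B$ must be shrunk using \emph{both} local finiteness statements so that the $\sim$-class of a point of $\wt U$ cannot escape into a sheet $\RR H^n_\gamma$ that fails to contain $x$, or that is inequivalent to $\alpha$ at $x$; without this, $p$ of the naive candidate neighbourhood need not be open, and $f$ need not lie in the interior of $Y_f$.
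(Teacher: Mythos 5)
Your argument is correct, and for parts (1) and (2) it is essentially the paper's proof — if anything more explicit: for (1) the paper simply refers back to the proof of Lemma \ref{lemma:openembeddingtopological}.\ref{item:above-not-1}, whereas your direct remark that $p^{-1}(Y_f)\cap\RR H^n_\beta$ is a \emph{finite} union of full intersections $\RR H^n_\beta\cap\RR H^n_{\alpha_i}$ (finite because $\ell\leq\#G(x)$, no group orbit being involved here) already suffices; and in (2) you spell out the propagation step via Lemma \ref{intersection}.\ref{dos} which the paper leaves implicit behind the remark that $\sim$ is an equivalence relation. Part (3) is where you genuinely diverge. The paper works downstairs in $Y(\mr L)$: starting from $V=\mr P^{-1}(U)$ for a small open $U\ni x$ meeting only finitely many sheets, it removes the finitely many closed sets $Z_\beta=p(\RR H^n_\beta)$ with $\beta(x)\neq x$ or $(x,\beta)\not\sim(x,\alpha)$, saturates the result under $L_f$, and then intersects with the preimage of a chart $U_x$ for which $L_x\sm U_x\to L\sm\CC H^n$ is injective; that chart is what drives the injectivity of $L_f\sm U_f\to M(\mr L)$. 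You instead work upstairs: a single metric ball $B$ around $x$ avoiding all sheets $\RR H^n_\beta$ and hyperplanes $H_r$ not through $x$ and satisfying $gB\cap B\neq\emptyset\Rightarrow g\in L_x$ (all obtainable from Lemmas \ref{lemma:beauville:finite} and \ref{lemma:compactfinite} and proper discontinuity of $L$), and then $U_f=p\bigl(\coprod_{i}(\RR H^n_{\alpha_i}\cap B)\bigr)$, your saturation argument giving openness at once and the precise invariance of $B$ replacing the paper's chart $U_x$ in the injectivity step. Both routes rest on the same inputs (local finiteness, Lemma \ref{intersection}, Lemma \ref{lemma:stabilizer-preserves}, properness of the $L$-action); what your version buys is a neighbourhood built in one step whose openness is visible from saturation, while the paper's version constructs the open set directly inside $Y(\mr L)$ and so never needs to verify saturation.
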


\begin{proof}
1. This follows from the proof of Lemma \ref{lemma:openembeddingtopological}.\ref{item:above-not-1}. 

2. We have 
\begin{equation*}
\mr P_f(Y_f) =  \mr P\left( p\left(\coprod_{i = 1}^\ell \RR H^n_{\alpha_i}\right)\right) = \widetilde{ \mr P} \left( \coprod_{i = 1}^\ell \RR H^n_{\alpha_i}\right) = \bigcup_{i = 1}^\ell \RR H^n_{\alpha_i} \subset \CC H^n. 
\end{equation*}
To prove injectivity of \eqref{align:Pfmap}, let $(y, \alpha_i), (z,\alpha_j) \in \widetilde Y$ and suppose that $y = z \in \CC H^n$. Then indeed, $(y, \alpha_i) \sim (z,\alpha_j)$ because $\sim$ is an equivalence relation by Lemma \ref{eqrel}. 

Finally, we prove that $\mr P_f$ is closed. Let $Z \subset Y_f$ be a closed set. Then $Z$ is closed in $Y$ by Part 1, hence $p^{-1}(Z)$ is closed in $\widetilde Y$, hence $\widetilde{ \mr P} \left( p^{-1}(Z) \right) )$ is closed in $\CC H^n$, so that 
\begin{equation*}
\mr P_f(Z) = \mr P(Z) = \widetilde{ \mr P} \left( p^{-1} \left( Z \right) \right) = \left(\widetilde{ \mr P} \left( p^{-1} \left( Z \right) \right)\right) \cap \left(\cup_{i = 1}^\ell \RR H^n_{\alpha_i} \right)
\end{equation*}
is closed in $\cup_{i = 1}^\ell \RR H^n_{\alpha_i} $.  

3. Let $x = \mr P(f) \in \CC H^n$. Since $\CC H^n$ is locally compact, there exists a compact set $Z \subset \CC H^n$ and an open set $U \subset \CC H^n$ with $x \in U \subset Z$. Since $Z$ is compact, it meets only finitely many of the $\RR H^n_\beta \subset \CC H^n$ (Lemma \ref{lemma:compactfinite}). Consequently, the same holds for $U$; define $V = \mr P^{-1}(U) \subset Y(\mr L)$ and put
\[
\mr B = \{\beta \in P\mr A: U \cap \RR H^n_\beta \neq \emptyset \}.
\]
Also define, for $\beta \in P\mr A$, $Z_\beta \coloneqq p\left(\RR H^n_\beta \right) \subset Y(\mr L)$. Then 
\begin{equation*}
    f \in V \subset \bigcup_{\beta \in \mr B} Z_\beta = \bigcup_{\substack{\beta \in \mr B \\ \beta(x) = x}} Z_\beta \bigcup_{\substack{\beta \in \mr B \\ \beta(x) \neq x}} Z_\beta.  
\end{equation*}

Since each $Z_\beta$ is closed in $Y(\mr L)$ by the proof of Lemma \ref{lemma:openembeddingtopological}.\ref{item:above-not-1}, there is an open subset $V' \subset V$ with 
\begin{equation*}
f \in  V' \subset \bigcup_{\substack{\beta \in \mr B \\ \beta(x) = x}} Z_\beta = 
\bigcup_{\substack{\beta \in \mr B \\ \beta(x) = x \\ (x,\beta) \sim (x,\alpha)}} Z_\beta
\bigcup_{\substack{\beta \in \mr B \\ \beta(x) = x \\ (x,\beta) \not \sim (x,\alpha)}} Z_\beta.
\end{equation*}
Since each $Z_\beta$ is closed in $Y(\mr L)$, there exists an open subset $V'' \subset V'$ with
\begin{equation*}
    f \in  V'' \subset \bigcup_{\substack{\beta \in \mr B \\ \beta(x) = x \\ (x,\beta) \sim (x,\alpha)}} Z_\beta \subset 
    \bigcup_{\substack{\beta \in P \mr A \\ \beta(x) = x \\ (x,\beta) \sim (x,\alpha)}} Z_\beta = Y_f. 
\end{equation*}
We have $L_f \cdot Y_f = Y_f$ by Lemma \ref{lemma:stabilizer-preserves}, and the subset $U_f'' \coloneqq L_f \cdot V'' \subset L_f \cdot Y_f = Y_f$ is an $L_f$-equivariant open neighbourhood of $f$ in $Y(\mr L)$. 

We define $U_f \subset U_f''$ as follows. Let $x \in U_x \subset \CCH^n$ be an $L_x$-equivariant open neighbourhood such that $L_x \sm U_x \subset L \sm \CCH^n$. Then $U_x \cap \mr P(Y_f)$ is open in $\mr P(Y_f)$. We define $V_x \subset Y_f$ as the inverse image of $U_x \cap \mr P(Y_f)$ under the homeomorphism $Y_f \xrightarrow{\sim} \mr P(Y_f)$, and put 
\[
U_f \coloneqq U_f'' \cap V_x. 
\] 
Then $V_x$ is open in $Y_f$, and hence $U_f = U_f'' \cap V_x$ is open in $U_f'' \subset Y_f$, and therefore in $Y(\mr L)$. Moreover, since $U_x$ is $L_x$-equivariant and $L_f \subset L_x$, we see that $V_x$ is $L_f$-equivariant, and hence $U_f$ is $L_f$-equivariant as well. 

We claim that the natural map 
$
L_f \sm U_f \to L \sm Y(\mr L)
$
is injective. To prove this, let $(y, \alpha_i), (z, \alpha_j) \in \wt Y$ such that $p(y, \alpha_i), p(z, \alpha_j) \in U_f \subset Y_f$, and let $\psi \in L$ such that $\psi \cdot (y, \alpha_i) = (z, \alpha_j)$. It suffices to show that $\psi \in L_f$. Observe that $(\psi(y), \psi \alpha_i \psi^{-1}) \sim (z, \alpha_j)$. In particular, $\psi(y) = z$, and since $y,z \in U_x$ and the map $L_x \sm U_x \to L \sm \CCH^n$ is injective, we get that $\psi \in L_x$. Consequently, we have $x,z \in \RRH^n_{\psi \alpha_i \psi^{-1}} \cap \RRH^n_{\alpha_j}$, 
and the fact that $(z, \psi \alpha_i \psi^{-1}) \sim (z, \alpha_j)$ implies, in view of Lemma \ref{intersection}, that $(x, \psi \alpha_i \psi^{-1}) \sim (x, \alpha_j)$. But then $\psi \alpha_i \psi^{-1} = \alpha_k$ for some $k \in \set{1, \dotsc, \ell}$, which implies that $\psi \in L_f$, and we are done. 
\end{proof}
%


\begin{lemma} \label{lemma:hausdorff}
The topological space $Y(\mr L)$ is Hausdorff. 
\end{lemma}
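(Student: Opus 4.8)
The plan is to distinguish two cases according to the continuous, closed map $\mr P \colon Y(\mr L) \to \CC H^n$ from Corollary \ref{corollary:proper}. Let $f_1 \neq f_2$ be points of $Y(\mr L)$. If $\mr P(f_1) \neq \mr P(f_2)$, then since $\CC H^n$ is Hausdorff we may separate $\mr P(f_1)$ and $\mr P(f_2)$ by disjoint open sets, and their $\mr P$-preimages separate $f_1$ and $f_2$. So the content is the case $\mr P(f_1) = \mr P(f_2) =: x$.

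In that case, by Lemma \ref{lemma:compactfinite} applied to $Z = \{x\}$ only finitely many $\gamma \in P\mr A$ satisfy $x \in \RR H^n_\gamma$; call them $\gamma_1, \dots, \gamma_N$, so that $\mr P^{-1}(x)$, finite by Corollary \ref{corollary:proper}, consists exactly of the classes $p(x,\gamma_k)$. First I would choose a small open ball $B \ni x$ in $\CC H^n$ with compact closure $\overline B$ such that (i) $B$ meets $\RR H^n_\gamma$ only for $\gamma \in \{\gamma_1, \dots, \gamma_N\}$, and (ii) $B$ meets $H_r$ only for $r \in \mr R$ with $x \in H_r$. Both conditions can be arranged by finitely many shrinkings: $\overline B$ meets only finitely many $\RR H^n_\gamma$ by Lemma \ref{lemma:compactfinite} and only finitely many $H_r$ by Lemma \ref{lemma:beauville:finite}, and among these all but the wanted ones are closed subsets of $\CC H^n$ not containing $x$.

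The key step — which I expect to be the main obstacle — is the observation that $G(y) \subseteq G(x)$ for every $y \in B$: any hyperplane $H_r$ through $y$ meets $B$, hence contains $x$ by (ii), so $[\phi_r] \in G(x)$ (here one rewrites $\phi_r = \phi_{r_i}$ for a short root $r_i$ with $x \in H_{r_i}$ using Proposition \ref{prop:remember}). It follows that for $y \in B$ and indices $k,\ell$ (with $y \in \RR H^n_{\gamma_k} \cap \RR H^n_{\gamma_\ell}$) the relation $(y,\gamma_k) \sim (y,\gamma_\ell)$ implies $(x,\gamma_k) \sim (x,\gamma_\ell)$: if $\gamma_k \neq \gamma_\ell$ then $\gamma_\ell \circ \gamma_k \in G(y) \subseteq G(x)$ is non-trivial, forcing $x \in \mr H$, so the conditions of Definition \ref{def:conditions} hold at $x$. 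Hence the equivalence relation on $\{1,\dots,N\}$ declaring $k$ and $\ell$ equivalent when $(x,\gamma_k) \sim (x,\gamma_\ell)$ has classes $T_1, \dots, T_m$ in bijection with $\mr P^{-1}(x)$ via $k \mapsto p(x,\gamma_k)$.

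Finally I would verify that the sets
\[
W_i \coloneqq p\Bigl( \coprod_{k \in T_i} \bigl( \RR H^n_{\gamma_k} \cap B \bigr) \Bigr) \subseteq Y(\mr L), \qquad i = 1, \dots, m,
\]
are pairwise disjoint open neighbourhoods of the points of $\mr P^{-1}(x)$. Disjointness is immediate from the key step: a point of $W_i \cap W_j$ yields $y \in B$ and $k \in T_i$, $\ell \in T_j$ with $(y,\gamma_k) \sim (y,\gamma_\ell)$, so $k$ and $\ell$ are equivalent and $i = j$. Openness follows from the equality $p^{-1}(W_i) = \coprod_{k \in T_i}(\RR H^n_{\gamma_k} \cap B)$, which is open in $\wt Y$ because each $\RR H^n_{\gamma_k}$ is a connected component of $\wt Y$ and $B$ is open; for the inclusion $\subseteq$ one notes that if $p(z,\alpha) \in W_i$ then $z \in B$ and $z \in \RR H^n_\alpha$, so $\RR H^n_\alpha$ meets $B$, whence $\alpha = \gamma_k$ for some $k$ by (i), and then the key step places $k \in T_i$. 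Since $f_1$ and $f_2$ lie in $\mr P^{-1}(x)$ and correspond to distinct classes, the associated $W_i$ separate them, so $Y(\mr L)$ is Hausdorff.
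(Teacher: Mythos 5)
Your proof is correct, but it takes a different route from the paper's. The paper separates two points $f \neq f'$ using machinery already built around the sets $Y_{f'}$: it splits into the cases $f \notin Y_{f'}$ (where closedness of $Y_{f'}$, from Lemma \ref{lemma:openembeddingtopological}, together with the neighbourhood $U_{f'}$ of Lemma \ref{localisometry} gives the separation) and $f \in Y_{f'}$ (where injectivity of $\mr P$ on $Y_{f'}$ forces the two lifts to have distinct images in $\CC H^n$, and one pulls back disjoint opens along $\mr P$). You instead split according to whether $\mr P(f_1) = \mr P(f_2)$ and, in the delicate equal-image case, build explicit disjoint saturated neighbourhoods by hand: you shrink to a ball $B$ around $x$ meeting only the finitely many $\RR H^n_\gamma$ and $H_r$ through $x$ (Lemmas \ref{lemma:compactfinite} and \ref{lemma:beauville:finite}), observe that $G(y) \subseteq G(x)$ for $y \in B$ so that the gluing pattern over $B$ is governed by the equivalence classes at $x$, and check that the resulting sets $W_i$ have saturated, hence open, preimages in $\wt Y$. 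This is more self-contained — it uses only the local finiteness lemmas and Definition \ref{def:conditions}, not Lemma \ref{localisometry} — at the cost of redoing, in this special situation, the "local constancy of the equivalence relation" that the paper packages into Lemma \ref{intersection} and the construction of $Y_f$; your key step is essentially an independent proof of that local statement. Both arguments are sound; the paper's is shorter given its prior lemmas, yours would stand on its own earlier in the development.
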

\begin{proof}
Let $f , f' \in Y(\mr L)$ be elements such that $f \neq f'$. First suppose that $f \not \in Y_{f'}$. Since $Y_{f'}$ is closed in $Y(\mr L)$ by Lemma \ref{localisometry}, there is an open neighbourhood $U$ of $f$ such that $U \cap Y_{f'} = \emptyset$. Thus, $U \cap U_{f'}  \subset U \cap Y_{f'} = \emptyset$, where $U_{f'} \subset Y_{f'}$ is the open neighbourhood of $f'$ constructed in item 3 of Lemma \ref{localisometry}. 

Next, suppose that $f \in Y_{f'}$. Lift $f$ and $f'$ to elements $(x,\alpha),  (y,\beta) \in \widetilde Y$. Assume first that $x = y$. This means that $\mr P(f) = \mr P(f')$. Since $\mr P: Y_{f'} \to \CC H^n$ is injective, this implies that $f = f'$, contradiction. So we have $x \neq y \in \CC H^n$. But $\CC H^n$ is Hausdorff, so there are open subsets $\left(U \subset \CC H^n, V \subset \CC H^n \right)$ such that $x \in U$, $y \in V$ and $U \cap V = \emptyset$. Then $\mr P^{-1}(U) \cap \mr P^{-1}(V) = \emptyset$ and we are done. 
\end{proof}


\begin{definition}\label{metric}
Define a path metric $d_i$ on each path-connected component $Y(\mr L)_i$ of $Y(\mr L)$ by pulling back the metric on $\CCH^n$ via the map $\mr P \colon Y(\mr L)_i \to \CCH^n$ as in Lemma \ref{lemma:pathmetrics-pullback}; note that $\mr P$ is a local topological embedding by 
Lemma \ref{localisometry}. For each path-connected component $Y(\mr L)_i \subset Y(\mr L)$, the map $\mr P \colon Y(\mr L)_i \to \CCH^n$ is a Lipschitz continuous local isometric embedding, see Lemma \ref{lemma:pathmetrics-pullback}. 
\end{definition}

\begin{proposition} \label{prop:pathmetricquotient}
Consider the path metrics $d_i$ on the path-connected components of $Y(\mr L)$, see Definition \ref{metric}.  
Let $M(\mr L)_i$ be a path-connected component of $M(\mr L)$, and let $Y(\mr L)_i \subset Y(\mr L)$ be a path-connected component of $Y(\mr L)$ that surjects onto $M(\mr L)_i$ (so that $M(\mr L)_i = L_i \sm Y(\mr L)_i$, where $L_i \subset L$ denotes the stabilizer of $Y(\mr L)_i$ in $L$). For orbits $[x_1], [x_2] \in M(\mr L)_i$ of points $x_1, x_2 \in Y(\mr L)_i$, define $d_i([x_1], [x_2]) = \inf_{\gamma \in L}(d_i(x_1, \gamma x_2))$. Then this defines a path metric on $M(\mr L)_i$. 
\end{proposition}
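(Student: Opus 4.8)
The plan is to exhibit $M(\mr L)_i$ as the metric quotient of $(Y(\mr L)_i, d_i)$ by the group $L_i$ and then to invoke Lemma \ref{lemma:metricquotient}. The first point to settle is that in the definition of $d_i([x_1],[x_2])$ the infimum may be taken over $\gamma \in L_i$ only. Indeed, $L$ acts on $Y(\mr L)$ by homeomorphisms, so for each $\gamma \in L$ the set $\gamma \cdot Y(\mr L)_i$ is again a path-connected component of $Y(\mr L)$; since the path-connected components partition $Y(\mr L)$, this component is either $Y(\mr L)_i$ itself --- precisely when $\gamma \in L_i$ --- or disjoint from it. In the latter case $x_1$ and $\gamma x_2$ lie in distinct path-connected components, no path joins them, and $d_i(x_1, \gamma x_2) = +\infty$. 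Hence $d_i([x_1],[x_2]) = \inf_{\gamma \in L_i} d_i(x_1, \gamma x_2)$, which is exactly the quotient pseudometric attached by Lemma \ref{lemma:metricquotient} to the action of $\Gamma = L_i$ on $X = Y(\mr L)_i$; in particular it is well defined independently of the chosen representatives $x_1, x_2$.

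It then remains to verify the two hypotheses of Lemma \ref{lemma:metricquotient}. First, $L_i$ acts on $Y(\mr L)_i$ by isometries for $d_i$: the map $\mr P \colon Y(\mr L)_i \to \CC H^n$ is $L_i$-equivariant (item \ref{tres} of Lemma \ref{intersection}), and by Definition \ref{metric} the metric $d_i$ is the pull-back of the metric of $\CC H^n$ along $\mr P$ in the sense of Lemma \ref{lemma:pathmetrics-pullback}, i.e.\ $\ell(\gamma) = \ell(\mr P \circ \gamma)$ for every path $\gamma$ in $Y(\mr L)_i$. Since every element of $L_i$ acts on $\CC H^n$ by an isometry (Lemma \ref{Gammaembedding}), it preserves lengths of paths in $Y(\mr L)_i$, and taking infima over paths shows it preserves $d_i$. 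Second, every $L_i$-orbit in $Y(\mr L)_i$ is closed: the group $L$, being contained in $L' \subset \Isom(\CC H^n)$, acts properly discontinuously on $\CC H^n$ (see the proof of Lemma \ref{lemma:compactfinite}), so $L \cdot \mr P(x)$ is a closed discrete subset of $\CC H^n$ for each $x \in Y(\mr L)_i$. If $\psi_n x \to y$ with $\psi_n \in L_i$, then $\psi_n \mr P(x) = \mr P(\psi_n x) \to \mr P(y)$, which forces $\psi_n \mr P(x)$ to be eventually equal to $\mr P(y)$; hence for large $n$ all the points $\psi_n x$ lie in the finite fibre $\mr P^{-1}(\mr P(y))$ (finite by Corollary \ref{corollary:proper}), so $y \in L_i \cdot x$. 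This sequential argument is legitimate because $Y(\mr L)_i$ is metrizable by Definition \ref{metric}.

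With both hypotheses in hand, the first assertion of Lemma \ref{lemma:metricquotient} applies verbatim: the quotient pseudometric $d_i$ on $M(\mr L)_i = L_i \sm Y(\mr L)_i$ is a metric, and since $d_i$ is a path metric on $Y(\mr L)_i$, the induced $d_i$ on $M(\mr L)_i$ is a path metric as well. I expect the only genuinely delicate point to be the closed-orbit verification, where one transfers the proper discontinuity of $L$ acting on the base $\CC H^n$ to closedness of the $L_i$-orbits upstairs by means of the continuity and the finiteness of the fibres of $\mr P$; the remaining steps are routine unwinding of the definitions set up in Sections \ref{gluing} and \ref{section-section-section}.
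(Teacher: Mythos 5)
Your proof is correct and follows the same overall strategy as the paper: reduce to Lemma \ref{lemma:metricquotient}, check that the group acts by isometries via the $L$-equivariance of $\mr P$ and the length-pullback construction of Definition \ref{metric}, and check that orbits are closed. The isometry step is essentially identical to the paper's. Where you diverge is in two places. First, you explicitly reduce the infimum over $\gamma \in L$ to $\gamma \in L_i$ (different components give no joining path, hence value $+\infty$), and then apply the lemma to $L_i$ acting on $Y(\mr L)_i$; the paper instead works directly with $L$ acting on all of $Y(\mr L)$ and leaves this point implicit, so your treatment actually resolves a small ambiguity in the statement's notation. Second, for closedness of orbits the paper goes \emph{up}: it pulls the orbit back along the quotient map $p \colon \wt Y \to Y(\mr L)$, uses that $p$-fibres are finite, and shows $L$ acts properly on $\wt Y$ by pushing compact sets to $\CC H^n$; you instead go \emph{down} along $\mr P$, using that $L$-orbits in $\CC H^n$ are closed and discrete (proper discontinuity) together with finiteness of the fibres of $\mr P$ (Corollary \ref{corollary:proper}) and a sequential argument, which is legitimate since the component is metrizable. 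Both verifications rest on the same underlying facts (properness of the action on $\CC H^n$ plus a finiteness-of-fibres input), so the proofs are of comparable strength; your route is marginally more economical in that it avoids introducing the properness of the $L$-action on $\wt Y$, while the paper's route reuses machinery ($\wt{\mr P}$, $p$) it has already set up and does not need to invoke sequences.
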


\begin{proof}
By Lemma \ref{lemma:metricquotient}, it suffices to show that $L$ acts by isometries on $Y(\mr L)$ with closed orbits. To prove this, we first claim that $L$ acts isometrically on $Y(\mr L)$. 
For a curve $\gamma$ on $Y(\mr L)$ or on $\CCH^n$, let $\ell(\gamma)$ denote its length. By construction (cf.\ Lemma \ref{lemma:pathmetrics-pullback} and Definition \ref{metric}), we have $\ell(\gamma) = \ell(\mr P \circ \gamma)$ for each $\gamma \colon [a,b] \to Y(\mr L)$. Let $x,y \in Y(\mr L)_i$; it suffices to show that for each curve $\gamma \colon [a,b] \to Y(\mr L)$ with $\gamma(a) = x$ and $\gamma(b) = y$, the length of $\gamma$ equals the length of $g_\ast(\gamma) = g \circ \gamma$. This holds indeed, for the $L$-equivariance of the map $\mr P$ (cf.\ Lemma \ref{intersection}) implies that:
\[
\ell(g_\ast(\gamma)) = \ell(\mr P \circ g \circ \gamma) = \ell(g_\ast(\mr P \circ \gamma)) = \ell(\mr P \circ \gamma) = \ell(\gamma). 
\]
Next, we check that the $L$-orbits are closed in $Y(\mr L)$. Let $f \in Y(\mr L)$ with representative $(x,\alpha)\in \widetilde Y$. By equivariance of $p\colon \widetilde Y \to Y(\mr L)$, we have $p^{-1} \left( L \cdot f \right) = L \cdot \left(p^{-1}f \right)$, and since $p$ is a quotient map, it therefore suffices to show that $L \cdot \left(p^{-1}f \right)$ is closed in $\wt Y$. Moreover, $p^{-1}(f) = \{(x, \beta) \in \wt Y \colon (x,\beta) \sim (x, \alpha)\}$ is a finite set, so it suffices to show that the orbit $L \cdot (x,\beta)$ is closed in $\widetilde Y$ for each $\beta \in P\mr A$. 
Since $L$ is discrete, it suffices to show that $L$ acts properly on $\widetilde Y$. So let $Z \subset \widetilde Y$ be any compact set: we claim that $\{g \in L: g Z \cap Z \neq \emptyset \}$ is a finite set. Indeed, for each $g \in L$, one has $\widetilde{ \mr P} \left( g Z \cap Z  \right) \subset g \widetilde{\mr P}(Z) \cap \widetilde{\mr P}(Z)$, and the latter is non-empty for only finitely many $g \in L$, by properness of the action of $L$ on $\CC H^n$. 

Since the metric on $Y(\mr L)_i$ is a path metric, the same holds for the metric on $M(\mr L)_i$, see Lemma \ref{lemma:metricquotient}. This proves the proposition.
\end{proof}

\begin{proposition} \label{proposition:completeness}
The path metrics on the path-connected components of $Y(\mr L)$ and $M(\mr L)$ defined in Definition \ref{metric} and Proposition \ref{prop:pathmetricquotient} are complete. 
\end{proposition}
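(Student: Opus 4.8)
The plan is to obtain both completeness assertions purely formally from the preparatory results of Section~\ref{section:prelim-path}, since the substantive geometric facts — properness of $\mr P$ and the isometric action of $L$ with closed orbits — have already been established.

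First I would treat $Y(\mr L)$. The map $\mr P\colon Y(\mr L)\to\CC H^n$ is proper with finite fibres by Corollary~\ref{corollary:proper}, and by Definition~\ref{metric} (via Lemma~\ref{lemma:pathmetrics-pullback}) its restriction to any path-connected component $Y(\mr L)_i$ is a Lipschitz, hence uniformly continuous, local isometric embedding into $\CC H^n$. To invoke Lemma~\ref{lemma:uniform} I need $\mr P|_{Y(\mr L)_i}$ to be proper, for which it suffices that $Y(\mr L)_i$ be closed in $Y(\mr L)$; this follows from the observation that $Y(\mr L)$ is locally path-connected — by Lemma~\ref{localisometry} every point has a neighbourhood homeomorphic to an open subset of a finite union of totally geodesic real subspaces of $\CC H^n$ through a common point, which is locally path-connected — so its path-connected components are open and therefore also closed. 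Since $\CC H^n$ is a complete, locally compact path metric space, Lemma~\ref{lemma:uniform} then yields completeness of $Y(\mr L)_i$.

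Next I would pass to $M(\mr L)$. Writing $M(\mr L)_i = L_i\setminus Y(\mr L)_i$ with $L_i=\Stab_L(Y(\mr L)_i)$ as in Proposition~\ref{prop:pathmetricquotient}, the proof of that proposition already shows that $L$ acts isometrically on $Y(\mr L)$ with closed orbits; restricting, $L_i$ acts isometrically on $Y(\mr L)_i$, and for $x\in Y(\mr L)_i$ one has $L_i\cdot x = (L\cdot x)\cap Y(\mr L)_i$ — any $g\in L$ with $g x\in Y(\mr L)_i$ carries the component $Y(\mr L)_i$ to itself, hence $g\in L_i$ — which is closed in $Y(\mr L)_i$ because $L\cdot x$ is closed in $Y(\mr L)$. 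Completeness of $Y(\mr L)_i$ together with Lemma~\ref{lemma:metricquotient} then gives completeness of $M(\mr L)_i$.

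The work here is essentially bookkeeping: all the analytic and group-theoretic content lives in the cited lemmas. The one point that genuinely requires attention — and the only place the local structure of $Y(\mr L)$ from Lemma~\ref{localisometry} really enters — is verifying that $Y(\mr L)$ is locally path-connected, so that its path-connected components are closed and properness of $\mr P$ descends to them; once that is in hand, both statements follow by directly checking the hypotheses of Lemmas~\ref{lemma:uniform} and~\ref{lemma:metricquotient}.
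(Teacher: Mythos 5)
Your proof is correct and follows essentially the same route as the paper: Lipschitz (hence uniform) continuity of $\mr P$ from Lemma \ref{lemma:pathmetrics-pullback}, properness from Corollary \ref{corollary:proper}, completeness of each $Y(\mr L)_i$ via Lemma \ref{lemma:uniform}, and then Lemma \ref{lemma:metricquotient} for the quotient $M(\mr L)_i$. The only difference is that you spell out two points the paper leaves implicit — that path-components of $Y(\mr L)$ are closed (so properness of $\mr P$ restricts to them, using the local structure from Lemma \ref{localisometry}) and that $L_i$-orbits are closed in $Y(\mr L)_i$ — which is careful bookkeeping rather than a different argument.
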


\begin{proof}
Let $Y(\mr L)_i \subset Y(\mr L)$ be a path-connected component with image $M(\mr L)_i \subset M(\mr L)$. 
Consider the map $\mr P \colon Y(\mr L)_i \to \CCH^n$. By Lemma \ref{lemma:pathmetrics-pullback}, the map $\mr P$ is Lipschitz continuous, and in particular uniformly continuous. Moreover, by Corollary \ref{corollary:proper}, the map $\mr P$ is proper. Consequently, the metric space $Y(\mr L)_i$ is complete because $\CCH^n$ is complete, see Lemma \ref{lemma:uniform}. By Lemma \ref{lemma:metricquotient}, the metric space $M(\mr L)_i$ is therefore complete as well, and we are done. 
\end{proof}

\subsection{Orbifold structure of the attachment} The next step is to prove that the space $M(\mr L) = L \setminus Y(\mr L)$ (see Definition \ref{gluedspace}) is locally isometric to quotients of open sets in $\RR H^n$ by finite groups of isometries. 
\begin{definition} \label{definition:nodes-ii}
Let $f \in Y(\mr L)$ with representative $ (x,\alpha) \in \widetilde Y$. Thus, $x$ is an element in $\CCH^n$, and $\alpha \in P\mr A$ is the class of an anti-unitary involution such that $\alpha(x) = x$. 
\begin{enumerate}
\item 
The \emph{nodes} of $f$ are by definition the nodes of $(x,\alpha)$ (see Definition \ref{fullset}). Thus, these are the hyperplanes $H \in \ca H(x)$, i.e.~the hyperplanes $H_r \in \ca H$ defined by short roots $r \in \mr R$ such that $x \in H_r$ (equivalently, such that $h(x,r) = 0$). 
\item 
The number of nodes of $f$ is the cardinality of $\ca H(x)$. 
\item 
The anti-unitary involution $\alpha \in P\mr A$ induces an involution on the set $\ca H(x)$ by Lemma \ref{alphaswitch}. 
Let $H \in \ca H(x)$ be a node of $x$. We call $H$ a \emph{real node} of $x$ if $\alpha(H) = H$. We call $(H, \alpha(H))$ a \emph{pair of complex conjugate nodes} of $x$ if $\alpha(H) \neq H$. 
\item 
If $k$ is the number of nodes of $f$, we generally write $k = 2a + b$, with $a$ the number of pairs of complex conjugate nodes of $x$, and $b$ the number of real nodes of $x$. We say that $f$ \emph{has $k$ nodes} (resp.\ \emph{$a$ pairs of complex conjugate nodes and $b$ real nodes}) if $x$ has $k$ nodes (resp.\ $a$ pairs of complex conjugate nodes and $b$ real nodes). 
\end{enumerate}
\end{definition}

Fix again a point $f \in Y(\mr L)$ and a point $(x,\alpha) \in \widetilde Y$ lying above $f$. Let $k = 2a + b$ be the number of nodes of $f$. Thus $x \in \RRH^n_\alpha$, and there exist $r_1, \dotsc, r_k \in \mr R$ such that 
\[
\ca H(x) = \set{H_{r_1}, \dotsc, H_{r_k}}, \quad G(x) = \langle \phi_{r_1}, \dotsc, \phi_{r_\ell} \rangle \cong (\ZZ/m)^k.
\]
For $\beta \in P\mr A$, recall that $(x,\beta) \sim (x,\alpha)$ if and only if $\alpha \circ \beta \in G(x)$. We relabel the $r_i$ so that they satisfy the following condition: 
\begin{align} \label{relabel}
\begin{split}
\alpha(H_{r_i}) = 
\begin{cases}
 H_{r_{i+1}} \tn{ for } i \tn{ odd and } i \leq 2a, \\
H_{r_{i-1}} \tn{ for } i \tn{ even and } i \leq 2a, \tn{ and } \\ 
H_{r_{i}} \tn{ for } i  \in \set{2a +1, \dotsc, k}.
\end{cases}
\end{split}
\end{align}

In other words, $H_{r_i}$ is a real node if and only if $i > 2a$, and $\left(H_{r_i}, H_{r_{i+1}}\right)$ is a pair of complex conjugate nodes if and only if $i < 2a$ is odd. 

\begin{lemma} \label{lemma:localcoordinates}
Continue with the notation from above.  
\begin{enumerate}
    \item \label{ein}Let $\beta \in P\mr A$ be such that $(x,\beta) \sim (x,\alpha)$. Then 
    \[
    \beta = \prod_{i = 1}^a \left( \phi_{r_{2i-1}} \circ \phi_{r_{2i}} \right)^{j_i} \circ \prod_{i = 2a+1}^k \phi_{r_i}^{j_i} \circ \alpha
    \]
    for some $j_1, \dotsc, j_a, j_{2a+1}, \dotsc, j_k \in \ZZ/m$. In particular, there are $m^{a + b}$ such $\beta$. 
    \item \label{zwei}There is an isometry $\CC H^n \xrightarrow{\sim} {\bb B}^n(\CC)$ identifying $x$ with the origin, $\phi_{r_i}$ with the map \begin{equation*}
        {\bb B}^n(\CC) \to {\bb B}^n(\CC), \white (t_1, \dotsc,t_i, \dotsc, t_n) \mapsto (t_1, \dotsc, \zeta t_i, \dotsc, t_n),
    \end{equation*}
    and $\alpha$ with the map defined by 
    \begin{equation} \label{eq:alpha}
        t_i \mapsto 
\begin{cases}
\bar t_{i+1} \white \textnormal{for $i$ odd and $i \leq 2a$}\\
\bar t_{i-1} \white \textnormal{for $i$ even and $i \leq 2a$}\\
\bar t_i \white\white \textnormal{for $i > 2a$}.
\end{cases}
    \end{equation}
\end{enumerate}
\end{lemma}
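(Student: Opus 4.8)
The plan is to deduce part \ref{ein} directly from Proposition \ref{preliminaryproposition} together with the relabelling \eqref{relabel}, and to prove part \ref{zwei} by constructing a single orthonormal basis of $(V,h)$ adapted at once to $x$, to the reflections $\phi_{r_i}$, and to $\alpha$.

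For part \ref{ein}: by Definition \ref{def:conditions}, the relation $(x,\beta) \sim (x,\alpha)$ means precisely $\beta \circ \alpha \in G(x)$. By Lemma \ref{Gxlemma} one has $G(x) = \langle \phi_{r_1}, \dotsc, \phi_{r_k}\rangle \cong (\ZZ/m)^k$, and the $\phi_{r_i}$ pairwise commute (this is established in the proof of Lemma \ref{Gxlemma} via Theorem \ref{orthogonal}), so we may write $\beta \circ \alpha = \phi_{r_1}^{n_1} \circ \cdots \circ \phi_{r_k}^{n_k}$ for a unique tuple $(n_1, \dotsc, n_k) \in (\ZZ/m)^k$. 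Since $\beta \in P\mr A$, Proposition \ref{preliminaryproposition} forces $n_j \equiv n_{\alpha(j)} \bmod m$ for every $j$, where $\alpha$ is the involution of $\set{1,\dotsc,k}$ from Definition \ref{alphainvolution}; by \eqref{relabel} this involution interchanges $2i-1$ and $2i$ for $i \leq a$ and fixes each $j$ with $2a < j \leq k$, so the constraints read $n_{2i-1} = n_{2i} =: j_i$ for $i \leq a$, while $j_i := n_i$ is free for $2a < i \leq k$. Regrouping the commuting factors gives $\beta = \left(\prod_{i=1}^a (\phi_{r_{2i-1}} \circ \phi_{r_{2i}})^{j_i}\right) \circ \left(\prod_{i=2a+1}^k \phi_{r_i}^{j_i}\right) \circ \alpha$, as asserted. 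Conversely, Proposition \ref{preliminaryproposition} shows that every such tuple produces an element of $P\mr A$ related to $(x,\alpha)$; as $g \mapsto g \circ \alpha$ is injective and the admissible tuples number $m^a \cdot m^b = m^{a+b}$ (recall $k = 2a+b$), there are exactly $m^{a+b}$ such $\beta$.

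For part \ref{zwei}: by Lemma \ref{hyperbolic} the point $x$ is represented by a vector $v \in V^\alpha$, which we rescale by a positive real so that $h(v,v) = -1$. The roots $r_1, \dotsc, r_k$ are pairwise orthogonal by Theorem \ref{orthogonal} (the distinct hyperplanes $H_{r_i}$ all contain $x$, hence meet pairwise), and each is orthogonal to $v$ since $h(x,r_i)=0$. Let $\sigma$ be the permutation of $\set{1,\dotsc,k}$ with $\alpha(H_{r_i}) = H_{r_{\sigma(i)}}$ coming from \eqref{relabel}. By the second part of Lemma \ref{alphaswitch}, $\alpha(r_i) = \lambda_i r_{\sigma(i)}$ with $|\lambda_i| = 1$; using $\alpha^2 = \id$ one checks these scalars can be absorbed by rescaling each $r_i$ by a complex number of modulus one (on a real node $r_i = r_{\sigma(i)}$ use a unit square root of $\lambda_i$; on a conjugate pair $\set{i,\sigma(i)}$ keep $r_i$ and replace $r_{\sigma(i)}$ by $\lambda_i r_{\sigma(i)}$). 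After this rescaling $\alpha(r_i) = r_{\sigma(i)}$ and the $r_i$ remain unit vectors. Now let $W \subset V$ be the orthogonal complement of $\CC v \oplus \CC r_1 \oplus \cdots \oplus \CC r_k$; then $(W, h|_W)$ is positive definite of dimension $n-k$ and $\alpha$-stable (being an anti-isometry preserving the span of $v, r_1, \dotsc, r_k$). The restriction $\alpha|_W$ is an anti-unitary involution of a positive-definite hermitian space, so $W^\alpha$ is a real form with $W = W^\alpha \otimes_\RR \CC$ on which $h$ is real-valued and positive definite; choose an orthonormal $\RR$-basis $e_{k+1}, \dotsc, e_n$ of $W^\alpha$, fixed by $\alpha$.

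With $e_0 := v$ and $e_i := r_i$ for $1 \leq i \leq k$, the vectors $e_0, \dotsc, e_n$ form an orthonormal basis of $(V,h)$ of signature $(n,1)$; the induced identification $V \cong \CC^{n,1}$, composed with the standard isometry $\CC H^n \cong \BB^n(\CC)$ of Lemma \ref{lemma:beauville:finite}, is the desired isometry, sending $x = [e_0]$ to the origin. Since $\phi_{r_i}$ fixes $e_j$ for $j \neq i$ and multiplies $e_i$ by $\zeta$, in the chart $t_j = z_j/z_0$ it multiplies $t_i$ by $\zeta$ and fixes the remaining $t_j$. Since $\alpha$ is $\CC$-antilinear with $\alpha(e_0) = e_0$, $\alpha(e_j) = e_{\sigma(j)}$ for $1 \leq j \leq k$, and $\alpha(e_j) = e_j$ for $j > k$, it sends $(z_0, \dotsc, z_n)$ to the tuple obtained by conjugating all coordinates and permuting $z_1, \dotsc, z_k$ by $\sigma$, hence $t_i \mapsto \overline{t_{\sigma(i)}}$ (with $\sigma(i) = i$ for $i > k$), which is precisely \eqref{eq:alpha}. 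The main obstacle is the rescaling step — forcing $\alpha$ to permute the chosen roots \emph{on the nose} rather than up to unit scalars, together with the accompanying existence of a full real orthonormal basis of $\alpha$-fixed vectors in $W$; both are elementary once one works over $\CC$, where unit-modulus square roots are available, and everything else is bookkeeping with the commuting reflections and \eqref{relabel}.
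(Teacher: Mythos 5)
Your proof is correct and follows essentially the same route as the paper: part (1) is exactly the intended application of Proposition \ref{preliminaryproposition} together with the relabelling \eqref{relabel}, and part (2) builds the same adapted orthonormal basis $\left\{x/\sqrt{-h(x,x)},\, r_1', \dotsc, r_k',\, w_1, \dotsc, w_{n-k}\right\}$ with an $\alpha$-fixed basis of $W^\alpha$. The only difference is cosmetic: the paper normalizes every root by a unit square root $\rho_i$ of $\lambda_i$, whereas you do this only at real nodes and instead replace $r_{\sigma(i)}$ by $\alpha(r_i)$ on conjugate pairs — an equivalent choice.
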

\begin{proof}
1. This follows readily from Proposition \ref{preliminaryproposition}. 

2. 
Define
\[
T =  \langle x \rangle \oplus \langle r_1 \rangle  \oplus \cdots \oplus \langle r_k \rangle  \subset V, 
\quad 
W = T^\perp = \{w \in V \mid h(w,t) = 0 \;\; \forall t \in T\}. 
\]
Observe that $\alpha(W) = W$. Since $W \subset \langle x \rangle^\perp$, the hermitian space $(W, h|_{W})$ is positive definite. Let $\{w_1, \dotsc, w_{n-k}\} \subset W$ be an orthonormal basis such that $\alpha(w_i) = w_i$ (such a basis exists because $h|_W$ restricts to a positive definite symmetric bilinear form on $W^\alpha$ that in turn induces a positive definite hermitian form on $W^\alpha \otimes_\RR \CC$ such that the canonical isomorphism $W^\alpha \otimes_\RR \CC = W$ preserves the hermitian forms). By equation \eqref{relabel} and Lemma \ref{alphaswitch}, there exist $\lambda_1, \dotsc, \lambda_k \in K^\ast$ with $\va{\lambda_i} = 1$, so that:
\begin{align} \label{relabel:2}
\begin{split}
\alpha(r_i) = 
\begin{cases}
 \lambda_i \cdot r_{i+1} \tn{ for } i \tn{ odd and } i \leq 2a, \\
\lambda_i \cdot r_{i-1} \tn{ for } i \tn{ even and } i \leq 2a, \tn{ and } \\ 
\lambda_i \cdot r_{i} \tn{ for } i  \in \set{2a +1, \dotsc, k}.
\end{cases}
\end{split}
\end{align}
Let $\rho_i \in \CC^\ast$ 
such that $\lambda_i = \rho_i^2$ and $\va{\rho_i} = 1$. 
We consider the following basis of $V$:
\begin{align} \label{align:basis-V}
V = \langle x', r_1', \dotsc, r_k', w_1, \dotsc, w_{n-k} \rangle, \quad \quad r_i' \coloneqq \rho_i \cdot r_i, \quad x' \coloneqq \frac{x}{\sqrt{-h(x,x)}}.
\end{align}
We have $\alpha(r_i') = r_{\alpha(i)}'$. 
Moreover, $h(x', x') = (-h(x,x))^{-1} \cdot h(x,x) = -1$ and $h(r_i', r_i') = \va{\rho_i}^2 \cdot h(r_i, r_i) = 1$. Thus, the basis \eqref{align:basis-V} induces an isomorphism of hermitian vector spaces
\begin{align} \label{isomorphism-hermitian-vector}
(V, h) \xlongrightarrow{\sim} (\CC^{n+1}, H),
\end{align}
where $H(x,y) = -x_0\bar y_0 + x_1 \bar y_1 + \cdots + x_n \bar y_n$, and the isometry $\CCH^n \xrightarrow{\sim} \BB^n(\CC)$ induced by \eqref{isomorphism-hermitian-vector} has all the required properties.
\end{proof}
\begin{definition}
\begin{enumerate}
\item
Define $L_f = \textnormal{Stab}_{L}(f)$ to be the subgroup of $L$ fixing $f \in Y(\mr L)$. This contains the group $G(x) \cong (\ZZ/m)^k$. 
\item 
Define $B_f$ as the subgroup of $G(x)$ generated by the order $m$ complex reflections associated to the real nodes of $f$. 
Hence $B_f  = \langle \phi_{r_i} \rangle_{i > 2a} \cong (\ZZ/m)^b$. 
\end{enumerate}
\end{definition}


We also need the following lemma. 

\begin{lemma} \label{lemma:T/G}
Write $m = 2^{e} \cdot d$ with $d \not \equiv 0 \bmod 2$. 
Let $T \coloneqq \{t \in \CC: t^m \in \RR\}$. Let $\langle \zeta_m \rangle$ act on $T$ by multiplication. 
Then each element in $\langle \zeta_m \rangle \sm T$ has a unique representative of the form $\zeta_{2^{e+1}}^\epsilon \cdot r \in T$ with $r  \in \RR_{\geq 0}$ and $\epsilon \in \{0,1\}$. 
\end{lemma}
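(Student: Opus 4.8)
The plan is to analyze the quotient $\langle \zeta_m \rangle \backslash T$ explicitly. First I would describe $T = \{t \in \CC : t^m \in \RR\}$ concretely: writing $t = r \cdot e^{i\theta}$ with $r \geq 0$, the condition $t^m \in \RR$ means $m\theta \in \pi\ZZ$, i.e.\ $\theta \in \frac{\pi}{m}\ZZ$. Hence $T = \RR_{\geq 0} \cdot \langle \zeta_{2m} \rangle$, where $\zeta_{2m} = e^{2\pi i/(2m)} = e^{i\pi/m}$; every element of $T$ can be written $r \cdot \zeta_{2m}^j$ for a unique $r \geq 0$ and a unique $j \in \ZZ/2m$ (when $r > 0$). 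The group $\langle \zeta_m \rangle = \langle \zeta_{2m}^2 \rangle$ acts by multiplication, i.e.\ $j \mapsto j + 2 \pmod{2m}$ on the exponent, fixing $r$. So the quotient of the exponent set $\ZZ/2m$ by adding $2$ is $\ZZ/\gcd(2,2m)$-many... more precisely the orbits of $\langle 2 \rangle \subset \ZZ/2m$ are indexed by $j \bmod \gcd(2, \cdot)$; since $2m$ is even, $\ZZ/2m \big/ \langle 2 \rangle \cong \ZZ/2$, with representatives $j \in \{0,1\}$.

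Next I would translate this back using $m = 2^e d$ with $d$ odd. The key point is to identify the distinguished representative of each $\langle \zeta_m\rangle$-orbit in terms of $\zeta_{2^{e+1}}$ rather than $\zeta_{2m}$. Since $2m = 2^{e+1} d$ with $d$ odd, we have $\langle \zeta_{2m} \rangle = \langle \zeta_{2^{e+1}} \rangle \times \langle \zeta_d \rangle$ (internal direct product of cyclic groups of coprime orders), and $\zeta_m = \zeta_{2^e} \cdot (\text{odd part})$, so $\langle \zeta_m \rangle \supset \langle \zeta_d \rangle$ — in fact $\langle \zeta_d\rangle \subset \langle \zeta_m \rangle$ and the quotient $\langle \zeta_{2m}\rangle / \langle \zeta_m \rangle \cong \langle \zeta_{2^{e+1}}\rangle/\langle \zeta_{2^e}\rangle \cong \ZZ/2$. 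So given $t \in T$, write $t = r \cdot u$ with $r \geq 0$ and $u \in \langle \zeta_{2m}\rangle$; decompose $u = u_2 \cdot u_{\mathrm{odd}}$ with $u_2 \in \langle \zeta_{2^{e+1}}\rangle$ and $u_{\mathrm{odd}} \in \langle \zeta_d \rangle$. Since $\langle \zeta_d \rangle \subset \langle \zeta_m \rangle$, multiplying $t$ by a suitable element of $\langle \zeta_m \rangle$ kills $u_{\mathrm{odd}}$; and since $\langle \zeta_{2^e}\rangle \subset \langle \zeta_m\rangle$ has index $2$ in $\langle \zeta_{2^{e+1}}\rangle$, we can further reduce $u_2$ to lie in $\{1, \zeta_{2^{e+1}}\}$, i.e.\ $u_2 = \zeta_{2^{e+1}}^{\epsilon}$ with $\epsilon \in \{0,1\}$. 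This gives the existence of a representative $\zeta_{2^{e+1}}^{\epsilon}\cdot r$.

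For uniqueness I would argue as follows: suppose $\zeta_{2^{e+1}}^{\epsilon} r = \omega \cdot \zeta_{2^{e+1}}^{\epsilon'} r'$ for some $\omega \in \langle \zeta_m\rangle$. Taking absolute values gives $r = r'$ (both nonnegative reals), and if $r = r' = 0$ there is nothing more to check, while if $r > 0$ we get $\zeta_{2^{e+1}}^{\epsilon - \epsilon'} = \omega \in \langle \zeta_m \rangle = \langle \zeta_{2^e}\rangle\times\langle\zeta_d\rangle$; projecting to the $2$-primary part, $\zeta_{2^{e+1}}^{\epsilon-\epsilon'} \in \langle \zeta_{2^e}\rangle$ forces $\epsilon \equiv \epsilon' \pmod 2$, hence $\epsilon = \epsilon'$ since both lie in $\{0,1\}$. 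The only mild subtlety — the part I expect to need the most care — is handling the coprime decomposition $\langle \zeta_{2m}\rangle = \langle \zeta_{2^{e+1}}\rangle \times \langle \zeta_d\rangle$ cleanly and making sure the "odd part" of the phase genuinely lies in $\langle \zeta_m\rangle$ (which it does, because the odd part of $2m$ equals the odd part of $m$, namely $d$); everything else is a routine bookkeeping of roots of unity. This is a short, self-contained argument with no real obstacle.
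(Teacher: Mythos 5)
Your proposal is correct and follows essentially the same route as the paper: identify $T = \RR_{\geq 0}\cdot\langle\zeta_{2m}\rangle$ and compute $\langle\zeta_{2m}\rangle/\langle\zeta_m\rangle \cong \langle\zeta_{2^{e+1}}\rangle/\langle\zeta_{2^e}\rangle \cong \ZZ/2$ via the $2$-primary/odd decomposition, with $1$ and $\zeta_{2^{e+1}}$ as coset representatives. The only difference is cosmetic: you carry the radial factor $r$ along explicitly and spell out existence and uniqueness, whereas the paper first reduces to the modulus-one elements $T_1$.
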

\begin{proof}
Let $T_1 = \set{t \in T \colon \va{t} = 1}$. It suffices to show that each element in $\langle \zeta_m \rangle \sm T_1$ has a unique representative of the form $\zeta_{2^{e+1}}^{\epsilon}$ for $\epsilon \in \set{0,1}$. Now notice that $T_1 = \langle \zeta_{2m} \rangle$ and that $\zeta_{2m} = \zeta_{2^{e+1}d}$. In particular, we have an isomorphism
\begin{align*}
\langle \zeta_{2m} \rangle / \langle \zeta_{m} \rangle = \langle \zeta_{2^{e+1}} \rangle / \langle \zeta_{2^e} \rangle & \xlongrightarrow{\sim} \ZZ/2, \\
\zeta_{2^{e+1}} &\mapsto 1,
\end{align*}
and the result follows. 
\end{proof}

We obtain the key to Theorem \ref{glueingtheorem1}. Consider the quotient map $p\colon \widetilde Y \to Y(\mr L)$, and the subset $Y_f \subset M(\mr L)$, see (\ref{Kf}). 

\begin{proposition} \label{localmodel}
Consider the above notation; in particular, we consider $f \in Y(\mr L)$ and a point $(x,\alpha) \in \widetilde Y$ lying above $f$. 
The following assertions are true. 
\begin{enumerate}
    \item\label{caseone} If $f$ has no nodes, then $G(x) = B_f$ is trivial, and $Y_f = \RR H^n_\alpha \cong {\bb B}^n(\RR)$. 
    \item\label{casetwo} If $f$ has only real nodes, then $B_f \setminus Y_f$ is isometric to ${\bb B}^n(\RR)$. 
    \item\label{casethree} If $f$ has $a$ pairs of complex conjugate nodes ($k = 2a$), and no other nodes, then $B_f\setminus Y_f = Y_f$ is the union of $m^a$ copies of ${\bb B}^n(\RR)$, any two of which meet along a totally geodesic ${\bb B}^{2c}(\RR) \subset \BB^n(\RR)$ for some integer $c$ with $0 \leq c \leq a$.
    \item\label{casefour} More generally, if $f$ has $2a$ complex conjugate nodes and $b$ real nodes, then there is an isometry between $B_f \setminus Y_f$ and the union of $m^a$ copies of ${\bb B}^n(\RR)$ identified along common ${\bb B}^{2c}(\RR)'s$ (that is, the set $Y_f$ of case \ref{casethree} above) such that $[f] \in B_f \setminus Y_f$ is identified with the origin in any of the copies of  ${\bb B}^n(\RR)$. 
    \item\label{casefive} Assume $f$ has $2a$ complex conjugate nodes and $b$ real nodes, and consider the above isometry between $B_f \setminus Y_f$ and the union $\cup_i B_i$ of copies $B_i$ of ${\bb B}^n(\RR)$. Then $L_f$ acts transitively on these copies of ${\bb B}^n(\RR)$ (i.e., for each $i \neq j$ and each $p \in B_j$ there exists $\phi \in L_f$ such that $\phi \cdot p \in B_i$).  
   If ${\bb B}^n(\RR) = B_i$ is any one of them, and if $\Gamma_f = (L_f/B_f)_{{\bb B}^n(\RR)}$ is its stabilizer in the group $L_f/B_f$, then the natural map
    \begin{equation*}
        \Gamma_f \setminus {\bb B}^n(\RR) \to \left(L_f/B_f\right) \setminus \left(B_f \setminus Y_f \right) = L_f \setminus Y_f
    \end{equation*}
is an isometry of path metrics. Moreover, the action of $\Gamma_f$ on $\BB^n(\RR)$ fixes $0 \in \BB^n(\RR)$, and $0$ is sent to $[f] \in L_f \sm Y_f$ under the induced map $\BB^n(\RR) \to L_f \sm Y_f$. 
\end{enumerate}
\end{proposition}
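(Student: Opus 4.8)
The plan is to work entirely inside the explicit local model of Lemma~\ref{lemma:localcoordinates}: first describe $Y_f$ in coordinates, then compute $B_f\backslash Y_f$ by ``straightening'' the real-node directions, and finally pass to the quotient by $L_f$ through an orbit--stabilizer argument. For Step~(i), by Lemma~\ref{localisometry} the map $\mr P_f\colon Y_f\to\bigcup_{i=1}^{\ell}\RR H^n_{\alpha_i}$ is a homeomorphism, and since (Definition~\ref{metric}) the path metric on $Y_f$ is the pull-back along $\mr P$ of the metric of $\CC H^n$, it is an isometry onto $\bigcup_{i}\RR H^n_{\alpha_i}$ equipped with the path metric induced from $\CC H^n$. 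By Lemma~\ref{lemma:localcoordinates}(1) the $\alpha_i$ are the involutions $\beta=\prod_{s=1}^{a}(\phi_{r_{2s-1}}\phi_{r_{2s}})^{j_s}\prod_{i=2a+1}^{k}\phi_{r_i}^{j_i}\,\alpha$ (so $\ell=m^{a+b}$), and, fixing the isometry $\CC H^n\cong\BB^n(\CC)$ of Lemma~\ref{lemma:localcoordinates}(2) under which $x\mapsto 0$, a short computation with the formulas given there identifies $\RR H^n_\beta$ with the intersection with $\BB^n(\CC)$ of the totally real subspace $\{t_{2s-1}=\zeta^{j_s}\overline{t_{2s}}\ (s\le a),\ t_i\in\zeta_{2m}^{j_i}\RR\ (2a<i\le k),\ t_i\in\RR\ (k<i\le n)\}$. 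This already settles (1): if $f$ has no nodes then $G(x)=B_f=\{1\}$, the only $\beta$ with $\beta\alpha\in G(x)$ is $\alpha$, and $Y_f=\RR H^n_\alpha\cong\BB^n(\RR)$ by Lemma~\ref{hyperbolic}.

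\emph{Folding by $B_f$ (cases (2)--(4)).} In a real-node direction $i$ the union over $j_i\in\ZZ/m$ of the real lines $\zeta_{2m}^{j_i}\RR$ is the set $T=\{t:t^m\in\RR\}$ of Lemma~\ref{lemma:T/G}, on which $\phi_{r_i}\in B_f$ acts by $t_i\mapsto\zeta t_i$, i.e.\ through the $\langle\zeta_m\rangle$-action on $T$; scaling one coordinate by a root of unity is an isometry of $\BB^n(\CC)$. Because each glued copy of $\BB^n(\RR)$ is totally geodesic in $\CC H^n$ and two of them meet along a totally geodesic subball, the copies are geodesically convex, so the $B_f\cong(\ZZ/m)^b$-quotient can be computed direction by direction: Lemma~\ref{lemma:T/G} identifies $\langle\zeta_m\rangle\backslash T$ with two half-lines joined at $0$, which is isometric as a length space to $\RR$, and iterating over the $b$ real-node directions yields an isometric copy of $\BB^n(\RR)$ --- this is (2). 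For (3), $b=0$ so $B_f=\{1\}$, and the relations $t_{2s-1}=\zeta^{j_s}\overline{t_{2s}}$ produce $m^a$ distinct totally geodesic copies of $\BB^n(\RR)$; those indexed by $(j_s)$ and $(j'_s)$ meet exactly where $t_{2s-1}=t_{2s}=0$ for every $s$ with $j_s\ne j'_s$, i.e.\ along a totally geodesic subball (``$\BB^{2c}$'' with $c=\#\{s:j_s=j'_s\}$, together with the common directions $k<i\le n$). Case (4) combines the two: fold the real directions as in (2), keep the $m^a$ complex pages as in (3), with $[f]$ --- the image of $x=0$ --- sitting at the origin of each page.

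\emph{Quotient by $L_f$ (case (5)).} I would first record that $L_f\subseteq L_x:=\Stab_L(x)$ (since $\psi f=f$ forces $\psi x=x$), that $L_f$ permutes $\ca H(x)$, and --- because reflections in mutually orthogonal short roots fix every $H_r$ --- that the set of real nodes is common to all the $\alpha_i$, so $L_f$ normalizes $B_f$ and $L_f/B_f$ acts on $B_f\backslash Y_f=\bigcup_iB_i$. Transitivity on pages comes from Lemmas~\ref{anti-involution} and \ref{prop:remember}: the element $\prod_{s=1}^{a}\phi_{r_{2s-1}}^{j_s}\in G(x)\subseteq L_f$ conjugates $\alpha$ to $\prod_{s}(\phi_{r_{2s-1}}\phi_{r_{2s}})^{j_s}\alpha$, hence carries $B_{(0)}$ to $B_{(j_s)}$. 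Fixing one page $B\cong\BB^n(\RR)$ and putting $\Gamma_f=(L_f/B_f)_B$, this $\Gamma_f$ is finite ($L_x$ is, the $L$-action on $\CC H^n$ being properly discontinuous), fixes $0$ (as $L_f\subseteq L_x$ fixes $x\mapsto 0$), and $0\mapsto[f]$ under $B\to L_f\backslash Y_f$; surjectivity of $\Gamma_f\backslash B\to L_f\backslash Y_f$ is clear from transitivity. For injectivity I would use the key observation that if $\psi b=b'$ with $b,b'\in B$ and $\psi\in L_f/B_f$, then $b'\in B\cap\psi B$, and the transition element $\chi$ carrying $\psi B$ back to $B$ is a product of powers of precisely those $\phi_{r_{2s-1}}$ with $t_{2s-1}(b')=0$, so $\chi$ fixes $b'$ and $\chi\psi\in\Gamma_f$ satisfies $\chi\psi\cdot b=b'$. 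Finally, each page is geodesically convex in $B_f\backslash Y_f$ (other pages attach along totally geodesic subballs, so no geodesic shortcut leaves a page), hence $d_{B_f\backslash Y_f}$ restricts to the hyperbolic metric on $B$; combining this with the fact that any excursion of a path into another page can be folded back into $B$ by a transition element --- an isometry fixing the relevant intersection --- Lemma~\ref{lemma:metricquotient} yields that $\Gamma_f\backslash B\to L_f\backslash Y_f$ is an isometry of path metrics.

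\emph{Main obstacle.} The combinatorial bookkeeping via Lemma~\ref{lemma:localcoordinates} is routine; the real work is metric. The delicate points are that both foldings --- of the real-node directions in Step~(ii), and of paths back into a single page in Step~(iii) --- are realized by genuine isometries of $\CC H^n$ and are compatible with the hyperbolic path metric across the junction loci, the engine being Lemma~\ref{lemma:T/G} together with the total geodesy of the subspaces in play, which ultimately rests on Theorem~\ref{orthogonal}; and the single structural fact that makes the orbit--stabilizer reduction to $\Gamma_f$ valid is that a transition element between two intersecting pages restricts to the identity on their intersection.
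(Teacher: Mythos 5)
Your proposal is correct and follows essentially the same route as the paper: you describe $Y_f$ in the coordinates of Lemma \ref{lemma:localcoordinates}, fold the real-node directions by $B_f$ via Lemma \ref{lemma:T/G} and coordinate-wise rotations (the paper's explicit fundamental domains $K_{f,\epsilon}$ and straightening map), obtain transitivity of $L_f$ on the pages from the reflections $\phi_{r_{2s-1}}$, and prove injectivity by the observation that a transition element between two pages fixes their intersection --- which is exactly the paper's Lemma \ref{lemma:elementary-lemma} together with its coordinate verification. The only differences are cosmetic (conjugation formulas instead of coordinates for transitivity, and a slightly more explicit justification than the paper's ``piecewise isometry'' remark for the path-metric statement in part (5)).
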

\begin{proof}
1. This is clear. 

2. Suppose next that $f$ has $k$ real nodes, with $1 \leq k \leq n$. Then in the local coordinates $t_i$ of Lemma \ref{lemma:localcoordinates}.\ref{zwei}, we have that $\alpha: {\bb B}^n(\CC) \to {\bb B}^n(\CC)$ is defined by $\alpha(t_i) = \bar t_i$. Part \ref{ein} of the same lemma shows that any $\beta \in P\mr A$ fixing $x$ such that $(x,\alpha) \sim (x,\beta)$ is of the form 
\begin{equation*}
{\bb B}^n(\CC) \to {\bb B}^n(\CC), \white (t_1, \dotsc,t_i, \dotsc, t_n) \mapsto (\bar t_1 \zeta^{j_1}, \dotsc, \bar t_k \zeta^{j_k}, \bar t_{k+1}, \dotsc, \bar t_n). 
\end{equation*}
Since $f$ has $k$ real nodes and no complex conjugate nodes, we have (writing $j = (j_1, \dotsc, j_k)$ and $\alpha_j = \prod_{i = 1}^k \phi_{r_i}^{j_i} \circ \alpha$): 
\begin{equation*}
Y_f \cong  \bigcup_{j = (j_1, \dotsc, j_k) \in (\ZZ/m)^k} \RR H^n_{\alpha_j} \cong \left\{ (t_1, \dotsc, t_n) \in {\bb B}^n(\CC): t_1^m, \dotsc, t_k^m, t_{k+1}, \dotsc, t_n \in \RR \right\}. 
\end{equation*}
As before, write $m = 2^e \cdot d$ with $d$ odd. Each of the $2^k$ subsets 
\begin{align*}
    K_{f, \epsilon_1, \dotsc, \epsilon_k} \coloneqq 
    \left\{ (t_1, \dotsc, t_n) \in {\bb B}^n(\CC): \zeta_{2^{e+1}}^{-\epsilon_1}t_1, \dotsc, \zeta_{2^{e+1}}^{-\epsilon_k}t_k \in \RR_{\geq 0} \textnormal{ and } t_{k+1}, \dotsc, t_n \in \RR    \right\},
\end{align*}
indexed by $\epsilon_1, \dotsc, \epsilon_k \in \{0,1\}$, is isometric to the closed region in ${\bb B}^n(\RR)$ bounded by $k$ mutually orthogonal hyperplanes. By Lemma \ref{lemma:T/G}, their union $U$ is a fundamental domain for $B_f$, in the sense that it maps homeomorphically and piecewise-isometrically onto $B_f \setminus Y_f$. Under its path metric, $U = \cup K_{f, \epsilon_1, \dotsc, \epsilon_k} $ is isometric to ${\bb B}^n(\RR)$ by the following map: 
\begin{equation*}
    U \to {\bb B}^n(\RR), \white (t_1, \dotsc, t_k) \mapsto \left((-\zeta_{2^{e+1}})^{-\epsilon_1}t_1, \dotsc, (-\zeta_{2^{e+1}})^{-\epsilon_k}t_k, t_{k+1}, \dotsc, t_n\right). 
\end{equation*}
This identifies $B_f \setminus Y_f$ with the standard ${\bb B}^n(\RR) \subset {\bb B}^n(\CC)$. 

3. Now suppose $f$ has $a$ pairs of complex conjugate nodes 
$H_{r_1}, \dotsc, H_{r_{2a}}$ such that \eqref{relabel} holds, 
where $1 \leq a \leq \lfloor n/2 \rfloor$. There are now $m^{a}$ anti-isometric involutions $\alpha_{j}$ fixing $x$ and such that $(x, \alpha_{j}) \sim (x,\alpha)$: they are given in the coordinates $t_i$ as follows, taking $j = (j_1, \dotsc, j_a) \in (\ZZ/m)^a$:
\begin{equation} \label{alpha-imaginary}
    \alpha_j \colon (t_1, \dotsc, t_n) \mapsto (\bar t_2 \zeta^{j_1}, \bar t_1 \zeta^{j_1}, \dotsc, \bar t_{2a} \zeta^{j_{a}}, \bar t_{2a-1} \zeta^{j_a}, \bar t_{2a+1}, \dotsc, \bar t_n).
\end{equation}
Thus, any fixed-point set $\RR H^n_{\alpha_j}$ is identified with   the copy $ {\bb B}^n(\RR)_{\alpha_j} $ of $\BB^n(\RR)$ defined as 
\begin{align} \label{fix-piont-def}
    {\bb B}^n(\RR)_{\alpha_j} = \left\{ (t_1, \dotsc, t_n) \in {\bb B}^n(\CC) \colon t_{i} = \bar t_{i-1} \zeta^{j_i} \textnormal{ for $i \leq 2a $ even, } t_i \in \RR \textnormal{ for $i > 2a$} \right\},
\end{align}
and we have 
\begin{equation*}
Y_f \cong \bigcup_j \BB^n(\RR)_{\alpha_j} = 
\left\{ (t_1, \dotsc, t_n) \in {\bb B}^n(\CC) \colon
    t_i^m = \bar t_{i-1}^m \textnormal{if $i \leq 2a$ even, }
     t_i \in \RR \textnormal{ if } i > 2a    \right\}.
\end{equation*}
These $m^a$ copies of ${\bb B}^n(\RR)$ meet at the origin of ${\bb B}^n(\CC)$; in fact, for $j \neq j'$, the space ${\bb B}^n(\RR)_{\alpha_{j}}$ meets ${\bb B}^n(\RR)_{\alpha_{j'}}$ in a ${\bb B}^{2c}(\RR)$ if $c$ is the number of pairs $(j_i, j'_i)$ 
with $j_i = j'_i$. 

4. We proceed to treat the general case: we assume that $f$ has $k = 2a +b >0$ nodes, among which $a$ pairs of complex conjugate nodes and $b$ real nodes. In the local coordinates $t_i$ of Lemma \ref{lemma:localcoordinates}.\ref{zwei}, any anti-unitary involution fixing $x$ equivalent to $\alpha$ is of the form
\begin{align*}
\begin{split}
 \alpha_j \colon 
&(t_1, \dotsc, t_n) \mapsto \\
& (\bar t_2 \zeta^{j_1}, \bar t_1 \zeta^{j_1}, \dotsc, \bar t_{2a} \zeta^{j_{a}}, \bar t_{2a-1} \zeta^{j_a}, \bar t_{2a+1}\zeta^{j_{2a+1}}, \dotsc, \bar t_k \zeta^{j_{k}}, \bar t_{k+1}, \dotsc, \bar t_n),
\end{split}
\end{align*}where 
$$
j = (j_1, \dotsc, j_a, j_{2a+1}, \dotsc, j_{k}) \in (\ZZ/m)^{a+b}.
$$
 We now have $B_f \cong (\ZZ/m)^b$ acting by multiplying the $t_i$ for $2a+1 \leq i \leq k$ by powers of $\zeta$, and there are $m^{a+b}$ anti-unitary involutions $\alpha_j$. We have 
\begin{align*}
&Y_f \cong \bigcup_{j  \in (\ZZ/m)^{a+b}}^m \RR H^n_{\alpha_j} \cong \\ 
&\left\{ (t_1, \dotsc, t_n) \in {\bb B}^n(\CC) \mid t_2^{m} = \bar t_1^{m}, \dotsc, t_{2a}^{m} = \bar t_{2a-1}^{m}, t_{2a+1}^{m}, \dotsc, t_k^{m}, t_{k+1}, \dotsc, t_n \in \RR \right\}.
\end{align*}
We look at subsets $K_{f, \epsilon_1, \dotsc, \epsilon_k} \subset Y_f$ again, this time defined as 
\begin{align*}
 &K_{f, \epsilon_1, \dotsc, \epsilon_k}= \\
&    \left\{ t \in {\bb B}^n(\CC)\mid 
    t_i^m = \bar t_{i-1}^m \textnormal{ $i \leq 2a$ even, }
    \zeta_{2^{e+1}}^{-\epsilon_i}t_i \in \RR_{\geq 0} \textnormal{ $2a<i\leq k$, }
     t_i \in \RR, i > k    \right\}.
\end{align*}
As before, the natural map
$
U \coloneqq \bigcup_{\epsilon} K_{f, \epsilon} \to B_f \setminus Y_f
$ is an isometry. Define  
\begin{equation*}
\widetilde Y_f = 
\left\{ (t_1, \dotsc, t_n) \in {\bb B}^n(\CC):
    t_i^m = \bar t_{i-1}^m \textnormal{ for $i \leq 2a$ even, }
     t_i \in \RR, \textnormal{ for } i > 2a    \right\}.
\end{equation*}
Under its path metric, $U = \cup_\epsilon K_{f, \epsilon_1, \dotsc, \epsilon_k}$ is isometric to $\widetilde Y_f$ 
by the following map: 
\begin{align*}
    U \to \widetilde Y_f, \white 
&(t_1, \dotsc, t_k) \mapsto \\ 
&\left(t_1, \dotsc, t_{2a}, (-\zeta_{2^{e+1}})^{-\epsilon_1}t_{2a+1}, \dotsc, (-\zeta_{2^{e+1}})^{-\epsilon_k}t_k, t_{k+1}, \dotsc, t_n\right). 
\end{align*}
Hence $B_f \setminus Y_f \cong U \cong \widetilde Y_f$. Since $\widetilde Y_f$ is what $Y_f$ was in case \ref{casethree}, we are done. 

5. For $j = (j_1, \dotsc, j_a) \in (\ZZ/m)^a$, define an involution $\alpha_j \colon \BB^n(\CC) \to \BB^n(\CC)$ as in \eqref{alpha-imaginary}. Then the above shows that we have an isometry \begin{align}\label{coordinates:final}\begin{split}
B_f \sm Y_f &\cong \bigcup_{j  \in (\ZZ/m)^a} \BB^n(\RR)_{\alpha_j} \\
&= 
\left\{ (t_1, \dotsc, t_n) \in {\bb B}^n(\CC) \colon
    t_i^m = \bar t_{i-1}^m \textnormal{if $i \leq 2a$ even, }
     t_i \in \RR \textnormal{ if } i > 2a    \right\}.
\end{split}
\end{align} where $\BB^n(\RR)_{\alpha_j} \subset \BB^n(\CC)$ is the copy of $\BB^n(\RR)$ defined in \eqref{fix-piont-def}.  
The transitivity of the action of $L_f$ on these copies $\BB^n(\RR)_{\alpha_j} $ of ${\bb B}^n(\RR)$ follows from the fact that $G(x)\subset  L_f$ contains transformations multiplying $t_1, \dotsc, t_{2a}$ by powers of $\zeta$, hence the map $(t_i \mapsto \zeta^ut_i, t_{i-1} \mapsto t_{i-1})$ maps those $(t_{i-1}, t_i)$ with $t_i = \bar t_{i-1}\zeta^{j_i}$ to those $(t_{i-1}, t_i)$ with $t_i = \bar t_{i-1}\zeta^{j_1+u}$. 

Let $B$ be one of the copies of $\BB^n(\RR)$, and let $\Gamma_f$ be the stabilizer of $B$ in the group $L_f/B_f$. It remains to prove that the canonical map $\pi \colon \Gamma_f \setminus B\to L_f \setminus Y_f$ is an isometry. The surjectivity of $\pi$ follows from the transitivity of $L_f$ on the ${\bb B}^n(\RR)'s$. As $\pi$ is a piecewise isometry, we only need to prove injectivity. This will follow from the following elementary lemma. 
\begin{lemma} \label{lemma:elementary-lemma}
Let a group $G$ act on a set $X$, which is a union $X = \cup_{i \in I}Y_i$ of subsets $Y_i \subset X$. Assume that for each $i \in I$ and $g \in G$, we have $gY_i = Y_j$ for some $j \in I$. 
Suppose that for all $y \in X$, the stabilizer $\Stab_G(y)$ of $y$ in $G$ acts transitively on the sets $Y_i$ containing $y$, in the sense that for each $i$ and $j$ such that $y \in Y_i \cap Y_j$, there exists $g \in \rm{Stab}_G(y)$ such that $gY_i = Y_j$. Let $i_0 \in I$ and let $G_{i_0} \subset G$ be the stabilizer of $Y_{i_0}$. Then the natural map $G_{i_0} \setminus Y_{i_0} \to G \setminus X$ is injective.
\end{lemma} 
\begin{proof}[Proof of Lemma \ref{lemma:elementary-lemma}]
Let $x,y \in Y_{i_0}$ and $g \in G$ such that $g \cdot x = y$. Let $j \in I$ such that $gY_{i_0} = Y_j$. Then $y \in Y_{i_0} \cap Y_j$. Thus, there exists $h \in \text{Stab}_G(y)$ such that $hY_{j} = Y_{i_0}$, and $hg \cdot x = h \cdot y = y$. Define $f \coloneqq hg$. Then $f \in G_{i_0}$ and $f \cdot x = y$. 
\end{proof} 

We apply the lemma to the case $X = B_f \sm Y_f$ and $G = B_f \sm L_f$.  
Consider the isometry \eqref{coordinates:final}. 
Note that for each $j_1 \in \set{1, \dotsc, m^a}$ and each $\phi \in L_f$, we have $\phi(\BB^n(\RR)_{\alpha_{j_1}}) = \BB^n(\RR)_{\alpha_{j_2}}$ for some $j_2 \in \set{1, \dotsc, m^a}$. Let $y \in B_f \setminus Y_f$. To finish the proof of Proposition \ref{localmodel}, it suffices to prove that $\text{Stab}_{L_f/B_f}(y)$ acts transtivitely on the copies of ${\bb B}^n(\RR)$ containing $y$. 
In terms of the isometry \eqref{coordinates:final}, we have $$y \in B_f \sm Y_f \cong \bigcup_{j \in (\ZZ/m)^a} \BB^n(\RR)_{\alpha_j},$$
hence there exists $
    j' = (j_1', \dotsc, j_a') \in (\ZZ/m)^a$ such that  $$y = (t_1, \dotsc, t_n) \in \BB^n(\RR)_{\alpha_{j'}} \subset \BB^n(\CC).$$ In particular, $t_{i} = \bar t_{i-1} \zeta^{j_i'}$ for $i \leq 2a$ even, and 
$t_i \in \RR$ for $i > 2a$. 

If all $t_i$ for $i \leq 2a$ are non-zero, then  is contained in $\BB^n(\RR)_{\alpha_{j'}}$ but not in any $\BB^n(\RR)_{\alpha_{j}}$ with $j \neq j'$, so there is nothing to prove. Let us suppose that $t_1 = t_2 = 0$ and the other $t_i$ are non-zero. 
Then $y$ is contained in those $\BB^n(\RR)_{\alpha_{j}}$ with $j_i = j_i'$ for $i \geq 2$; there are $m$ of them. The group $\text{Stab}_{L_f/B_f}(y)$ contains transformations that multiply $t_1$ and $t_2$ by powers of $\zeta$ and leave the other $t_i$ invariant. Thus, $\text{Stab}_{L_f/B_f}(y)$ acts transitively on the set of $\BB^n(\RR)_{\alpha_{j}}$ containing $y$, because if $t_2 = \bar t_1 \zeta^{j_1}$, then $\zeta^{(j_1'-j_1)}t_2 = \bar t_1 \zeta^{j_1'}$. The general case is similar. 
\end{proof}

\begin{proof}[Proof of Proposition \ref{glueingtheorem1}]
\ref{completepart}. The path metrics on the path-connected components $Y(\mr L)_i$ of $Y(\mr L)$ were provided in Definition \ref{metric}; 
they were constructed in such a way that the map $\mr P \colon Y(\mr L)_i \to \CCH^n$ is a local isometric embedding. 

\ref{completepart:2}. The path metrics on the path-connected components of $M(\mr L)$ were defined in Proposition \ref{prop:pathmetricquotient}. Completeness follows from Proposition \ref{proposition:completeness}. 

\ref{item:importanttheoremitem}. Let $[f] \in M(\mr L)$ be the image of some element $f \in Y(\mr L)$. Then $[f]$ has a connected open neighborhood isometric to the quotient of a connected open set $W$ in $\RR H^n$ by a finite group of isometries $\Gamma_f$. To see this, define as before $L_f \subset L$ as the stabilizer of $f \in Y(\mr L)$, define $Y_f \subset Y(\mr L)$ as in equation (\ref{Kf}), and define an open neighbourhood $f \in U_f \subset Y(\mr L)$ with $U_f \subset Y_f$ as in Lemma \ref{localisometry}.\ref{trois}. Thus, by construction, the natural map $L_f \sm U_f \to L \sm Y(\mr L)$ is injective, hence an open embedding. 
By Proposition \ref{localmodel}.\ref{casefive}, we know that $L_f \setminus Y_f$ is isometric to $\Gamma_f \setminus \BB^n(\RR)$ for some finite group $\Gamma_f$ of isometries of $\BB^n(\RR)$ that fixes the origin $0 \in \BB^n(\RR)$. Thus, $L_f \setminus U_f \subset M(\mr L)$ is isometric to some open set $W'$ in $\Gamma_f \setminus \BB^n(\RR)$. Define $W'' \subset \BB^n(\RR)$ as the preimage of $W'$, and let $W \subset W''$ be the connected component of $W''$ that contains $0 \in W'' \subset \BB^n(\RR)$. Then $\Gamma_f$ preserves $W''$, and hence also $W$ since $\Gamma_f$ fixes $0 \in W'' \subset \BB^n(\RR)$. This concludes the proof of Proposition \ref{glueingtheorem1}. 
\end{proof}

\subsection{Uniformizing the attachment} \label{section:proof-first-main}


\begin{lemma} \label{lemma:pathmetric-orbifold}
Let $X$ be a path metric space such that for each $x \in X$ there exist an open neighbourhood $U$ of $x$ in $X$ and a connected hyperbolic manifold $M$ together with a finite group $G$ of isometries of $M$ such that $U$ and $G\sm M$ are isometric. Then $X$ admits a hyperbolic orbifold structure whose path metric is the given one. 
\end{lemma}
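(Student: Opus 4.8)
The plan is to construct an explicit orbifold atlas on $X$ out of the given local models and to verify that the charts are mutually compatible. Recall that a real hyperbolic orbifold structure is (an equivalence class of) an atlas of \emph{charts} $(\wt U, G, \pi)$, with $\wt U \subseteq \RR H^n$ connected open, $G$ a finite group acting on $\wt U$ by isometries, and $\pi\colon \wt U \to X$ continuous inducing a homeomorphism $G\sm\wt U \xrightarrow{\sim} \pi(\wt U)$ onto an open subset $\pi(\wt U)\subseteq X$; the $\pi(\wt U)$ must cover $X$, and the atlas must be \emph{compatible}, meaning that for every $p\in\pi_1(\wt U_1)\cap\pi_2(\wt U_2)$ there is a chart $(\wt U_3,G_3,\pi_3)$ with $p\in\pi_3(\wt U_3)\subseteq\pi_1(\wt U_1)\cap\pi_2(\wt U_2)$ admitting \emph{embeddings of charts} into $(\wt U_i,G_i,\pi_i)$ for $i=1,2$, i.e.\ a $G_3\hookrightarrow G_i$-equivariant open isometric embedding $\wt U_3\hookrightarrow\wt U_i$ commuting with the $\pi$'s.

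First I would promote the given local models $U\cong G\sm M$ to genuine charts. Fix $x\in U$, a preimage $\wt x\in M$, and its finite stabilizer $G_{\wt x}$. Since $M$ has constant curvature $-1$, for $\epsilon$ below the injectivity radius at $\wt x$ the geodesic ball $B(\wt x,\epsilon)\subseteq M$ is isometric to the geodesic $\epsilon$-ball in $\RR H^n$; shrinking $\epsilon$ below $\tfrac12\min\{d(\wt x,g\wt x): g\wt x\neq\wt x\}$ makes $G_{\wt x}\sm B(\wt x,\epsilon)\to G\sm M=U$ an open embedding onto a neighbourhood of $x$. An isometry of a connected open subset of $\RR H^n$ restricting to the identity on a smaller open set is the identity, so $G_{\wt x}$ acts effectively. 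Carrying this out at every point and for every model neighbourhood produces a basis $\ca B$ of open subsets of $X$, each presented as $\Gamma\sm B$ with $B\subseteq\RR H^n$ a geodesic ball and $\Gamma$ a finite isometry group fixing its centre and acting effectively; these presentations are the proposed charts. (Hausdorffness is automatic since $X$ is metric; second countability holds in the intended application, where $X=M(\mr L)$.)

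The substance is the compatibility condition. Given $U_1,U_2\in\ca B$ and $x\in U_1\cap U_2$, applying the hypothesis at $x$ inside the open set $U_1\cap U_2$ and shrinking as above gives $U_3=\Gamma_3\sm B_3\in\ca B$ with $x\in U_3\subseteq U_1\cap U_2$. For a chart embedding $(\wt U_3,\Gamma_3,\pi_3)\hookrightarrow(\wt U_1,\Gamma_1,\pi_1)$, pass to a connected component $\wt V$ of $\pi_1^{-1}(U_3)\subseteq\wt U_1$ and let $H=\Stab_{\Gamma_1}(\wt V)$; since $U_3$ is connected, $\Gamma_1$ permutes the components of $\pi_1^{-1}(U_3)$ transitively and $(\wt V,H,\pi_1|_{\wt V})$ is again a chart for $U_3$ (routine). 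Thus $U_3$ carries two presentations as a finite effective isometric quotient of a connected open subset of $\RR H^n$, and I would invoke the key rigidity input: an isometry $f\colon\Gamma\sm W\xrightarrow{\sim}\Gamma'\sm W'$ between such quotients is, in a neighbourhood of any point, induced by an isometric embedding of a $\Gamma_w$-invariant ball $B\ni w$ into $W'$ onto a $\Gamma'_{w'}$-invariant ball, intertwining an isomorphism $\Gamma_w\xrightarrow{\sim}\Gamma'_{w'}$ of stabilizers. Applying this to the identity of $U_3$, read through the two presentations $(\wt U_3,\Gamma_3)$ and $(\wt V,H)$ at the centre $x$, yields — after shrinking $U_3$ within $\ca B$ if necessary, still containing $x$ — the required open isometric embedding $\wt U_3\hookrightarrow\wt V\hookrightarrow\wt U_1$ together with the monomorphism $\Gamma_3\hookrightarrow H\hookrightarrow\Gamma_1$; the case of $U_2$ is symmetric. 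Hence $\ca B$ is an orbifold atlas; each chart is an isometry onto its image, so the path metric it induces is the given one, and each chart is a ball-quotient in $\RR H^n$, so the orbifold is hyperbolic.

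The \emph{main obstacle} is the rigidity input just used. Its proof should run as follows: by the theorem of Myers--Steenrod, $f$ restricts to a Riemannian isometry between the smooth loci $\Gamma\sm(W\sm\bigcup_{g\neq 1}\rm{Fix}(g))$ and $\Gamma'\sm(W'\sm\bigcup_{g\neq 1}\rm{Fix}(g))$; by the rigidity of $\RR H^n$ (a Riemannian isometry between connected open subsets extends uniquely to an element of $\PO(n,1)$), $f$ lifts locally to an isometry of $\RR H^n$ over the smooth locus; the singular set has codimension $\geq 1$ and the local lift, being Lipschitz on a dense set, extends continuously across it; and $\Gamma_w$ is recovered from the metric germ of $\Gamma\sm W$ at $[w]$ as its isotropy group — e.g.\ from the local fundamental group of the space of directions — which forces the extended lift to carry $\Gamma_w$ isomorphically onto $\Gamma'_{w'}$. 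Each step is classical; the only one needing genuine care is extending the lift across the whole singular locus (including reflector strata) and checking that it descends to the asserted isomorphism of isotropy groups — after that, the rest is bookkeeping.
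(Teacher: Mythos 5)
The paper does not prove this lemma at all: its ``proof'' is a citation to \cite{lange-orbifolds-metric}, whose main theorem (a length space that is locally isometric to quotients of Riemannian manifolds by finite isometry groups carries a canonical Riemannian orbifold structure inducing the given metric) is exactly this statement. So you are attempting to reprove the cited result from scratch, and your outline --- promote the local models to geodesic-ball charts $\Gamma\sm B$ with $\Gamma$ fixing the centre, and reduce chart compatibility to a rigidity statement for isometries between finite quotients of connected open subsets of $\RR H^n$ --- is indeed the standard architecture of such a proof. The atlas-building and bookkeeping parts of your argument are fine.

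The gap is precisely at the step you call the ``main obstacle'', and the mechanism you propose for it does not work in general. You want to recover the isotropy group $\Gamma_w$ intrinsically from the metric germ of $\Gamma\sm W$ at $[w]$, and you suggest doing so via the local fundamental group of the regular part (or of the space of directions). This fails as soon as $\Gamma_w$ contains a reflection: for $\ZZ/2$ acting on $\RR H^n$ by a reflection, the quotient near a mirror point is a half-space whose regular part is simply connected, so no fundamental-group invariant detects the isotropy, and more generally the orbifold fundamental group of a reflection quotient does not surject onto the local group from the metric side without further input. Reflections cannot be excluded in the intended application: the local groups arising in Proposition \ref{localmodel} and in the stabilizers $\Gamma_f \subset \PO(n,1)$ do contain order-two elements fixing codimension-one walls. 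Likewise, extending the local lift of the isometry across the singular locus is delicate exactly along these codimension-one reflector strata (continuity of the extension is easy; that the extended map is still an isometric, stabilizer-intertwining lift is the hard part). These two points --- determination of the local group by the metric germ, and lifting across reflector walls --- are the actual content of the theorem of Lange that the paper invokes, so as written your proof assumes what it needs to prove. Either carry out that rigidity argument in full (in constant curvature one can do it, e.g.\ by analyzing the tangent cone and the fixed-point strata of the germ, but this requires a genuine argument covering the reflection case), or do as the paper does and quote \cite{lange-orbifolds-metric}.
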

\begin{proof}
See \cite{lange-orbifolds-metric}. 
\end{proof}

\begin{proof}[Proof of Theorem \ref{theorem:introduction:uniformization}]
1. We want to prove that $L \setminus Y(\mr L)$ is naturally a real hyperbolic orbifold. In view of Proposition \ref{glueingtheorem1}, this follows from Lemma \ref{lemma:pathmetric-orbifold}. 

Consider the canonical open embedding of topological spaces
\begin{align} \label{align:openembedding-tops}
\coprod_{\alpha \in C\mr A}  L_\alpha \setminus \left(\RR H^n_\alpha - \mr H \right)  \longhookrightarrow M(\mr L) = L \setminus Y(\mr L),
\end{align}
see item \ref{item:openembedding} in Lemma \ref{lemma:openembeddingtopological}. Let us show that this map is in fact an open immersion of hyperbolic orbifolds. We begin with the following:

\emph{Claim:} Let $f = p(x, \alpha) \in Y(\mr L)$ be an element with no nodes (cf.\ Definition \ref{fullset}). Consider the stabilizer $L_f \subset L$ of $f \in Y(\mr L)$ in the group $L$, and the stabilizer $L_{\alpha,x} \subset L_{\alpha}$ of $x \in \RR H^n_\alpha$ in the group $L_\alpha$. Then $L_f = L_{\alpha,x} \subset L$. 

\emph{Proof of the Claim:} To prove that $L_f = L_{\alpha,x}$, we first observe that the quotient map $p \colon \widetilde Y \to Y(\mr L)$ induces an isomorphism between $L_{(x,\alpha)}$, the stabilizer of $(x,\alpha)\in \widetilde Y$ and $L_f$, the stabilizer of $f = [x, \alpha] \in Y(\mr L)$. So it suffices to show that $L_{(x,\alpha)} = L_{\alpha,x}$. By Lemma \ref{normalisator}, the subgroup $L_\alpha \subset L$ coincides with the normalizer $N_{L}(\alpha) \subset L$ of $\alpha$ in $L$, which implies that $    L_{\alpha,x} = L_{(x,\alpha)}$ because
\begin{align*}
L_{\alpha,x} = \left\{ g \in L_\alpha : gx = x \right\}  &= \left\{ g \in N_{L}(\alpha) : gx = x \right\} \\
&=       \left\{ g \in L: g\cdot (x,\alpha)= (g(x), g\alpha g^{-1}) = (x,\alpha)\right\} \\
& = L_{(x,\alpha)}. 
    \end{align*}
The claim is proved. Item \ref{oopensuborbifold} of the theorem can be deduced from it as follows. Let $f = p(x,\alpha) \in Y(\mr L)$ have no nodes. We have $Y_f = \RR H^n_\alpha$, hence 
\[
 L_f \setminus \RR H^n_\alpha= L_f \setminus Y_f = \Gamma_f \setminus \RR H^n \quad \tn{with} \quad \Gamma_f = L_f \setminus B_f =  L_f.
\] 
By construction, an orbifold chart of the space $L \setminus Y(\mr L)$ is given by $$W \to L_f \setminus W \subset L_\alpha \setminus (\RR H^n_\alpha - \mr H) \subset L \sm Y(\mr L)$$ for an invariant open subset $W$ of $\RR H^n_\alpha$ containing $x$. Because $L_f = L_{\alpha,x}$ by the above claim, this is also an orbifold chart for $\sqcup_{\alpha \in C\mr A}  L_\alpha \setminus \left(\RR H^n_\alpha - \mr H \right) $ at the point $(x,\alpha)$. 

Let $\mr Z \subset M(\mr L)$ be the complement of the image of the open immersion \eqref{align:openembedding-tops}. We claim that $\mr Z$ has measure zero. 
Let $[f] \in \mr Z$ be the image of some element $f \in Y(\mr L)$, and suppose that $f$ is the image of $(x,\alpha) \in \RR H^n_\alpha \subset \wt Y$. 
Define $Y_f \subset Y(\mr L)$ as in equation (\ref{Kf}), so that by Lemma \ref{localisometry}, we have $
Y_f \cong \bigcup_{i = 1}^\ell \RR H^n_{\alpha_i} 
$
where $\alpha_1, \dotsc, \alpha_\ell$ are the elements in $P\mr A$ such that $(x,\alpha_i) \sim (x, \alpha)$ for each $i \in \set{1, \dotsc, \ell}$. 
The fact that $\mr Z$ has measure zero is a local computation; 
by construction of the orbifold chart (see Proposition \ref{localmodel}), it thus follows from the fact that locally around $x \in \RR H^n_\alpha$, the space $\RR H^n_\alpha$ is glued to the spaces $\RR H^n_{\alpha_i}$ along a finite union of geodesic subspaces in $\RR H^n_\alpha$ each of which has measure zero as a subspace of $\RR H^n_\alpha$. 

2. The real hyperbolic orbifold $M(\mr L) = L \setminus Y(\mr L)$ is complete by item 1, so the uniformization of the connected components of $M(\mr L) $ follows from the uniformization theorem for complete $(G,X)$-orbifolds, see \cite[Proposition 13.3.2]{Thurston80}, \cite{matsumoto-montesinos} or \cite[Corollary 2.16, Chapter III.$\mathcal G$, page 603]{bridson-metric}. This proves item 2. 

3. We show that the complete real hyperbolic orbifold $M(\mr L)$ has finite volume. By the above, we know that the measure of $\mr Z \subset M(\mr L)$ is zero, where $\mr Z \subset M(\mr L)$ is the complement of the image of the open immersion \eqref{align:openembedding-tops}. Consequently, the volume of $M(\mr L)$ equals the volume of $M(\mr L) - \mr Z$. As we have an isomorphism of hyperbolic orbifolds $\sqcup_{\alpha \in C\mr A}  L_\alpha \setminus \left(\RR H^n_\alpha - \mr H \right)  \cong M(\mr L) - \mr Z$ by item 1, we get
\begin{align*}
\rm{vol}(M(\mr L)) &= \rm{vol}(M(\mr L)-\mr Z) = \rm{vol}\left(\coprod_{\alpha \in C\mr A}  L_\alpha \setminus \left(\RR H^n_\alpha - \mr H \right)\right) \\
&= \rm{vol}\left(\coprod_{\alpha \in C\mr A} L_\alpha \setminus \RR H^n_\alpha \right) 
= \sum_{\alpha \in C\mr A}\rm{vol}\left( L_\alpha \setminus \RR H^n_\alpha\right). 
\end{align*}
Since the set $C\mr A \cong L \sm P\mr A$ is finite by Proposition \ref{proposition:galois}, and since the volume of $L_\alpha \setminus \RR H^n_\alpha$ is finite by Theorem \ref{th:crucialthm-finitevolume}, we conclude that the volume of $M(\mr L)$ is finite. This concludes the proof of Theorem \ref{theorem:introduction:uniformization}. 
\end{proof}

\section{Preliminaries on totally geodesic immersions} \label{sec:totgeo} 

The goal of this section is to provide some preliminary results regarding totally geodesic immersions of complete connected hyperbolic orbifolds. 
The results in this section are well-known; we state and prove them for lack of suitable reference. 

Let $M$ and $\wt M$ be Riemannian manifolds, with respective Levi--Civita connections $\nabla$ and $\wt \nabla$. Let $\phi \colon M \to \wt M$ be an isometric immersion. 
Let $TM$ be the tangent bundle of $M$ and $NM$ the normal bundle of $\phi$. For a vector bundle $E \to M$, let $\Gamma(M, E)$ denote the space of global sections. The second fundamental form $\rm{II} \colon \Gamma(M, TM) \times \Gamma(M, TM) \to \Gamma(M, NM)$ is defined as follows (cf.\ \cite[Chapter 9]{lee-riemannian}). For a vector field $X$ on an open neighbourhood of the image of $\phi$, let $X|_M \in \Gamma(M, \phi^\ast(T\wt M))$ be the induced section of $\phi^\ast(T\wt M)$ over $M$, and let $X|_M^\perp \in \Gamma(M, NM)$ be the projection of the latter onto the normal bundle. Let $X,Y \in \Gamma(M,TM) \subset \Gamma(M, \phi^\ast(T\wt M))$ be vector fields on $M$, and let $X_W, Y_W \in \Gamma(W, T \wt M)$ be arbitrary extensions of $X,Y$ over an open neighbourhood $\phi(M) \subset W \subset \wt M$. Consider $\wt \nabla^W_{X_W}(Y_W) \in \Gamma(W, T \wt M)$, where $\wt \nabla^W$ is the restriction of $\wt \nabla$ to $W \subset \wt M$. We define $\rm{II}(X,Y) \coloneqq \wt \nabla^W_{X_W}(Y_W)|_M^\perp \in \Gamma(M, NM)$. 

Let $\phi \colon M \to \wt M$ be an isometric immersion. Recall that $\phi$ is a \emph{totally geodesic immersion} if $\phi$ carries the geodesics on $M$ to geodesics on $\wt M$. By \cite[Proposition 8.12]{lee-riemannian}, $\phi$ is a totally geodesic immersion if and only if the second fundamental form $\rm{II} \colon \Gamma(M, TM) \times \Gamma(M, TM) \to \Gamma(M, NM)$ of $\phi$ vanishes identically. 

\begin{lemma} \label{lemma:second-fundamental-lemma}
Let $\phi \colon M \to \wt M$ be an isometric immersion of Riemannian manifolds. Assume that for each $p \in M$, there exist open neighbourhoods $p \in U \subset M$ and $\phi(p) \in V \subset \wt M$ such that $\phi(U) \subset V$ and such that the induced map $\phi_{U,V}  \colon U \to V$ is a totally geodesic immersion. Then $\phi$ is a totally geodesic immersion. 
\end{lemma}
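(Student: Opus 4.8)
The plan is to reduce the global statement to the local hypothesis by exploiting the fact that the vanishing of the second fundamental form is a pointwise condition. Recall from the discussion preceding the lemma that, by \cite[Proposition 8.12]{lee-riemannian}, an isometric immersion $\phi \colon M \to \wt M$ is totally geodesic if and only if its second fundamental form $\rm{II} \colon \Gamma(M, TM) \times \Gamma(M, TM) \to \Gamma(M, NM)$ vanishes identically. So it suffices to show that $\rm{II}$ vanishes at every point $p \in M$.

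First I would fix $p \in M$ and choose, by hypothesis, open neighbourhoods $p \in U \subset M$ and $\phi(p) \in V \subset \wt M$ with $\phi(U) \subset V$ such that $\phi_{U,V} \colon U \to V$ is a totally geodesic immersion. The key point is that the second fundamental form is local: for vector fields $X, Y \in \Gamma(M, TM)$, the value $\rm{II}(X,Y)(p)$ depends only on the germs of $X$ and $Y$ at $p$ (equivalently, only on the vectors $X_p, Y_p \in T_pM$), because the Levi--Civita connection is a local operator and the orthogonal projection onto the normal bundle is pointwise. More precisely, the construction of $\rm{II}$ described in the excerpt — extend $X, Y$ to vector fields $X_W, Y_W$ on a neighbourhood $W$ of $\phi(M)$ in $\wt M$, form $\wt\nabla^W_{X_W}(Y_W)$, restrict to $M$, and project onto $NM$ — can be carried out entirely inside $V$: one may take $W \subset V$, and then $\wt\nabla^W_{X_W}(Y_W)|_U^\perp$ computed in $\wt M$ agrees with the second fundamental form of $\phi_{U,V} \colon U \to V$ evaluated on $X|_U, Y|_U$, since $U$ and $V$ are open submanifolds carrying the restricted metrics and restricted Levi--Civita connections, and the normal bundle of $\phi_{U,V}$ is the restriction of $NM$ to $U$.

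Therefore $\rm{II}(X,Y)(p)$ equals the second fundamental form of the totally geodesic immersion $\phi_{U,V}$ evaluated at $p$, which is zero by \cite[Proposition 8.12]{lee-riemannian} applied to $\phi_{U,V}$. Since $p \in M$ and $X, Y \in \Gamma(M,TM)$ were arbitrary, $\rm{II}$ vanishes identically on $M$, and hence $\phi$ is a totally geodesic immersion by the same proposition applied to $\phi$.

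The only mild subtlety — which I would state carefully rather than belabour — is the locality claim: that the computation of $\rm{II}(X,Y)(p)$ is insensitive to whether one works in $\wt M$ or in the open subset $V$, and insensitive to the choice of extensions $X_W, Y_W$. This is standard (it is exactly the content of the well-definedness of $\rm{II}$ recalled in the excerpt, together with the fact that restricting to an open submanifold commutes with $\wt\nabla$ and with orthogonal projection), so I do not anticipate a genuine obstacle; it is simply the one place where one must be explicit about what "local" means. Everything else is a direct invocation of the tensorial characterization of totally geodesic immersions.
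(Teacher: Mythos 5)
Your argument is correct and is essentially the paper's own proof: both reduce to the vanishing of the second fundamental form via \cite[Proposition 8.12]{lee-riemannian}, then use the locality of $\rm{II}$ to identify $\rm{II}(X,Y)|_U$ with the second fundamental form of the totally geodesic map $\phi_{U,V}$, which vanishes. The extra care you take in justifying the locality of $\rm{II}$ is exactly the point the paper treats with the identity $\rm{II}_U(X_U,Y_U)=\rm{II}(X,Y)|_U$, so there is no substantive difference.
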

\begin{proof}
By the above, we need to prove that $\rm{II}(X,Y) =0 \in \Gamma(M, NM)$ for each $X,Y \in \Gamma(M, TM)$. Fix vector fields $X$ and $Y$ on $M$, and let $p \in M$. 
Let $p \in U \subset M$ and $\phi(p) \in V \subset \wt M$ be open neighbourhoods such that $\phi(U) \subset V$ and such that the induced map $\phi_{U,V} \colon U \to V$ is a totally geodesic immersion. Let $X_U$ and $Y_U \in \Gamma(U, TU)$ be the restrictions of $X$ and $Y$. 
The second fundamental form $\rm{II}_U \colon \Gamma(U, TU) \times \Gamma(U, TU) \to \Gamma(U, NU)$ of the map $\phi_{U,V} \colon U \to V$ vanishes because $\phi_{U,V}$ is a totally geodesic immersion. Since $\rm{II}_U(X_U, Y_U) = \rm{II}(X,Y)|_U \in \Gamma(U, NU)$, we get $\rm{II}(X,Y)|_U = 0$, and the lemma follows. 
\end{proof}

\begin{lemma} \label{lemma:intermediate}
Let $M = \RRH^{n_1}$ and $N = \RRH^{n_2}$ for some $n_2\geq n_1 \geq 1$. Consider a smooth map $\phi \colon M \to N$. Assume that for each $x \in M$, there exist open neighbourhoods $x \in U \subset M$ and $y \coloneqq \phi(x) \in V \subset N$ such that $\phi(U) \subset V$ and such that the induced map $\phi_{U,V} \colon U \to V$ is a totally geodesic isometric immersion.  Then $\phi$ is a totally geodesic closed isometric embedding. 
\end{lemma}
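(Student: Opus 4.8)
The plan is to assemble the conclusion from three separate facts about $\phi$: that it is a totally geodesic isometric immersion, that it is an embedding (injective), and that its image is closed. The local hypothesis immediately gives the first two of these: since each point $x\in M$ has a neighbourhood on which $\phi$ restricts to a totally geodesic isometric immersion, Lemma \ref{lemma:second-fundamental-lemma} shows that the second fundamental form of $\phi$ vanishes identically, hence $\phi$ is a totally geodesic immersion; and the local isometric-immersion property shows $\phi$ is a local isometric embedding, in particular a local diffeomorphism onto its image and a $1$-Lipschitz map that is locally distance-preserving. So the real content is upgrading ``local isometric immersion + totally geodesic'' to ``closed isometric embedding'' in the specific situation $M=\RRH^{n_1}$, $N=\RRH^{n_2}$.

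First I would prove $\phi$ is distance non-increasing globally and in fact a \emph{global} isometric embedding onto a totally geodesic submanifold. The key geometric input is that a totally geodesic immersion sends geodesics to geodesics: given $x\in M$ and a unit tangent vector $v\in T_xM$, the geodesic ray $\gamma_v$ in $M=\RRH^{n_1}$ is complete (hyperbolic space is complete and geodesically complete), and $\phi\circ\gamma_v$ is a geodesic in $N=\RRH^{n_2}$, necessarily a complete minimizing geodesic ray since geodesics in $\RRH^{n_2}$ are globally minimizing. Thus for any $x,y\in M$, taking the unique minimizing geodesic in $M$ between them and applying $\phi$, one gets a geodesic segment in $N$ of the same length joining $\phi(x)$ to $\phi(y)$, whence $d_N(\phi(x),\phi(y))\le d_M(x,y)$, and because that image segment is itself minimizing in $N$ we actually get equality $d_N(\phi(x),\phi(y))=d_M(x,y)$. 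This proves $\phi$ is a global isometric embedding (in particular injective). Here I would want to be slightly careful: to run this argument I should first check that $\phi$ maps the \emph{global} minimizing geodesic of $M$ to a geodesic of $N$, which follows from the local statement by a standard continuation/open-closed argument along the segment, using that ``being a geodesic'' is a local condition and the vanishing of $\mathrm{II}$ is global.

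Next, closedness of the image: since $\phi$ is a global isometric embedding of the complete metric space $(M,d_M)=\RRH^{n_1}$ and it is distance-preserving, the image $\phi(M)$ is a complete subspace of the metric space $N=\RRH^{n_2}$; a complete subspace of a metric space is closed. Hence $\phi(M)$ is closed in $N$, and since $\phi$ is a proper injective immersion that is a homeomorphism onto its image (isometry onto image with the subspace metric, which by completeness induces the subspace topology), $\phi$ is a closed embedding. Finally, totally geodesic was already established. Putting these together gives that $\phi$ is a totally geodesic closed isometric embedding.

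The step I expect to be the main obstacle is the globalization of ``geodesic to geodesic'': patching the purely local totally-geodesic-immersion hypothesis into a statement about the image of an entire (possibly long) minimizing geodesic segment, and making sure no issues arise from $\phi$ only being an immersion rather than an embedding a priori (e.g.\ ruling out that $\phi\circ\gamma$ could be a non-injective or non-minimizing geodesic in $N$). The resolution is that geodesics in $\RRH^{n_2}$ \emph{are} automatically minimizing and injective — there are no conjugate points and the space is simply connected with nonpositive curvature — so once $\phi\circ\gamma$ is known to be an (unbroken) geodesic of $N$ it is automatically a minimizing embedded segment, and the length comparison forces $d_N(\phi(x),\phi(y))=d_M(x,y)$. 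Everything else (vanishing second fundamental form via Lemma \ref{lemma:second-fundamental-lemma}, completeness $\Rightarrow$ closed image, isometric embedding $\Rightarrow$ homeomorphism onto image) is routine.
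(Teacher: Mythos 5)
Your argument is correct, and it reaches the conclusion by a slightly different mechanism than the paper. Both proofs start identically: the local hypothesis plus Lemma \ref{lemma:second-fundamental-lemma} gives that $\phi$ is a totally geodesic isometric immersion, and both then exploit that such a map carries geodesics of $\RRH^{n_1}$ to geodesics of $\RRH^{n_2}$. The paper's proof uses this with \emph{uniqueness of geodesics with prescribed initial data}: fixing one point $x$, it shows $\phi\circ\gamma_v=\mu_{d\phi_x(v)}$ for every $v\in T_x\RRH^{n_1}$, hence $\phi=\exp_{\phi(x)}\circ d\phi_x\circ\exp_x^{-1}$, and reads off the closed embedding directly from this global formula (the exponential maps being diffeomorphisms of hyperbolic space). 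You instead use the \emph{minimizing} property of geodesics in $\RRH^{n_2}$: pushing forward the minimizing segment between two arbitrary points of $M$ yields a geodesic segment of the same length in $N$, which is automatically minimizing, so $\phi$ is globally distance-preserving, hence injective and a homeomorphism onto its image; closedness then follows from completeness of $\RRH^{n_1}$ (a complete subspace of a metric space is closed). Your route buys the stronger explicit statement that $\phi$ preserves distances globally, at the cost of invoking two Cartan--Hadamard facts (geodesics are minimizing; complete image is closed), whereas the paper's route needs only geodesic uniqueness and the exponential parametrization and packages closedness and embedding in one formula. The one step you flagged as delicate -- globalizing ``geodesic to geodesic'' -- is in fact immediate once the second fundamental form is known to vanish everywhere, so no continuation argument is needed.
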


\begin{proof}
It is clear that $\phi$ is an isometric immersion. By Lemma \ref{lemma:second-fundamental-lemma}, the map $\phi$ is therefore a totally geodesic immersion.
We prove that $\phi$ is topologically a closed embedding. Let $x \in \RR H^{n_1}$. It suffices to show that $\phi$ commutes with the differential $d\phi_x \colon T_x\RRH^{n_1} \to T_{\phi(x)}\RRH^{n_1}$ and the two exponentials $\exp_x \colon T_x \RR H^{n_1}\xrightarrow{\sim} \RR H^{n_1}$ and $\exp_{\phi(x)} \colon T_{\phi(x)} \RR H^{n_2} \xrightarrow{\sim} \RR H^{n_2}$. Indeed, in that case, $\phi = \exp_{\phi(x)} \circ d\phi_x \circ \exp_x^{-1}$, which is clearly topologically a closed embedding. 

To commute that $\phi$ commutes with $d\phi_x$ and the two exponentials $\exp_x$ and $\exp_{\phi(x)}$, let $v \in T_x \RR H^{n_1}$. Then $\exp_x(v) = \gamma_v(1)$, where $\gamma_v \colon \RR \to \RR H^{n_1}$ is the unique geodesic defined on $\RR$ with $\gamma_v(0) = x$ and $\gamma_v'(0) = v$, where $\gamma'_v(0) = (d\gamma_v)_0(d/dt) \in T_x\RR H^{n_1}$ is the velocity of $\gamma_v$ at $0$. We must show that 
$
\phi \left( \exp_x(v) \right) = \exp_{\phi(x)}(d\phi_x(v)). 
$
Let $w = d \phi_x(v) \in T_{\phi(x)} \RR H^{n_2}$ and let $\mu_w \colon \RR \to \RR H^{n_2}$ be the unique geodesic defined on $\RR$ such that $\mu_w(0) = \phi(x)$ and $\mu'_w(0) = w$. We get $\exp_{\phi(x)}(w) = \mu_w(1)$, hence it suffices to show that
$
\phi(\gamma_v(1)) = \mu_w(1). 
$ Define a map $\tilde \mu \colon \RR \to \RR H^{n_2}$ as $\mu' \coloneqq \phi \circ \gamma_v$. Then $\tilde \mu$ is a geodesic, since $\phi$ is a totally geodesic immersion. Moreover, $\tilde \mu$ is defined on $\RR$, and we have
\[
\tilde \mu(0) = \phi(\gamma_v(0) = \phi(x), \quad \quad (\tilde \mu)'(0) = (\phi \circ \gamma_v)'(0) = d\phi_x(\gamma_v'(0)) = d\phi_x(v) = w. 
\]
By the uniqueness of $\mu_w$ as a geodesic defined on $\RR$ with $\mu_w(0) = \phi(x)$ and $\mu_w'(0) = w$, we get $\tilde \mu = \mu_w$. Consequently,
$
\phi(\exp_x(v)) = \phi(\gamma_v(1))  = \mu_w(1) = \exp_{\phi(x)}(d\phi_x(v))
$
as wanted. This shows that $\phi(\gamma_v(1)) = \mu_w(1)$, finishing the proof of the lemma. 
\end{proof}

\begin{definition}[cf.\ \S 2.4.1 in \cite{subspacestabilizers}] \label{def:interestingly}
Let $\Gamma_1 \subset \PO(n_1,1)$ and $\Gamma_2 \subset \PO(n_2,1)$ be lattices, where $n_2 \geq n_1 \geq 1$ are integers. Let $$i \colon \Gamma_1\setminus \RR H^{n_1} \longrightarrow \Gamma_2 \setminus \RR H^{n_2}$$ be a morphism of smooth orbifolds. We say that $i$ is a \emph{totally geodesic immersion} if for any of the lifts $\widetilde i \colon \RR H^{n_1} \to \RR H^{n_2}$ of $i$, there exists a totally geodesic subspace $U \subset \RR H^{n_2}$ such that $\widetilde i$ factors through an isometry $\RR H^{n_1} \xrightarrow{\sim} U \subset \RR H^{n_2}$. If the map $i$ is moreover an embedding, then we call it a \emph{totally geodesic embedding}.  
\end{definition}

\begin{lemma} \label{direct}
Let $n_2 \geq n_1 \geq 1$ be integers. 
Let $\Gamma_1 \subset \tn{PO}(n,1)$ and $\Gamma_2 \subset \PO(n_2,1)$ be lattices. Let 
\[
i \colon \Gamma_1 \setminus \RR H^{n_1} \longrightarrow \Gamma_2 \setminus \RR H^{n_2}
\]
be a morphism of hyperbolic orbifolds. Assume that for each $\bar x \in \Gamma_1 \setminus \RR H^{n_1}$, there exist connected open neighbourhoods $\bar x \in V_1 \subset \Gamma_1 \setminus \RR H^{n_1}$ and $\bar y = i(\bar x) \in V_2  \subset \Gamma_2 \setminus \RR H^{n_2}$ with $i(V_1) \subset V_2$, such that $V_i \cong U_i / \Gamma_{i,x}$ 
for a connected open subset $U_i \subset \RR H^{n_i}$ and a lift $x_i \in U_i$ of the point $\bar x_i$ $(i = 1,2)$, and such that the induced map $i \colon V_1 \to V_2$ lifts to a totally geodesic immersion $j \colon U_1 \to U_2$ which is equivariant for a homomorphism 
$$
\PO(n_1,1)\supset \Gamma_{1,x} \longrightarrow \Gamma_{2,y} \subset \PO(n_2,1). 
$$
Then any lift $\wt i\colon  \RR H^{n_1} \to \RR H^{n_2}$ of $i$ is a totally geodesic closed immersion. In particular, the map $i$ 
is a totally geodesic immersion of hyperbolic orbifolds. 
\end{lemma}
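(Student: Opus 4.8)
The plan is to deduce the global statement from the local hypothesis by first producing a well-defined global lift, then checking it is a totally geodesic closed immersion using the preliminary lemmas already established.

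\textbf{Step 1: Produce a global lift $\wt i$ and analyse its local structure.} Since $\RR H^{n_1}$ is simply connected, the orbifold morphism $i\colon \Gamma_1\setminus\RR H^{n_1}\to\Gamma_2\setminus\RR H^{n_2}$ lifts to a map $\wt i\colon \RR H^{n_1}\to\RR H^{n_2}$ (equivariant for a homomorphism $\Phi\colon\Gamma_1\to\Gamma_2$); this is the standard lifting property for maps out of a simply connected orbifold, and any two such lifts differ by an element of $\Gamma_2$, so it suffices to treat one of them. The first thing to observe is that $\wt i$ is \emph{smooth}: given $x\in\RR H^{n_1}$ with image $\bar x\in\Gamma_1\setminus\RR H^{n_1}$, choose neighbourhoods $V_1,V_2$ and the local data $U_1,U_2,x_1,x_2,j$ as in the hypothesis. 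Because $U_i\to V_i$ is the orbifold chart and $\wt i$ lifts $i$, after translating by a suitable element of $\Gamma_2$ the restriction of $\wt i$ to a small enough neighbourhood of $x$ agrees with $j\colon U_1\to U_2$ (or with $g\circ j$ for some $g\in\Gamma_2$, on the overlap); in particular $\wt i$ is locally a totally geodesic isometric immersion.

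\textbf{Step 2: Upgrade from local to global.} Now I am exactly in the situation of Lemma \ref{lemma:intermediate}: $\wt i\colon\RR H^{n_1}\to\RR H^{n_2}$ is a smooth map which is, in a neighbourhood of every point, a totally geodesic isometric immersion. Applying Lemma \ref{lemma:intermediate} directly gives that $\wt i$ is a totally geodesic closed isometric embedding. In particular its image is a totally geodesic submanifold $U\subset\RR H^{n_2}$ isometric via $\wt i$ to $\RR H^{n_1}$, which is precisely the factorization required in Definition \ref{def:interestingly}. Since this holds for one — hence, by the ``differ by $\Gamma_2$'' remark, for every — lift $\wt i$, the orbifold morphism $i$ is a totally geodesic immersion of hyperbolic orbifolds as claimed.

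\textbf{Main obstacle.} The routine part is Step 2, which is a black-box application of Lemma \ref{lemma:intermediate}. The delicate point is Step 1: making rigorous the claim that a global lift $\wt i$ exists \emph{and} that near each point it genuinely coincides (after a $\Gamma_2$-translation) with the given local totally geodesic immersion $j$, rather than merely being "compatible" with it in some weaker sense. One must be careful that the lift of $i\colon V_1\to V_2$ provided by the hypothesis and the restriction of the global lift $\wt i$ are two lifts of the same map between the same charts, hence differ by a deck transformation in $\Gamma_{2,y}$ — and that these local identifications are consistent as $x$ varies, which follows from the uniqueness of lifts once a basepoint is fixed. I would also note in passing that properness/closedness of $\wt i$ comes for free from Lemma \ref{lemma:intermediate} (via the $\exp\circ d\wt i_x\circ\exp^{-1}$ description), so no separate argument is needed there. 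Once Step 1 is set up cleanly, the rest is immediate.
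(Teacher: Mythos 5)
Your proposal is correct and takes essentially the same route as the paper's proof: fix an equivariant global lift $\wt i$, show that near each point it agrees (up to an isometry of the target) with the hypothesized local lift $j$, and then apply Lemma \ref{lemma:intermediate} to promote local total geodesy to the statement that $\wt i$ is a totally geodesic closed isometric embedding, whence $i$ is a totally geodesic immersion in the sense of Definition \ref{def:interestingly}. The one step you compress is exactly where the paper spends most of its effort: the charts $U_i$ furnished by the hypothesis come with abstract identifications $V_i \cong U_i/\Gamma_{i,x}$ and a priori different basepoint lifts, so before one can say that $j$ and the restriction of $\wt i$ are ``two lifts of the same map between the same charts,'' the paper shrinks to simply connected geodesic balls and invokes \cite[Lemma 2.2]{lange-orbifolds-metric} to conjugate the given charts onto canonical ones by ambient isometries, after which the two lifts agree because they match at the basepoint; your appeal to uniqueness of lifts with a fixed basepoint is the same idea, so there is no essential gap, but a complete write-up needs that chart-rigidity input.
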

\begin{proof}
Let $M_1 = \Gamma_1 \setminus \RR H^{n_1}$ and $M_2 = \Gamma_2 \setminus \RR H^{n_2}$.  
Let $\wt i\colon  \RR H^{n_1} \to \RR H^{n_2}$ be a lift of $i \colon M_1 \to M_2$. We need to show that $\wt i$ is a totally geodesic closed isometric embedding. To do this, let $x_1 \in \RRH^{n_1}$ and $x_2 = \iota(x_1) \in \RRH^{n_2}$. Let $x_1 \in U_1 \subset \RR H^{n_1}$ be a connected $\Gamma_{1,x_1}$-stable open neighbourhood with  $V_1 = U_1/\Gamma_{1,x} \subset M_1$, let $x_2 \in U_2 \subset \RRH^{n_2}$ be a connected $\Gamma_{2,x_2}$-stable open neighbourhood with $V_2 = U_2/\Gamma_{2,x} \subset M_2$, such that $\wt i(U_1) \subset U_2$. By Lemma \ref{lemma:intermediate}, it suffices to show that the map $\wt i_{U_1,U_2} \colon U_1 \to U_2$ induced by $\wt i$ is a totally geodesic immersion in a neighbourhood of $x_1 \in U_1$. 

Let $\rho \colon \Gamma_1 \to \Gamma_2$ be the homomorphism attached to the morphism of orbifolds $i \colon M_1 \to M_2$ and the point $x_1 \in \RRH^{n_1}$. Note that $\wt i \colon \RRH^{n_1} \to \RRH^{n_1}$ is equivariant with respect to $\rho \colon \Gamma_1 \to \Gamma_2$, and that the induced map $M_1 \to M_2$ coincides with the map $i$. Moreover, $\rho(\Gamma_{1,x_1}) \subset \Gamma_{2,x_2}$, the map $\wt i_{U_1, U_2} \colon U_1 \to U_2$ is equivariant for the map $\rho \colon \Gamma_{1,x_1} \to \Gamma_{2,x_2}$, and the map $V_1 \to V_2$ induced by $\wt i_{U_1, U_2} \colon U_1 \to U_2$ and $\rho \colon \Gamma_{1,x_1} \to \Gamma_{2,x_2}$ coincides with the restriction of the map $i \colon M_1 \to M_2$ to $V_1$. 

Let $\bar x_1 \in M_1$ and $\bar x_2 \in M_2$ be the images of $x_1$ and $x_2$. By assumption, there exist connected open neighbourhoods $\bar x_1 \in V_1'  \subset M_1$ and $\bar x_2 = i(\bar x_1) \in V_2' \subset M_2$ with $i(V_1') \subset V_2'$, such that $V_i' \cong U_i' / \Gamma_{i,x}$ for connected opens $U_i' \subset \RR H^{n_i}$ and lifts $x_i' \in U_i'$ of $\bar x_i$, and such that $i \colon V_1' \to V_2'$ lifts to a totally geodesic immersion $j \colon U_1' \to U_2'$ with $j(x_1') = x_2'$ which is equivariant for a homomorphism 
$
\Gamma_{1,x_1} \to \Gamma_{2,x_2}$. 

Let $W_i \subset M_i$ be a connected open neighbourhood of $\bar x_i$ such that $W_i \subset V_i \cap V_i'$ and $i(W_1) \subset W_2$. Up to replacing $V_i$ and $V_i'$ by $W_i$, and $U_i$ (resp.\ $U_i'$) by the connected component of the inverse image of $W_i$ containing $x_i$ (resp.\ $x_i'$), we may assume that $V_i = V_i'$ for $i = 1,2$. Then, for $i = 1,2$, we let 
\[
x_i \in U_{\epsilon_i}(x_i) \subset U_i, \quad \quad x_i' \in U_{\epsilon_i}(x_i') \subset U_i'
\]
be an open geodesic ball of radius $\epsilon_i>0$ 
such that we have $j(U_{\epsilon_1}(x_1')) \subset U_{\epsilon_2}(x_2')$ and $\wt i_{U_1, U_2}(U_{\epsilon_1}(x_1)) \subset U_{\epsilon_2}(x_2))$. In particular, by the definition of geodesic ball (see \cite[Chapter 6, page 158]{lee-riemannian}), $U_{\epsilon_i}(x_i)$ and $U_{\epsilon_i}(x_i') $ are the respective images of open balls $B_{\epsilon_i}(0) \subset T_{x_i}\RRH^{n_i}$ and $B_{\epsilon_i}'(0) \subset T_{x_i'}\RRH^{n_i}$ under the exponential maps $T_{x_i}\RRH^{n_i} \xrightarrow{\sim} \RRH^{n_i}$ and $T_{x_i'}\RRH^{n_i} \xrightarrow{\sim} \RRH^{n_i}$. 
Now $B_{\epsilon_i}(0) \subset T_{x_i}\RRH^{n_i}$ and $B_{\epsilon_i}'(0) \subset T_{x_i'}\RRH^{n_i}$ are open balls in a normed space, and therefore simply connected. In particular, the open geodesic balls $U_{\epsilon_i}(x_i) \subset U_i$ and $U_{\epsilon_i}(x_i') \subset U_i'$ are simply connected for each $i \in \set{1,2}$. 

Since $U_{\epsilon_i}(x_i)$ and $U_{\epsilon_i}(x_i')$ are geodesic balls, they are metric balls of the same radius, see \cite[Corollary 6.13]{lee-riemannian}. Therefore, since $\Gamma_i$ and $\Gamma_i'$ act by isometries and fix $x_i$ and $x_i'$ respectively, these groups preserve these balls. Consequently, up to replacing $U_i$ and $U_i'$ by $U_\epsilon(x_i)$ and $U_\epsilon(x_i')$, we may assume that $U_i = U_\epsilon(x_i)$ and $U_i' = U_\epsilon(x_i')$. In particular, the $U_i$ are simply connected, and by \cite[Lemma 2.2]{lange-orbifolds-metric}, for $i = 1,2$, there exists an isometry 
\begin{align*}
g_i \colon U_i \xrightarrow{\sim} U_i'   \quad &\text{such that} \quad   \Gamma_{i, x_i'} = g_i \circ \Gamma_{i,x_i} \circ g_i^{-1}  \subset g_i \circ \rm{Isom}(U_i) \circ g_i^{-1} = \rm{Isom}(U_i') \\
& \text{and }\quad\quad\quad \;g_i(x_i) = x_i'. 
\end{align*}
The isometry $g_i \colon U_i \xrightarrow{\sim} U_i'$ is the restriction of an isometry $g_i \colon \RRH^{n_i} \xrightarrow{\sim} \RRH^{n_i}$, i.e., $g_i$ is the restriction of an element $g_i \in \PO(n_i,1) = \rm{Isom}(\RR H^{n_i})$ to $U_i \subset \RRH^{n_i}$. 
To prove that $\wt i_{U_1,U_2} \colon U_1 \to U_2$ is a totally geodesic immersion in a neighbourhood of $x_1 \in U_1$, we may thus assume that $U_i = U_i'$ and $x_i = x_i'$ for $i = 1,2$. 
Then $\wt i_{U_1, U_2}$ and $j$ define two lifts 
$
U_1 \rightrightarrows U_2
$
of the map $i \colon V_1 \to V_2$. Since $\wt i_{U_1, U_2}(x_1) = x_2 = j(x_1)$, we have $\wt i_{U_1, U_2} = j$ because $U_1$ and $U_2$ are simply connected. The map $j \colon U_1 \to U_2$ is a totally geodesic immersion, hence the same holds for $\wt i_{U_1, U_2}$. 
\end{proof}

To prove that $\Gamma_{\eis}^n \subset \PO(n,1)$ is non-arithmetic for each $n \geq 2$, we would like to reduce to the case $n =2$. A key ingredient in this reduction step is the following theorem. 

\begin{theorem}[Bergeron--Clozel] \label{th:bergeron}
Let $n_2 \geq n_1 \geq 2$ be integers. Let $\Gamma_i \subset \PO(n_i,1)$ be a lattice for $i = 1,2$, and let $$\Gamma_1 \setminus \RR H^{n_1} \longrightarrow \Gamma_2 \setminus \RR H^{n_2}$$ be a totally geodesic immersion (see Definition \ref{def:interestingly}). If the lattice $\Gamma_2 \subset \rm{PO}(n_2,1)$ is arithmetic, then the lattice $\Gamma_1 \subset \rm{PO}(n_1,1)$ is arithmetic as well. 
\end{theorem}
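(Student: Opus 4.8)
The plan is to deduce this from the superrigidity/arithmeticity results of Bergeron--Clozel on totally geodesic cycles in arithmetic hyperbolic manifolds, so the proof is largely a matter of reducing to the precise statement available in the literature and handling the orbifold (rather than manifold) setting. First I would pass to torsion-free finite-index subgroups: by Selberg's lemma, choose a torsion-free normal finite-index subgroup $\Gamma_2' \subset \Gamma_2$, and set $\Gamma_1' \coloneqq \widetilde\iota^{-1}(\Gamma_2') \cap \Gamma_1$, where $\widetilde\iota \colon \RR H^{n_1} \to \RR H^{n_2}$ is a lift of the totally geodesic immersion and we use the induced homomorphism $\Gamma_1 \to \Gamma_2$ on fundamental groups (well-defined up to conjugacy, cf.\ Definition \ref{def:interestingly}). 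Then $\Gamma_1'$ is a torsion-free finite-index subgroup of $\Gamma_1$, and since arithmeticity is a commensurability invariant, it suffices to prove that $\Gamma_1'$ is arithmetic assuming $\Gamma_2'$ is. After this reduction the induced map $\Gamma_1' \setminus \RR H^{n_1} \to \Gamma_2' \setminus \RR H^{n_2}$ is a totally geodesic immersion of \emph{manifolds}, which is exactly the setting of \cite{bergeronclozel} (Bergeron--Clozel) and its precursors (Bergeron, and the surrounding circle of ideas going back to Li--Millson); I would cite their theorem that a closed (or finite-volume) totally geodesically immersed hyperbolic submanifold of an arithmetic hyperbolic manifold has arithmetic fundamental group, in the form needed here.

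The key steps, in order, are: (i) replace $\Gamma_2$ by a torsion-free finite-index subgroup and pull back to get a torsion-free $\Gamma_1'$ of finite index in $\Gamma_1$; (ii) observe that the immersion descends to a totally geodesic immersion of the associated manifolds, with $\widetilde\iota$ factoring through an isometry onto a totally geodesic $n_1$-plane $U \subset \RR H^{n_2}$, so the image is an immersed totally geodesic finite-volume hyperbolic $n_1$-submanifold of $\Gamma_2' \setminus \RR H^{n_2}$; (iii) invoke Bergeron--Clozel to conclude $\Gamma_1' \subset \PO(n_1,1)$ is arithmetic; (iv) deduce $\Gamma_1 \subset \PO(n_1,1)$ is arithmetic by commensurability invariance of arithmeticity. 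One should double check that the relevant Bergeron--Clozel statement is available for finite-volume (possibly non-compact) quotients, which is the generality we need since $\Gamma^n_{\eis}$ is non-cocompact for small $n$; the finite-volume case is indeed covered in \emph{loc.\ cit.} (or can be reduced to the compact case after a further finite cover by a theorem of the same authors), and I would cite the appropriate statement.

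The main obstacle I anticipate is not any single hard computation but rather correctly matching the hypotheses: ensuring that the totally geodesic immersion in the sense of Definition \ref{def:interestingly} really does produce, after passing to finite covers, a \emph{properly immersed} totally geodesic submanifold of finite volume (so that the arithmeticity criterion applies), and citing the Bergeron--Clozel result in a form that allows $n_1 \geq 2$ arbitrary and does not require the submanifold to be of codimension one or compact. A secondary subtlety is the direction of the implication: Bergeron--Clozel gives arithmeticity of the \emph{ambient} manifold from a non-arithmetic totally geodesic submanifold (equivalently, arithmeticity of the submanifold from arithmeticity of the ambient manifold), which is precisely what we want here, so no contrapositive gymnastics are needed; still, I would state it carefully. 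Given that all the geometric input (that $\widetilde\iota$ is a totally geodesic closed isometric embedding onto an $n_1$-plane) has already been established in Lemma \ref{direct} and Lemma \ref{lemma:intermediate}, the proof should be short: reduce to torsion-free covers, quote Bergeron--Clozel, and invoke commensurability invariance.
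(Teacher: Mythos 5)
Your proposal is correct and matches the paper, whose entire proof of this statement is the citation to Bergeron--Clozel (Proposition 15.2.2 of their book, compared with \cite{subspacestabilizers}); the cited result is already phrased for lattices/stabilizers of totally geodesic subspaces, so your Selberg-lemma reduction to torsion-free covers is harmless but not needed. The only point to state carefully is that Bergeron--Clozel gives arithmeticity of the stabilizer $\mathrm{Stab}_{\Gamma_2}(U)$ acting on $U$, and one passes to $\Gamma_1$ via the induced injective homomorphism and finite-index (commensurability) considerations, exactly as you indicate.
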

\begin{proof}
See \cite[Proposition 15.2.2]{bergeron-clozel} (compare also \cite[Theorem 1.7]{subspacestabilizers}). 
\end{proof}

\section{Standard hermitian lattices} \label{sec:standard} 

The goal of this section is to study particular hermitian lattices $\mr L = (K, \Lambda)$, where the underlying CM field $K$ is of the form $\QQ(\sqrt{-p})$ if $p>2 $ is prime, or of the form $\QQ(\zeta_p)$ if $p>3$ is a prime number such that the class number of $\QQ(\zeta_p)$ is odd. 
This analysis will allow us in the next two sections to prove Theorems \ref{theorem:introduction:non-arithmeticity} and \ref{theorem:non-arithmeticity:p-arbitrary}.

\begin{lemma} \label{lemma:narkiewicz}
Let $p>2$ be a prime number. Assume that the class number of $\QQ(\zeta_p)$ is odd. Then, there exists a unit $\lambda$ in the ring of integers of the totally real number field $ \QQ(\zeta_p+\zeta_p^{-1})$ that satisfies the following conditions:
\begin{enumerate}
\item \label{item:narkiewicz:1}One has that $\tau(\lambda) > 0$ for some $\tau \in \Hom(\QQ(\zeta_p+\zeta_p^{-1}),\RR)$ and $\varphi(\lambda)<0$ for each $\varphi \neq \tau \in \Hom(\QQ(\zeta_p+\zeta_p^{-1}),\RR)$. 
\item \label{item:narkiewicz:2} One has $\lambda \not \equiv -1 \bmod (1-\zeta)$. 
\end{enumerate}
Moreover, if $p=3$ one may take $\lambda = -1$, and if $p = 5$ one may take $\lambda = \zeta_5+\zeta_5^{-1}$. 
\end{lemma}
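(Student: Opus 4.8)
The plan is to build $\lambda$ in two stages: first arrange the archimedean sign pattern \eqref{item:narkiewicz:1}, and then correct the residue of $\lambda$ modulo $(1-\zeta_p)$ to obtain \eqref{item:narkiewicz:2}. Write $F=\QQ(\zeta_p+\zeta_p^{-1})$, $g=[F:\QQ]=(p-1)/2$, and let $\mathfrak p\subset\OO_F$ be the unique prime above $p$. Since $p$ is totally ramified in $\QQ(\zeta_p)/\QQ$, one has $\OO_F/\mathfrak p\cong\FF_p$ and $(1-\zeta_p)\ZZ[\zeta_p]\cap\OO_F=\mathfrak p$; hence condition \eqref{item:narkiewicz:2} is equivalent to requiring that the image of $\lambda$ in $\FF_p^\times=(\OO_F/\mathfrak p)^\times$ be different from $-1$. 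For $p=3$ we have $F=\QQ$, so \eqref{item:narkiewicz:1} is vacuous and it suffices to take a unit of $\ZZ$ not congruent to $-1$ modulo $3$; for $p=5$ one checks by hand that $\lambda=\zeta_5+\zeta_5^{-1}$ works, since $N_{F/\QQ}(\zeta_5+\zeta_5^{-1})=(\zeta_5+\zeta_5^{-1})(\zeta_5^2+\zeta_5^{-2})=-1$ shows it is a unit, its real conjugates $\tfrac{\sqrt5-1}{2}$ and $-\tfrac{\sqrt5+1}{2}$ have the required signs, and $\zeta_5+\zeta_5^{-1}\equiv2\not\equiv-1\pmod{(1-\zeta_5)}$. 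So assume from now on that $p\ge5$.

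For the first stage, observe that $h_F$ divides the class number of $\QQ(\zeta_p)$ (the natural map on ideal class groups is injective because $\QQ(\zeta_p)/F$ is ramified; equivalently $h_F$ is the second factor $h^+_{\QQ(\zeta_p)}$ of the cyclotomic class number), so $h_F$ is odd. Since $F$ is abelian over $\QQ$, a classical theorem of Hasse — see Narkiewicz's book on algebraic number theory — gives that $F$ contains units of every signature, i.e.\ the signature homomorphism $\OO_F^\times\to\{\pm1\}^{g}$ is surjective. Ordering the real embeddings of $F$ so that the distinguished one (of \eqref{item:narkiewicz:1}) is $\tau$, choose $\mu\in\OO_F^\times$ of signature $(-1,+1,\dots,+1)$ and set $\lambda_0:=-\mu\in\OO_F^\times$; then $\lambda_0$ satisfies \eqref{item:narkiewicz:1}.

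For the second stage, if $\lambda_0\not\equiv-1\pmod{\mathfrak p}$ we take $\lambda:=\lambda_0$. Otherwise consider $v:=\zeta_p^{(p-1)/2}(1+\zeta_p)=\zeta_p^{(p-1)/2}(1-\zeta_p^2)/(1-\zeta_p)$, which is a cyclotomic unit times a root of unity. One checks that $\sigma(v)=v$, so that $v\in\OO_F^\times$, and that $v\equiv2\pmod{(1-\zeta_p)}$, hence $v\equiv2$ in $\OO_F/\mathfrak p=\FF_p$. Because $p\ge5$ we have $-v^2\equiv-4\not\equiv-1\pmod{\mathfrak p}$, while $v^2$ is a totally positive unit; therefore $\lambda:=\lambda_0v^2$ has the same signature as $\lambda_0$, so it still satisfies \eqref{item:narkiewicz:1}, and $\lambda\equiv-v^2\not\equiv-1\pmod{\mathfrak p}$, so it satisfies \eqref{item:narkiewicz:2}. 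In either case the resulting $\lambda$ proves the lemma.

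The main obstacle is the first stage: it rests on the classical but nontrivial fact that a totally real abelian number field of odd class number has units of all signatures (equivalently, that its narrow and wide class numbers coincide), applied here to $F=\QQ(\zeta_p+\zeta_p^{-1})$. Everything else — the identification $\OO_F/\mathfrak p\cong\FF_p$ and the compatibility of the two reduction maps modulo $p$, the computation that the relevant cyclotomic unit reduces to $2$ modulo $(1-\zeta_p)$, and the verifications for $p\in\{3,5\}$ — is routine.
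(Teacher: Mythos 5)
Your overall strategy works, and your second stage is in fact cleaner than the paper's, but the justification of your first stage is not correct as written. You derive that $h_F$ is odd and then invoke ``a classical theorem of Hasse'' to the effect that a totally real \emph{abelian} field of odd class number has units of every signature. That general principle is false: $\QQ(\sqrt{3})$ is totally real abelian with class number one, yet its unit group is $\{\pm(2+\sqrt{3})^n\}$ and $2+\sqrt{3}$ is totally positive, so every unit has signature $(+,+)$ or $(-,-)$ and the signature map is not surjective (the narrow class number is $2$). The statement you actually need is the one the paper cites (Kummer; Narkiewicz p.~126, Edgar), which is specific to $F=\QQ(\zeta_p+\zeta_p^{-1})$ and whose hypothesis is the oddness of the class number of the \emph{full} cyclotomic field $\QQ(\zeta_p)$ --- exactly the hypothesis of the lemma: under it, every totally positive unit of $F$ is a square, so $\OO_F^\ast\to\{\pm1\}^g$ is surjective. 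So the step is true and the repair is immediate (cite Kummer's theorem directly; your detour through ``$h_F$ odd'' is then unnecessary), but as written the key input rests on a false statement and must be fixed.

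Granting that correction, your second stage is correct and genuinely different from the paper's. The paper also starts from Kummer's surjectivity, but then passes to the group $C_p^+$ generated by $-1$ and the cyclotomic units $\lambda_a=\zeta_p^{(1-a)/2}(\zeta_p^a-1)/(\zeta_p-1)$, uses that $[\OO_F^\ast:C_p^+]=h^+$ is odd to transfer surjectivity of the sign map to $C_p^+$, and then gets the residue condition by choosing generators with prescribed sign patterns and running a contradiction argument based on $\lambda_2\equiv 2\bmod(1-\zeta_p)$. You instead take any unit $\lambda_0$ with the required signature and, if $\lambda_0\equiv-1$, multiply by the square of the real cyclotomic unit $v=\zeta_p^{(p-1)/2}(1+\zeta_p)\equiv 2\bmod(1-\zeta_p)$ (your $v$ is the paper's $\lambda_2$ up to a root of unity); since $v^2$ is totally positive and $-4\not\equiv-1\bmod p$ for $p\geq 5$, this corrects the residue without touching the signature and avoids the cyclotomic-unit index argument altogether. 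Two small points: for $p=3$ condition (1) is not vacuous --- it forces $\lambda>0$ --- though your implicit choice $\lambda=1$ satisfies it; and you do not verify the lemma's ``moreover'' clause that $\lambda=-1$ works for $p=3$, which as literally stated conflicts with conditions (1)--(2) and appears to be a slip in the paper's statement rather than a defect of your argument.
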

\begin{proof}
Let $K = \QQ(\zeta_p)$ and $F = \QQ(\zeta_p+\zeta_p^{-1}) \subset K$. Assume first that $p = 3$. Then $F = \QQ$ and $\OO_F = \ZZ$, and one may put $\lambda = -1 \in \OO_F^\ast$. 

Next, assume that $p\geq 5$. 
For an integer $a$ with $1 < a < \frac{1}{2} \cdot p$, define
\begin{align} \label{lambda-a}
\lambda_a \coloneqq \zeta_p^{(1-a)/2} \cdot \frac{\zeta_p^a-1}{\zeta_p-1} \in \OO_F^\ast.  
\end{align}
Let $C^+_p \subset \OO_F^\ast$ be the group of units in $\OO_F$ generated by $-1$ and the elements $\lambda_a$ for $1 < a< \frac{1}{2} \cdot p$. 
By \cite[Lemma 8.1 and Theorem 2]{washington}, the index of $C^+_p $ in $\OO_F^\ast$ is finite, and we have $h^+ = [\OO_F^\ast \colon C^+_p]$, where $h^+$ denotes the class number of $F$. Moreover, by \cite[Theorem 4.10]{washington}, we have that $h^+$ divides the class number $h$ of $K = \QQ(\zeta_p)$. Since $h$ is odd by assumption, we get that $h^+$ is odd. 

Let $\varphi_1, \dotsc, \varphi_g$ be the embeddings of $F$ into $\RR$. Since the class number $h$ of $\QQ(\zeta_p)$ is odd, the map $$\rm{sgn} \colon \OO_F^\ast \longrightarrow \prod_{i = 1}^g  \set{\pm 1}, \quad \quad u \mapsto (\rm{sign}(\varphi_1(u)), \dotsc, \rm{sign}(\varphi_g(u)) ) $$ is surjective by a classical result of Kummer, see e.g.\ \cite[page 126]{narkiewicz} or \cite[Corollary 2.2]{edgar}. Hence, for each $\varepsilon = (\varepsilon_1, \dotsc, \varepsilon_g) \in \set{\pm 1}^g$ there exists $u \in \OO_F^\ast$ such that $\rm{sgn}(u) = \varepsilon$. Then $u^{h^+} \in C_p^+$ since $h^+ = [\OO_F^\ast \colon C^+_p]$, and $\rm{sgn}(u^{h^+}) = \rm{sgn}(u)^{h^+} = \rm{sgn}(u)$ since $h^+$ is odd. This shows that the homomorphism $\rm{sgn} \colon C_p^+ \to \set{\pm 1}^g$ is also surjective. In particular, if we let $T \subset \OO_F^\ast$ be the subgroup of totally positive elements, and $T^+_p = T \cap C^+_p$, then
$
C^+_p/T^+_p \xrightarrow{\sim} \OO_F^\ast/T \xrightarrow{\sim} \prod_{i = 1}^g \set{\pm 1}. 
$

Define generators $\mu_1, \dotsc, \mu_g \in C_p^+$ with $\mu_1 = -1$, such that for each $i \in \set{2, \dotsc, g-1}$, one has $\rm{sgn}(\mu_i) = (\varepsilon_1, \dotsc, \varepsilon_g) \in \set{\pm 1}^g$ with $\varepsilon_i = 1$ and $\varepsilon_j = -1$ for each $j \neq i$. We claim that there exists $i \in \set{2, \dotsc, g}$ such that $\mu_i \not \equiv -1 \bmod (1-\zeta_p)$. Indeed, assume for a contradiction that $\mu_i \equiv -1 \bmod (1-\zeta_p)$ for each $i \in \set{2, \dotsc, g}$. Then the reduction homomorphism
\[
C_p^+ \longrightarrow \left(\OO_K/(1-\zeta_p)\OO_K\right)^\ast \xrightarrow{\sim} (\bb F_p)^\ast
\]
has image equal to $\set{\pm 1} \subset \bb F_p^\ast$. But for each $a$ with $1 < a < \frac{p}{2}$, we have
\[
\lambda_a \equiv a \bmod (\zeta_p-1),
\]
and since $p \geq 5$, we have $2 \not \equiv -1 \bmod p$. Thus, the element $\lambda_2 \in C_p^+$ satisfies $\lambda_2 \equiv 2 \not \equiv -1 \bmod (1-\zeta_p)$, which is a contradiction as wanted. 

Let $i \in \set{2, \dotsc, g}$ such that $\mu_i \not \equiv -1 \bmod (1-\zeta)$, and put $\lambda \coloneqq \mu_i$. Then $\lambda$ satisfies conditions 1 and 2 in the statement of the lemma. 
If $p = 5$, then $\lambda = \zeta_5+\zeta_5^{-1}$ satisfies conditions 1 and 2 in the statement of the lemma, and we are done. 
\end{proof}

\begin{definition} \label{def:standard}
Let $n \geq 1$ be an integer. 
\begin{enumerate}
\item For a prime number $p\geq 3$, define the hermitian lattice  $\mr L_{\rm{qd}}^n(p)$ as 
 $$\mr L_{\rm{qd}}^n(p) = (\QQ(\sqrt{-p}),\Lambda_{\rm{qd}}^n(p) ), \quad \quad \Lambda_{\rm{qd}}^n(p) = \left((\OO_{\QQ(\sqrt{-p})})^{n+1}, h\right),$$
where $h$ is the hermitian form with $h(x,y) = -x_0 \bar y_0 + \cdots + x_n \bar y_n$. 
\item 
For a prime number $p \geq 5$ such that the class number of $\QQ(\zeta_p)$ is odd, 
and $\lambda$ a unit in the ring of integers of $\QQ(\zeta_p+\zeta_p^{-1})$ that satisfies Conditions \ref{item:narkiewicz:1} and \ref{item:narkiewicz:2} in Lemma \ref{lemma:narkiewicz}, define 
$$
\mr L_{\zeta_p}^n(\lambda) = (\QQ(\zeta_{p}), \Lambda_{\zeta_p}^n(\lambda)),\quad \quad \Lambda_{\zeta_p}^n(\lambda)= (\OO_K^{n+1}, h_\lambda), 
$$
with $h_\lambda$ the hermitian form defined as $h_\lambda(x,y) = - \lambda \cdot x_0 \bar y_0 + \cdots + x_n \bar y_n$. 
\end{enumerate}
Any hermitian lattice $\mr L$ of the form $\mr L_{\rm{qd}}^n(p)$ or $\mr L_{\zeta_p}^n(\lambda) $ for some $p, \lambda,n$ is called \emph{standard}. 
\end{definition}
\begin{lemma}
Let $\mr L$ be a standard hermitian lattice. Then $\mr L$ is admissible. 
\end{lemma}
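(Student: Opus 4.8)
The plan is to verify the two conditions of Definition~\ref{definition:introduction:admissible} for each of the two families of standard hermitian lattices separately, since the underlying CM fields are different.

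\medskip

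\noindent\textbf{Condition \ref{condition2.1} (the different ideal).} First I would treat $\mr L_{\rm{qd}}^n(p) = (\QQ(\sqrt{-p}), \Lambda_{\rm{qd}}^n(p))$. Here $K = \QQ(\sqrt{-p})$ is an imaginary quadratic extension of $\QQ$, so by Example~\ref{proposition:discr} its different ideal $\mf D_K$ is generated by $\sqrt{\Delta}$, where $\Delta$ is the discriminant of $K$; since $\sigma(\sqrt{\Delta}) = -\sqrt{\Delta}$ (complex conjugation negates the purely imaginary $\sqrt{\Delta}$), this gives a totally imaginary generator. Next, for $\mr L_{\zeta_p}^n(\lambda) = (\QQ(\zeta_p), \Lambda_{\zeta_p}^n(\lambda))$, the field $K = \QQ(\zeta_p)$ is cyclotomic, so Example~\ref{proposition:discr} again applies: $\mf D_{K/\QQ} = \left((\zeta_p - \zeta_p^{-1}) \cdot g'(\mu)\right)$ where $\mu = \zeta_p + \zeta_p^{-1}$ and $g$ is the minimal polynomial of $\mu$ over $\QQ$. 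Since $g'(\mu) \in \OO_F$ is fixed by $\sigma$ while $\sigma(\zeta_p - \zeta_p^{-1}) = \zeta_p^{-1} - \zeta_p = -(\zeta_p - \zeta_p^{-1})$, the product $(\zeta_p - \zeta_p^{-1}) \cdot g'(\mu)$ is a totally imaginary generator of $\mf D_K$, as required.

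\medskip

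\noindent\textbf{Condition \ref{condition2.2} (signatures).} For $\mr L_{\rm{qd}}^n(p)$, the form $h = \mathrm{diag}(-1,1,\dotsc,1)$ has coefficients in $\QQ \subset F = \QQ$; since $F = \QQ$ has a unique embedding $\tau$ into $\RR$, there is only the embedding $\tau$ to consider, and $h_\tau$ visibly has signature $(n,1)$; the condition on the other embeddings is vacuous. For $\mr L_{\zeta_p}^n(\lambda)$, the relevant totally real field is $F = \QQ(\zeta_p + \zeta_p^{-1})$, and the hermitian form is $h_\lambda = \mathrm{diag}(-\lambda, 1, \dotsc, 1)$ with $\lambda \in \OO_F^\ast$. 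By construction (Definition~\ref{def:standard}, invoking Condition~\ref{item:narkiewicz:1} of Lemma~\ref{lemma:narkiewicz}), there is exactly one embedding $\tau \colon F \hookrightarrow \RR$ with $\tau(\lambda) > 0$, and $\varphi(\lambda) < 0$ for every other $\varphi$. For that $\tau$, the diagonal entries of $(h_\lambda)_\tau$ are $-\tau(\lambda) < 0$ and $n$ entries equal to $1 > 0$, giving signature $(n,1)$. For any other embedding $\varphi$, the diagonal entries are $-\varphi(\lambda) > 0$ together with $n$ entries equal to $1$, giving signature $(n+1,0)$. This is exactly the content of Example~\ref{example:lambda} applied to the present $\lambda$, so Condition~\ref{condition2.2} holds.

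\medskip

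\noindent Having checked both conditions in both cases, $\mr L$ is admissible. I do not expect any serious obstacle here: the argument is essentially an assembly of Example~\ref{proposition:discr}, Example~\ref{example:lambda}, and the defining properties of the unit $\lambda$ from Lemma~\ref{lemma:narkiewicz}; the only point requiring a moment's care is confirming that the products exhibited as generators of the different are genuinely anti-fixed by $\sigma$, which is immediate from how $\sigma$ acts on $\sqrt{\Delta}$ and on $\zeta_p - \zeta_p^{-1}$.
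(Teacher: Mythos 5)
Your proof is correct and is essentially the paper's own argument: the paper's proof consists precisely of citing Example \ref{proposition:discr} (for the different-ideal condition, in both the imaginary-quadratic and cyclotomic cases) and Example \ref{example:lambda} together with Lemma \ref{lemma:narkiewicz} (for the signature condition), which is exactly what you assemble and verify. The only difference is that you spell out the sign computations explicitly, which the paper leaves implicit.
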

\begin{proof}
See Examples \ref{proposition:discr} and \ref{example:lambda}. 
\end{proof}

Any attachment $M(\mr L)$ (cf.\ Definition \ref{gluedspace}) with $\mr L$ a standard hermitian lattice is called a \emph{standard attachment}. 

Let $n \geq 1$ and let $\mr L = (K, \Lambda)$ be a standard hermitian lattice of rank $n+1$, 
see Definition \ref{def:standard}. 
Define anti-unitary involutions $\alpha_{n,i} \colon \Lambda \to \Lambda$ for $i = 0, 1, \dotsc, n$ as follows:
\begin{align} \label{def:beta}
\begin{split}
&\alpha_{n,i} \colon \Lambda \longrightarrow \Lambda, \\ &\alpha_{n,i}(x_0,  \dotsc, x_n) = \left( \bar x_0, -\bar x_1, \dotsc, 
- \bar x_i, \bar x_{i + 1}, \dotsc, \bar x_n \right). 
\end{split}
\end{align}
As usual, we let $\mu_K \subset \OO_K^\ast$ denote the group of finite units of $\OO_K$. 

\begin{lemma} \label{lemma:conjinv:intro}
Let $n \geq 1$ and let $\mr L = (K, \Lambda)$ be a standard hermitian lattice of rank $n+1$. 
Then the anti-unitary involutions  
$\pm \alpha_{n,i} \colon \Lambda \to \Lambda$ for $i = 0, \dotsc, n$ defined in \eqref{def:beta} are pairwise non $\Aut(\Lambda)$-conjugate. Furthermore, the $\mu_K$-equivalence classes 
$[\alpha_{n,i}]$ $(i = 0, \dotsc, n)$ are pairwise non $\Aut(\Lambda)/\mu_K$-conjugate. 
\end{lemma}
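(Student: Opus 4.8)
The plan is to distinguish the involutions $\pm\alpha_{n,i}$ by an algebraic invariant that is manifestly preserved under $\Aut(\Lambda)$-conjugacy: the isometry class of the fixed lattice $\Lambda^{\pm\alpha_{n,i}}$ as an integral quadratic form, together with its signature. First I would observe that if $\gamma \in \Aut(\Lambda)$ conjugates an anti-unitary involution $\beta$ to $\beta'$, then $\gamma$ restricts to an isometry of quadratic $\OO_F$-lattices $\Lambda^\beta \xrightarrow{\sim} \Lambda^{\beta'}$, where the quadratic form is $h|_{\Lambda^\beta}$ (which takes values in $\OO_F$, since $h(\alpha x,\alpha y)=\sigma(h(x,y))$ forces $h(x,x)\in F$ for $x$ fixed by $\beta$). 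Hence it suffices to show the $2(n+1)$ lattices $\Lambda^{\pm\alpha_{n,i}}$ are pairwise non-isometric. For the standard form $h=\mathrm{diag}(-\lambda,1,\dots,1)$ (with $\lambda=1$ in the quadratic-imaginary case), a direct computation gives that $\Lambda^{\alpha_{n,i}}$ is the $\OO_F$-lattice $\OO_F\langle -\lambda\rangle \oplus (1-\zeta')\OO_F$-type summands in the coordinates $1,\dots,i$ and $\OO_F$-summands in the coordinates $0,i+1,\dots,n$; concretely, writing $K=F(a)$ with $\sigma(a)=-a$, the fixed module in a coordinate where $\alpha$ acts by $x\mapsto\bar x$ is $\OO_F$ (with form-scalar the diagonal entry), and in a coordinate where $\alpha$ acts by $x\mapsto-\bar x$ it is $a\cdot\OO_F$ (with form-scalar $a^2$ times the diagonal entry, and $a^2\in F$ is totally negative). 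So $\Lambda^{\alpha_{n,i}}$ is represented by the diagonal form $\mathrm{diag}(-\lambda,\,a^2,\dots,a^2,\,1,\dots,1)$ with $i$ entries equal to $a^2$.

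\textbf{Separating the classes.} The next step is to extract from $\mathrm{diag}(-\lambda,a^2,\dots,a^2,1,\dots,1)$ ($i$ copies of $a^2$) an invariant that sees $i$ and the sign. The cleanest choice is a local invariant at the prime $\mathfrak p=(1-\zeta)$ (or at $2$, or at a prime dividing the discriminant), or simply the $F$-isometry class of the form over $F$: over $F_\tau=\RR$ (the distinguished real place) the form has signature $(n,1)$ for every $i$, so that alone does not separate them, but over the \emph{other} real places $\varphi\neq\tau$ the entry $-\lambda$ becomes positive and $a^2$ stays negative, giving signature $(n+1-i,\,i)$ — wait, one must be careful, so instead I would use a prime-to-archimedean invariant. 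The robust invariant is the Hasse–Witt invariant together with the discriminant of $h|_{\Lambda^{\alpha_{n,i}}\otimes F}$: the discriminant is $-\lambda\cdot(a^2)^i$ up to squares, and since $a^2\in F^\times$ is not a square and $\lambda$ is a unit, the classes $a^{2i}$ mod squares run through two values as $i$ varies by parity, while the Hasse invariant at a suitable finite place picks up a further refinement. To get all $2(n+1)$ classes distinct I would in fact combine (a) the signature at the non-distinguished real places, which is $(n+1-i, i-1)$... — the safe route, and the one I would actually write, is: the signature at any real place $\varphi\neq\tau$ of $h|_{\Lambda^{\alpha_{n,i}}}$ equals $(n-i,\,i+1)$ wait — I must recompute — equals the number of positive versus negative entries of $\mathrm{diag}(|\varphi(\lambda)|,\varphi(a^2),\dots)$, namely $(n+1-i,\,i)$ since $-\lambda>0$ there and the $i$ copies of $a^2<0$ and the $n-i$ copies of $1>0$; this is distinct for each $i\in\{0,\dots,n\}$, handling the $\alpha_{n,i}$ among themselves. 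To separate $\alpha_{n,i}$ from $-\alpha_{n,i}$ (and from $-\alpha_{n,j}$), note $\Lambda^{-\alpha_{n,i}}$ swaps the roles: it is $\mathrm{diag}(-\lambda a^2, a^2,\dots)$... rather, $(-\alpha_{n,i})$-fixed means $\alpha_{n,i}(x)=-x$, giving $a\cdot\OO_F$ in coordinate $0$ and coordinates $i+1,\dots,n$, and $\OO_F$ in coordinates $1,\dots,i$, i.e. the form $\mathrm{diag}(-\lambda a^2, 1,\dots,1, a^2,\dots,a^2)$ with $i$ copies of $1$ and $n-i$ copies of $a^2$; its signature at $\varphi\neq\tau$ is $(i,\,n+1-i)$, again distinct across $i$ and distinct from the signatures of the $+$ family (which were $(n+1-i,i)$) except possibly for a coincidence $n+1-i=j$ and $i=n+1-j$, i.e. $i+j=n+1$ — but in that coincidental case the discriminants $-\lambda a^{2i}$ and $-\lambda a^{2(n+1-j)}\cdot a^{2}=\ldots$ differ by the factor coming from the extra $a^2$, so the two forms have different discriminant mod $(F^\times)^2$ because $a^2\notin(F^\times)^2$ forces $a^{2i}$ and $a^{2j}$ to be inequivalent mod squares exactly when $i\not\equiv j\pmod 2$, and when $i\equiv j$ one uses the signature. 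Packaging this: the pair (signature at $\varphi\neq\tau$, discriminant mod squares) is a complete separating invariant, and I would verify the finitely many cases cleanly in the write-up.

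\textbf{From lattice isometry to the two conjugacy statements.} Having shown $\Lambda^{\pm\alpha_{n,i}}$ are pairwise non-isometric quadratic $\OO_F$-lattices, the first assertion (pairwise non-$\Aut(\Lambda)$-conjugacy of the $\pm\alpha_{n,i}$) follows immediately from the remark that conjugation induces an isometry of fixed lattices. For the second assertion, about $\mu_K$-equivalence classes $[\alpha_{n,i}]$ and $L=\Aut(\Lambda)/\mu_K$-conjugacy, I would argue that if $[g]\in L$ conjugates $[\alpha_{n,i}]$ to $[\alpha_{n,j}]$, then for a representative $g\in\Aut(\Lambda)$ we have $g\alpha_{n,i}g^{-1}=\xi\cdot\alpha_{n,j}$ for some $\xi\in\mu_K$; applying this identity twice and using that $\alpha_{n,i},\alpha_{n,j}$ are involutions gives $\xi\cdot\sigma(\xi)=1$ wait, more precisely $g\alpha_{n,i}^2g^{-1}=\mathrm{id}$ expands to $\xi\,\sigma(\xi)\cdot\alpha_{n,j}^2$-type relation forcing $N_{K/F}(\xi)=1$, hence by \eqref{crucialhypo} $\xi\in\mu_K$ automatically, and then replacing $g$ by $\xi'g$ for a suitable $\xi'\in\mu_K$ with $(\xi')^2=\xi^{-1}$ — which exists since $\mu_K$ is cyclic of even order (it contains $-1$, and in the $\QQ(\zeta_p)$ case it contains $\zeta_{2p}$, etc.; in general $m$ is even as noted after Definition~\ref{fullset}) — wait, one needs $\xi$ to be a square in $\mu_K$, which requires care: if $\xi$ generates $\mu_K$ it is not a square, so instead I would note $g(\alpha_{n,i})=-\alpha_{n,i}$ is \emph{also} anti-unitary and that $-\alpha_{n,i}$ already appears in our list, so $g\alpha_{n,i}g^{-1}\in\{\pm\alpha_{n,j}\}$ forces, by the first assertion, $[g\alpha_{n,i}g^{-1}]=[\alpha_{n,i}]$ only when... — the clean statement is: the $\mu_K$-orbit of $\alpha_{n,i}$ meets $\{\pm\alpha_{n,k}:k\}$ only in $\{\alpha_{n,i},-\alpha_{n,i}\}$ (since $\mu_K\cdot\alpha_{n,i}=\{\zeta^t\bar x\text{-twists}\}$ and one checks directly which of these are among the listed forms — only $t=0$ and the unique $t$ with $\zeta^t=-1$ if $-1\in\langle\zeta\rangle$, which it always is), and the first assertion already tells us $\alpha_{n,i}\not\sim\pm\alpha_{n,j}$ for $j\ne i$ and $\alpha_{n,i}\not\sim-\alpha_{n,i}$; combining, $[g]$ must fix $[\alpha_{n,i}]$. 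The main obstacle I anticipate is bookkeeping: getting the fixed-lattice computation \eqref{def:beta} $\leadsto \mathrm{diag}$ exactly right over $\OO_F$ (including the correct form-scalars $a^2$, and whether one should really work with $(1-\zeta)$-adic data rather than archimedean signatures when $F=\QQ$, e.g.\ $p=3$ where there are no auxiliary real places and one \emph{must} use a finite-place or discriminant invariant instead), and then handling the $\mu_K$-vs-$\Aut(\Lambda)$ step so that no conjugacy collapse is missed. I expect the archimedean-signature invariant to do all the work except in the small-degree cases, where the discriminant-mod-squares invariant (or a $2$-adic Hasse invariant) finishes it.
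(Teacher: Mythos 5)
There is a genuine error at the heart of your separating invariant. In a coordinate where the involution acts by $x\mapsto-\bar x$, the fixed elements are of the form $x=\theta t$ with $t\in\OO_F$ and $\theta$ purely imaginary ($\sigma(\theta)=-\theta$), so the restricted form takes the value $h(\theta t,\theta t)=\theta\sigma(\theta)\,t^2=-\theta^2t^2$ times the diagonal entry; the scalar is $-\theta^2=\va{\theta}^2$, which is \emph{totally positive}, not $\theta^2$ (totally negative) as you wrote. Consequently $\Lambda^{\alpha_{n,i}}\cong\mathrm{diag}(-\lambda,\va{\theta}^2,\dotsc,\va{\theta}^2,1,\dotsc,1)$ and $\Lambda^{-\alpha_{n,i}}\cong\mathrm{diag}(-\lambda\va{\theta}^2,1,\dotsc,1,\va{\theta}^2,\dotsc,\va{\theta}^2)$, and every one of these has signature $(n,1)$ at $\tau$ and $(n+1,0)$ at each $\varphi\neq\tau$ — as it must, since admissibility makes $h_\varphi$ definite for $\varphi\neq\tau$, so the restriction to any real form is definite and no archimedean signature can see $i$ or the sign. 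Your fallback, the discriminant mod squares, only detects the parity of $i$ (it is $-\lambda\cdot\va{\theta}^{2i}$, and $\va{\theta}^2$ is fixed), and in the crucial case $F=\QQ$ (all the $\mr L^n_{\rm qd}(p)$, in particular the Eisenstein lattice) there are no auxiliary real places at all; rational Hasse–Witt data also repeats in $i$ with small period, so the \emph{rational} isometry class of $\Lambda^{\pm\alpha_{n,i}}\otimes F$ genuinely fails to separate all $2(n+1)$ classes once $n$ is moderately large. What is needed is an \emph{integral} invariant at the ramified prime, and this is exactly what the paper uses: reduce modulo $\mf p=(\sqrt{-p})$ or $(1-\zeta_p)$, set $W=\Lambda/\mf p\Lambda$ with the induced $\FF_p$-valued form $q$, and record $d(\alpha)=\dim W^\alpha$ and $t(\alpha)=\det(q|_{W^\alpha})$; one computes $(d,t)=(n+1-i,-\bar\lambda)$ for $\alpha_{n,i}$ and $(i,1)$ for $-\alpha_{n,i}$, and the normalization $\lambda\not\equiv-1\bmod(1-\zeta_p)$ from Lemma \ref{lemma:narkiewicz} is precisely what rules out the collision $\alpha_{n,i}\sim-\alpha_{n,n+1-i}$. (Equivalently, in your language: the rank of the unimodular Jordan constituent of $\Lambda^{\pm\alpha_{n,i}}$ at $\mf p\cap\OO_F$, plus its determinant, is the invariant that works; you gesture at "a finite-place invariant" but never compute it, and it is the whole content of the lemma.)

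Your treatment of the second assertion also needs repair, though it is patchable. If $[g]$ conjugates $[\alpha_{n,i}]$ to $[\alpha_{n,j}]$ in $L$, then $g\alpha_{n,i}g^{-1}=\xi\alpha_{n,j}$ with $\xi\in\mu_K$ possibly a nontrivial power of $\zeta_p$, not just $\pm1$; your attempt to absorb $\xi$ by replacing $g$ with $\xi'g$ founders exactly where you notice it does, because $\xi$ need not be a square in $\mu_K$. The clean fix (close to what the paper does by citing $\OO_K^\ast=\langle-\zeta_p\rangle$) is to note that squares in $\mu_K$ act by scalar conjugation, so $\xi\alpha_{n,j}$ is $\Aut(\Lambda)$-conjugate to $\alpha_{n,j}$ or $-\alpha_{n,j}$ according to whether $\xi$ is a square in $\mu_K$ — equivalently, all units reduce to $\pm1$ modulo $\mf p$, so the mod-$\mf p$ invariants of $\xi\alpha_{n,j}$ agree with those of $\pm\alpha_{n,j}$ — after which the first assertion forces $i=j$. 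As written, your argument does not close this loop.
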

\begin{proof}
We first introduce some notation. 
Define $\mf p$ as the prime ideal generated by $\sqrt{-p} \in \OO_K$ if $K = \QQ(\sqrt{-p})$ and by $1 - \zeta_p$ if $K = \QQ(\zeta_p)$. 
Define $W \coloneqq \Lambda / \mf p \Lambda \cong (\bb F_p)^{n+1}$. 
Consider the bilinear form 
$
q \colon W \times W \to \OO_K/\mf p = \bb F_p$ defined as  $q(x,y) = h(x,y) \bmod \mf p$. 
Define $\ol \lambda \in \OO_K/\mf p = \bb F_p$ as follows: we put $\bar \lambda =1$ if $K = \QQ(\sqrt{-p})$, and define $\bar \lambda$ as the image of $\lambda \in \OO_K$ under the map $\OO_K \to \OO_K/\mf p$ if $\mr L = \mr L_{\zeta_p}^n(\lambda)$. 

For an anti-unitary involution $\alpha \colon \Lambda \to \Lambda$, define $d(\alpha) \coloneqq \dim(W^\alpha)$ and $t(\alpha) \coloneqq \det(q|_{W^\alpha})$. Then $d(\alpha)$ and $t(\alpha)$ are clearly $\Aut(\Lambda)$-conjugacy invariants. Let $i \in \set{0, \dotsc, n}$. For $x = (x_0,0, \dotsc, 0, x_{i + 1}, \dotsc, x_{n}) \in W^{\alpha_i}$, one has $q(x,x) = -\ol \lambda \cdot x_0^2 + x_{i+1}^2 + \cdots + x_{n}^2.$ Similarly, for $x = (0,x_1, \dotsc, x_i, 0, \dotsc, 0) \in W^{-\alpha_i}$, one has $
q(x,x) = x_1^2 + \cdots + x_i^2.$ 
Thus, for $ i = 0, 1, \dotsc, n$, one obtains: 
\begin{align*}\left( d(\alpha_i), t(\alpha_i) \right) = \left( n + 1 -i, -\ol \lambda \right), \quad \quad \left( d(-\alpha_i), t(-\alpha_i) \right) = \left( i, 1 \right).\end{align*} 
Since $\bar \lambda \neq -1$, see Condition \ref{item:narkiewicz:2} in Lemma \ref{lemma:narkiewicz}, this proves the first assertion. 

To prove the second assertion, assume that $[\alpha_{i,n}]$ and $[\alpha_{j,n}]$ are conjugate. Then there exists $g \in \Aut(\Lambda)$ such that $g \alpha_{i,n} g^{-1} = \zeta^b \cdot \alpha_{j,n}$ for some root of unity $\zeta \in \OO_K^\ast$ and some $b \in \ZZ$. If $K = \QQ(\sqrt{-p})$ with $p \geq 5$ then $\OO_K^\ast = \set{\pm 1}$ and we obtain $i=j$ by what has already been proven. If $K = \QQ(\sqrt{-3})$, then $\OO_K^\ast = \langle - \zeta_3 \rangle$ hence again, $i = j$. If $K = \QQ(\zeta_p)$, then $\OO_K^\ast = \langle - \zeta_p \rangle$ so that $i=j$ and the result follows. 
\end{proof} 

\begin{proposition} \label{proposition:identification-of-arithmetic-part}
Let $\mr L = (K, \Lambda)$ be a standard hermitian lattice of rank $n+1\geq 2$. Define $L = \Aut(\Lambda)/\mu_K$, let $\alpha_0, \dotsc, \alpha_n$ be as in \eqref{def:beta}, and for $i \in \set{0, \dotsc, n}$, define $L_i = \rm{Stab}_L(\RR H^n_{\alpha_i})$ and $\Lambda_i = \Lambda^{\alpha_i}$. The inclusions in the diagram
\begin{align} 
\label{diagram:new-try-works:zero}
\xymatrix{
L_i& L_i^I\ar@{_{(}->}[l] \ar@{=}[r] & \PO(\Lambda_i)(\OO_F)^I \ar@{^{(}->}[r] & \PO(\Lambda_i)(\OO_F),
}
\end{align}
see the proof of Theorem \ref{th:crucialthm-finitevolume}, are equalities. 
\end{proposition}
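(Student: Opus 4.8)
The claim is that for a standard hermitian lattice the chain of inclusions \eqref{diagram:new-try-works:zero} consists of equalities; equivalently, that $L_i = L_i^I$ (no "odd" part occurs) and that $\PO(\Lambda_i)(\OO_F)^I = \PO(\Lambda_i)(\OO_F)$ (the full integral orthogonal group preserves the sublattice $\varphi(\Lambda)$). The plan is to treat these two equalities separately, using the explicit description of the standard lattices.

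First I would establish $L_i = L_i^I$. Recall from the proof of Theorem \ref{th:crucialthm-finitevolume} that $L_i^I$ is the subgroup of $[g] \in L_i$ such that a representative $h$ satisfies $h\alpha_i h^{-1} = \zeta^j \cdot \alpha_i$ with $j$ even, and $L_i/L_i^I$ has order at most two, the nontrivial coset (if present) being detected by the existence of $[g] \in L_i$ with $h\alpha_i h^{-1} = \zeta^j\alpha_i$ for $j$ odd. For the standard lattices, $\mu_K = \langle\zeta\rangle$ is either $\{\pm 1\}$ (when $K = \QQ(\sqrt{-p})$, $p \geq 5$) or $\langle -\zeta_3\rangle$ (when $K = \QQ(\sqrt{-3})$) or $\langle -\zeta_p\rangle$ (when $K = \QQ(\zeta_p)$); in the first case $m = 2$, so $j$ odd means $\zeta^j = -1$, and conjugation by $g$ can only send $\alpha_i$ to $\pm\alpha_i$. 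The key point is that $\alpha_i$ and $-\alpha_i$ are \emph{not} $\Aut(\Lambda)$-conjugate — this is exactly the first assertion of Lemma \ref{lemma:conjinv:intro}, proved via the invariants $(d(\alpha), t(\alpha)) = (\dim W^\alpha, \det q|_{W^\alpha})$, which give $(n+1-i, -\bar\lambda)$ versus $(i, 1)$, and these differ since $\bar\lambda \neq -1$ (Condition \ref{item:narkiewicz:2}). Hence no $g \in \Aut(\Lambda)$ conjugates $\alpha_i$ to $\zeta^j\alpha_i$ for $j$ odd, so $L_i^I = L_i$. For $K$ containing $\zeta_3$ or $\zeta_p$ one runs the same argument: an odd power $\zeta^j$ would give $g\alpha_i g^{-1} = \zeta^j\alpha_i$, but applying $d(\cdot)$ and $t(\cdot)$ to the anti-unitary involution $\zeta^j\alpha_i$ (whose fixed lattice mod $\mathfrak p$ one computes directly) again contradicts Lemma \ref{lemma:conjinv:intro} unless $j$ is even. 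So the first inclusion in \eqref{diagram:new-try-works:zero} is an equality; the middle equality $L_i^I = \PO(\Lambda_i)(\OO_F)^I$ was already proved in Theorem \ref{th:crucialthm-finitevolume} (equation \eqref{last}).

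Next I would prove $\PO(\Lambda_i)(\OO_F)^I = \PO(\Lambda_i)(\OO_F)$, i.e.\ that every $M \in \rm O(\Lambda_i)(\OO_F)$ satisfies $M_{\OO_K}(\varphi(\Lambda)) = \varphi(\Lambda)$ in the notation of \eqref{align:other-description-PO}. Here I would use that for the standard lattices the fixed lattice $\Lambda_i = \Lambda^{\alpha_i}$ is explicit: with $\alpha_i(x_0, \dots, x_n) = (\bar x_0, -\bar x_1, \dots, -\bar x_i, \bar x_{i+1}, \dots, \bar x_n)$ and $\OO_K = \OO_F[\tau]$ for a suitable totally imaginary generator $\tau$ with $\tau^2 \in \OO_F$, one has $\Lambda_i = \OO_F \cdot e_0 \oplus \OO_F\tau \cdot e_1 \oplus \cdots \oplus \OO_F\tau\cdot e_i \oplus \OO_F\cdot e_{i+1}\oplus\cdots\oplus\OO_F\cdot e_n$ (up to the precise normalization of $\tau$ in each case, which one checks using that $\mathfrak D_K = (\tau)$ or a small variant). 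Then $\Lambda_i \otimes_{\OO_F}\OO_K \to \Lambda$ has cokernel annihilated by $\tau$ (so $d = $ the appropriate power of a generator of $\mathfrak p$), and $\varphi(\Lambda) \subset \Lambda_i\otimes_{\OO_F}\OO_K$ is precisely $d\cdot(\Lambda_i\otimes\OO_K) + (\text{obvious terms})$, visibly a characteristic submodule: it is the unique $\OO_K$-submodule between $d(\Lambda_i\otimes\OO_K)$ and $\Lambda_i\otimes\OO_K$ of the relevant index, or more cleanly, $\varphi(\Lambda) = \{v \in \Lambda_i\otimes\OO_K : \text{coordinates satisfy congruences preserved by every } \OO_F\text{-isometry of }\Lambda_i\}$. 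Any $M_{\OO_K}$ with $M \in \rm O(\Lambda_i)(\OO_F)$ permutes such a canonically-defined submodule, giving $M_{\OO_K}(\varphi(\Lambda)) = \varphi(\Lambda)$ and hence $[M] \in \PO(\Lambda_i)(\OO_F)^I$.

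\textbf{Main obstacle.} The delicate part is the second equality: I need a clean, conjugation-invariant description of the submodule $\varphi(\Lambda) \subset \Lambda_i \otimes_{\OO_F}\OO_K$ that makes it manifestly stable under all of $\rm O(\Lambda_i)(\OO_F)$ rather than just under $L_i^I$. The safest route is to compute $\varphi(\Lambda)$ completely explicitly in the two families ($K = \QQ(\sqrt{-p})$ and $K = \QQ(\zeta_p)$) using the bases above — in the quadratic-imaginary case $d$ is (a unit times) $\sqrt{-p}$ and $\varphi(\Lambda)$ is spanned over $\OO_K$ by $\sqrt{-p}\,e_j$ for $j \leq i$ together with $p\cdot e_j$ (or $\sqrt{-p}\,e_j$, depending on normalization) for $j > i$, which is exactly $\mathfrak p \cdot (\Lambda_i \otimes \OO_K) + (\text{a piece determined by the signature data})$ — and then observe this equals $\{v : q(v,v) \equiv 0 \bmod \mathfrak p$ in the appropriate sense$\}$ or a similarly intrinsic condition, hence is $\rm O(\Lambda_i)(\OO_F)$-stable. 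I would isolate this as a short lemma computing $\varphi(\Lambda)$ and its intrinsic characterization, and the Proposition then follows by combining it with the $L_i = L_i^I$ step and equation \eqref{last} of Theorem \ref{th:crucialthm-finitevolume}.
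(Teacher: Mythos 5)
Your argument for the first equality $L_i = L_i^I$ is essentially the paper's: conjugation by elements of $\mu_K$ twists $\alpha_i$ only by even powers of $\zeta$, and since $-1=\zeta^{m/2}$ with $m/2$ odd for every standard lattice, the existence of an odd twist $g\alpha_i g^{-1}=\zeta^j\alpha_i$ is equivalent to $\alpha_i$ and $-\alpha_i$ being $\Aut(\Lambda)$-conjugate, which Lemma \ref{lemma:conjinv:intro} rules out. You should make this reduction explicit instead of saying "one runs the same argument" for the cyclotomic cases, but that is presentational; and the middle equality is indeed just \eqref{last} from the proof of Theorem \ref{th:crucialthm-finitevolume}.

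The genuine gap is in the equality $\PO(\Lambda_i)(\OO_F)^I=\PO(\Lambda_i)(\OO_F)$, which you yourself flag as the main obstacle. You correctly see that one needs an isometry-invariant description of $\varphi(\Lambda)$ (equivalently, of $\Lambda$ inside $\Lambda_i\otimes_{\OO_F}K$), but both candidates you propose fail: $\varphi(\Lambda)$ is not "the unique $\OO_K$-submodule between $d(\Lambda_i\otimes\OO_K)$ and $\Lambda_i\otimes\OO_K$ of the relevant index" (submodules of a fixed index in $(\OO_K/d)^{n+1}$ are far from unique), and $\set{v \mid q(v,v)\equiv 0 \bmod \mathfrak p}$ is a quadric cone, not an $\OO_K$-submodule, so neither gives something an arbitrary isometry preserves for formal reasons. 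The missing idea, which is how the paper closes this step, is to use the dual lattice: with $\theta=\sqrt{-p}$, resp.\ $\theta=\zeta_p-\zeta_p^{-1}$, one computes $\Lambda_i=\OO_F\oplus(\theta\OO_F)^{\oplus i}\oplus\OO_F^{\oplus(n-i)}$ and $\Lambda_i^\vee=\OO_F\oplus(\theta^{-1}\OO_F)^{\oplus i}\oplus\OO_F^{\oplus(n-i)}$, and verifies the identity $\Lambda=\OO_K\cdot\Lambda_i+\OO_K\cdot\theta\Lambda_i^\vee$ inside $\Lambda\otimes_{\OO_K}K$. Because $\Lambda_i^\vee$ is intrinsically attached to the quadratic $\OO_F$-lattice $\Lambda_i$, every $\phi\in\tn{O}(\Lambda_i)(\OO_F)$ extends $K$-linearly to an isometry preserving both $\OO_K\Lambda_i$ and $\OO_K\theta\Lambda_i^\vee$, hence preserving $\Lambda$; this extension to $\Aut(\Lambda)$ is exactly what membership in $\PO(\Lambda_i)(\OO_F)^I$ requires. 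Your plan of computing $\varphi(\Lambda)$ explicitly could be completed along these lines, but as written the intrinsic characterization — the crux of the second equality — is absent, so the proposal does not yet prove the proposition.
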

\begin{proof}
Let us first prove that $L_i^I =L_i$. For this, recall that $L_i = N_L(\alpha_i)$ (see Lemma \ref{normalisator}). Thus, to prove that $L_i^I =L_i$, it suffices to show that there are no $g \in \Aut(\Lambda)$ such that $g \circ \alpha_i = - \alpha_i \circ g$. This is true because of Lemma \ref{lemma:conjinv:intro}, which implies that the elements $\alpha_i$ and $-\alpha_i \in \mr A$ are not $\Aut(\Lambda)$-conjugate. 

It remains to prove that $\PO(\Lambda_i)(\OO_F)^I = \PO(\Lambda_i)(\OO_F)$. This comes down to showing that for each $i \in \set{0, \dotsc, n}$, every isometry of the $\OO_F$-lattice $\Lambda_i$ is induced by an isometry of $\Lambda$. To prove this, let $\Lambda_i^\vee \subset \Lambda\otimes_{\OO_K}K$ be the $\OO_F$-lattice dual to $\Lambda_i$. Let $\theta = \sqrt{-p}$ if $K = \QQ(\sqrt{-p})$ and $\theta = \zeta_p-\zeta_p^{-1}$ if $K = \QQ(\zeta_p)$. Then note that, as sublattices of $\Lambda \otimes_{\OO_K}K$, we have: \begin{align*}
\Lambda_i &= \OO_F \oplus \left( \theta \OO_F \right)^{\oplus i} \oplus \OO_F^{\oplus (n-i)}, \\
\Lambda_i^\vee &= \OO_F \oplus \left( \theta^{-1} \OO_F \right)^{\oplus i} \oplus \OO_F^{\oplus (n-i)}, \\
\Lambda_i \cap \theta \Lambda &= \va{\theta}^2\OO_F \oplus 
 \left( \theta \OO_F \right)^{\oplus i} \oplus (\va{\theta}^2\OO_F)^{\oplus (n-i)}  = \va{\theta}^2 \Lambda_i^\vee. 
\end{align*}
Since $\theta^{-1}\left( \Lambda_i \cap \theta \Lambda \right)  = \theta \OO_F \oplus \OO_F^{\oplus i} \oplus (\theta \OO_F)^{\oplus (n-i)}$, one deduces that
\begin{align} \label{align:preserves}
\begin{split}
\Lambda = 
\OO_K \cdot \Lambda_i + \OO_K \cdot  \theta^{-1}\left( \Lambda_i \cap \theta \Lambda \right)   = 
\OO_K \cdot \Lambda_i + \OO_K \cdot  \theta \Lambda_i^\vee. 
\end{split}
\end{align}
Now let $\phi \in \Aut(\Lambda_i)$ be an $\OO_F$-linear isometry of $\Lambda_i$. Then $\phi$ extends to a $K$-linear isometry $\psi$ of $\Lambda \otimes_{\OO_K} K$ which preserves $\OO_K \cdot \Lambda_i$ as well as $\OO_K \cdot \theta \Lambda_i^\vee$. By \eqref{align:preserves}, it follows that $\psi(\Lambda) = \Lambda$, and hence $\psi$ defines an isometry $\psi \colon \Lambda \xrightarrow{\sim} \Lambda$ such that $\psi(\Lambda_i) = \Lambda_i$. This concludes the proof of the proposition. 
\end{proof}


\section{Totally geodesic immersions of standard attachments} \label{sec:nonarithmetic}

We fix an integer $n \geq 3$, and let
\begin{align} \label{align:m-l}
\mr M = (K, M) \quad \quad \text{and} \quad \quad \mr L = (K, \Lambda)
\end{align}
be standard hermitian lattices such that $\mr M$ has rank two and $\mr L$ has rank $n$. In particular, we have either that $M = \Lambda_{\rm{qd}}^2(p)$ and $\Lambda = \Lambda_{\rm{qd}}^n(p)$, or that $M = \Lambda_{\zeta_p}^2(\lambda)$ and $\Lambda = \Lambda_{\zeta_p}^n(\lambda)$ for suitable $p$ and $\lambda$. 
We define anti-unitary involutions $\beta_i \colon M \to M$ for $i = 0,1,2$ and $\alpha_i \colon \Lambda \to \Lambda$ by putting $\beta_i = \alpha_{2,i}$ and $\alpha_i = \alpha_{n,i}$, see \eqref{def:beta}. We let $M(\mr M, \beta_0) \subset M(\mr M)$ (resp.\  $M(\mr L, \alpha_0) \subset M(\mr L)$) be the connected component containing the image of the natural map $\RRH^2_{\beta_0} \to M(\mr M)$ (resp.\ $\RRH^n_{\alpha_0} \to M(\mr L)$). 

The goal of this section is to prove the following theorem, and deduce Theorem \ref{theorem:non-arithmeticity:p-arbitrary} from it. 

\begin{theorem} \label{theorem:totally-geodesic-standard}
Consider the above notation. Assume that each anti-unitary involution of $M$ is $\Aut(M)$-conjugate to one of the $\pm \beta_i$ $(i = 0,1,2)$. Then the following assertions are true. 
\begin{enumerate}
\item \label{a}The orbifold $M(\mr M)$ is connected, hence equal to $M(\mr M, \beta_0)$. 
\item \label{b}There exists a canonical totally geodesic immersion of complete connected hyperbolic orbifolds
$$
\iota \colon M(\mr M) = M(\mr M, \beta_0)\longrightarrow M(\mr L, \alpha_0). 
$$
\item \label{c}
In particular, if the hyperbolic orbifold $M(\mr M)$ is non-arithmetic, then the hyperbolic orbifold $M(\mr L, \alpha_0)$ is non-arithmetic as well. 
\end{enumerate}
\end{theorem}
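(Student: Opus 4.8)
The plan is to produce an explicit totally geodesic immersion of orbifolds directly from the combinatorial set-up, and then invoke the Bergeron--Clozel theorem (Theorem~\ref{th:bergeron}) to transfer non-arithmeticity. For part~\eqref{a}, the key point is that $M(\mr M)$ is connected precisely when all the spaces $\RR H^2_\beta$ for $\beta \in P\mr A$ are glued together; by hypothesis every anti-unitary involution of $M$ is $\Aut(M)$-conjugate to some $\pm\beta_i$, and by Lemma~\ref{lemma:conjinv:intro} the classes $[\beta_0],\dots,[\beta_2]$ (equivalently the $+\beta_i$, since $-\beta_i$ is conjugate to some $+\beta_j$ or handled by the same argument) exhaust $L\setminus P\mr A$. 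One then checks that consecutive pieces $\RR H^2_{\beta_i}$ and $\RR H^2_{\beta_{i+1}}$ are actually glued: the involutions $\beta_i$ and $\beta_{i+1}$ differ by a single complex reflection $\phi_r$ in a short root $r$ (the relevant standard basis vector), and one exhibits a point $x \in \RR H^2_{\beta_i}\cap \RR H^2_{\beta_{i+1}}$ lying on $H_r$, so that $\beta_{i+1}\circ\beta_i \in G(x)$ and $(x,\beta_i)\sim(x,\beta_{i+1})$ by Definition~\ref{def:conditions}. This forces $M(\mr M)$ to be connected.

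For part~\eqref{b}, I would build $\iota$ from the standard embedding of hermitian lattices. There is an isometric $\OO_K$-linear embedding $M \hookrightarrow \Lambda$ onto the first two coordinates of $\Lambda$ (or the relevant coordinate block), compatible with the anti-unitary involutions in the sense that $\beta_i$ is the restriction of $\alpha_i$; this induces a totally geodesic isometric embedding $\RR H^2_{\beta_0} \hookrightarrow \RR H^n_{\alpha_0}$ of real hyperbolic spaces, and more generally $\RR H^2_{\beta_i}\hookrightarrow \RR H^n_{\alpha_i}$. These embeddings are compatible with the respective stabilizers $\Stab_M \subset \Stab_\Lambda$ and with the reflection subgroups $G(x)$, so they descend to the gluings: one gets a well-defined continuous map $Y(\mr M) \to Y(\mr L)$ carrying the component through $\RR H^2_{\beta_0}$ to the component through $\RR H^n_{\alpha_0}$, equivariant for a homomorphism $L(\mr M) \to L(\mr L)$, hence a map $\iota \colon M(\mr M,\beta_0) \to M(\mr L,\alpha_0)$. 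To verify that $\iota$ is a totally geodesic immersion of hyperbolic orbifolds, I would apply Lemma~\ref{direct}: one must check that locally around each point, $\iota$ lifts to a totally geodesic isometric immersion of open balls in $\RR H^2 \hookrightarrow \RR H^n$ equivariant for a homomorphism of local isotropy groups. This is exactly what the local models of Proposition~\ref{localmodel} provide --- the local chart of $M(\mr M)$ at a point with nodes is a quotient of a union of copies of $\BB^2(\RR)$ inside $\BB^2(\CC)$, the local chart of $M(\mr L)$ at its image is a quotient of a union of copies of $\BB^n(\RR)$ inside $\BB^n(\CC)$, and the coordinatewise inclusion respects all the structure (nodes of $x$ in $M$ map to nodes of its image in $\Lambda$, real/complex-conjugate node types are preserved, the groups $B_f$, $G(x)$, $\Gamma_f$ map compatibly). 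Part~\eqref{c} is then immediate from Theorem~\ref{th:bergeron}: the lattice $\Gamma$ with $M(\mr L,\alpha_0)\cong \Gamma\setminus \RR H^n$ would be arithmetic, forcing the lattice $\Gamma'$ with $M(\mr M)\cong \Gamma'\setminus \RR H^2$ to be arithmetic too, contrary to hypothesis.

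The main obstacle I anticipate is the bookkeeping in part~\eqref{b}: verifying that the lattice inclusion $M\hookrightarrow\Lambda$ is genuinely compatible with \emph{all} the gluing data simultaneously --- not just that $\beta_i|_M$ restricts correctly, but that the set of anti-unitary involutions of $M$ equivalent to $\beta_0$ maps into the set of those of $\Lambda$ equivalent to $\alpha_0$, that the stabilizer $L(\mr M)_{\beta_i}$ maps into $L(\mr L)_{\alpha_i}$, and that the combinatorial correspondence of nodes behaves well when a node of a point of $M(\mr M)$ becomes, after inclusion, a node which may meet additional hyperplanes coming from the extra coordinates of $\Lambda$. One needs that $H_r \cap \RR H^2_{\beta_i}$, for $r$ a short root of $M$, is nonempty if and only if $H_{r} \cap \RR H^n_{\alpha_i}$ (viewing $r \in \Lambda$) is, and that no ``new'' short roots of $\Lambda$ create obstructions to the local totally geodesic model --- this is where the orthogonality Theorem~\ref{orthogonal} and the explicit local coordinates of Lemma~\ref{lemma:localcoordinates} do the real work. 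A secondary technical point is checking that $\iota$ is well-defined on the \emph{quotient} $Y(\mr M)\to Y(\mr L)$, i.e.\ that $\sim$-equivalent points go to $\sim$-equivalent points; this follows because $\beta\circ\beta' \in G_M(x)$ implies $(\alpha \circ \alpha')|$ lies in $G_\Lambda(\iota(x))$ under the inclusion of reflection groups, but the details require Proposition~\ref{prop:remember} to identify reflections across the two lattices.
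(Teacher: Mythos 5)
Your plan matches the paper's proof in all essentials: connectedness of $M(\mr M)$ via the surjection from the three pieces $\RR H^2_{\beta_i}$ glued along their nonempty intersections inside $\CC H^2$, the coordinate embedding $\nu \colon \CC H^2 \hookrightarrow \CC H^n$ compatible with the pairs $(\beta_i,\alpha_i)$ together with stabilizer homomorphisms $\Gamma_i \hookrightarrow L_i$ descending to a continuous map on the quotients, a local verification (in charts given by the node structure) that the map is an orbifold morphism and lifts locally to totally geodesic immersions equivariant for homomorphisms of isotropy groups, so that Lemma \ref{direct} applies, and finally Theorem \ref{th:bergeron} for part (3). The paper organizes the descent slightly differently—defining $\iota$ directly on $M(\mr M)$ via representatives in the three distinguished pieces (Proposition \ref{proposition:canonical-map}) rather than through a globally defined map $Y(\mr M)\to Y(\mr L)$, which would require non-canonical extensions of arbitrary anti-unitary involutions of $M$—but the verifications you single out (stabilizer compatibility, which rests on Proposition \ref{proposition:identification-of-arithmetic-part}, the homomorphism of local groups as in Lemma \ref{lemma:canonical-hom}, and the node bookkeeping handled by Lemma \ref{lemma:localcoordinates:infamily:new}) are exactly the ones the paper carries out.
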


\begin{remark}
For $M$ as above (i.e.\ equal to $\Lambda_{\rm{qd}}^2(p)$ or $ \Lambda_{\zeta_p}^2(\lambda)$ for suitable $p, \lambda$), one can ask whether 
it is always true that each anti-unitary involution of 
$M$ is $\rm{Aut}(M)$-conjugate to exactly one of the $\pm \beta_i$. This is in any case true if $M = \Lambda_{\rm{qd}}^2(3) = \ZZ[\zeta_3]^{2,1}$, as we will prove below (cf.\ Theorem \ref{useful-theorem}). In a subsequent paper, see \cite{gaay-hyperbolic}, we will prove that this also holds if $M = \Lambda_{\zeta_5}^2(\zeta_5 + \zeta_5^{-1})$. 
\end{remark}

\subsection{Continuous map between standard attachments} 

Continue with the above notation. In particular, we consider the standard hermitian lattices $\mr M$ and $\mr L$, see equation \eqref{align:m-l} above. We put
\begin{align*}
L= \Aut(\Lambda)/\mu_K  \quad \quad \text{ and } \quad \quad 
\Gamma= \Aut(M)/\mu_K,
\end{align*}
where, as usual, $\mu_K \subset \OO_K^\ast$ denotes the torsion-subgroup of $\OO_K^\ast$. 
For $i  \in \set{0,1, \dotsc, n}$, define $\Lambda_i  \coloneqq \Lambda^{\alpha_i}$, and put
\begin{align*}
L_i \coloneqq L_{\alpha_i} = \rm{Stab}_L(\RRH^n_{\alpha_i}) \subset L. 
\end{align*}
For $i \in \set{0, 1,2}$, define $M_i \coloneqq M^{\beta_i}$, and put
\begin{align*}
\Gamma_i \coloneqq \Gamma_{\beta_i} = \rm{Stab}_\Gamma(\RRH^2_{\beta_i}) \subset \Gamma. 
\end{align*}
Consider the canonical embeddings $\Aut(M) \subset \GL_3(\OO_K)$ and $\Aut(\Lambda) \subset \GL_{n+1}(\OO_K)$, and the embedding
\begin{align}\label{jn}
j \colon \Aut(M) \longhookrightarrow \Aut(\Lambda), \quad M \mapsto (M, \Id). 
\end{align}
Observe that $\CCH^n$ is the space of lines in $\CC^{n+1}$ which are negative for the hermitian form $h(x,y) = - \lambda \cdot x_0 \bar y_0 + \cdots + x_n \bar y_n$. For an element $x = (x_0, \dotsc, x_n) \in \CC^{n+1}$ such that $h(x,x) < 0$, we define $[x_0 \colon \dotsc \colon x_n] \in \CCH^n$ as the associated negative line. Consider then the totally geodesic embedding \begin{align*}\nu \colon \CC H^2 &\longhookrightarrow \CC H^n, \\
[x_0\colon x_1\colon x_2] &\mapsto [x_0 \colon x_1 \colon x_2 \colon 0 \colon \dotsc \colon 0]. 
\end{align*}
Observe that $\nu$ commutes with each pair $(\beta_i, \alpha_i)$ for $i = 0,1,2$, hence induces totally geodesic embeddings
\begin{align} \label{align:nu-i}
\nu_i \colon \RRH^2_{\beta_i}  \longhookrightarrow \RRH^n_{\alpha_i}, \quad \quad i = 0, 1,2. 
\end{align}


\begin{lemma} \label{lemma:ol-nu}
For each $i \in \set{0,1,2}$, there is a natural injective homomorphism 
\begin{align} \label{homomorphism:nat}
\Gamma_i\longhookrightarrow L_i
\end{align}
that makes the map \eqref{align:nu-i} equivariant. In particular, for each $i \in \set{0,1,2}$, there is a canonical totally geodesic immersion of hyperbolic orbifolds $\ol{\nu_i}  \colon \Gamma_i\sm \RRH^{2}_{\beta_i} \to L_i \sm \RRH^n_{\alpha_i}$ such that the following diagram commutes:
\[
\xymatrix{
\RRH^{2}_{\beta_i}  \ar@{^{(}->}[r]^{\nu_i} \ar[d] & \RRH^n_{\alpha_i} \ar[d] \\
\Gamma_i\sm \RRH^{2}_{\beta_i} \ar[r]^{\ol{\nu_i}} & L_i \sm \RRH^n_{\alpha_i}.
}
\]
\end{lemma}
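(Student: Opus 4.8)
\emph{Overview and decompositions.}
The plan is to build the homomorphism \eqref{homomorphism:nat} by hand from an orthogonal splitting of $\Lambda$, to identify $\Gamma_i$ and $L_i$ with orthogonal groups over $\OO_F$ via Proposition \ref{proposition:identification-of-arithmetic-part}, and then to read the commutative square off the explicit shape of $\nu$. Since the hermitian form on $\Lambda = \OO_K^{n+1}$ is diagonal, there is an orthogonal splitting $\Lambda = M \oplus P$ of hermitian $\OO_K$-lattices, with $M$ occupying the first three coordinates and $P = \OO_K^{n-2}$ the standard positive definite lattice on the last $n-2$ coordinates; in this notation the embedding $j$ of \eqref{jn} is $g \mapsto g \oplus \Id_P$. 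Because $i \leq 2$, the involution $\alpha_i = \alpha_{n,i}$ of \eqref{def:beta} preserves both summands, acting on $M$ as $\beta_i = \alpha_{2,i}$ and on $P$ as coordinatewise conjugation. Hence $\Lambda_i = \Lambda^{\alpha_i} = M_i \oplus N_i$ as orthogonal $\OO_F$-lattices, where $M_i = M^{\beta_i}$ has $\OO_F$-rank $3$ and $N_i$ is the standard positive definite $\OO_F$-lattice of rank $n-2$. Applying Proposition \ref{proposition:identification-of-arithmetic-part} to $\mr L$ and (with ``$n = 2$'') to $\mr M$ gives canonical identifications $\Gamma_i = \PO(M_i)(\OO_F)$ and $L_i = \PO(\Lambda_i)(\OO_F)$, compatible with the inclusions of $\Gamma_i$ into $\Isom(\RR H^2_{\beta_i})$ and of $L_i$ into $\Isom(\RR H^n_{\alpha_i})$ described just before Theorem \ref{th:crucialthm-finitevolume}.

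\emph{The determinant-twisted extension.}
Next I would consider $\widetilde\rho_i \colon \mathrm{O}(M_i)(\OO_F) \to \mathrm{O}(\Lambda_i)(\OO_F)$ defined by $\gamma \mapsto \gamma \oplus (\det \gamma)\,\Id_{N_i}$. Multiplicativity of $\det$ shows this is a group homomorphism, and it is visibly injective. The reason for the twist is that the naive extension $\gamma \mapsto \gamma \oplus \Id_{N_i}$ does not send $-\Id_{M_i}$ into $\{\pm \Id_{\Lambda_i}\}$ once $n \geq 3$; by contrast $\det(-\Id_{M_i}) = (-1)^3 = -1$, so $\widetilde\rho_i(-\Id_{M_i}) = -\Id_{M_i} \oplus (-\Id_{N_i}) = -\Id_{\Lambda_i}$. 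Therefore $\widetilde\rho_i$ descends to a homomorphism $\Gamma_i = \PO(M_i)(\OO_F) \to \PO(\Lambda_i)(\OO_F) = L_i$, still injective (if $\widetilde\rho_i(\gamma) = \pm\Id_{\Lambda_i}$, the first block forces $\gamma = \pm\Id_{M_i}$). This is the map \eqref{homomorphism:nat}.

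\emph{Equivariance and passage to orbifolds.}
Since $\widetilde\rho_i(\gamma)$ is block diagonal for $\Lambda_i = M_i \oplus N_i$, after $\otimes_{\OO_F,\tau}\RR$ it preserves $M_i\otimes\RR \subset \Lambda_i\otimes\RR$ and restricts there to $\gamma\otimes\RR$. Under the identifications of Lemma \ref{hyperbolic}, the embedding $\nu_i$ of \eqref{align:nu-i} is precisely the inclusion of the negative lines of $M_i\otimes\RR$ into those of $\Lambda_i\otimes\RR$; hence $\nu_i$ is equivariant for the homomorphism $\Gamma_i \hookrightarrow L_i$ just constructed, and in particular $\widetilde\rho_i(\Gamma_i)$ stabilizes $\RR H^2_{\beta_i}$ inside $\RR H^n_{\alpha_i}$. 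Passing to quotients produces a well-defined map $\ol{\nu_i} \colon \Gamma_i \sm \RR H^2_{\beta_i} \to L_i \sm \RR H^n_{\alpha_i}$ making the stated square commute. Finally $\ol{\nu_i}$ is a totally geodesic immersion of hyperbolic orbifolds: $\nu_i$ is a totally geodesic isometric embedding, and, using the equivariance to absorb source deck transformations, every lift of $\ol{\nu_i}$ has the form $g \circ \nu_i$ with $g \in L_i$ and thus factors through an isometry onto the totally geodesic subspace $g(\nu_i(\RR H^2_{\beta_i})) \subset \RR H^n$; this is exactly Definition \ref{def:interestingly}, and one may alternatively invoke Lemma \ref{direct} with local charts induced by $\nu_i$.

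\emph{Main obstacle.}
The only genuinely delicate point is the $\mathrm{O}$-versus-$\PO$ bookkeeping. The construction cannot be run directly on $\Aut(M) \to \Aut(\Lambda)$, because the block embedding $j$ does not descend modulo roots of unity; and even after reducing to the $\OO_F$-orthogonal groups via Proposition \ref{proposition:identification-of-arithmetic-part}, the plain extension-by-identity still fails to descend to $\PO$. The determinant twist, available precisely because $M_i$ has odd rank $3$, is what repairs this. Everything else amounts to bookkeeping with the identifications already established in Theorem \ref{th:crucialthm-finitevolume} and Proposition \ref{proposition:identification-of-arithmetic-part}.
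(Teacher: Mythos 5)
Your proposal is correct and takes essentially the same route as the paper: identify $\Gamma_i = \PO(M_i)(\OO_F)$ and $L_i = \PO(\Lambda_i)(\OO_F)$ via Proposition \ref{proposition:identification-of-arithmetic-part}, exploit the odd rank $3$ of $M_i$ to produce the homomorphism, and read off equivariance from the block decomposition $\Lambda_i = M_i \oplus N_i$. Your determinant-twisted extension $\gamma \mapsto \gamma \oplus (\det\gamma)\Id_{N_i}$ induces literally the same map as the paper's composition $\PO(W_i) = \SO(W_i) \hookrightarrow \SO(V_i) \to \PO(V_i)$ (the two lifts differ by $-\Id_{\Lambda_i}$ exactly when $\det\gamma = -1$), so this is only a cosmetic variation.
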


\begin{proof}
Let $W = M \otimes_{\OO_K}\CC$ and $V = \Lambda \otimes_{\OO_K} \CC$, where the tensor products are taken with respect to the embedding $\tau \colon K = \QQ(\zeta_p) \hookrightarrow \CC$ with $\tau(\zeta_p) = \zeta_p = e^{2 \pi i/p} \in \CC$. For $i = 0,1,2$, define $W_i = W^{\beta_i}$ and $V_i = V^{\alpha_i}$. 
Consider the natural isomorphisms $\RRH^2_{\beta_i} = \RRH(W_i)$ and $\RRH^n_{\alpha_i} = \RRH(V_i)$, see Lemma \ref{hyperbolic}. With respect to these identifications, the map $\nu_i$ is given by the totally geodesic immersion $\nu_i \colon \RRH(W_i) \hookrightarrow \RRH(V_i)$ induced by the canonical embedding $W_i \hookrightarrow V_i$. As $\dim(W_i) = 3$, the composition $\SO(W_i) \to \rm{O}(W_i) \to \PO(W_i)$ is an isomorphism. Moreover, the composition
\begin{align} \label{align:embedding-phi}
\phi \colon \PO(W_i) = \SO(W_i) \longhookrightarrow \SO(V_i) \longrightarrow \PO(V_i)
\end{align}
is an embedding. We have $L_i = \PO(\Lambda_i)(\OO_F)$ and $\Gamma_i = \PO(M_i)(\OO_F)$ by Lemma \ref{proposition:identification-of-arithmetic-part}, and the map \eqref{align:embedding-phi} restricts to an embedding
\[
\phi \colon \Gamma_i= \PO(M_i)(\OO_F) \longhookrightarrow \PO(\Lambda_i)(\OO_F) = L_i. 
\]
that makes the map $\nu_i \colon \RRH(W_i) \hookrightarrow \RRH(V_i)$ equivariant. The lemma follows. 
\end{proof}

We now continue by assuming the following:

\begin{condition}\label{condition:connected}
Each anti-unitary involution of $M$ is $\Aut(M)$-conjugate to one of the $\pm \beta_i$ $(i = 0,1,2)$. 
\end{condition}

\begin{lemma} \label{connected}
Assume Condition \ref{condition:connected}. Then $M(\mr M)$ is connected. 
\end{lemma}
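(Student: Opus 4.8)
The plan is to reduce connectedness of $M(\mr M)$ to one explicit gluing, realised at a single point of $\CC H^2$. First I would observe that $-1$ is a root of unity, hence $-1 \in \mu_K$, so that $[-\beta_i] = [\beta_i]$ in $P\mr A$ for $i = 0,1,2$; thus Condition \ref{condition:connected} says exactly that every class in $P\mr A$ is $\Gamma$-conjugate to one of $[\beta_0], [\beta_1], [\beta_2]$. Write $p \colon \wt Y \to Y(\mr M)$ and $q \colon Y(\mr M) \to \Gamma \sm Y(\mr M) = M(\mr M)$ for the two quotient maps (notation as in Definition \ref{gluedspace}). By Lemma \ref{lemma:pgammaaction}, $\Gamma$ acts on $\wt Y = \coprod_{\beta \in P\mr A}\RR H^2_\beta$ by $g \cdot \RR H^2_\beta = \RR H^2_{g\beta g^{-1}}$ and $p$ is $\Gamma$-equivariant, so $\wt Y = \Gamma \cdot \big(\RR H^2_{\beta_0} \sqcup \RR H^2_{\beta_1} \sqcup \RR H^2_{\beta_2}\big)$ and hence
\[
M(\mr M) = q\big( p(\RR H^2_{\beta_0}) \cup p(\RR H^2_{\beta_1}) \cup p(\RR H^2_{\beta_2})\big).
\]
It therefore suffices to prove that the set $p(\RR H^2_{\beta_0}) \cup p(\RR H^2_{\beta_1}) \cup p(\RR H^2_{\beta_2}) \subset Y(\mr M)$ is connected.

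The heart of the argument is to exhibit a point lying in all three images. I would take the short roots $r_1 = (0,1,0)$ and $r_2 = (0,0,1)$ of $M$ (they satisfy $h(r_j,r_j) = 1$, since the hermitian form on $M$ is $\tn{diag}(-\lambda,1,1)$), and write $-1 = \zeta^k \in \mu_K$ (the exponent $k$ depends on the CM field $K$: one can take $k = 1$, $k = 3$ or $k = p$ according as $K = \QQ(\sqrt{-p})$ with $p \geq 5$, $K = \QQ(\sqrt{-3})$, or $K = \QQ(\zeta_p)$). A direct computation from \eqref{def:beta} in rank three gives
\[
\beta_1 \circ \beta_0 = \tn{diag}(1,-1,1) = \phi_{r_1}^k, \qquad \beta_2 \circ \beta_1 = \tn{diag}(1,1,-1) = \phi_{r_2}^k.
\]
Now consider $x_0 = [1 : 0 : 0]$. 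Since $h\big((1,0,0),(1,0,0)\big) = -\tau(\lambda) < 0$, we have $x_0 \in \CC H^2$; it is fixed by each of $\beta_0, \beta_1, \beta_2$, so $x_0 \in \RR H^2_{\beta_i}$ for every $i$ (Lemma \ref{hyperbolic}), and $x_0 \in H_{r_1} \cap H_{r_2}$. As $x_0 \in H_{r_1}$ we get $\phi_{r_1}^k \in G(x_0)$ and as $x_0 \in H_{r_2}$ we get $\phi_{r_2}^k \in G(x_0)$ (Definition \ref{fullset}), whence $\beta_1 \circ \beta_0 \in G(x_0)$ and $\beta_2 \circ \beta_1 \in G(x_0)$. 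By the gluing rule of Definition \ref{def:conditions} this gives $(x_0,\beta_0) \sim (x_0,\beta_1) \sim (x_0,\beta_2)$, so the three points $(x_0,\beta_i) \in \wt Y$ have one and the same image $y_0 \coloneqq p(x_0,\beta_i) \in Y(\mr M)$.

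Finally, each $p(\RR H^2_{\beta_i})$ is connected, being the continuous image of $\RR H^2_{\beta_i} \cong \RR H^2$ (Lemma \ref{hyperbolic}); since all three contain $y_0$, their union is connected, and hence so is $M(\mr M)$ by the first paragraph. I do not anticipate any serious obstacle: the only steps requiring care are the bookkeeping in the identity $\beta_{i+1}\circ\beta_i = \phi_{r_{i+1}}^k$ (reading off a diagonal involution as a power of a short-root reflection via $\zeta^k = -1$) and the correct invocation of the relation $\sim$; the statement is in essence this explicit gluing together with Condition \ref{condition:connected}.
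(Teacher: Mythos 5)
Your proof is correct and follows essentially the same route as the paper's: Condition \ref{condition:connected} (together with $-1\in\mu_K$ and the $\Gamma$-action) reduces $M(\mr M)$ to the image of the three copies $\RR H^2_{\beta_0},\RR H^2_{\beta_1},\RR H^2_{\beta_2}$, which are then seen to be glued to one another, giving a connected image. The only difference is cosmetic: the paper identifies the quotient of the three copies by $\sim$ with the union $\RR H^2_{\beta_0}\cup\RR H^2_{\beta_1}\cup\RR H^2_{\beta_2}\subset\CC H^2$ by appealing to Lemma \ref{intersection}, whereas you verify the gluing explicitly at the common point $[1:0:0]$ via the identities $\beta_{1}\circ\beta_0=\phi_{r_1}^{k}$, $\beta_2\circ\beta_1=\phi_{r_2}^{k}$ with $\zeta^k=-1$ — a detail the paper leaves implicit.
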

\begin{proof}
By Condition \ref{condition:connected}, the natural map $\RRH^2_{\beta_0} \sqcup \RRH^2_{\beta_1} \sqcup \RRH^2_{\beta_2} \to M(\mr M)$ is surjective. Moreover, if $\sim$ denotes the equivalence relation on $\RRH^2_{\beta_0} \sqcup \RRH^2_{\beta_1} \sqcup \RRH^2_{\beta_2}$ induced by its inclusion into $\coprod_{\beta \in P\mr B} \RRH^2_{\beta}$, where $P\mr B = \langle - \zeta_p \rangle \sm \mr B$, we have 
\[
\left( \RRH^2_{\beta_0} \coprod \RRH^2_{\beta_1} \coprod \RRH^2_{\beta_2} \right)/_\sim = \RRH^2_{\beta_0} \bigcup \RRH^2_{\beta_1} \bigcup \RRH^2_{\beta_2} \subset \CCH^2,
\]
which follows from Lemma \ref{intersection}. Consequently, we have a natural surjective map $\cup_{i = 0}^2 \RRH^2_{\beta_i} \to M(\mr M)$; since $\cup_{i = 0}^2 \RRH^2_{\beta_i}$ is connected, the lemma follows.  
\end{proof}

Let $\mr H_2 \subset \CCH^2$ (resp.\ $\mr H_n \subset \CCH^n$) be the hyperplane arrangement defined by the norm one vectors in $M$ (resp.\ $\Lambda$). Recall that there are natural open embeddings $\sqcup_{i = 0}^2 \Gamma_i \sm (\RRH^2_{\beta_i} - \mr H_2 ) \hookrightarrow M(\mr M)$ and $\sqcup_{i = 0}^n L_i \sm (\RRH^n_{\alpha_i} - \mr H_n ) \hookrightarrow M(\mr L)$, see Theorem \ref{theorem:introduction:uniformization} (and compare Lemma \ref{lemma:openembeddingtopological}). 

\begin{proposition} \label{proposition:canonical-map}
Assume Condition \ref{condition:connected}. 
There is a canonical map of topological spaces $\iota \colon M(\mr M) = M(\mr M, \beta_0) \to M(\mr L, \alpha_0)$ that makes the diagram
\begin{align} 
\label{diagram:top-com}
\begin{split}
\xymatrix{
M(\mr M, \beta_0) \ar[r]^\iota & M(\mr L, \alpha_0) \\
\coprod_{i = 0}^2 \Gamma_i \sm \left(\RRH^2_{\beta_i} - \mr H_2 \right) \ar[r]^{\ol \nu} \ar@{^{(}->}[u] & \coprod_{i = 0}^n L_i \sm\left( \RRH^n_{\alpha_i} - \mr H_n \right) \ar@{^{(}->}[u]
}
\end{split}
\end{align}
commute, where $\ol \nu$ is the coproduct of the maps $\ol{\nu_i}$ of Lemma \ref{lemma:ol-nu}. 
\end{proposition}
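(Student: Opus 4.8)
The plan is to build the map $\iota$ first on the dense open locus where everything is a genuine disjoint union of hyperbolic orbifolds, and then extend it continuously to all of $M(\mr M)$ using the explicit local models of Proposition \ref{localmodel}.

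First I would define $\iota$ on the open subset $\coprod_{i=0}^2 \Gamma_i \setminus (\RR H^2_{\beta_i} - \mr H_2)$ by declaring it to be $\ol \nu = \coprod_i \ol{\nu_i}$ followed by the open embedding $\coprod_{i=0}^n L_i \setminus (\RR H^n_{\alpha_i} - \mr H_n) \hookrightarrow M(\mr L)$. This is well-defined since each $\ol{\nu_i}$ is a morphism of orbifolds by Lemma \ref{lemma:ol-nu}, and it lands in $M(\mr L, \alpha_0)$ because $\nu_0$ has image meeting $\RR H^n_{\alpha_0}$, and the images of $\RR H^2_{\beta_1}, \RR H^2_{\beta_2}$ are glued to the $\beta_0$-component (this uses Condition \ref{condition:connected} together with Lemma \ref{connected}, which shows $M(\mr M)$ is connected, so the glued copies $\nu_i(\RR H^2_{\beta_i})$ land in the same component $M(\mr L, \alpha_0)$ as soon as their images in $M(\mr L)$ are connected to $\RR H^n_{\alpha_0}$; this last point needs checking, namely that $\nu_i$ carries the gluing relation on the $M$-side into the gluing relation on the $\Lambda$-side, which follows from the equation $\phi_r = \phi_{j(r)}$ under $j$ together with $G(x) \subset G(\nu(x))$ for $x \in \RR H^2_{\beta_i}$).

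The main work is the continuous extension across $\mr H_2$. Here I would argue pointwise: take $\bar x \in M(\mr M)$ with lift $(x, \beta) \in \wt Y_{\mr M}$, look at the local model from Proposition \ref{localmodel} expressing a neighbourhood of $\bar x$ as $\Gamma_{f,\mr M} \setminus \BB^2(\RR)$ (with nodes encoded by the coordinates $t_i$ of Lemma \ref{lemma:localcoordinates}), and observe that $\nu$ carries the short roots $r_i \in \mr R_M$ through $x$ to short roots $j(r_i)$ through $\nu(x)$ in $\Lambda$, with $h(r_i, r_j) = 0 \Leftrightarrow h(j(r_i), j(r_j)) = 0$, so the combinatorial type of the node configuration (number of real nodes, pairs of complex conjugate nodes) is preserved. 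Consequently the local model $\Gamma_{f,\mr M} \setminus \BB^2(\RR) \to \Gamma_{\nu(f), \mr L} \setminus \BB^n(\RR)$ extends the map $\ol \nu$ near $\bar x$: one matches up the $B_f$-quotient descriptions on both sides and checks that $\Gamma_{f,\mr M} \to \Gamma_{\nu(f),\mr L}$ (the restriction of $j$ to stabilizers, descended modulo $B_f$) is a well-defined homomorphism making the embedding $\BB^2(\RR) \hookrightarrow \BB^n(\RR)$ equivariant. Gluing these local extensions, which agree on overlaps (they agree on the dense open set $\coprod \Gamma_i \setminus (\RR H^2_{\beta_i} - \mr H_2)$, which is dense, and $M(\mr L, \alpha_0)$ is Hausdorff by Lemma \ref{lemma:hausdorff} plus Theorem \ref{theorem:introduction:uniformization}), produces the desired continuous $\iota$, and diagram \eqref{diagram:top-com} commutes by construction.

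The step I expect to be the main obstacle is verifying that the gluing relations are compatible under $\nu$ — concretely, that for $x \in \RR H^2_\beta \cap \RR H^2_{\beta'}$ with $(x,\beta) \sim (x, \beta')$ in $\wt Y_{\mr M}$, one has $(\nu(x), \beta) \sim (\nu(x), \beta')$ in $\wt Y_{\mr L}$ (so the relation on the source maps into the relation on the target) \emph{and} conversely that no spurious identifications are introduced, i.e. $\iota$ really is well-defined on the quotient and not just on $\wt Y_{\mr M}$. The forward direction follows from $\beta' \circ \beta \in G_{\mr M}(x) = \langle \phi_{r_1}, \dotsc, \phi_{r_k}\rangle$ and $j(\phi_{r_i}) = \phi_{j(r_i)} \in G_{\mr L}(\nu(x))$; the subtlety is checking that $j$ sends an anti-unitary involution $\beta$ of $M$ to one whose $\mu_K$-class on $\Lambda$ is again represented by something of the form we can control, which is where Condition \ref{condition:connected} and the explicit shape $\beta = \pm\beta_i$ get used to reduce to the three model involutions $\alpha_i$ that $\nu$ visibly intertwines. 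I would also need Lemma \ref{intersection} to pass between ``at one point $x$'' and ``at every point of the intersection''.
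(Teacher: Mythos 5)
Your strategy (define $\iota$ on the dense open locus and then extend across $\mr H_2$ via the local models of Proposition \ref{localmodel}) is a genuinely different and much heavier route than the paper's, and as written it contains a step that fails. The claim that ``the combinatorial type of the node configuration is preserved'' under $\nu$ is false: for every $x \in \CCH^2$ the point $\nu(x)$ is orthogonal to the last $n-2$ standard coordinate vectors of $\Lambda$, which are short roots, so $\nu(x)$ acquires $n-2$ extra real nodes and in particular $\nu(\CCH^2) \subset \mr H_n$ for $n \geq 3$ (this is exactly what Lemma \ref{lemma:localcoordinates:infamily:new} records). Consequently the target chart at $\nu(x)$ is not the one your matching presupposes: even when $x$ has no nodes, a neighbourhood of $[\nu(x)]$ in $M(\mr L)$ is a quotient of a union of $m^{n-2}$ glued copies of $\RRH^n$ with $B_g \cong (\ZZ/m)^{n-2}$ nontrivial, not a quotient of an open set of $\RRH^n_{\alpha_i}$. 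Worse, those extra real nodes are precisely what make the local homomorphism $A_f/B_f \to A_g/B_g$ well defined: the $\mu_K$-ambiguity in lifting an element of $A_f$ is absorbed by reflections in the extra coordinate roots (Lemma \ref{lemma:canonical-hom}), so if the node configuration really were preserved, the equivariance homomorphism you invoke would not be canonically defined. The same phenomenon means your opening step does not typecheck literally: $\ol\nu$ does not land in $\coprod_i L_i \sm (\RRH^n_{\alpha_i} - \mr H_n)$ (admittedly the diagram in the statement invites this reading), and one should instead compose with the natural maps $L_i \sm \RRH^n_{\alpha_i} \to M(\mr L)$.

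Beyond this, the extension-by-local-charts detour buys nothing for the purely topological statement: the underlying point map is forced, and the only thing to verify is exactly what you postpone to your final paragraph, namely that $\nu$ carries the gluing relation forward. The paper's proof is just that verification: define $\iota$ directly on lifts by $(x,\beta_i) \mapsto [(\nu_i(x),\alpha_i)]$, check independence of the lift --- for lifts with the same index via the homomorphism $\Gamma_i \to L_i$ of Lemma \ref{lemma:ol-nu}, and for different indices via the fact that $x = y \in \RRH^2_{\beta_i} \cap \RRH^2_{\beta_j}$ forces $(\nu(x),\alpha_i) \sim (\nu(x),\alpha_j)$ (Lemmas \ref{intersection} and \ref{intersection:complexpart}; equivalently, your observation that $j(\phi_r) = \phi_{j(r)}$ sends the group $G(x)$ computed in $M$ into $G(\nu(x))$ computed in $\Lambda$) --- and then continuity is automatic from the quotient topology, with no density, Hausdorffness or chart-matching needed. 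Your sketch of that forward compatibility is essentially correct and is the real content; the local-model machinery belongs to the next step (Proposition \ref{prop:fund-funda-fund}, where $\iota$ is upgraded to a morphism of orbifolds), and there it is carried out with the extra nodes taken into account.
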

\begin{proof}
We have $M(\mr M) = M(\mr M, \beta_0)$ by Lemma \ref{connected}. 
Let $\ol f \in M(\mr M)$. We can lift $\ol f$ to an element $(x, \beta_i) \in \RRH^2_{\beta_0} \sqcup \RRH^2_{\beta_1} \sqcup \RRH^2_{\beta_2}$ for some $i \in \set{0,1,2}$. This yields an element $(\nu_i(x), \alpha_i) \in \RRH^2_{\alpha_0} \sqcup \RRH^2_{\alpha_1} \sqcup \RRH^2_{\alpha_2}$. We define $\iota(\ol f)$ as the image $[(\nu_i(x), \alpha_i)] \in M(\mr L)$ of the element $(\nu_i(x), \alpha_i)$ under the composition of maps $\RRH^2_{\alpha_0} \sqcup \RRH^2_{\alpha_1} \sqcup \RRH^2_{\alpha_2} \to Y(\mr L) \to M(\mr L)$. Notice that $[(\nu_i(x), \alpha_i)] \in M(\mr L, \alpha_0)$. 

We check that this gives a well-defined defined map. To prove this, let  $(y, \beta_j) \in \RRH^2_{\beta_0} \sqcup \RRH^2_{\beta_1} \sqcup \RRH^2_{\beta_2}$ be another lift of $\ol f$. Assume first that $i = j$. In that case, there exists an element $g \in \Gamma_i$ such that $g \cdot x = y$. Consider the homomorphism $\phi \colon \Gamma_i \to L_i$ defined in \eqref{homomorphism:nat}. By Lemma \ref{lemma:ol-nu}, we get $\phi(g) \cdot \nu_i(x) = \nu_i(g \cdot x) = \nu_i(y)$. Hence $[(\nu_i(x), \alpha_i)]  = [(\nu_i(y), \alpha_i)] \in M(\mr L)$, proving what we want. Next, assume that $i \neq j$. We claim that $[(\nu_i(x), \alpha_i)]  = [(\nu_i(y), \alpha_j)]  \in M(\mr L)$. Observe that there exists $g \in \Gamma_i$ such that $(g \cdot x, \beta_i) \sim (y, \beta_j)$. Since $[(\nu_i(x), \alpha_i)]  = [(\phi(x) \cdot \nu_i(x), \alpha_i)]= [(\nu_i(g \cdot x), \alpha_i)]$, we may assume that $(x, \beta_i) \sim (y, \beta_j)$. This implies that $x = y \in \RRH^2_{\beta_i} \cap \RRH^2_{\beta_j}$ and hence that 
\[
\nu_i(x) = \nu_i(y) \in \nu \left( \RRH^2_{\beta_i} \cap \RRH^2_{\beta_j}\right) = \nu \left( \RRH^2_{\beta_i}\right) \cap \nu \left( \RRH^2_{\beta_j}\right) \subset \RRH^n_{\alpha_i} \cap \RRH^n_{\alpha_j}. 
\]
Consequently, $(\nu_i(x), \alpha_i) \sim (\nu_j(y), \alpha_j)$, hence $[(\nu_i(x), \alpha_i)]  = [(\nu_i(y), \alpha_j)]  \in M(\mr L)$ as wanted. Thus $\iota \colon M(\mr M) \to M(\mr L, \alpha_0)$ is a well-defined map. Moreover, $\iota$ is continuous since $\iota$ fits inside the commutative diagram 
\[
\xymatrix{
\coprod_{i = 0}^2\RRH^2_{\beta_i}  \ar@{->>}[d] \ar[r] & \coprod_{\alpha \in P\mr A} \RRH^n_{\alpha} \ar@{->>}[d] \\
 M(\mr M) \ar[r]^\iota & M(\mr L)
}
\]
in which the vertical arrows are quotient maps and the upper horizontal arrow is continuous. Diagram \eqref{diagram:top-com} clearly commutes and the proof is finished. 
\end{proof}

\subsection{Orbifold map between standard attachments} \label{sec:orbifoldmap} 
\begin{proposition} \label{prop:fund-funda-fund}
Assume Condition \ref{condition:connected}. 
The map $\iota \colon M(\mr M) \to M(\mr L, \alpha_0)$ defined in Proposition \ref{proposition:canonical-map} is a morphism of real analytic orbifolds. 
\end{proposition}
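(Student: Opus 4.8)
The statement to prove is that the continuous map $\iota \colon M(\mr M) \to M(\mr L, \alpha_0)$ constructed in Proposition \ref{proposition:canonical-map} is a morphism of real analytic orbifolds. The strategy is local: I will produce, around every point of $M(\mr M)$, compatible orbifold charts on source and target together with an equivariant real analytic map between the chart domains lifting $\iota$, and check that these local descriptions glue to a genuine orbifold morphism. Throughout I will use the explicit local models for the attachments established in Proposition \ref{localmodel} and Lemma \ref{lemma:localcoordinates}, together with the fact (Lemma \ref{connected}, already invoked) that $M(\mr M)$ is connected so $\iota$ lands in $M(\mr L, \alpha_0)$.

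\textbf{Key steps.} First I would fix $\ol f \in M(\mr M)$, lift it to $(x,\beta_i) \in \widetilde Y_{\mr M}$ for some $i \in \set{0,1,2}$, and set $\ol g = \iota(\ol f)$ with its lift $(\nu_i(x), \alpha_i) \in \widetilde Y_{\mr L}$. The totally geodesic embedding $\nu \colon \CC H^2 \hookrightarrow \CC H^n$ sends the short roots of $M$ lying through $x$ into short roots of $\Lambda$ lying through $\nu_i(x)$ (since $\nu$ is induced by the isometric inclusion $M \hookrightarrow \Lambda$ of hermitian $\OO_K$-modules, via $j$ in \eqref{jn}), so the nodes of $\ol f$ map to a subset of the nodes of $\ol g$; moreover $\nu$ carries the reflections $\phi_r$ (on the $M$-side) to the reflections $\phi_{j(r)}$ (on the $\Lambda$-side), compatibly with the groups $G(x)$. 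Thus, using Lemma \ref{lemma:localcoordinates}.\ref{zwei}, I can choose simultaneous ball-coordinates $\CC H^2 \xrightarrow{\sim} \BB^2(\CC)$ and $\CC H^n \xrightarrow{\sim} \BB^n(\CC)$ in which $x \mapsto 0$, $\nu_i(x)\mapsto 0$, $\nu$ becomes the linear inclusion $(t_1,t_2) \mapsto (t_1,t_2,0,\dotsc,0)$, the relevant reflections become coordinatewise multiplications by $\zeta$, and $\beta_i$, $\alpha_i$ become the standard (anti-linear) involutions of the form \eqref{eq:alpha}. Second, I would invoke Lemma \ref{localisometry}.\ref{trois} on both sides to get $L_{\ol f}$-equivariant, resp.\ $L_{\ol g}$-equivariant, open neighbourhoods $U_{\ol f} \subset Y_{\ol f}$ and $U_{\ol g} \subset Y_{\ol g}$ whose quotients embed openly in $M(\mr M)$, resp.\ $M(\mr L)$; then apply Proposition \ref{localmodel}.\ref{casefive} to identify $L_{\ol f} \sm Y_{\ol f} \cong \Gamma_{\ol f} \sm \BB^2(\RR)$ and $L_{\ol g}\sm Y_{\ol g} \cong \Gamma_{\ol g}\sm \BB^n(\RR)$, with $\Gamma_{\ol f}$, $\Gamma_{\ol g}$ finite groups fixing the origin. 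Third — this is the heart — I would check that the linear isometry $\BB^2(\RR) \hookrightarrow \BB^n(\RR)$, $(t_1,\dotsc,t_n \text{ determined by the fixed-point equations})$, chosen compatibly with the coordinate choices above, descends to $\iota$ on the quotients and is equivariant for a homomorphism $\Gamma_{\ol f} \to \Gamma_{\ol g}$. The homomorphism comes from the inclusion $\Gamma_{\beta_i} \hookrightarrow L_{\alpha_i}$ of Lemma \ref{lemma:ol-nu}: one restricts it to stabilizers of the relevant copies of real hyperbolic space and passes to the quotient by $B_{\ol f}$, $B_{\ol g}$. One must verify that an element of $L_{\ol f}$ stabilizing the chosen copy $\BB^2(\RR) = B_{i_0}$ maps, under $\Gamma_{\beta_i}\hookrightarrow L_{\alpha_i}$, into the stabilizer of the corresponding copy $\BB^n(\RR)$ on the target — this is a bookkeeping check using the explicit description \eqref{coordinates:final} of how $L_{\ol g}$ permutes the copies $\BB^n(\RR)_{\alpha_j}$ by shifting the exponents $j$, and the fact that $\nu$ respects these exponent labels. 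Finally, real analyticity is immediate: after passing to these coordinates, the lift of $\iota$ is a \emph{linear} inclusion of balls, hence real analytic, and the transition maps for the orbifold charts are real analytic isometries of hyperbolic space, so Lemma \ref{direct} (or a direct appeal to Lemma \ref{lemma:intermediate} on lifts) upgrades the local data to a morphism of real analytic orbifolds.

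\textbf{Main obstacle.} The genuinely delicate point is the third step: showing the local lift is equivariant for a well-defined group homomorphism $\Gamma_{\ol f} \to \Gamma_{\ol g}$ between the \emph{local isotropy quotients} $L_{\ol f}/B_{\ol f}$-stabilizer and $L_{\ol g}/B_{\ol g}$-stabilizer, rather than just for the ambient inclusion $\Gamma \hookrightarrow L$. The subtlety is that the combinatorics of which copies of $\BB^n(\RR)$ inside $Y_{\ol g}$ are "hit" by $\iota$, and which elements of $L_{\ol g}$ preserve the chosen one, must be matched up with the $M$-side data; one has to track carefully how a product of reflections $\prod \phi_{r_\nu}^{i_\nu}$ realizing an equivalence $(x,\beta_i)\sim(x,\beta_j)$ on the source corresponds, via $j \colon \Aut(M)\hookrightarrow\Aut(\Lambda)$, to the analogous product on the target, and confirm the exponents $i_\nu$ are preserved so that the identification of real loci is compatible. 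The relabelling convention \eqref{relabel} and Proposition \ref{preliminaryproposition} are the tools for this, but making the two local pictures line up — especially when $x$ has nodes whose $\nu$-images acquire \emph{new} partner nodes on the $\Lambda$-side that are not in the image of $\nu$ — requires care. I expect this to be the one place where a careful case analysis (real nodes vs.\ complex-conjugate pairs, and whether extra nodes appear) is unavoidable; everything else reduces to the already-established local models and the formal properties of orbifold morphisms.
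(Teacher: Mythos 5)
Your overall architecture coincides with the paper's: localize at a point, put simultaneous ball coordinates on $\CC H^2$ and $\CC H^n$ compatible with $\nu$, run a case analysis according to the node type of $x$, and produce a local lift of $\iota$ equivariant for a homomorphism of local isotropy groups. But the step you yourself flag as the main obstacle is left unresolved, and the mechanism you propose for it does not work as stated. The local group at $[f]$ is (the stabilizer of a chosen copy of $\BB^2(\RR)$ inside) $A_f/B_f$, where $A_f=\Stab_{\Gamma}(f)$ is the stabilizer of the point $f\in Y(\mr M)$; this group contains classes of complex reflections in $G(x)$ and of elements that permute the real forms through $x$, so it is not contained in $\Gamma_{\beta_i}=\Stab_{\Gamma}(\RRH^2_{\beta_i})$, and the required homomorphism cannot be obtained by restricting the inclusion $\Gamma_{\beta_i}\hookrightarrow L_{\alpha_i}$ of Lemma \ref{lemma:ol-nu}. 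Worse, the embedding $j\colon\Aut(M)\hookrightarrow\Aut(\Lambda)$ of \eqref{jn} does not even descend to the $\mu_K$-quotients, since $j(\zeta\cdot\tilde\phi)$ and $\zeta\cdot j(\tilde\phi)$ differ by multiplication by a unit on the orthogonal complement $M^\perp$. The paper resolves exactly this in Lemma \ref{lemma:canonical-hom}: using $\Lambda=M\oplus M^\perp$, a class $\phi\in A_f$ with lift $\tilde\phi\in\Stab_{\Aut(M)}(f)$ is sent to $[(\tilde\phi,\Id)]\in B_g\sm A_g$, and the well-definedness (independence of the lift, and the killing of $B_f$) rests on the observation that the standard basis vectors of $M^\perp$ are norm-one vectors through $\nu(x)$ defining \emph{real} nodes of $g$, so that multiplication by a root of unity on $M^\perp$ is a product of reflections lying in $B_g$. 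This is precisely the role of the ``new partner nodes'' you worried about; without this device the homomorphism $A_f/B_f\to A_g/B_g$, hence the equivariance of the local lift, hence the orbifold-morphism property, is not established. As written, your proof is therefore incomplete at its central point, even though you correctly located where the difficulty sits.

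Two smaller omissions, both fixable. First, after invoking Lemma \ref{localisometry}.\ref{trois} on both sides you must shrink the neighbourhood so that $\nu(U_f)\subset U_g$, and verify that $U_f\cap\nu^{-1}(U_g)$ is still $A_f$-stable; the paper checks this by intertwining the $A_f$- and $A_g$-actions through $j$. Second, the coordinates you need are not obtained by applying Lemma \ref{lemma:localcoordinates}.\ref{zwei} separately on each side: they must be chosen simultaneously, compatibly with $\nu$, with the linear inclusion $\BB^2(\CC)\hookrightarrow\BB^n(\CC)$, and with the full node set of $\nu(x)$ including the extra nodes coming from $M^\perp$; this is what the paper's Lemma \ref{lemma:localcoordinates:infamily:new} supplies, and your phrase ``chosen compatibly'' is where that statement has to be proved.
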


\begin{proof}
 Let $\bar f \in M(\mr L_\eis^2)$. Our goal is to show that $\iota \colon M(\mr M) \to M(\mr L, \alpha_0)$ is a morphism of real analytic orbifolds at the point $\bar f$. For this, we lift $\bar f$ to an element $(x, \beta_{i_0}) \in \RRH^2_{\beta_0} \sqcup \RRH^2_{\beta_1} \sqcup \RRH^2_{\beta_2}$ for some $i_0 \in \set{0,1,2}$, which induces an element $(\nu_{i_0}(x), \alpha_{i_0}) \in \RRH^n_{\alpha_0} \sqcup \RRH^n_{\alpha_1} \sqcup \RRH^n_{\alpha_2}$ with image $[(\nu_{i_0}(x), \alpha_{i_0})] = \iota(\ol f)\in M(\mr L, \alpha_0)$. 
 
We have $\nu(x) \in \RRH^n_{\alpha_{i_0}}$. Let $f \in Y(\mr M)$ be the image of $x$ under the natural map $\RRH^2_{\alpha_{i_0}} \to Y(\mr L)$. Similarly, let $g \in Y(\mr L)$ and $\bar g \in M(\mr L)$ be the images of $\nu(x) \in \RRH^n_{\alpha_{i_0}}$ under the natural maps $\RRH^n_{\alpha_{i_0}} \to Y(\mr L)$ and $\RRH^n_{\alpha_{i_0}} \to M(\mr L)$. 

\begin{lemma} \label{lemma:canonical-hom}
Let $A_f \coloneqq \Stab_{\Gamma}(f) \subset \Gamma = \Aut(M)/\mu_K$ and $A_g \coloneqq \Stab_L(g)  \subset L = \Aut(\Lambda)/\mu_K$. Let $B_f \subset A_f$ and $B_g \subset A_g$ be the subgroups generated by reflections in the real roots of $f$ and $g$ respectively. Then there is a canonical group homomorphism
\begin{align} \label{well-defined-embedding}
\pi \colon A_f/B_f = \Stab_{\Gamma}(f)/B_f \longrightarrow \Stab_{L}(g)/B_{g} = A_{g} / B_{g}. 
\end{align}
\end{lemma}
\begin{proof}[Proof of Lemma \ref{lemma:canonical-hom}]
To prove this, it suffices to show that there is a canonical group homomorphism $\pi' \colon A_f \to A_g/B_g$ that sends $B_f \subset A_f$ to zero. Observe that $\Stab_{\Gamma}(f) = \Stab_{\Aut(M)}(f)/\mu_K$ and $\Stab_{L}(g) = \Stab_{\Aut(\Lambda)}(g)/\mu_K$. Let $\phi \in A_f$ and choose a representative $\tilde \phi \in \Stab_{\Aut(M)}(f)$. Let $M^\perp \subset \Lambda$ be the orthogonal complement of $M \subset \Lambda$, and observe that $\Lambda = M \oplus M^\perp$ as hermitian $\OO_K$-modules. 
The element $(\tilde \phi, \Id) \in \Aut(M) \oplus \Aut(M^\perp) \subset  \Aut(\Lambda)$ satisfies $(\tilde \phi, \Id) \in \Stab_{\Aut(\Lambda)}(g) \subset \Aut(\Lambda)$,  
and we define
$$
\pi'(\phi) \coloneqq [(\tilde \phi, \Id) ] \in B_g \sm \Stab_{\Aut(\Lambda)}(g) / \mu_K = B_g \sm A_g. 
$$
For another choice of lift $\zeta^i \cdot \tilde \phi \in \Stab_{\Aut(M)}(f)$ of $\phi$, we get 
\[
[(\zeta^i \cdot \tilde \phi, \Id)] = [(\tilde \phi, \zeta^{-i})] = [(\tilde \phi, \Id) ] \quad \quad \text{since} \quad \quad (\Id, \zeta^{-i}) \in B_g \subset \Aut(\Lambda).
\]
Thus, $\pi'(\phi)$ does not depend on the chosen lift of $\phi$, and we get a group homomorphism
$
\pi' \colon A_f \to A_g/B_g$; 
the latter sends $B_g$ to zero, which allows us to define \eqref{well-defined-embedding} as the canonical map $A_f/B_f \to A_g/B_g$ through which $\pi'$ factors. 
\end{proof}
 
 \begin{lemma} \label{lemma:localcoordinates:infamily:new}
Consider the points $x \in \CC H^2$ and $\nu(x) \in \CC H^n$. 
    There are isometries $
\kappa_2 \colon    \CC H^2 \xrightarrow{\sim} {\bb B}^2(\CC)$ and $\kappa_n\colon \CC H^n \xrightarrow{\sim} {\bb B}^n(\CC)$ identifying $x$ and $\nu(x)$ with the respective origins in $\BB^2(\CC)$ and $\BB^n(\CC)$, and that have the following properties: 
\begin{enumerate}
\item 
The isometries $\kappa_2$ and $\kappa_n$ commute with $\nu$ and the embedding $\BB^2(\CC) \hookrightarrow \BB^n(\CC)$ that sends $(x_1, x_2)$ to $(x_1, x_2, 0, \dotsc, 0)$. 
\item Assume that $x$ has $k$ real nodes $H_{r_i}$ $(k \in \set{1,2})$ defined by elements $r_i \in M$ of norm one. Let $t_i \in \Lambda$ be the image of $r_i$. For $i \geq 3$, define $t_{i} = (0, 0, \dotsc, 0, 1, 0, \dotsc, 0) \in \Lambda$, where the $1$ is placed on the $(i+1)$-th coordinate. Then $H_{t_1},  H_{t_{3-k}}, \dotsc, H_{t_n}$ are the nodes of $\nu(x)$. Moreover, $\kappa_2$ (resp.\ $
\kappa_n$) identifies $\phi_{r_1}$ (resp.\ $\phi_{t_1}, \dotsc, \phi_{t_n}$) with the map $(t_1, t_2) \mapsto (\zeta \cdot t_1, t_2)$ (resp.\ maps $(t_1, \dotsc, t_n) \mapsto (t_1,  \dotsc, t_{i-1}, \zeta  \cdot t_{i } ,t_{i+1}, \dotsc,  t_n)$), 
    and $\beta_{j_0}$ (resp.\ $\alpha_{j_0}$) with the map $(t_1, t_2) \mapsto (\bar t_1, \bar t_2)$ (resp.\ $(t_1, \dotsc, t_n) \mapsto (\bar t_1, \dotsc, \bar t_n)$). 
    \item 
    Assume that $x$ has one pair of complex conjugate nodes $H_{r_1}, H_{r_2}$. Let $t_i \in \Lambda$ be the image of $r_i$. For $i \geq 2$, define $t_{i} = (0, 0, \dotsc, 0, 1, 0, \dotsc, 0) \in \Lambda$, where the $1$ is placed on the $(i+2)$-th coordinate. Then $H_{t_1},  H_{t_2}, \dotsc, H_{t_n}$ are the nodes of $\nu(x)$. Moreover, the map $\kappa_2$ (resp.\ $
\kappa_n$) identifies $\phi_{r_1}$ (resp.\ $\phi_{r_2}$, resp.\ $\phi_{t_1}, \dotsc, \phi_{t_n}$) with the map $(t_1, t_2) \mapsto (\zeta \cdot t_1, t_2)$ (resp.\ $(t_1, t_2) \mapsto (t_1, \zeta t_2)$, resp.\ maps $(t_1, \dotsc, t_n) \mapsto (t_1,  \dotsc, t_{i-1}, \zeta  \cdot t_{i } ,t_{i+1}, \dotsc,  t_n)$), and $\beta_{j_0}$ (resp.\ $\alpha_{j_0}$) with the map $(t_1, t_2) \mapsto (\bar t_2, \bar t_1)$ (resp.\ $(t_1, \dotsc, t_n) \mapsto (\bar t_2, \bar t_1, \bar t_3, \bar t_4, \dotsc, \bar t_n)$). 
    \end{enumerate}
\end{lemma}
\begin{proof}[Proof of Lemma \ref{lemma:localcoordinates:infamily:new}]
See the proof of Lemma \ref{lemma:localcoordinates}.\ref{zwei}.
\end{proof}
 
To show $\iota \colon M(\mr M) \to M(\mr L, \alpha_0)$ is a morphism of real analytic orbifolds at the point $\bar f$, there are four cases to consider: $x \in \CCH^2$ either has zero, one or two real nodes, or $x$ has one pair of complex conjugate nodes (see Definitions \ref{fullset} and \ref{definition:nodes-ii}). 
\begin{enumerate}
\item[Case I:] \emph{The element $x \in \RRH^2_{\beta_{i_0}}$ has zero nodes.} \end{enumerate}
This case follows from Proposition \ref{proposition:canonical-map} (cf.\ Lemma \ref{lemma:openembeddingtopological}.\ref{item:openembedding}). 
\begin{enumerate}
\item[Case II:] \emph{The element $x \in \RRH^2_{\beta_{i_0}}$ has one real node and no other nodes.} 
\end{enumerate}
Consider the notation in item 2 of Lemma \ref{lemma:localcoordinates:infamily:new}. 
Under the coordinates $\CC H^2 \xrightarrow{\sim} \BB^2(\CC)$ of Lemma \ref{lemma:localcoordinates:infamily:new}, any $\xi \in P\mr B$ with $(x, \beta_{i_0}) \sim (x, \xi)$ to an involution of the form $(t_1, t_2) \mapsto (\zeta^i \cdot \bar t_1, \bar t_2)$. Thus if the $\xi_i$ for $i = 1, \dotsc, m$ are the $m$ anti-unitary involutions such that $(x, \beta_{i_0}) \sim (x, \xi_i)$, then for the set $Y_f$ defined in (\ref{Kf}), one has
$$
Y_f \cong \bigcup_{i = 1}^m \RR H^2_{\xi_i} \cong \set{(t_1, t_2) \in \BB^2(\CC) \mid t_1^m, t_2 \in \RR}. 
$$
Define $\chi_i = \phi_{r_1}^{i_1} \circ \cdots \circ \phi_{r_{n-1}}^{i_{n-1}} \circ \beta_{j_0}$ for $i = (i_1, \dotsc, i_{n-1}) \in \ZZ^m$ with $1 \leq i_j \leq m$. In the coordinates of Lemma \ref{lemma:localcoordinates:infamily:new}, we have 
$$
Y_g \cong \bigcup_{i_1, \dotsc, i_{n-1} = 1}^m \RR H^n_{\chi_i} \cong \set{(t_1, \dotsc, t_n) \in \BB^n(\CC) \mid t_1^m,t_2,  t_3^m, \dotsc, t_n^m \in \RR}.
$$
Next, define $K_{f, \epsilon} = \set{(t_1, t_2) \in \BB^2(\CC) \mid i^{-\epsilon} \cdot t_1 \in \RR_{\geq 0}, t_2 \in \RR}$ and  
\[
K_{g, \epsilon_1, \dotsc, \epsilon_{n-1}} = \set{
(t_1, \dotsc, t_n) \in \BB^n(\CC) \mid i^{\epsilon_1} t_1, i^{-\epsilon_2} t_3, \dotsc, i^{-\epsilon_{n-1}} t_n \in \RR_{\geq 0}, t_2 \in \RR}.
\]
With respect to the natural embedding $\nu \colon \CC H^2 \to \CC H^n$, one has for each $i = 1, \dotsc, m$, that
$$
\nu\left( \RR H^n_{\xi_i} \right) \subset \RR H^n_{\chi_{(i,m, \dotsc, m)}}.
$$
Therefore, the maps defined above fit together in the following commutative diagram:
\begin{align} \label{D-DIAGRM}
\begin{split}
\xymatrix{
B_f \setminus Y_f \ar[d]^{\wr} \ar@{^{(}->}[r] &B_g \setminus Y_g  \ar[d]^{\wr}  \\
B_f \setminus  \bigcup_{i = 1}^m \RR H^n_{\xi_i} \ar@{^{(}->}[r] \ar[d]^{\wr} & B_g \setminus \bigcup_{i_1, \dotsc, i_{n-1} = 1}^m \RR H^2_{\chi_i}  \ar[d]^{\wr}  \\
 \langle \zeta \rangle \setminus \set{(t_1, t_2) \in \BB^2(\CC) \mid t_1^m, t_2 \in \RR} \ar[d]^{\wr} \ar@{^{(}->}[r] & 
\langle \zeta \rangle^{n-1} \setminus \set{(t_1, \dotsc, t_n) \mid t_1^m,t_2,  t_3^m, \dotsc, t_n^m \in \RR} \ar[d]^{\wr} \\
  \bigcup_{\epsilon \in \set{0,1}} K_{f,\epsilon}  \ar[d]^{\wr} \ar@{^{(}->}[r] &   \bigcup_{\epsilon_1, \dotsc, \epsilon_{n-1} = 0}^1 K_{g, \epsilon_1, \dotsc, \epsilon_{n-1}} \ar[d]^{\wr}   \\
  \BB^2(\RR) \ar@{^{(}->}[r] & \BB^n(\RR).
}
\end{split}
\end{align}
Notice that embedding $B_f \setminus Y_f \hookrightarrow B_g \setminus Y_g$ on top of diagram \eqref{D-DIAGRM} is equivariant with respect to the homomorphism $\pi$ defined in (\ref{well-defined-embedding}). As in the proof of Proposition \ref{glueingtheorem1}.\ref{item:importanttheoremitem}, see also Lemma \ref{localisometry}.\ref{trois}, 
we choose an $A_f$ (resp.\ $A_g$)-equivariant open neighbourhood $f \in U_f \subset Y(\mr M)$ with $U_f \subset Y_f$ (resp.\ $g \in U_g \subset Y(\mr L)$ with $U_g \subset Y_g$) such that $A_f \setminus U_f  \subset \Gamma \setminus Y(\mr M)$ (resp.\ such that $A_g \setminus U_g \subset L \setminus Y(\mr L)$). 

Define an open subset $U_f' \subset U_f \subset Y_f$ as $U_f' \coloneqq U_f \cap \nu^{-1}(U_g)$, where we view $\nu$ as the canonical map $Y_f \hookrightarrow Y_g$ induced by $\nu \colon \CCH^2 \hookrightarrow \CCH^n$. Notice that $U_f' \subset Y(\mr M)$ is again an open neighbourhood of $f \in Y(\mr M)$. We claim that $U_f'$ is $A_f$-equivariant. To prove this, it suffices to show that $\nu^{-1}(U_g)$ is $A_f$-equivariant. To see this, let $\psi \in A_f$ and $h \in \iota^{-1}(U_g)$; we need to show that $\psi \cdot h \in \iota^{-1}(U_g)$. 
Lift $\psi$ to an element $\psi' \in \rm{Stab}_{\Aut(M)}(f)$. Via the canonical embedding $j \colon \Aut(M) \hookrightarrow \Aut(\Lambda)$ defined in \eqref{jn}, we obtain an element $j(\psi') \in \Stab_{\Aut(\Lambda)}(g)$ and we let $\overline{j(\psi')} \in A_g$ denote its image in $A_g$ under the quotient map $\Stab_{\Aut(\Lambda)}(g) \to A_g$. Then
$$
\nu(\psi \cdot h) = \nu(\psi' \cdot h) = j(\psi') \cdot \nu(h) = \overline{j(\psi')} \cdot \nu(h) \in U_g,
$$
where $\overline{j(\psi')} \cdot \nu(h) \in U_g$ because $U_g$ is $A_g$-equivariant. Hence $\psi \cdot h \in \nu^{-1}(U_g)$, so that $U_f'$ is $A_f$-equivariant, as claimed. 

Possibly up to replacing $U_f$ by $U_f'$, we may assume that $\nu \colon Y_f \hookrightarrow Y_g$ satisfies $\nu(U_f) \subset U_g$, and in particular, that $\iota(A_f \setminus U_f) \subset A_g \setminus U_g$. If we define $V_f \coloneqq B_f \sm U_f$ and $V_g \coloneqq B_g \sm U_g$, then, in view of diagram \eqref{D-DIAGRM}, we obtain an isometric embedding
\[
V_f \longhookrightarrow V_g
\]
which is equivariant for the homomorphism $\pi \colon A_f/B_f \to A_g/B_g$ defined in \eqref{well-defined-embedding} above, and which fits into the following commutative diagram:
\[
\xymatrix{
&\BB^2(\RR) \ar[r]^-{\sim} & B_f \sm Y_f  \ar@{^{(}->}[r] & B_g\sm Y_g  \ar[r]^-\sim & \BB^n(\RR)& \\
&& V_f\ar@{^{(}->}[u] \ar[d] \ar@{^{(}->}[r]  & V_g\ar@{^{(}->}[u] \ar[d] && \\
M(\mr M)&A_f \sm U_f \ar@{=}[r] \ar@{_{(}->}[l]& (A_f/B_f) \sm V_f \ar@{^{(}->}[r]^-{\iota}  & (A_g/B_g) \sm V_g \ar@{=}[r]& A_g \sm U_g \ar@{^{(}->}[r]& M(\mr L).
}
\]
Thus, the map $\iota \colon M(\mr M) \to M(\mr L, \alpha_0)$ is a morphism of real analytic orbifolds at the point $\bar f \in M(\mr M)$, which is what we wanted to prove.  
\begin{enumerate}
\item[Case III:] \emph{The element $x \in \RRH^2_{\beta_{i_0}}$ has two real nodes and no other nodes.} 
\end{enumerate}
This case is similar to Case II and we leave the proof to the reader. 
\begin{enumerate}
\item[Case IV:]\emph{The element $x \in \RRH^2_{\beta_{i_0}}$ has one pair of complex conjugate nodes and no other nodes.} 
\end{enumerate}
Consider the coordinates $\CC H^2 \xrightarrow{\sim} \BB^2(\CC)$ in item 3 of Lemma \ref{lemma:localcoordinates:infamily:new}. 
As in the proof of Proposition \ref{localmodel}.\ref{casethree} (see also Case II above), we have that $B_f$ is trivial and 
\begin{equation*}
Y_f = B_f \setminus Y_f \cong  
\left\{ (t_1, t_2) \in {\bb B}^2(\CC):
    t_2^m = \bar t_{1}^m \right\} = \bigcup_{i = 1}^m\BB^2(\RR)_{\xi_i},
\end{equation*}
where $\xi_i \colon \BB^2(\CC) \to \BB^2(\CC)$ is defined as $\xi_i(t_1, t_2) = (\bar t_2\zeta^i, \bar t_1\zeta^i)$ for $i \in \ZZ/m$. 
Similarly, in view of the proof of Proposition \ref{localmodel}.\ref{casefour}, we can describe $B_g \setminus Y_g$ as follows: 
\begin{align*}
Y_g &\cong \left\{ (t_1, \dotsc, t_n) \in {\bb B}^n(\CC) \mid t_2^{m} = \bar t_1^{m}, t_{3}^{m}, \dotsc, t_n^{m} \in \RR \right\}, \quad \quad  \tn{ and } \\
B_g \setminus Y_g &\cong  
\left\{ (t_1, \dotsc, t_n) \in {\bb B}^n(\CC) \mid 
    t_2^{m} = \bar t_1^{m}, t_{3}, \dotsc, t_n \in \RR  \right\} = \bigcup_{i = 1}^m\BB^n(\RR)_{\xi_i},
\end{align*}
where $\xi_i \colon \BB^n(\CC) \to \BB^n(\CC)$ is defined as $\xi_i(t_1, \dotsc, t_n) = (\bar t_2 \zeta^i, \bar t_1 \zeta^i, \bar t_3, \dotsc, \bar t_n)$ for $i \in \ZZ/m$. By Proposition \ref{localmodel}.\ref{casefive}, the group $A_f = \Stab_{\Gamma}(f)$ (resp.~$A_g = \Stab_L(g)$) acts transitively on the copies of $\BB^2(\RR)$ (resp.\ $\BB^n(\RR)$). Define 
$
S_f \coloneqq \tn{Stab}_{A_f / B_f}({\BB^2(\RR)_{\xi_1}})$ and $S_g \coloneqq \tn{Stab}_{A_g / B_g}({\BB^n(\RR)_{\xi_1}})$ as the respective stabilizers of $\BB^2(\RR)_{\xi_1}$ and $\BB^n(\RR)_{\xi_1}$. The map $Y_f = B_f \sm Y_f \to B_g \sm Y_g$ induced by $\nu \colon Y_f \hookrightarrow Y_g$ is equivariant for the map $\pi \colon A_f/B_f \to A_g/B_g$ constructed in Lemma \ref{lemma:canonical-hom}, hence we obtain a map
$$
A_f \sm Y_f = (B_f \sm A_f) \sm (B_f \sm Y_f) \longrightarrow (B_g \sm A_g) \sm (B_g \sm Y_g) = A_g \sm Y_g
$$
that fits into the following commutative diagram:
\[
\xymatrix{
&\bigcup_{i=1}^m \BB^2(\RR)_{\xi_i} \ar[r]^-\sim&Y_f\ar@{^{(}->}[r] \ar[dd]&B_g \sm Y_g\ar[r]^-\sim \ar[dd]& \bigcup_{i=1}^m \BB^n(\RR)_{\xi_i} \\
\BB^2(\RR)_{\xi_1}\ar@{^{(}->}[ur] \ar[rrrr] \ar[dd] &&&& \BB^n(\RR)_{\xi_1}\ar@{^{(}->}[u] \ar[dd] \\
&&A_f\sm Y_f \ar[r]& A_g\sm Y_g \ar[dr]^\sim& \\
S_f \setminus \BB^2(\RR)_{\xi_1} \ar[urr]^-\sim \ar[rrrr] &&&& S_g \setminus \BB^n(\RR)_{\xi_1}. 
}
\]
 As in the proof of Proposition \ref{glueingtheorem1}.\ref{item:importanttheoremitem}, see also Lemma \ref{localisometry}.\ref{trois}, 
we choose an $A_f$ (resp.\ $A_g$)-equivariant open neighbourhood $f \in U_f \subset Y(\mr M)$ with $U_f \subset Y_f$ (resp.\ $g \in U_g \subset Y(\mr L)$ with $U_g \subset Y_g$) such that $A_f \setminus U_f  \subset M(\mr M)$ (resp.\ such that $A_g \setminus U_g \subset M(\mr L)$). As in Case II, we may assume that the canonical map $\nu \colon Y_f \hookrightarrow Y_g$ satisfies $\nu(U_f) \subset U_g$, and in particular, that $\iota(A_f \setminus U_f) \subset A_g \setminus U_g$. 

Finally, define $V_f \coloneqq U_f \cap \BB^2(\RR)_{\xi_1}$ and $V_g  \coloneqq (B_g \sm U_g) \cap \BB^n(\RR)_{\xi_1}$. Then $V_f$ and $V_g$ are open subsets of $\BB^2(\RR)_{\xi_1}$ and $\BB^n(\RR)_{\xi_1}$ respectively, and we have 
a commutative diagram of the following form:
\[
\xymatrix{
\BB^2(\RR)_{\xi_1}&V_f \ar@{^{(}->}[rrr]\ar@{_{(}->}[l] \ar[d] &&& V_g \ar[d]\ar@{^{(}->}[r] & \BB^n(\RR)_{\xi_1} \\
&S_f \sm V_f \ar[r]^-\sim&A_f\sm U_f\ar@{^{(}->}[r]^-\iota& A_g\sm U_g\ar[r]^-\sim & S_g \sm V_g&
}
\]
Consequently, $\iota \colon M(\mr M) \to M(\mr L, \alpha_0)$ is a morphism of real analytic orbifolds at $\bar f \in M(\mr M)$. This finishes Case IV, and thereby the proof of Proposition \ref{prop:fund-funda-fund}.
\end{proof}


The following is a corollary of the proof of Proposition \ref{prop:fund-funda-fund}. 

\begin{corollary} \label{corollary:totallygeodesic}
Assume Condition \ref{condition:connected}. 
The map $\iota \colon M(\mr M) \to M(\mr L, \alpha_0)$ defined in Proposition \ref{proposition:canonical-map} is a totally geodesic immersion of complete connected real hyperbolic orbifolds, see Definition \ref{def:interestingly}. 
\end{corollary}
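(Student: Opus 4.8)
The statement we must establish is Corollary \ref{corollary:totallygeodesic}: under Condition \ref{condition:connected}, the map $\iota \colon M(\mr M) \to M(\mr L, \alpha_0)$ constructed in Proposition \ref{proposition:canonical-map} is a totally geodesic immersion of complete connected real hyperbolic orbifolds. The strategy is to feed the local picture assembled in the proof of Proposition \ref{prop:fund-funda-fund} directly into the abstract criterion Lemma \ref{direct}. Concretely, Lemma \ref{direct} says that if a morphism of hyperbolic orbifolds $i \colon \Gamma_1\sm\RR H^{n_1} \to \Gamma_2\sm \RR H^{n_2}$ admits, around every point, a local lift to a totally geodesic isometric immersion of connected open subsets of $\RR H^{n_1}$ into $\RR H^{n_2}$ equivariant for a homomorphism of stabilizer groups, then any global lift of $i$ is a totally geodesic closed immersion, and hence $i$ itself is a totally geodesic immersion of orbifolds. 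So the proof reduces to: (i) recording that $M(\mr M)$ and $M(\mr L, \alpha_0)$ are complete connected real hyperbolic orbifolds; (ii) verifying the local hypothesis of Lemma \ref{direct} at every point $\bar f \in M(\mr M)$.

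For step (i): completeness and the real-hyperbolic-orbifold structure of both spaces is exactly Theorem \ref{theorem:introduction:uniformization} applied to the standard hermitian lattices $\mr M$ and $\mr L$; connectedness of $M(\mr M)$ is Lemma \ref{connected} (which uses Condition \ref{condition:connected}), and $M(\mr L, \alpha_0)$ is connected by definition, being a connected component of $M(\mr L)$. For step (ii), the work has essentially already been done inside the four-case analysis of the proof of Proposition \ref{prop:fund-funda-fund}. In each of Cases I--IV, around a point $\bar f$ with lift $(x,\beta_{i_0})$, that proof produces: the local uniformizing data $V_f = B_f\sm U_f \cong S_f\sm V_f$ and $V_g = B_g \sm U_g \cong S_g \sm V_g$ (or the analogous $A_f\sm U_f$, $A_g \sm U_g$ in the nodeless/real-node cases) together with their presentations as quotients of connected open subsets of $\BB^2(\RR)$ and $\BB^n(\RR)$; the map $\nu \colon Y_f \hookrightarrow Y_g$, which by Lemma \ref{lemma:localcoordinates:infamily:new} is induced by the \emph{linear} totally geodesic embedding $\BB^2(\CC)\hookrightarrow\BB^n(\CC)$, $(x_1,x_2)\mapsto (x_1,x_2,0,\dots,0)$, and hence restricts on the relevant real loci to a linear, totally geodesic, isometric embedding of open subsets of $\BB^2(\RR)$ into $\BB^n(\RR)$; and the equivariance of this embedding with respect to the stabilizer homomorphism $\pi \colon A_f/B_f \to A_g/B_g$ of Lemma \ref{lemma:canonical-hom} (resp.\ $S_f \to S_g$). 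The only thing left to note is the elementary fact that an isometric embedding between open subsets of real hyperbolic spaces coming from a linear inclusion of the ambient hermitian spaces (equivalently, an isometry onto a totally geodesic subspace) is a totally geodesic immersion in the sense of the second-fundamental-form criterion recalled before Lemma \ref{lemma:second-fundamental-lemma}; this is immediate since such an embedding extends to a global isometry $\RR H^2 \xhookrightarrow{\sim} U \subset \RR H^n$ onto a totally geodesic subspace, whose second fundamental form vanishes identically.

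Thus the plan is: (1) invoke Theorem \ref{theorem:introduction:uniformization} and Lemma \ref{connected} for the orbifold/completeness/connectedness claims; (2) fix $\bar f \in M(\mr M)$, choose the lift $(x,\beta_{i_0})$ and run through the case distinction of the proof of Proposition \ref{prop:fund-funda-fund}, in each case extracting the commutative diagram already displayed there and reading off that $\iota$ is locally modeled on the linear totally geodesic embedding $\BB^2(\RR)\hookrightarrow\BB^n(\RR)$, equivariant for the stabilizer homomorphism; (3) conclude via Lemma \ref{direct} that every lift $\widetilde\iota \colon \RR H^2 \to \RR H^n$ of $\iota$ factors as an isometry onto a totally geodesic subspace, so $\iota$ is a totally geodesic immersion in the sense of Definition \ref{def:interestingly}. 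The main obstacle is bookkeeping rather than mathematics: one must check that the local lift $\BB^2(\RR) \hookrightarrow \BB^n(\RR)$ supplied by Lemma \ref{lemma:localcoordinates:infamily:new} is compatible with the orbifold charts produced in Proposition \ref{localmodel} for $M(\mr M)$ and $M(\mr L,\alpha_0)$ — i.e.\ that, after shrinking $U_f$ to $U_f' = U_f \cap \nu^{-1}(U_g)$ as in Case II, the charts on source and target are precisely the domains and codomains of this linear embedding and the stabilizer homomorphism $\pi$ is the one appearing in Lemma \ref{direct}'s hypothesis. All of this is already carried out in the proof of Proposition \ref{prop:fund-funda-fund}, so here the argument is essentially a citation of that proof together with the observation that linear embeddings of hyperbolic spaces are totally geodesic; I would therefore keep the proof of the corollary very short, pointing to the relevant displays in the proof of Proposition \ref{prop:fund-funda-fund} and to Lemma \ref{direct}.
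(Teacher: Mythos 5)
Your proposal is correct and follows essentially the same route as the paper: extract from the proof of Proposition \ref{prop:fund-funda-fund} the local orbifold charts $S_f \setminus V_f \subset M(\mr M)$ and $S_g \setminus V_g \subset M(\mr L, \alpha_0)$ together with the equivariant totally geodesic local lift of $\iota$, and then conclude with Lemma \ref{direct}. The only detail the paper makes explicit that you leave implicit is the replacement of $V_f$ and $V_g$ by suitable connected components so that the hypotheses of Lemma \ref{direct} (connected opens) are literally satisfied — a trivial adjustment that your argument accommodates.
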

\begin{proof}
Let $i_0 \in \set{0,1,2}$. 
Let $x \in \RRH^2_{\beta_{i_0}}$ with image $\bar f \in M(\mr M)$. Consider $\nu(x) \in \RRH^n_{\beta_{i_0}}$ and let $\bar g \in M(\mr L, \alpha_0)$ be its image in $M(\mr L, \alpha_0)$. By the proof of Proposition \ref{prop:fund-funda-fund}, there exist open neighbourhoods $W_f \subset M(\mr M)$ and $W_g \subset M(\mr L, \alpha_0)$ of $\bar f$ and $\bar g$ respectively, such that $W_f = S_f \sm V_f$ for an open neighbourhood $V_f \subset \BB^2(\RR)$ of the origin $0 \in \BB^2(\RR)$ and a finite group of isometries $S_f$ that preserves $V_f$, and $W_g = S_g \sm V_g$ for an open neighbourhood $V_g \subset \BB^n(\RR)$ of the origin $0 \in \BB^n(\RR)$ and a finite group of isometries $S_g$ that preserves $V_g$, such that $\iota(W_f)  \subset W_g$ and such that the map $\iota \colon W_f \to W_g$ lifts to a totally geodesic immersion $V_f \hookrightarrow V_g$ which is equivariant for a group homomorphism $\pi \colon S_f \to S_g$. Replacing $V_f$ by its connected component that contains $0 \in \BB^2(\RR)$, we may assume that $V_f$ is connected. Replacing $V_g$ by its connected component containing the image of $V_f$, we may assume that $V_g$ is connected as well. The corollary follows then from Lemma \ref{direct}. 
\end{proof}

\begin{proof}[Proof of Theorem \ref{theorem:totally-geodesic-standard}]
For item \ref{a}, see Lemma \ref{connected}. For item \ref{b}, see Corollary \ref{corollary:totallygeodesic}. Item \ref{c} follows from item \ref{b} and Theorem \ref{th:bergeron}. 
\end{proof}

\begin{proof}[Proof of Theorem \ref{theorem:non-arithmeticity:p-arbitrary}]
Note that the $\Aut(\ZZ[\zeta_p]^{2,1}_\lambda)$-conjugacy classes of the anti-unitary involutions $(x_0, x_1, x_2) \mapsto (\epsilon_0\bar x_0, \epsilon_1 \bar x_1, \epsilon_2 \bar x_0)$ for $\epsilon_i \in \set{\pm 1}$ are the same as the $\Aut(\ZZ[\zeta_p]^{2,1}_\lambda)$-conjugacy classes of the elements $\pm \alpha_{2,i}$, where the latter are defined in \eqref{def:beta}. Thus, the result is a direct consequence of Theorem \ref{theorem:totally-geodesic-standard}.\ref{c}. 
\end{proof}

\section{Uniformization of real moduli spaces} \label{section:uniformization:realcubics} 

In this section, we first prove Theorem \ref{theorem:reprove-ACT}, and then prove Theorem \ref{theorem:introduction:non-arithmeticity}. 

\subsection{Cubic surfaces} \label{sec:cubic}

Write $\mr C_s^{\RR}$ for the space of non-zero
cubic forms with real coefficients in four variables which are stable in the sense of geometric invariant theory, and let $\mr C_0^\RR \subset \mr C_s^\RR$ be the open subspace of cubic forms whose associated cubic surface is smooth. The orbifold quotients
$$
\mr M_s^{\RR} = \GL_4(\RR) \setminus \mr C_s^{\RR} \supset \GL_4(\RR) \setminus \mr C_0^\RR = \mr M_0^\RR
$$
are the respective moduli spaces of stable and smooth real cubic surfaces. By \cite[Theorem 1.2]{realACTsurfaces}, there are a non-arithmetic lattice $P\Gamma^\RR \subset \PO(4,1)$, a homeomorphism 
\begin{align} \label{align:ACT-stable:body-text}
\mr M_s^\RR \cong P\Gamma^\RR \setminus \RR H^4, 
\end{align}
and a $P\Gamma^\RR$-invariant union $\mr H' \subset \RR H^4$ of two- and three-dimensional subspaces of $\RR H^4$ such that \eqref{align:ACT-stable} restricts to an orbifold isomorphism 
$
\mr M_0^\RR \cong P\Gamma^\RR \setminus \left( \RR H^4 - \mr H'\right)$. Let us explain this result in some detail, before we relate it to the uniformization of $M(\mr L_\eis^4)$, see Theorems \ref{theorem:introduction:uniformization} and \ref{theorem:introduction:non-arithmeticity}. 

As in \cite{realACTsurfaces}, let $\mr C$ be the space of all non-zero cubic forms
in four complex variables, $\Delta$ the discriminant locus, and $\mr C_0 = \mr C - \Delta$ the set of forms defining smooth cubic surfaces. Write $G = \GL(4,\CC)/\langle \zeta_3 \cdot \Id \rangle $ which acts faithfully and properly discontinuously on $\mr C_0$; the orbifold $\mr M_0 = G \sm \mr C_0$ is the moduli space of smooth cubic surfaces in $\PP^3_\CC$. Define 
\begin{align} \label{align:forget-the-framing}
\mr F_0 \longrightarrow \mr C_0 
\end{align}
as in \cite[Section 2.2]{realACTsurfaces}; thus $\mr F_0$ is the space of framed smooth cubic surfaces and \eqref{align:forget-the-framing} is the map that forgets the framing. 

As before, let $\mr A_\eis^4$ be the set of anti-unitary involutions $\alpha \colon \ZZ[\zeta_3]^{4,1} \to \ZZ[\zeta_3]^{4,1}$, let $P\mr A_\eis^4$ be the quotient of $\mr A_\eis^4$ by $\mu_{\QQ(\zeta_3)} = \langle - \zeta_3 \rangle$, and let $C\mr A_\eis^4 \subset P\mr A_\eis^4$ be a set of representatives for the action of the group $L \coloneqq \Aut(\ZZ[\zeta_3]^{4,1})/\langle -\zeta_3 \rangle$ on $P\mr A_\eis^4$ by conjugation. Thus, we have $C\mr A_\eis^4 \cong L \setminus P\mr A_\eis^4$. 

Let $\mr F_0^\RR$ be the preimage of $\mr C_0^\RR$ under the map \eqref{align:forget-the-framing}. As shown in \cite{realACTsurfaces}, each $\alpha \in P\mr A_\eis^4$ naturally lifts to an anti-holomorphic involution $\alpha \colon \mr F_0 \to \mr F_0$. Moreover, $\mr F_0^\alpha \cap \mr F_0^\beta = \emptyset$ for $\alpha \neq \beta \in P\mr A_\eis^4$, and we have
\[
\mr F_0^\RR = \coprod_{\alpha \in P\mr A_\eis^4} \mr F_0^\alpha. 
\]
as subspaces of $\mr F_0$. Let 
$$
g \colon \mr F_0 \longrightarrow \CC H^4
$$
be the period map defined in \cite[equation (2.10)]{realACTsurfaces}. For each $\alpha \in P\Aeisfour$, the map $g$ commutes with the involutions given by $\alpha$ on both sides, hence induces a map
$
g|_{\mr F_0^\alpha} \colon \mr F_0^\alpha \to (\CC H^4)^\alpha = \RR H^4_\alpha
$
for each $\alpha \in P \Aeisfour$. The resulting real analytic map 
\[
g^\RR \colon \mr F_0^\RR = \coprod_{\alpha \in P \Aeisfour} \longrightarrow \coprod_{\alpha \in P \Aeisfour}\RR H^4_\alpha 
\]
is constant on $G^\RR$-orbits, where $G^\RR \coloneqq \GL(4,\RR) \subset G$, and its image is contained in $\sqcup_{\alpha \in P \Aeisfour}\left( \RR H^4_\alpha - \mr H \right)$, where $\mr H \subset \CC H^4$ denotes as usual the hyperplane arrangement defined by the norm one vectors in $\ZZ[\zeta_3]^{4,1}$. By \cite[Theorem 3.3]{realACTsurfaces}, the induced map 
\[
g^\RR \colon G^\RR \sm \mr F_0^\RR \longrightarrow \coprod_{\alpha \in P\Aeisfour} \left( \RR H^4_\alpha - \mr H \right)
\]
is a diffeomorphism. Taking the quotient by $L \coloneqq \Aut(\ZZ[\zeta_3]^{4,1})/\langle - \zeta_3 \rangle$ yields an isomorphism of real analytic orbifolds
\[
\mr M_0^\RR = G^\RR \sm \mr C_0^\RR = (L \times G^\RR) \sm \mr F_0^\RR \longrightarrow L \sm \left( \RR H^4_\alpha - \mr H \right) \cong \coprod_{\alpha \in C \Aeisfour} L_\alpha \sm \left( \RR H^4_\alpha - \mr H \right).
\] 
Let $\mr F_s \supset \mr F_0$ be the Fox completion of $\mr F_0 \to \mr C_0$ over $\mr C_s$, see \cite{fox}. By \cite[(3.17) - (3.19)]{ACTsurfaces}, the period map $ g \colon \mr F_0 \to \CC H^4$ extends to a morphism $g_s \colon \mr F_s \to \CC H^4$ which factors through $G \sm \mr F_s$ and induces an $L$-equivariant diffeomorphism $g_s \colon G \sm \mr F_s \cong \CC H^4$. The latter sends the $k$-nodal cubic surfaces to the locus in $\CC H^4$ where exactly $k$ of the hyperplanes of $\mr H$ meet. The induced map $\mr M_s = G \sm \mr C_s \to L \sm \CC H^4$ is an isomorphism of analytic spaces. 

By \cite[Lemma 11.3]{realACTsurfaces}, for every $\alpha \in P\mr A$, the restriction of the period map $g_s \colon \mr F_s \to \CC H^4$ to $\mr F_s^\alpha \subset \mr F_s$ induces a surjective morphism 
\begin{align} \label{align:galpha}
 \mr F_s^\alpha \longtwoheadrightarrow \RR H^4_\alpha
\end{align} that factors through an isomorphism $G^\RR \sm \mr F_s^\alpha \cong  \RR H^4_\alpha$ of real analytic manifolds. 

Define $\wt Y = \sqcup_{\alpha \in P\mr A_\eis^4}\RR H^4_\alpha$ and let 
$p \colon \wt Y \to Y(\mr L_{\eis}^4)$ be the quotient map. As in \cite[Section 11.1]{realACTsurfaces}, let $\mr F_s^\RR$ be the preimage of $\mr C_s^\RR$ in $\mr F_s$. Note that, as topological spaces, we have
$
\left( L \times G^\RR \right) \sm \mr F_s^\RR = \mr M_s^\RR. 
$

\begin{theorem} \label{th:realstableperiod}
Consider the above notation and assumptions. Let
\begin{align}\label{therealstableperiodmap}
g_s^{\RR} \colon  \coprod_{\alpha \in P \Aeisfour} \mr F_s^\alpha \longtwoheadrightarrow  \coprod_{\alpha \in P \Aeisfour}\RR H^4_\alpha  =  \widetilde Y
\end{align} be the smooth map which is the coproduct of the maps $ \mr F_s^\alpha \to \RR H^4_\alpha$ defined in  \eqref{align:galpha} above. Then the morphism \eqref{therealstableperiodmap} induces the following commutative diagram of topological spaces, in which $h_s^\RR$ and $\overline h_s^\RR$ are homeomorphisms:
\begin{equation*}
\xymatrixcolsep{5pc}
\xymatrix{
&\coprod_{\substack{{\alpha \in P \Aeisfour}}} \mr F_s^\alpha \ar@{->>}[r]^{g_s^\RR}\ar[d] & \wt Y \ar[d]^p\\
&\mr F_s^\RR \ar@{->>}[r]^{\overline g_s^\RR}\ar[d] & Y(\mr L_\eis^4)\ar@{=}[d] \\
 &G^\RR \setminus \mr F_s^\RR \ar[r]^{h_s^\RR}_\sim\ar[d] & Y(\mr L_\eis^4)\ar[d] \\
{\mr M_s^\RR} \ar@{=}[r] &G^\RR \setminus \mr C_s^\RR \ar[r]^{\overline h_s^\RR}_\sim & M(\mr L_\eis^4).
}
\end{equation*}
%
\end{theorem}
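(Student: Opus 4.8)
\textbf{Proof plan for Theorem \ref{th:realstableperiod}.}

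The strategy is to promote the known descriptions on the smooth locus to the stable completion by a careful comparison of the two ``Fox-completion'' procedures available here: on the one hand $\mr F_s \to \mr C_s$ (the Fox completion used in \cite{ACTsurfaces, realACTsurfaces}), on the other hand the gluing $Y(\mr L_\eis^4)$ and its quotient $M(\mr L_\eis^4)$ constructed in Section \ref{gluing}. First I would record the commutativity of the top square: the map $g_s^\RR$ is by construction the coproduct of the maps $\mr F_s^\alpha \to \RR H^4_\alpha$ obtained by restricting the $L$-equivariant diffeomorphism $g_s \colon G \sm \mr F_s \xrightarrow{\sim} \CC H^4$, so the composite $\coprod_\alpha \mr F_s^\alpha \to \wt Y \to \CC H^4$ agrees with $\coprod_\alpha \mr F_s^\alpha \hookrightarrow \mr F_s \to \mr C_s \to \CC H^4$; since the vertical map $\coprod_\alpha \mr F_s^\alpha \to \mr F_s^\RR$ and the quotient $p\colon \wt Y \to Y(\mr L_\eis^4)$ are both surjective, it suffices to check that $g_s^\RR$ descends through $\sim$, i.e.\ that if $(x,\alpha), (y,\beta)$ lie in the same $\mr F_s^\RR$-fiber then $(g_s^\RR(x,\alpha)) \sim (g_s^\RR(y,\beta))$. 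This is exactly the content of the gluing rules in Definition \ref{def:conditions}: two framed stable real cubics mapping to the same point of $\mr C_s^\RR$ but with distinct real structures $\alpha\neq\beta$ must have $\beta\circ\alpha$ lying in the stabilizer $G(x)$ of the relevant point, which is precisely the monodromy statement proved in \cite[\S 11]{realACTsurfaces} (the real structures on a fixed $k$-nodal cubic differ by a product of reflections in the vanishing cycles through it). Granting this, $g_s^\RR$ factors through $\mr F_s^\RR$, giving $\overline g_s^\RR$, and then — being constant on $G^\RR$-orbits, again by $G^\RR$-invariance of the period map — it factors through $G^\RR\sm\mr F_s^\RR$, giving $h_s^\RR$, and finally taking the quotient by $L$ gives $\overline h_s^\RR \colon \mr M_s^\RR \to M(\mr L_\eis^4)$.

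The substantive point is that $h_s^\RR$ (equivalently $\overline h_s^\RR$) is a homeomorphism. For surjectivity: every $\alpha\in P\mr A_\eis^4$ occurs (by \cite[\S 11]{realACTsurfaces} every such anti-unitary involution is realized by a real structure on some stable real cubic), so the images of the $\mr F_s^\alpha$ cover all of $\wt Y$, hence cover $Y(\mr L_\eis^4)$ after applying $p$; this gives surjectivity of $\overline g_s^\RR$ and hence of $h_s^\RR$. For injectivity: suppose $h_s^\RR([F_1]) = h_s^\RR([F_2])$ for $F_i \in \mr F_s^{\alpha_i}$. Lifting to $\wt Y$, the points $g_s(F_1)\in\RR H^4_{\alpha_1}$ and $g_s(F_2)\in \RR H^4_{\alpha_2}$ become $\sim$-equivalent, so they are equal in $\CC H^4$ and $\alpha_2\circ\alpha_1 \in G(g_s(F_1))$. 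Now I invoke the $L$-equivariant isomorphism $g_s\colon G\sm\mr F_s \xrightarrow{\sim} \CC H^4$ together with the fact, from \cite[\S 11]{realACTsurfaces}, that the anti-holomorphic involutions of $\mr F_s$ that cover a fixed point of $G\sm\mr F_s = \CC H^4$ and are compatible under the monodromy action with $\alpha_1$ are exactly those in the $G(g_s(F_1))$-orbit of $\alpha_1$; this forces $F_1$ and $F_2$ to lie in the same $(L\times G^\RR)$-orbit, i.e.\ to have the same image in $\mr M_s^\RR$. Thus $\overline h_s^\RR$ is a continuous bijection. Continuity of $h_s^\RR$ is automatic since it is induced on quotients by the continuous map $g_s^\RR$ (and $p$, $\coprod_\alpha \mr F_s^\alpha \to \mr F_s^\RR$ are quotient maps). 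For the inverse to be continuous, I would argue properness: $\mr F_s$ is the Fox completion, $\mr M_s = G\sm\mr C_s$ carries a compact Hausdorff analytic structure and $g_s$ identifies it with $L\sm\CC H^4$ which is Hausdorff and locally compact; likewise $M(\mr L_\eis^4)$ is Hausdorff by Lemma \ref{lemma:hausdorff} and its quotient, and the map $\overline{\mr P}\colon M(\mr L_\eis^4)\to L\sm\CC H^4$ has finite fibers and is closed by Corollary \ref{corollary:proper}. A continuous bijection between such spaces, compatible with the finite closed maps to $L\sm\CC H^4$, is a homeomorphism; alternatively one checks directly that $\overline h_s^\RR$ is closed using that $\mr M_s^\RR$ is compact (\cite[\S 11]{realACTsurfaces}) and $M(\mr L_\eis^4)$ is Hausdorff.

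The main obstacle I anticipate is the injectivity argument, specifically translating the monodromy/real-structure bookkeeping of \cite[\S 11]{realACTsurfaces} into the precise statement that the $\sim$-relation on $\wt Y$ exactly matches the fibers of $G^\RR\sm\mr F_s^\RR \to L\sm\CC H^4$ — i.e.\ that \emph{no} extra identifications are forced in $\mr M_s^\RR$ beyond those predicted by Definition \ref{def:conditions}, and conversely that all of them are. This requires knowing, for each $k$-nodal stable real cubic surface, both that its distinct real forms differ by reflections in exactly the vanishing cycles passing through the corresponding point of $\CC H^4$ (no more) and that every such reflection product is actually realized by a real form on a stable (not merely smooth) cubic; both are available from the work of Allcock–Carlson–Toledo but need to be cited and assembled carefully. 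A secondary technical point is checking the continuity of $(\overline h_s^\RR)^{-1}$ cleanly; the cleanest route is the properness/closedness argument sketched above, leaning on Corollary \ref{corollary:proper} and Lemma \ref{lemma:hausdorff} on our side and the compactness of $\mr M_s^\RR$ on the ACT side.
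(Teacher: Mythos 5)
Your overall skeleton (factor $p\circ g_s^\RR$ through $\sim$, descend through $G^\RR$, prove bijectivity, then upgrade to a homeomorphism) matches the paper's, but two steps have genuine problems. First, the injectivity argument proves the wrong statement: you conclude that $F_1$ and $F_2$ lie in the same $(L\times G^\RR)$-orbit, which at best gives injectivity of $\overline h_s^\RR$ on $\mr M_s^\RR$, whereas the theorem asserts that $h_s^\RR$ on $G^\RR\setminus \mr F_s^\RR$ is already a bijection onto $Y(\mr L_\eis^4)$ (and $\overline h_s^\RR$ is then obtained by dividing both sides by $L$). Moreover the "fact" you invoke about involutions covering a fixed point of $G\setminus\mr F_s$ is stated too loosely to carry the argument. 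The mechanism the paper uses is sharper: if $(x,\alpha)\sim(x,\beta)$ then $\beta=\phi\circ\alpha$ with $\phi\in G(x)$, and the period map identifies $G(x)$ with the stabilizer $L_f$ of the framed surface $f$ itself (\cite[Lemma 10.3]{realACTsurfaces}); hence $\phi(f)=f$, so $\beta(f)=f$, i.e.\ $f$ and $g$ both lie in $\mr F_s^\beta$ with the same period, and then the injectivity of the single-involution real period map $G^\RR\setminus\mr F_s^\beta\to\RR H^4_\beta$ (\cite[Lemma 11.3]{realACTsurfaces}) yields $a\in G^\RR$ with $a\cdot f=g$. Without the transfer through $L_f\cong G(x)$ you cannot get rid of the $L$-factor. (The same identification is also what makes your descent through the $G^\RR$-action rigorous: if $a\cdot f=g$ with $f\in\mr F_s^\alpha$, $g\in\mr F_s^\beta$, one first checks $\alpha(g)=g$, so $g\in\mr F_s^\alpha\cap\mr F_s^\beta$ and $\alpha\beta\in L_g\cong G(x)$; "$G^\RR$-invariance of the period map" alone only gives equality of periods, not the relation $\sim$.)

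Second, your route to continuity of the inverse is flawed: the fallback you offer, closedness of $\overline h_s^\RR$ "using that $\mr M_s^\RR$ is compact", is based on a false premise — $\mr M_s^\RR$ is not compact, as the paper itself records in Remark \ref{remark:non-cocompact} (this non-compactness is exactly why $\Gamma^4_{\eis}$ is non-cocompact). Your first alternative, via properness over $L\setminus\CC H^4$, would need a proof that $\mr M_s^\RR\to L\setminus\CC H^4$ is proper, which you do not supply (Corollary \ref{corollary:proper} and Lemma \ref{lemma:hausdorff} only handle the right-hand column). The paper avoids all of this by showing directly that $h_s^\RR$ is open: for open $U\subset G^\RR\setminus\mr F_s^\RR$ with preimage $V$ in $\coprod_\alpha\mr F_s^\alpha$, one has $g_s^\RR(V)=p^{-1}(h_s^\RR(U))$, and $g_s^\RR$ is open because each $\mr F_s^\alpha\to\RR H^4_\alpha$ has surjective differential at every point. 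Replacing your properness/compactness argument by this openness argument (and repairing the injectivity step as above) would bring the proposal in line with a correct proof.
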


\begin{proof}
We claim that the composition $p \circ g_s^\RR$ 
factors through a morphism $\overline g_s^\RR \colon \mr F_s^\RR \to Y(\mr L_\eis^4)$. Let $f_\alpha, g_\beta\in \coprod_{\alpha \in P \Aeisfour} \mr F_s^\alpha$. Then $f_\alpha$ and $g_\beta$ have the same image in $\mr F_s^\RR$ if and only if $f = g \in \mr F_s^\alpha \cap \mr F_s^\beta$, in which case 
$
g_s(f) = g_s(g) \eqqcolon x \in \RR H^4_\alpha \cap \RR H^4_\beta$. Note that $\alpha\beta \in L_f$, 
and $g_s$ induces an isomorphism 
$
L_f \cong G(x)
$
by \cite[Lemma 10.3]{realACTsurfaces}. Hence $\alpha \beta \in G(x)$ so that $(x,\alpha) \sim (x,\beta)$, proving the claim. 

Next, we prove the $G^\RR$-equivariance of $\overline g_s^\RR$. Suppose that 
$
f \in \mr F_s^\alpha, g \in \mr F_s^\beta$ such that $ a \cdot f = g \in \mr F_s^\RR$ for some $ a \in G^\RR$. Then $x\coloneqq g_s(f) = g_s(g) \in \CC H^4$, so we need to show that $\alpha \beta \in G(x)$. The actions of $G$ and $L$ on $\CC H^4$ commute, and the same holds for the actions of $G^\RR$ and $L'$ on $\mr F_s^\RR$, where $L'$ is as in Section \ref{section:anti}. It follows that 
$
\alpha(g) = \alpha (a \cdot f) = a \cdot \alpha(f) = a \cdot f = g,
$
hence $g \in \mr F_s^\alpha \cap \mr F_s^\beta$. This implies in turn that $(\alpha \circ \beta) (g) = g$, hence $\alpha \beta \in L_g \cong G(x)$, see \cite[Lemma 10.3]{realACTsurfaces}. Therefore, $(x,\alpha) \sim (x,\beta)$, so that $\alpha \beta \in G(x)$ as desired. 

Let us prove that the resulting map $h_s^\RR \colon G^\RR \sm \mr F_s^\RR \to Y(\mr L_\eis^4)$ is injective. To do so, let 
$
f \in \mr F_s^\alpha$ and $g \in \mr F_s^\beta$ and suppose that $
x\coloneqq g_s(f) = g_s(g) \in \RR H^4_\alpha \cap \RR H^4_\beta,
$
and $\beta = \phi \circ \alpha$ for some $\phi \in G(x)$. 
We have $\phi \in G(x) \cong L_f$ (cf.\ \cite[Lemma 10.3]{realACTsurfaces}) hence $    \beta(f) = \phi \left(\alpha (f)\right) = \phi(f) = f$. 
Therefore, we have $f,g \in \mr F_s^\beta$; since $g_s(f) = g_s(g)$, it follows from \cite[Lemma 11.3]{realACTsurfaces} that there exists $a \in G^\RR$ such that $a \cdot f = g$. This proves the injectivity of $h_s^\RR$.   
The surjectivity of $h_s^\RR$ is straightforward: it follows from the surjectivity of $g_s^\RR$, see \cite[Lemma 11.3]{realACTsurfaces}. 

Finally, we claim that $h_s^\RR$ is open. Let $U \subset G^\RR \setminus \mr F_s^\RR$ be open. 
Let $V$ be the preimage of $U$ in $\coprod_{\alpha \in P \Aeisfour}\mr F_s^\alpha$. Then 
$
V = (g_s^\RR)^{-1}\left( p^{-1}\left(h_s^\RR(U)\right)\right)
$, hence 
$
    g_s^\RR\left( V \right) = p^{-1}\left(h_s^\RR(U)\right)$, so that it suffices to show that $g_s^\RR(V)$ is an open subset of $\coprod_{\alpha \in P\Aeisfour} \RR H^4_\alpha$. This follows, because $g_s^\RR$ is open, being the coproduct of the maps $\mr F_s^\alpha \to \RR H^4_\alpha$, which are open since they have surjective differential at each point. 
\end{proof}

\begin{proof}[Proof of Theorem \ref{theorem:reprove-ACT}]
Since $\mr M_s^\RR$ is connected, the same holds for $M(\mr L_\eis^4)$ in view of Theorem \ref{th:realstableperiod}. By \cite[Theorem 4.1]{realACTsurfaces}, we know that $\set{\alpha_{4,0}, \dotsc, \alpha_{4,4}}$ is a set of representatives for the action of the group $L = \Aut(\ZZ[\zeta_3]^{4,1})/\langle -\zeta_3 \rangle$ on $P\mr A_\eis^4$ by conjugation. Here, for $i \in \set{0,1,2,3,4}$, the anti-unitary involution $\alpha_{4,i}$ on $\ZZ[\zeta_3]^{4,1}$ is defined in \eqref{def:beta}. 
Hence item 1 follows from Theorem \ref{th:realstableperiod} in conjunction with Theorem 
\ref{theorem:volume} (whose proof shall be provided in Section \ref{section:volume}, and only depends on Theorem \ref{theorem:introduction:uniformization}, Lemmas \ref{normalisator} and \ref{lemma:conjinv:intro}, and the proof of Theorem \ref{th:crucialthm-finitevolume}). Item 2 is clear. 
\end{proof}

\subsection{Binary sextics with a double root at infinity} \label{section:binary}

We use (and extend) the notation of \cite[Section 5]{realACTnonarithmetic}. Consider the real projective line $\PP^1_\RR= \Proj(\RR[x,y])$, and fix $\infty \coloneqq [1\colon0] \in \PP^1(\RR) \subset \PP^1(\CC)$. Let $\ca P_\infty$ be the space of non-zero complex polynomials $F \in \CC[x,y]$ which are homogeneous of degree six that have a double root at $\infty$ such that all other roots of $F$ in $\PP^1(\CC)$ have multiplicity at most two. Let $\ca P_{\infty, \rm{sm}} \subset \ca P_\infty$ be the subspace of polynomials $F \in \ca P_\infty$ such that apart from $\infty$, all roots of $F$ have multiplicity one. For $F \in \ca P_{\infty, \rm{sm}}$, we have $F(x,y) = y^2 \cdot F_1(x,y)$ for some $F_1 \in \CC[x,y]$ such that $F_1(1,0) \neq 0$, hence $y^{-1}F = y F_1$ is a two-variable homogeneous polynomial of degree five with distinct roots in $\PP^1(\CC)$, one of which is $\infty$. Consider the weighted projective plane $\PP(3,3,5)$ and define $C_F 
\subset \PP(3,3,5)$ as the smooth curve
\[
C_F \coloneqq \set{z^3 + y^{-1} F(x,y) = 0} \subset \PP(3,3,5).
\]
Then $C_F$ is a degree three covering of $\PP^1$ branched along five distinct points on $\PP^1$ (among which $\infty = [1 \colon 0]$), hence $C_F$ is a curve of genus three. The cyclic covering group is generated by $C_F \to C_F, (x,y,z) \mapsto (x,y,\zeta_3z)$, and the induced action of $\ZZ/3$ on $\rm H^1(C_F,\ZZ)$ fixes no non-zero element. Thus, $\rm H^1(C_F,\ZZ)$ is a free $\ZZ[\zeta_3]$-module of rank three. Moreover, the hermitian form 
\[
h(x,y) = i \sqrt{3} \int x \wedge \bar y
\]
is $\ZZ[\zeta_3]$-valued on the projection of $\rm H^1(C_F,\ZZ)$ to $\rm H^1(C_F,\CC)_{\bar \zeta_3}$, and the induced hermitian form 
\[
h \colon \rm H^1(C_F,\ZZ) \times \rm H^1(C_F,\ZZ) \longrightarrow \ZZ[\zeta_3]
\]
is unimodular. Moreover, $h$ is positive definite on the one-dimensional space $\rm H^{1,0}(C_F)_{\bar \zeta_3}$ and negative definite on the two-dimensional space $\rm H^{0,1}(C_F)_{\bar \zeta_3}$. Since
$$
\rm H^1(C_F,\ZZ) \otimes_{\ZZ} \RR = \rm H^1(C_F,\CC)_{\bar \zeta_3} = \rm H^{1,0}(C_F)_{\bar \zeta_3} \oplus \rm H^{0,1}(C_F)_{\bar \zeta_3}, 
$$we conclude that $\rm H^1(C_F,\ZZ) \cong \ZZ[\zeta_3]^{1,2}$ as hermitian modules over $\ZZ[\zeta_3]$, where $\ZZ[\zeta_3]^{1,2}$ is the free $\ZZ[\zeta_3]$-module of rank three equipped with the hermitian form $h(x,y) = x_0\bar y_0 - x_1\bar y_1-x_2\bar y_2$. Let 
\[
\mr F_0 \longrightarrow \ca P_{\infty, \rm{sm}}
\]
be the covering space of pairs $(F, [i])$ where $F \in \ca P_{\infty, \rm{sm}}$ and $[i]$ is a $\langle -\zeta_3 \rangle$-equivalence class of isometries $i \colon \rm H^1(C_F,\ZZ) \cong \ZZ[\zeta_3]^{1,2}$. 
\begin{lemma} \label{F0-connected}
The space $\mr F_0$ is connected. Equivalently, fixing a point $F_0 \in \ca P_{\infty, \rm{sm}}$, the monodromy representation $\rho \colon \pi_1(\ca P_{\infty, \rm{sm}},F_0) \to \Aut(\ZZ[\zeta_3]^{1,3})/\langle - \zeta_3 \rangle$ associated to $F_0$ and the local system of $\ZZ[\zeta_3]$-modules with fibre $\rm H^1(C_F,\ZZ)$ is surjective. 
\end{lemma}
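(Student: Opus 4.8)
The plan is to prove the equivalent assertion that the monodromy $\rho$ is surjective. The first observation is that $\mr F_0 \to \ca P_{\infty,\rm{sm}}$ is a covering whose fibre over $F_0$ is the set of $\langle-\zeta_3\rangle$-classes of isometries $\rm H^1(C_{F_0},\ZZ) \xrightarrow{\sim} \ZZ[\zeta_3]^{1,2}$ — recall from the computation preceding the lemma that $\rm H^1(C_F,\ZZ)\cong\ZZ[\zeta_3]^{1,2}$ — and that this fibre is a torsor under the group $G \coloneqq \Aut(\ZZ[\zeta_3]^{1,2})/\langle-\zeta_3\rangle$ appearing in the statement. Hence $\mr F_0$ is connected if and only if the monodromy acts transitively on the fibre, i.e.\ if and only if $\rho$ is surjective, so it suffices to prove the latter.

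Next I would unwind $\pi_1(\ca P_{\infty,\rm{sm}},F_0)$. Writing $F = y^2F_1$ and sending $F$ to the pair (leading coefficient of $F_1$; unordered $4$-element set of roots of $F_1$) identifies $\ca P_{\infty,\rm{sm}}$ with $\CC^\ast$ times the configuration space of $4$ unordered points in $\PP^1\setminus\{\infty\}$; thus $\pi_1(\ca P_{\infty,\rm{sm}},F_0)$ is generated by the loop scaling $F$ together with the half-twists $\sigma_1,\sigma_2,\sigma_3$ exchanging consecutive roots of $F_1$ along a chosen linear chain. The scaling loop acts on $C_{F_0}$ through the cyclic deck transformation $(x,y,z)\mapsto(x,y,\zeta_3 z)$, hence on $\rm H^1(C_{F_0},\ZZ)$ by multiplication by a power of $\zeta_3$; since $\zeta_3 \in \mu_6 = \langle-\zeta_3\rangle$ this is trivial in $G$, so $\rho$ is determined by $\rho(\sigma_1),\rho(\sigma_2),\rho(\sigma_3)$.

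I would then identify each $\rho(\sigma_i)$, modulo $\langle-\zeta_3\rangle$, with an order-$3$ complex reflection. When two consecutive branch points of the triple cover collide, the curve $C_F$ degenerates to one with a single $A_2$-singularity (locally $z^3 = x^2$), whose vanishing (Milnor) lattice is a rank-one $\ZZ[\zeta_3]$-module $\ZZ[\zeta_3]\!\cdot\! r_i$ spanned by a short root $r_i$, and whose local monodromy is the Coxeter element of order $3$; by the cyclic Picard--Lefschetz formula this acts on $\rm H^1(C_{F_0},\ZZ)$ as $\phi_{r_i}(x) = x-(1-\zeta_3)h(x,r_i)h(r_i,r_i)^{-1}r_i$, so that $\rho(\sigma_i)$ equals $\phi_{r_i}$ (or its inverse) up to a unit in $\langle-\zeta_3\rangle$. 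Thus the image of $\rho$ is, modulo $\langle-\zeta_3\rangle$, generated by the three order-$3$ reflections $\phi_{r_1},\phi_{r_2},\phi_{r_3}$; the braid relations force $h(r_1,r_3) = 0$ and the length-three braid relation between consecutive reflections, and together with the signature $(1,2)$ this both pins down the sublattice $\ZZ[\zeta_3]r_1+\ZZ[\zeta_3]r_2+\ZZ[\zeta_3]r_3$ and shows the $r_i$ span $\rm H^1(C_{F_0},\ZZ)$. Finally, I would invoke that these three chain-reflections generate the whole of $G$: this is exactly the monodromy computation of Allcock--Carlson--Toledo for stable real binary sextics with a double root at $\infty$ \cite[Section 5]{realACTnonarithmetic}, where the corresponding period map identifies the ball quotient with $G$ acting on $\CC H^2$ minus a hyperplane arrangement (the weights $(1/3,1/3,1/3,1/3,2/3)$ lie in the Deligne--Mostow range, and $\ZZ[\zeta_3]^{1,2}$ is the standard Eisenstein lattice whose integral unitary group is generated by a length-three chain of order-$3$ reflections). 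Combining these steps yields surjectivity of $\rho$, hence connectedness of $\mr F_0$.

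The hard part will be this last step: upgrading ``the $\phi_{r_i}$ generate a finite-index subgroup of $G$'' — which would only give that $\mr F_0$ has finitely many components — to ``the $\phi_{r_i}$ generate all of $G$''. I expect to obtain this by reading it off from \cite[Section 5]{realACTnonarithmetic}, for instance from an explicit fundamental domain common to the reflection group and to $\Aut(\ZZ[\zeta_3]^{1,2})/\langle-\zeta_3\rangle$; the remaining points — the homotopy type of $\ca P_{\infty,\rm{sm}}$, the $A_2$ local monodromy, and the triviality of the scaling loop modulo $\langle-\zeta_3\rangle$ — are routine and I would dispatch them quickly.
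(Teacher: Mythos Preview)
Your approach is correct and is essentially the same as the paper's: the paper simply writes ``This follows as in \cite[Theorem 2.1]{realACTnonarithmetic} (cf.\ \cite[Lemma 7.12]{ACTsurfaces})'', and your proposal unpacks exactly the Allcock--Carlson--Toledo argument those references contain (braid generators, $A_2$ Picard--Lefschetz giving order-$3$ complex reflections, and the fact that the resulting chain of reflections generates the full projective unitary group of the Eisenstein lattice). Your sketch is a faithful expansion of what the cited proof does.
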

\begin{proof}
This follows as in \cite[Theorem 2.1]{realACTnonarithmetic} (cf.\ \cite[Lemma 7.12]{ACTsurfaces}). 
\end{proof}

For any $(F, [i]) \in \mr F_0$, the isometry $i \colon \rm H^1(C_F,\ZZ) \xrightarrow{\sim} \Lambda$ induces an isometry $i_\CC \colon \rm H^1(C_F,\CC)_{\bar \zeta_3} \xrightarrow{\sim} \Lambda_\CC = \CC^{1,2}$, and  
\[
i_\CC\left( \rm H^{1,0}(C_F) _{\bar \zeta_3} \right) \subset i_\CC\left(\rm H^1(C_F,\CC)_{\bar \zeta_3}\right) = \CC^{1,2}
\]
is a positive line in $\CC^{1,2}$. Define $B^2$ as the space of positive lines in $\CC^{1,2}$; thus
$$B^2 = \set{\text{lines } \ell \subset \CC^3 \text{ such that } \va{x_0}^2 - \va{x_1}^2 - \va{x_2}^2 > 0 \text{ for } 0 \neq x \in \ell}.$$
We get a map, the \emph{period map},
\[
g \colon \mr F_0 \longrightarrow B^2, \quad \quad g(F,[i]) = i_\CC\left( \rm H^{1,0}(C_F) _{\bar \zeta_3} \right) \in B^2. 
\]
The map $g$ is holomorphic since the Hodge filtration varies holomorphically. 

Let $$\GL_2(\CC)_\infty \coloneqq \rm{Stab}_{\GL_2(\CC)}(\infty) \subset \GL_2(\CC), \quad \quad G \coloneqq \GL_2(\CC)_\infty/\langle \zeta_6 \rangle.$$
The group $G$ acts on $\ca P_\infty$ by $h \cdot F(x,y) = F(h^{-1}\cdot (x,y))$.   
Let $\mr F_s \to \ca P_{\infty, \rm{st}}$ be the Fox completion of $\mr F_0 \to \ca P_{\infty, \rm{sm}} \hookrightarrow \ca P_{\infty}$ (cf.\ \cite{fox}). The map $g$ extends to a map $g \colon \mr F_s \to B^2$ which is constant on $G$-orbits, hence induces a morphism of complex analytic spaces $g \colon G \sm \mr F_s \to B^2$. 

\begin{theorem}[Picard, Deligne--Mostow] \label{theorem:DM-infinity}
The map $g \colon G \sm \mr F_s \to B^2$ is an isomorphism. 
\end{theorem}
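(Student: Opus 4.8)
The plan is to recognize Theorem~\ref{theorem:DM-infinity} as an instance of the theory of periods of cyclic coverings of $\PP^1$ due to Picard and to Deligne--Mostow, and to promote their statement to the framed level on which it is phrased here. First I would record the relevant Deligne--Mostow data. For $F\in\ca P_{\infty,\rm{sm}}$ the curve $C_F$ is the $\ZZ/3$-covering of $\PP^1$ totally ramified over the five distinct points of the zero locus of $y^{-1}F$; writing $f(x)=y^{-1}F(x,1)$, a quartic with simple roots, the multivalued holomorphic form $dx/z=\prod_i(x-a_i)^{-1/3}\,dx$ has local exponent $\mu_i=1/3$ at each of the four moving branch points $a_i$ and exponent $\mu_\infty=2/3$ at $\infty$, with $\sum_i\mu_i=2$; hence its periods define a point of the $2$-ball $B^2\subset\PP(\CC^3)$, namely $i_\CC(\rm H^{1,0}(C_F)_{\bar\zeta_3})$. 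The only pairwise sums $\mu_i+\mu_j<1$ that occur equal $2/3$, and $(1-2/3)^{-1}=3\in\ZZ$, so the condition $\Sigma\mathrm{INT}$ of Deligne--Mostow and Mostow holds. Consequently their theorems apply: the monodromy group is a lattice in $\rm{PU}(1,2)$, the period map is everywhere a local isomorphism, and it extends over the stable locus to an isomorphism from the corresponding GIT moduli space onto the associated ball quotient. The remaining work is to descend this to the precise statement about $\mr F_s$.

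Next I would check directly that $g\colon\mr F_0\to B^2$ is a local biholomorphism and that the induced map $G\sm\mr F_0\to B^2$ is injective. For the first point, the differential of $g$ at $(F,[i])$ is, by Griffiths' description of the variation of Hodge structure, the composite of the Kodaira--Spencer map $T_F\,\ca P_{\infty,\rm{sm}}\to\rm H^1(C_F,T_{C_F})$, which lands in the $\zeta_3$-isotypic summand and is identified with the space of deformations of the branch divisor on $\PP^1$, with cup product into $\Hom(\rm H^{1,0}(C_F)_{\bar\zeta_3},\rm H^{0,1}(C_F)_{\bar\zeta_3})$; its injectivity is the non-vanishing of the relevant hypergeometric Wronskian, and since $\dim\ca P_{\infty,\rm{sm}}-\dim G=2=\dim B^2$ this differential is an isomorphism. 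For injectivity of $G\sm\mr F_0\to B^2$, suppose $g(F,[i])=g(F',[i'])$; the framings then identify $\rm H^1(C_F,\ZZ)$ and $\rm H^1(C_{F'},\ZZ)$ as polarized $\ZZ[\zeta_3]$-Hodge structures, and by the Torelli theorem for cyclic triple covers of $\PP^1$ the branch divisor, and hence $C_F$, is recovered from this Hodge structure compatibly with the markings; the resulting isomorphism $C_F\xrightarrow{\sim}C_{F'}$ descends to $a\in G$ with $a\cdot(F,[i])=(F',[i'])$. Thus $G\sm\mr F_0\to B^2$ is an open embedding, with image the complement of the hyperplane arrangement cut out by the norm-one vectors of $\ZZ[\zeta_3]^{1,2}$, i.e.\ the classes that become degenerate when two moving branch points collide.

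Finally I would pass to the Fox completion $\mr F_s\to\ca P_{\infty,\rm{st}}$. On $\ca P_{\infty,\rm{st}}$ the admissible degenerations are exactly the collisions of pairs of moving roots ($\mu_i+\mu_j=2/3<1$), a moving root meeting $\infty$ ($\mu_i+\mu_\infty=1$) being excluded; over this locus the Deligne--Mostow period map extends holomorphically, remains a local isomorphism, and identifies $\Gamma\sm(G\sm\mr F_s)$ with $\Gamma\sm B^2$ for $\Gamma=\Aut(\ZZ[\zeta_3]^{1,2})/\langle-\zeta_3\rangle$. Since the framing trivializes the monodromy by Lemma~\ref{F0-connected}, the map $G\sm\mr F_s\to B^2$ is the corresponding $\Gamma$-covering, hence an isomorphism. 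Concretely one checks that the extended $g$ is a proper continuous bijection --- bijective on the open part by the previous paragraph, and on each boundary stratum by the same Torelli argument applied to the degenerate curves --- which restricts to a biholomorphism away from the hyperplane arrangement; as source and target are normal analytic spaces, Riemann's extension theorem applied to $g^{-1}$ then forces $g$ to be biholomorphic. I expect this last step to be the main obstacle: reconciling the purely topological Fox completion with the analytic Deligne--Mostow extension and verifying that the bijective continuous extension is a genuine isomorphism of analytic spaces (equivalently, that $\mr F_s$ is smooth along the discriminant and that $g$ has bijective differential there). The generic-injectivity and local-isomorphism inputs, by contrast, are classical.
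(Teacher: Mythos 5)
Your proposal is correct and takes essentially the same approach as the paper: the paper offers no independent argument, proving the theorem simply by citing Picard and Deligne--Mostow (see also \cite[Section 5]{realACTnonarithmetic}), which is exactly the classical input you identify, with the correct weights $(1/3,1/3,1/3,1/3,2/3)$, the INT condition, and the matching of the stable locus with double roots away from $\infty$. The framed-level, Torelli, and Fox-completion details you sketch are the content of those references rather than anything the paper re-proves.
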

\begin{proof}
See \cite{picardperiods} and \cite{DeligneMostow}. See also \cite[Section 5]{realACTnonarithmetic}. 
\end{proof}

Let $\mr N_s^\RR(\infty)$ be the moduli space of stable real binary sextics that have a double root at $\infty$ (see \cite[Section 5]{realACTnonarithmetic}), and let $\mr N_{\rm{sm}}^\RR(\infty)$ be the space of stable real sextics with a double root at infinity that have no other double roots. In the above notation, we have $\mr N_s^\RR(\infty) = G^\RR \sm \ca P_{\infty}^\RR$ and $\mr N_{\rm{sm}}^\RR(\infty) = G^\RR \sm \ca P_{\infty, \rm{sm}}^\RR$, where $G^\RR$ is the fixed locus of the natural anti-holomorphic involution of $G$, $\ca P^\RR_{\infty}$ denotes the space of polynomials $F \in \ca P_{\infty}$ whose coefficients are real, and $\ca P^\RR_{\infty, \rm{sm}} = \ca P^\RR_{\infty} \cap \ca P_{\infty, \rm{sm}}$. 

\begin{theorem} \label{theorem:connection-ACT} The following assertions are true. 
\begin{enumerate}
\item 
There are a union of geodesic subspaces $\mr H_i \subset \RR H^2$ for $i = 0, \dotsc, 4$ and a homeomorphism $\mr N_s^\RR(\infty) \cong \Gamma_{\rm{eis}}^2 \sm \RR H^2$ 
that restricts to an orbifold isomorphism $\mr N_{\rm{sm}}^{\RR}(\infty) \cong \coprod_{i = 0}^2  \rm{PO}(\Psi_i^2,\ZZ) \setminus \left(\RR H^2 - \mr H_i \right)$.
\item The lattice 
$\GamEis^2 \subset \PO(2,1)$ is conjugate to the lattice 
$\Gamma_\infty^\RR \subset \PO(2,1)$ defined in \cite[Section 5]{realACTnonarithmetic}. 
\end{enumerate}
\end{theorem}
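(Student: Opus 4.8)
The strategy is to run exactly the same machinery that was used for cubic surfaces in Section~\ref{sec:cubic} (Theorem~\ref{th:realstableperiod} and its proof), but with the Deligne--Mostow period map for binary sextics with a double root at $\infty$ in place of the Allcock--Carlson--Toledo period map for cubic surfaces. Concretely, one first shows that the real stable period map organizes into a commutative diagram
\[
\xymatrix{
\coprod_{\alpha \in P\mr A} \mr F_s^\alpha \ar@{->>}[r] \ar[d] & \wt Y \ar[d] \\
G^\RR \sm \mr F_s^\RR \ar[r]^-\sim & M(\mr L_{\zeta_3}^2(-1))
}
\]
identifying $\mr N_s^\RR(\infty) = G^\RR \sm \ca P_\infty^\RR$ with the attachment $M(\mr L)$ for the rank-three standard hermitian lattice $\mr L = (\QQ(\zeta_3), \ZZ[\zeta_3]^{1,2})$ (equivalently $\mr L_{\rm{qd}}^2(3) = \mr L_{\eis}^2$ up to the sign convention on $h$, since $\ZZ[\zeta_3]^{1,2} \cong \ZZ[\zeta_3]^{2,1}$ as hermitian modules over $\ZZ[\zeta_3]$). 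Granting this identification, Theorem~\ref{theorem:introduction:uniformization} gives the hyperbolic orbifold structure and, via Theorem~\ref{theorem:volume} (or directly Lemma~\ref{lemma:conjinv:intro}), the open immersion $\coprod_{i=0}^2 \PO(\Psi_i^2,\ZZ)\sm(\RR H^2 - \mr H_i) \hookrightarrow \Gamma_{\eis}^2 \sm \RR H^2$; this yields item~1.

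The key steps, in order, would be the following. First, one needs the analogue of the intermediate results of \cite{realACTnonarithmetic}: the period map $g \colon \mr F_0 \to B^2$ extends over the Fox completion $\mr F_s$ to a map $g_s$ descending to the isomorphism $G \sm \mr F_s \cong B^2$ of Theorem~\ref{theorem:DM-infinity}, under which $k$-nodal stable sextics go to the stratum of $B^2$ where exactly $k$ of the mirror hyperplanes meet. Second, for each anti-unitary involution $\alpha$ one lifts the corresponding real structure to $\mr F_s$, proves $\mr F_s^\alpha \cap \mr F_s^\beta = \emptyset$ for $\alpha \neq \beta$ and $\mr F_s^\RR = \coprod_{\alpha \in P\mr A}\mr F_s^\alpha$, and shows the restriction $g_s|_{\mr F_s^\alpha}$ surjects onto $\RR H^2_\alpha$ and induces $G^\RR \sm \mr F_s^\alpha \cong \RR H^2_\alpha$ (the analogue of \cite[Lemma~11.3]{realACTsurfaces}); one also needs the local statement that $g_s$ induces an isomorphism $L_f \cong G(x)$ between the stabilizer of a framed stable sextic and the reflection group at its period point (the analogue of \cite[Lemma~10.3]{realACTsurfaces}). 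Third, with these in hand the proof of Theorem~\ref{th:realstableperiod} applies verbatim: the composite $\coprod_\alpha \mr F_s^\alpha \to \wt Y \to Y(\mr L)$ factors through $\mr F_s^\RR$ by the gluing condition in Definition~\ref{def:conditions}, is $G^\RR$-equivariant because the $G^\RR$- and $L'$-actions commute, and the resulting map $G^\RR \sm \mr F_s^\RR \to M(\mr L)$ is a continuous bijection which is open (being covered by the open map $g_s^\RR$), hence a homeomorphism; it restricts on the smooth locus to an orbifold isomorphism by the same argument as in Theorem~\ref{theorem:reprove-ACT}, using Lemma~\ref{lemma:conjinv:intro} to see that $\{\alpha_{2,0},\alpha_{2,1},\alpha_{2,2}\}$ represents $L \sm P\mr A$. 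Finally, item~2 follows: both $\GamEis^2$ and $\Gamma_\infty^\RR$ are, by construction, lattices $\Gamma$ with $\Gamma \sm \RR H^2$ homeomorphic (as hyperbolic orbifolds) to the connected space $\mr N_s^\RR(\infty) = M(\mr L_{\eis}^2)$, and by Mostow rigidity (or directly the uniformization theorem for complete hyperbolic orbifolds, Theorem~\ref{theorem:introduction:uniformization}(2)) such a lattice is unique up to conjugacy in $\PO(2,1)$.

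The main obstacle I anticipate is \emph{not} the formal gluing argument---that is a mechanical transcription of the cubic-surface case---but rather establishing the two local/surjectivity inputs for the Deligne--Mostow setting: that the real period map $g_s|_{\mr F_s^\alpha}$ remains surjective onto all of $\RR H^2_\alpha$ after Fox completion (including at the cusps coming from stable-but-singular sextics), and that the stabilizer-versus-reflection-group identification $L_f \cong G(x)$ holds at every stable point. For cubic surfaces these are \cite[Lemmas 10.3 and 11.3]{realACTsurfaces}, proved by a careful analysis of the discriminant and of the extension of the period map across it; here one must check that the arguments of \cite{realACTnonarithmetic}, \cite{ACTsurfaces} and \cite{DeligneMostow} for binary sextics with a marked double root carry through, in particular that the real locus of the Fox completion behaves as expected and that connectedness of $\mr F_0$ (Lemma~\ref{F0-connected}) propagates to the needed transitivity statements. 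Once those inputs are secured, the remainder of the proof is routine.
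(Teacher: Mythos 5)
Your proposal matches the paper's (unwritten) proof: the paper itself only says that the theorem follows by repeating the cubic-surface argument of Section \ref{sec:cubic} with the Deligne--Mostow period map for binary sextics with a double root at $\infty$, which is exactly your plan, including the need to re-establish the analogues of \cite[Lemmas 10.3 and 11.3]{realACTsurfaces} in this setting and to know that $\set{\alpha_{2,0},\alpha_{2,1},\alpha_{2,2}}$ is a \emph{complete} set of representatives of $L \sm P\mr A$ (this is Theorem \ref{useful-theorem}, proved via Theorem \ref{theorem:non-empty}, not just the distinctness statement of Lemma \ref{lemma:conjinv:intro}). One small correction for item 2: Mostow rigidity is unavailable for $\PO(2,1)$, so you must rely on your second justification --- the identification with $\mr N_s^\RR(\infty)$ is an isomorphism of complete hyperbolic orbifolds (not merely a homeomorphism) for both $\Gamma^2_{\rm{eis}}$ and $\Gamma^\RR_\infty$, and such an orbifold isomorphism lifts to an isometry of $\RR H^2$ conjugating the two lattices.
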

\begin{proof}
Using the set-up and results above, together with the results of \cite[Section 5]{realACTnonarithmetic}, one can prove this in a way similar to the way in which we proved Theorem \ref{theorem:reprove-ACT} in Section \ref{sec:cubic} above; we omit the details. 
\end{proof}

\subsection{Proof of Theorem \ref{theorem:introduction:non-arithmeticity}}  \label{section:proof-of-theorem}

In the proof of Theorem \ref{theorem:introduction:non-arithmeticity}, that we shall provide below, key inputs are Theorem \ref{theorem:totally-geodesic-standard}, Theorem \ref{theorem:connection-ACT}, and the following result.

\begin{theorem} \label{useful-theorem}
Consider the Eisenstein hermitian lattice $\mr L^2_{\eis} = (\QQ(\zeta_3), \ZZ[\zeta_3]^{2,1})$ of rank three. Each anti-unitary involution of $\ZZ[\zeta_3]^{2,1}$ is $\Aut(\ZZ[\zeta_3]^{2,1})$-conjugate to exactly one of the $\pm \alpha_{2,i}$, where the anti-unitary involutions $\alpha_{2,i}$ are defined in equation \eqref{def:beta}. In other words, Condition \ref{condition:connected} holds in this case.  
\end{theorem}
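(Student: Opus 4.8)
Since $\QQ(\sqrt{-3}) = \QQ(\zeta_3)$ and $\OO_{\QQ(\sqrt{-3})} = \ZZ[\zeta_3]$, the Eisenstein lattice $\mr L^2_{\eis}$ coincides with the standard hermitian lattice $\mr L^2_{\rm{qd}}(3)$ of Definition \ref{def:standard}; hence Lemma \ref{lemma:conjinv:intro} already shows that the six anti-unitary involutions $\pm\alpha_{2,i}$ $(i = 0,1,2)$ of \eqref{def:beta} are pairwise non $\Aut(\ZZ[\zeta_3]^{2,1})$-conjugate. So the content of the theorem is the \emph{exhaustiveness}: every anti-unitary involution $\alpha$ of $\Lambda := \ZZ[\zeta_3]^{2,1}$ is $\Aut(\Lambda)$-conjugate to one of the $\pm\alpha_{2,i}$. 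The plan is to control $\alpha$ through the integral lattice $M_\alpha := \Lambda^\alpha$, equipped with the $\ZZ$-valued symmetric bilinear form $b_\alpha := h|_{M_\alpha}$, which has rank $3$ and signature $(2,1)$.

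The key step is a \emph{reconstruction} statement: the pair $(\Lambda, \alpha)$ is functorially recovered from $(M_\alpha, b_\alpha)$. Indeed, $\Lambda \otimes_\ZZ \QQ = M_\alpha \otimes_\ZZ \QQ(\zeta_3)$ as hermitian $\QQ(\zeta_3)$-spaces (the real structure being $\alpha$), and I claim $\Lambda$ is the unique $\ZZ[\zeta_3]$-lattice $N$ with $M_\alpha \subseteq N \subseteq M_\alpha \otimes \QQ(\zeta_3)$ which is $\alpha$-stable, satisfies $N^\alpha = M_\alpha$, and on which $h$ is $\ZZ[\zeta_3]$-valued and unimodular. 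I would prove this prime by prime: at every $p$ with $p \neq 3$ the extension $\QQ(\zeta_3)/\QQ$ is unramified, so by Galois descent $N_p = (M_\alpha)_p \otimes_{\ZZ_p} \ZZ_p[\zeta_3]$ and $(b_\alpha)_p$ is unimodular; at $p = 3$, where $\mathfrak p = (\sqrt{-3})$ is ramified, the claim reduces to a direct analysis of semilinear involutions of the rank-three unimodular hermitian $\ZZ_3[\sqrt{-3}]$-lattice of discriminant $-1$. Granting the reconstruction, $\alpha$ and $\beta$ are $\Aut(\Lambda)$-conjugate if and only if $(M_\alpha, b_\alpha) \cong (M_\beta, b_\beta)$ as abstract integral lattices: any such isometry extends $\QQ(\zeta_3)$-linearly to an isometry of $\Lambda \otimes \QQ$ carrying $\alpha$ to $\beta$, and it preserves $\Lambda$ by uniqueness.

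It then remains to enumerate the integral lattices $(M_\alpha, b_\alpha)$. First, $M_{\alpha_{2,i}} = (\ZZ^3, \Psi_i^2)$ and $M_{-\alpha_{2,i}} = (\ZZ^3, -3x_0^2 + x_1^2 + \cdots + x_i^2 + 3x_{i+1}^2 + \cdots + 3x_2^2)$ for $i = 0,1,2$, because $\{x \in \ZZ[\zeta_3] : \bar x = -x\} = \sqrt{-3}\,\ZZ$ and $h(\sqrt{-3}\,m, \sqrt{-3}\,m) = 3m^2$; these six lattices are pairwise non-isometric, having pairwise distinct $3$-adic Jordan types. Now for general $\alpha$: since $M_\alpha$ is $\ZZ_p$-unimodular for all $p \neq 3$, its genus is determined by its $\ZZ_3$-genus, and I would show --- using the reconstruction constraint that $M_\alpha$ admits a unimodular hermitian $\ZZ_3[\sqrt{-3}]$-extension with fixed lattice $M_\alpha$ --- that the only possibilities are the six $3$-adic types above; the invariants $d(\alpha) = \dim_{\FF_3}\bigl(M_\alpha/(M_\alpha \cap \sqrt{-3}\,\Lambda)\bigr)$ and $t(\alpha)$ from Lemma \ref{lemma:conjinv:intro} already take six distinct values on the $\pm\alpha_{2,i}$ and provide the bookkeeping. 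Finally, each of these six genera of indefinite ternary $\ZZ$-lattices consists of a single class (a routine verification, these being diagonal forms with entries in $\{\pm 1, \pm 3\}$), so $M_\alpha$ is isometric to exactly one $M_{\pm\alpha_{2,i}}$, and $\alpha$ is $\Aut(\Lambda)$-conjugate to exactly one $\pm\alpha_{2,i}$. Alternatively, one can extract this classification from the work of Allcock--Carlson--Toledo on the moduli of real binary quintics, where these anti-unitary involutions of $\ZZ[\zeta_3]^{2,1}$ occur as the real structures.

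The main obstacle is the local analysis at the ramified prime $3$: establishing both that an $\alpha$-equivariant unimodular hermitian $\ZZ_3[\sqrt{-3}]$-extension of $(M_\alpha)_3$ with the prescribed fixed lattice is unique, and that exactly the six expected $\ZZ_3$-lattices occur as $(\Lambda \otimes \ZZ_3)^\alpha$, requires keeping careful track of Jordan decompositions over $\ZZ_3[\sqrt{-3}]$ together with the $2$-adic gluing between the $\alpha$-invariant and $\alpha$-anti-invariant parts of $\Lambda$. The remaining ingredients --- descent away from $3$, the extension of isometries, and the one-class-per-genus check for these small indefinite ternary forms --- are routine.
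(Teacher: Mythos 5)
Your strategy (classify $\alpha$ by the fixed $\ZZ$-lattice $(M_\alpha,b_\alpha)=(\Lambda^\alpha,h|_{\Lambda^\alpha})$, reconstruct $(\Lambda,\alpha)$ from it, then run genus theory) is genuinely different from the paper's, and the easy half — that $\mr L^2_{\eis}=\mr L^2_{\rm{qd}}(3)$ and that Lemma \ref{lemma:conjinv:intro} makes the six $\pm\alpha_{2,i}$ pairwise non-conjugate, with the correct fixed lattices $\rm{diag}(-1,3,\dotsc)$ and $\rm{diag}(-3,1,\dotsc)$ — is right. But as written the argument has a real gap: the two statements on which everything rests are only announced, not proved. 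Namely (i) the uniqueness, at the ramified prime $3$, of an $\alpha$-stable unimodular hermitian $\ZZ_3[\sqrt{-3}]$-lattice $N$ with prescribed fixed lattice $N^\alpha=(M_\alpha)_3$ (your ``reconstruction'', without which the criterion ``$\alpha\sim\beta$ iff $M_\alpha\cong M_\beta$'' has no backward direction), and (ii) the enumeration showing that only the six expected $3$-adic isometry types can occur as $(\Lambda\otimes\ZZ_3)^\alpha$. You defer both (``I would prove'', ``I would show'') and yourself flag them as the main obstacle; away from $3$ everything is forced by unramifiedness, so these local computations at $3$ are precisely the mathematical content of the theorem in your approach, not bookkeeping. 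In addition, ``one class per genus'' for indefinite ternary lattices is not automatic — a genus can contain several spinor genera — so even that step needs Eichler's spinor-genus theorem plus a verification (true for these diagonal forms of determinant dividing $27$, but it must be checked), not just the word ``routine''. So the proposal is a plausible programme rather than a complete proof.

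For comparison, the paper proves the exhaustiveness by an entirely geometric route: it realizes $\ZZ[\zeta_3]^{1,2}$ as $\rm H^1$ of the $\mu_3$-covers of $\PP^1$ attached to binary sextics with a double root at $\infty$, uses the Picard/Deligne--Mostow isomorphism $G\sm\mr F_s\cong B^2$ (Theorem \ref{theorem:DM-infinity}) to show that every anti-unitary involution has non-empty fixed locus in the framed moduli space (Theorem \ref{theorem:non-empty}, whose inputs are that a real hyperbolic $2$-ball cannot lie in the image of the discriminant and that $\rm H^1(\ZZ/2,G)=0$, via Lemma \ref{lemma:H1lemma}), and then bounds the number of classes in $L\sm P\mr A$ by the number of connected components of the space of real smooth sextics of this type, which is $3$; combined with Lemma \ref{lemma:conjinv:intro} this gives exactly three classes in $P\mr A$, hence the six classes $\pm\alpha_{2,i}$ in $\mr A$. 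Your closing remark — extracting the classification from the real moduli of binary quintics/sextics à la Allcock--Carlson--Toledo — is essentially this route, but it too has to be carried out (in particular the non-emptiness of every real locus), so it cannot be invoked as a one-line alternative. If you want to keep the arithmetic approach, the way to make it complete is to actually do the Jordan-type analysis of anti-unitary involutions of the unimodular hermitian $\ZZ_3[\sqrt{-3}]$-lattice of rank $3$ and the accompanying uniqueness of the unimodular extension; that would yield an independent and arguably more elementary proof.
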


To prove Theorem \ref{useful-theorem}, the idea is to use 
the ball quotient uniformization of Theorem \ref{theorem:DM-infinity}. 
More precisely, we will deduce Theorem \ref{useful-theorem} from the following consequence of Theorem \ref{theorem:DM-infinity}.  
For an anti-unitary involution $\alpha$ on $\ZZ[\zeta_3]^{1,2}$, define an anti-holomorphic involution as follows:
\[
\alpha \colon \mr F_0 \longrightarrow \mr F_0, \quad \quad \alpha(F,[i]) = (\kappa \cdot F, [\alpha \circ i \circ \kappa^\ast]), 
\]
where $\kappa\cdot F$ denotes the binary sextic whose coefficients are the complex conjugates of the coefficients of $F$, and where $\kappa^\ast \colon \rm H^1(C_{\kappa F},\ZZ) \cong \rm H^1(C_F,\ZZ)$ is the pull-back of the canonical anti-holomorphic map $\kappa \colon C_F \xrightarrow{\sim} C_{\kappa F}$. 


\begin{theorem} \label{theorem:non-empty}
For each anti-unitary involution $\alpha$ on $\ZZ[\zeta_3]^{1,2}$, the fixed locus $\mr F_0^\alpha$ of $\alpha$ is non-empty. 
\end{theorem}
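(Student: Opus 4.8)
The plan is to reduce the statement to a surjectivity property of a suitable real monodromy representation, in the same spirit as the proof that $\mr F_0$ is connected (Lemma \ref{F0-connected}). Recall that $\mr F_0 \to \ca P_{\infty,\rm{sm}}$ is the covering associated to the local system of $\ZZ[\zeta_3]$-modules with fibre $\rm H^1(C_F,\ZZ) \cong \ZZ[\zeta_3]^{1,2}$, and that the monodromy $\rho \colon \pi_1(\ca P_{\infty,\rm{sm}}, F_0) \to \Aut(\ZZ[\zeta_3]^{1,2})/\langle -\zeta_3\rangle$ is surjective by Lemma \ref{F0-connected}. Fix a real basepoint $F_0 \in \ca P_{\infty,\rm{sm}}^\RR$, i.e.\ a real binary sextic with a double root at $\infty$ and otherwise distinct roots; complex conjugation on the base gives an anti-holomorphic involution $\kappa$ of $\ca P_{\infty,\rm{sm}}$ fixing $F_0$, which lifts to the anti-holomorphic involution $\alpha_{F_0} \colon \mr F_0 \to \mr F_0$ attached to the anti-unitary involution $\alpha_0 \coloneqq \kappa^\ast$ on $\rm H^1(C_{F_0},\ZZ) \cong \ZZ[\zeta_3]^{1,2}$ determined by the chosen framing at $F_0$. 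The point $(F_0,[\id]) \in \mr F_0$ lies in $\mr F_0^{\alpha_0}$, so at least one fixed locus is non-empty; the task is to promote this to all anti-unitary involutions.

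The key step is the following: for a general point $(F,[i]) \in \mr F_0$ with $F \in \ca P_{\infty,\rm{sm}}^\RR$ real, the point $(F,[i])$ is fixed by the anti-holomorphic involution attached to the anti-unitary involution $i \circ \kappa^\ast_F \circ i^{-1}$ of $\ZZ[\zeta_3]^{1,2}$, where $\kappa^\ast_F$ is the anti-unitary involution induced on $\rm H^1(C_F,\ZZ)$ by complex conjugation of $C_F$. Thus the set of anti-unitary involutions $\alpha$ with $\mr F_0^\alpha \neq \emptyset$ contains all conjugates $i \circ \kappa^\ast_F \circ i^{-1}$ as $(F,[i])$ ranges over the real fibres. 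Now I would argue that the collection of anti-unitary involutions $\kappa^\ast_F$ (for $F$ real) is acted on transitively enough by the image of $\rho$: concretely, given two real points $F_0, F_1$ in the same connected component of $\ca P_{\infty,\rm{sm}}^\RR$ one can deform, and for different components one uses that the real monodromy — the stabilizer in $\pi_1(\ca P_{\infty,\rm{sm}})$ of the real structure, or rather the full $\pi_1$ together with its interaction with $\kappa$ — realizes all $\Aut(\ZZ[\zeta_3]^{1,2})/\langle -\zeta_3\rangle$-conjugates of $\kappa^\ast_{F_0} = \alpha_0$. Since by nonabelian cohomology (Proposition \ref{proposition:galois}, with $G = \Gal(\QQ(\zeta_3)/\QQ)$) every anti-unitary involution $\alpha$ is $\Aut(\ZZ[\zeta_3]^{1,2})$-conjugate to some $\alpha' = \gamma \cdot \alpha_0$ with $\gamma \cdot \sigma(\gamma) = \id$, and since the relevant class set maps onto $\Aut\backslash \mr A$, matching these up shows every $\alpha$ arises, hence $\mr F_0^\alpha \neq \emptyset$.

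In carrying this out, the first thing I would do is make precise the statement ``$(F,[i])$ is fixed by the involution attached to $i\circ\kappa^\ast_F\circ i^{-1}$'': this is essentially unwinding the definition of $\alpha \colon \mr F_0 \to \mr F_0$ given just before the theorem, $\alpha(F,[i]) = (\kappa\cdot F, [\alpha\circ i\circ \kappa^\ast])$, and observing $\alpha(F,[i]) = (F,[i])$ iff $\kappa\cdot F = F$ (i.e.\ $F$ real) and $\alpha \circ i \circ \kappa^\ast \equiv i$ modulo $\langle -\zeta_3\rangle$, i.e.\ $\alpha = i \circ (\kappa^\ast)^{-1} \circ i^{-1}$ up to a root of unity; since $\kappa^\ast$ is an involution this reads $\alpha = i\circ\kappa^\ast_F\circ i^{-1}$. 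Then I would invoke the surjectivity of $\rho$ to move the framing $[i]$ over a fixed real $F_0$ through all of $L' \coloneqq \Aut(\ZZ[\zeta_3]^{1,2})/\langle-\zeta_3\rangle$, which already produces all conjugates of $\alpha_0$. The main obstacle — and the place where some genuine input is needed rather than formal manipulation — is to show that \emph{every} anti-unitary involution is such a conjugate of $\alpha_0$; equivalently, that $\rm H^1(G,\Aut(\ZZ[\zeta_3]^{1,2}))$ has a single class, or at least that the classes that do occur as $\kappa^\ast_F$ exhaust $\Aut\backslash\mr A$. For this I would either appeal to an explicit classification of anti-unitary involutions of $\ZZ[\zeta_3]^{1,2} \cong \ZZ[\zeta_3]^{2,1}$ (which is anyway needed for Theorem \ref{useful-theorem} and can be done by hand using the $d(\alpha), t(\alpha)$ invariants of Lemma \ref{lemma:conjinv:intro} together with a direct argument that $\rm H^1(G,\Aut)$ is trivial for this small lattice), or — more in keeping with the Deligne--Mostow viewpoint — use that the real moduli space $\mr N_s^\RR(\infty) = G^\RR\backslash\ca P_\infty^\RR$ surjects, via the period map of Theorem \ref{theorem:DM-infinity}, onto $M(\mr L_\eis^2)$, and that the components of $\ca P_{\infty,\rm{sm}}^\RR$ account for all $\alpha$-fixed loci; one checks the latter by the classical count of connected components of the space of real binary forms with prescribed root configuration and comparing with $\#(L'\backslash P\mr A)$. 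I expect the cleanest route is the direct lattice-theoretic classification, deferring to the proof of Theorem \ref{useful-theorem}, with the present theorem then following because each $\pm\alpha_{2,i}$ visibly arises from a real sextic (write down an explicit real $F$ whose associated $\kappa^\ast_F$ has the right $(d,t)$-invariants).
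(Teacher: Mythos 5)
Your reduction of the statement ``$(F,[i])\in\mr F_0^\alpha$'' to ``$F$ real and $[\alpha]=[i\circ\kappa_F^\ast\circ i^{-1}]$'' is correct, and moving the framing over a fixed real sextic by the monodromy of Lemma \ref{F0-connected} does produce fixed points for every involution in the conjugacy class of $\kappa_{F_0}^\ast$. But the proposal has a genuine gap at exactly the step you flag as the ``main obstacle'': you must show that \emph{every} conjugacy class of anti-unitary involutions of $\ZZ[\zeta_3]^{1,2}$ is realized as $i\circ\kappa_F^\ast\circ i^{-1}$ for some real $F$, and neither of your proposed fixes works as stated. Deferring to the classification in Theorem \ref{useful-theorem} is circular: in the paper that theorem is deduced from the present one (via Corollary \ref{corollary:thenthere}, which gives the surjection $\pi_0(\ca P_{\infty,\rm{sm}}^\RR)\twoheadrightarrow L\setminus P\mr A$), so it cannot be used as input here. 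Likewise, comparing the three components of $\ca P^\RR_{\infty,\rm{sm}}$ with $\#(L\setminus P\mr A)$ presupposes the bound $\#(L\setminus P\mr A)\le 3$, which is again the content being proved. Finally, the parenthetical claim that $\rm H^1(\Gal(\QQ(\zeta_3)/\QQ),\Aut(\ZZ[\zeta_3]^{1,2}))$ is trivial is false: by Proposition \ref{proposition:galois} this set is $\Aut(\Lambda)\backslash\mr A$, and Lemma \ref{lemma:conjinv:intro} already exhibits six distinct classes $\pm\alpha_{2,i}$, so a one-class argument cannot exist. An independent lattice-theoretic determination of the class set would indeed close the gap, but it is a nontrivial computation that you do not supply.

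The paper's proof sidesteps the classification entirely, and this is why it is not circular: fix an arbitrary $\alpha$, transport it to an anti-holomorphic involution of $B^2$ via the Deligne--Mostow isomorphism $g\colon G\setminus\mr F_s\xrightarrow{\sim}B^2$ (Theorem \ref{theorem:DM-infinity}), and observe that its fixed locus $B^2_\alpha$, a real hyperbolic plane, cannot be contained in the image $g(\Delta)$ of the discriminant, since the latter is a proper closed complex analytic subset; hence $(G\setminus\mr F_0)^\alpha\neq\emptyset$. One then lifts a fixed point of the quotient to a fixed point of $\mr F_0$ itself by the descent Lemma \ref{lemma:H1lemma}, whose hypotheses are checked by the Torelli theorem ($G$ acts freely on $\mr F_0$) and an explicit computation that $\rm H^1(\ZZ/2,G)=0$ for $G=\GL_2(\CC)_\infty/\langle\zeta_6\rangle$. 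If you want to keep your monodromy-based strategy, you would need to replace the appeal to Theorem \ref{useful-theorem} by a self-contained computation of $\Aut(\ZZ[\zeta_3]^{1,2})\backslash\mr A$ (or at least show directly that every class contains an involution of the form $i\circ\kappa_F^\ast\circ i^{-1}$), which is precisely the work the paper's analytic argument avoids.
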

Before we prove Theorem \ref{theorem:non-empty}, we show that it implies Theorem \ref{useful-theorem}. To explain this, let, as above, $\ca P^\RR_\infty \subset \ca P_\infty$ be the space of polynomials $F \in \ca P_\infty$ whose coefficients are real, and $\ca P^\RR_{\infty, \rm{sm}} \subset \ca P^\RR_{\infty}$ the open subspace of polynomials $F \in \ca P^\RR_\infty$ such that apart from $\infty$, all roots of $F$ have multiplicity one. Then $\ca P^\RR_{\infty, \rm{sm}}$ has three connected components $\ca P^\RR_{\infty, i}$ $(i = 0,1,2)$ such that $\ca P^\RR_{\infty, i}$ parametrizes real polynomials with exactly $i$ pairs of complex conjugate roots (and $4-2i$ real roots not equal to $\infty$). 
\begin{corollary} \label{corollary:thenthere}
Let $\alpha \colon \ZZ[\zeta_3]^{1,2} \to \ZZ[\zeta_3]^{1,2}$ be an anti-unitary involution. Then there exists a real binary sextic with double root at infinity and no other double roots $F \in \ca P_{\infty, \rm{sm}}^\RR$ and an isometry $i \colon \rm H^1(C_F,\ZZ) \cong \ZZ[\zeta_3]^{1,2}$ such that the composition 
\[
\xymatrix{
\ZZ[\zeta_3]^{1,2} \ar[r]_-{\sim}^-{i^{-1}} & \rm H^1(C_F,\ZZ) \ar[r]_-{\sim}^-{\kappa^\ast}& \rm H^1(C_F,\ZZ) \ar[r]_-{\sim}^-{i} & \ZZ[\zeta_3]^{1,2}
}
\]
agrees with $\alpha$ up to multiplication by $\langle -\zeta_3 \rangle$ (i.e., $[\alpha] = [i \circ \kappa^\ast \circ i^{-1}] \in P\mr A$).
\end{corollary}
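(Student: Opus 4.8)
The plan is to obtain Corollary \ref{corollary:thenthere} as an essentially formal consequence of Theorem \ref{theorem:non-empty}, by unwinding the definitions of the space $\mr F_0$ and of the anti-holomorphic involution $\alpha \colon \mr F_0 \to \mr F_0$ attached to an anti-unitary involution $\alpha$ of $\ZZ[\zeta_3]^{1,2}$. No additional geometric input is needed beyond Theorem \ref{theorem:non-empty}; the real content is hidden there and is proved afterwards via the Deligne--Mostow uniformization of Theorem \ref{theorem:DM-infinity}.

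First I would apply Theorem \ref{theorem:non-empty} to produce a point $(F,[i]) \in \mr F_0$ with $\alpha(F,[i]) = (F,[i])$. By the definition $\alpha(F,[i]) = (\kappa \cdot F,\ [\alpha \circ i \circ \kappa^\ast])$, this fixed-point equation is equivalent to the two conditions $\kappa \cdot F = F$ and $[\alpha \circ i \circ \kappa^\ast] = [i]$ as $\langle -\zeta_3\rangle$-equivalence classes of isometries $\rm H^1(C_F,\ZZ) \xrightarrow{\sim} \ZZ[\zeta_3]^{1,2}$. The first condition says that the binary sextic obtained from $F$ by complex-conjugating its coefficients equals $F$, i.e.\ that $F$ has real coefficients; since $(F,[i])$ lies in $\mr F_0$, which covers $\ca P_{\infty,\rm{sm}}$, the polynomial $F$ moreover lies in $\ca P_{\infty,\rm{sm}}$, so altogether $F \in \ca P_{\infty,\rm{sm}}^\RR$. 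This $F$, together with the framing $i \colon \rm H^1(C_F,\ZZ) \xrightarrow{\sim} \ZZ[\zeta_3]^{1,2}$, is the real binary sextic with double root at $\infty$ and no other double roots required by the statement.

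Then I would read off the conclusion from the second condition. Since $\kappa \colon C_F \to C_{\kappa F} = C_F$ is an anti-holomorphic involution of $C_F$, its pull-back satisfies $\kappa^\ast \circ \kappa^\ast = \id$ on $\rm H^1(C_F,\ZZ)$; and since $\kappa$ conjugates the cyclic covering automorphism $(x,y,z)\mapsto(x,y,\zeta_3 z)$ to its inverse and is anti-isometric for the hermitian form $h(x,y)=i\sqrt 3\int x\wedge\bar y$, the composite $i \circ \kappa^\ast \circ i^{-1}$ is an anti-unitary involution of $\ZZ[\zeta_3]^{1,2}$, i.e.\ represents a class in $P\mr A$. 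Now $[\alpha \circ i \circ \kappa^\ast] = [i]$ means $\alpha \circ i \circ \kappa^\ast = \xi \cdot i$ for some $\xi \in \langle -\zeta_3\rangle$, hence $\alpha = \xi \cdot (i \circ \kappa^\ast \circ i^{-1})$ using $(\kappa^\ast)^{-1} = \kappa^\ast$; this is precisely the assertion that the composite $\ZZ[\zeta_3]^{1,2} \xrightarrow{i^{-1}} \rm H^1(C_F,\ZZ) \xrightarrow{\kappa^\ast} \rm H^1(C_F,\ZZ) \xrightarrow{i} \ZZ[\zeta_3]^{1,2}$ agrees with $\alpha$ up to multiplication by $\langle -\zeta_3\rangle$.

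I do not anticipate a genuine obstacle at this stage: the argument is a definition chase once Theorem \ref{theorem:non-empty} is in hand. The only points that require a sentence of justification — the involutivity, the conjugate-linearity over $\ZZ[\zeta_3]$, and the anti-isometry property of $\kappa^\ast$ — are immediate from the explicit description of the curves $C_F$ and their $\ZZ/3$-action recalled in Section \ref{section:binary}. The genuinely hard work lies in Theorem \ref{theorem:non-empty}, which asserts non-emptiness of $\mr F_0^\alpha$ for every anti-unitary involution $\alpha$; that is where I would expect to need the ball-quotient uniformization and a careful analysis of real forms, but it is not part of the present statement.
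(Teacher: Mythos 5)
Your proposal is correct and follows exactly the paper's own argument: invoke Theorem \ref{theorem:non-empty} to get a fixed point $(F,[i]) \in \mr F_0^\alpha$, then unwind $\alpha(F,[i]) = (\kappa\cdot F, [\alpha\circ i\circ\kappa^\ast]) = (F,[i])$ to conclude $F \in \ca P_{\infty,\rm{sm}}^\RR$ and $[\alpha] = [i\circ\kappa^\ast\circ i^{-1}]$. The extra remarks you include on involutivity and anti-unitarity of $\kappa^\ast$ are harmless elaborations of the same definition chase.
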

\begin{proof}
Indeed, we have that $\mr F_0^\alpha$ is non-empty by Theorem \ref{theorem:non-empty}. Therefore, there exists $(F,[i]) \in \mr F_0$ such that $(\kappa \cdot F, [\alpha \circ i \circ \kappa^\ast]) = \alpha (F,[i]) = (F,[i])$. This implies that $F \in \ca P_{\infty, \rm{sm}}^\RR$ and that $[\alpha \circ i \circ \kappa^\ast] = [i]$, i.e., we have $[\alpha] = [i \circ \kappa^\ast \circ i^{-1}]$. 
\end{proof}

\begin{proof}[Proof of Theorem \ref{useful-theorem}]
Let $\mr A$ be the set of anti-unitary involutions of $\ZZ[\zeta_3]^{1,2}$, let $P\mr A$ be its quotient by $\langle - \zeta_3 \rangle$, 
and define $L = \Aut(\ZZ[\zeta_3]^{1,2})/\langle - \zeta_3 \rangle$. 
Notice that $P\mr A$ equals the set of $\langle -\zeta_3 \rangle$-equivalence classes of anti-unitary involutions $\ZZ[\zeta_3]^{2,1}$, and that $L = \Aut(\ZZ[\zeta_3]^{2,1})/\langle - \zeta_3 \rangle$. By Lemma \ref{lemma:conjinv:intro}, we have that the classes $[\alpha_{2,i}] \in L \sm P\mr A$ for $i = 0,1,2$ are pairwise distinct. Thus, we have $\# (L \sm P\mr A) \geq 3$, and to prove Theorem \ref{useful-theorem}, it suffices to prove that $\# (L \sm P\mr A) \leq 3$. 

To this end, consider the space $\ca P_{\infty, \rm{sm}}^\RR = \ca P^\RR_{\infty, 0} \sqcup \ca P^\RR_{\infty, 1} \sqcup \ca P^\RR_{\infty, 2}$ defined above. For $F  \in \ca P_{\infty, \rm{sm}}^\RR$, choose an isometry $i \colon \rm H^1(C_F,\ZZ) \cong \ZZ[\zeta_3]^{1,2}$, and let $\alpha(F) \colon \ZZ[\zeta_3]^{1,2} \to \ZZ[\zeta_3]^{1,2}$ be the anti-unitary involution defined as $\alpha(F) = i \circ \kappa^\ast \circ i^{-1}$, where $\kappa \colon C_F(\CC) \to C_F(\CC)$ is the natural anti-holomorphic involution of the Riemann surface $C_F(\CC)$. 
Then this construction yields a well-defined function
\begin{align} \label{align:def-a-funct}
\ca P_{\infty, \rm{sm}}^\RR \longrightarrow L \sm P\mr A, \quad \quad F \mapsto \alpha(F). 
\end{align}
By Corollary \ref{corollary:thenthere}, the function \eqref{align:def-a-funct} is surjective. Moreover, the $L$-conjugacy class $[\alpha(F)] \in L \sm P\mr A$ of $\alpha \in P\mr A$ only depends on the connected component of $\ca P_{\infty, \rm{sm}}^\RR $ in which $F$ lies, hence the map \eqref{align:def-a-funct} factors through a surjective map 
\[
\set{0,1,2} = \pi_0\left( \ca P_{\infty, \rm{sm}}^\RR  \right)  \longtwoheadrightarrow L \sm P\mr A. 
\]
It follows that $\# \left( L \sm P\mr A \right) \leq 3$, concluding the proof of the theorem.
\end{proof}

\begin{proof}[Proof of Theorem \ref{theorem:non-empty}]
Define $\Delta_{\mr F_s} = \mr F_s - \mr F_0$. Then $\Delta_{\mr F_s}$ is a closed analytic subset of $\mr F_s$, and the quotient $\Delta \coloneqq G \sm \Delta_{\mr F_s}$ is a closed analytic subset of $G \sm \mr F_s$. Consider the period map $g \colon G \sm \mr F_s \to B^2$, which is an isomorphism by Theorem \ref{theorem:DM-infinity}, and define $U \subset B^2$ as the complement of $g(\Delta)$ in $B^2$. 

Let $\alpha \colon \ZZ[\zeta_3]^{1,2} \to \ZZ[\zeta_3]^{1,2}$ be an anti-unitary involution. Then $\alpha$ induces an anti-holomorphic involution $\alpha \colon B^2 \to B^2$; let $B^2_\alpha \coloneqq (B^2)^\alpha$ denote its fixed locus. Then $B^2_\alpha$ is naturally isometric to real hyperbolic space of dimension two, see Lemma \ref{hyperbolic}. Let $U^\alpha \coloneqq B^2_\alpha \cap U$. We claim that $U^\alpha \neq \emptyset$. Indeed, if $U^\alpha = \emptyset$, then $B^2_\alpha \subset g(\Delta)$ which is absurd since $g(\Delta) \subsetneq B^2$ is a properly contained closed complex analytic subset of $B^2$, hence $g(\Delta)$ cannot contain a closed real analytic submanifold of $B^2$ which is real analytically isomorphic to a real two-ball. 

By the naturality of the Fox completion, the anti-holomorphic involution $\alpha \colon \mr F_0 \to \mr F_0$ extends to an anti-holomorphic involution $\alpha \colon \mr F_s \to \mr F_s$ which is compatible with the natural anti-holomorphic involution $\kappa \colon G \to G$ hence descends to an anti-holomorphic involution $\alpha \colon G \sm \mr F_s \to G \sm \mr F_s$. Moreover, the period map $g \colon G \sm \mr F_s \xrightarrow{\sim} B^2$ is equivariant for the anti-holomorphic involutions $\alpha \colon G \sm \mr F_s \to G \sm \mr F_s$ and $\alpha \colon B^2 \to B^2$, and induces an isomorphism $g \colon G \sm \mr F_0 \xrightarrow{\sim} U$. In particular, 
\[
(G \sm \mr F_0)^\alpha \cong U^\alpha \neq \emptyset. 
\]
Consequently, to prove the theorem, it suffices to show that the natural map 
\begin{align}
G^\kappa \sm \mr F_0^\alpha \longrightarrow (G \sm \mr F_0)^\alpha 
\end{align}
is an isomorphism. For this, by Lemma \ref{lemma:H1lemma} below, it suffices to show that $G$ acts freely on $\mr F_0$ and that $\rm H^1(\ZZ/2, G) = 0$, where the latter denotes degree one non-abelian group cohomology, and where $\ZZ/2$ acts on $G$ via conjugation by $\kappa \colon G \to G$.

Let us first prove that $G$ acts freely on $\mr F_0$. This holds, since if $(F, [i]) \in \mr F_0$ and $(g\cdot F, [i \circ g^\ast]) = (F, [i])$, then $g$ induces an automorphism $\tilde g \colon C_F \xrightarrow{\sim} C_F$ such that $(\tilde g)^\ast \colon \rm H^1(C_F,\ZZ) \to \rm H^1(C_F,\ZZ)$ is multiplication by some power of $-\zeta_3$, thus $\tilde g$, and hence also $g$, is some power of $-\zeta_3$ (Torelli theorem for curves). 

Thus, to finish the proof of Theorem \ref{theorem:non-empty}, it suffices to show that $\rm H^1(\ZZ/2, G) = 0$. For this, we first show that the natural map $\rm H^1(\ZZ/2, \GL_2(\CC)_\infty) \to \rm H^1(\ZZ/2, G)$ is surjective. Let $\GL_2(\CC)_\infty'$ be the set of linear and anti-linear transformations $\CC^2 \cong \CC^2$ that fix $\infty = [1\colon0] \in \PP^1(\CC)$, and let $G' = \GL_2(\CC)_\infty'/\langle - \zeta_3 \rangle$. Let $\chi_0 \in G$ such that $\chi_0 \kappa(\chi_0) = e$. Then $\chi \coloneqq \chi_0 \circ \kappa \in G'$ satisfies $\chi^2 = \chi_0\kappa \chi_0 \kappa = \chi_0\kappa(\chi_0) = e$. Let $\gamma \in \GL_2(\CC)_\infty'$ be an element that maps to $\hat \chi \in G'$. Then $\gamma$ is anti-linear since $\hat \chi$ is anti-holomorphic. Note that $\gamma^2 = \zeta_6^i$ for some $i \in \ZZ/6$ (because $\gamma$ maps to $\hat \chi$, which is an involution). Define $\alpha = \gamma^3$. Then $\alpha$ maps to $(\hat \chi)^3 = \hat \chi$ and $\alpha^2 = (\gamma^2)^3 = (\zeta_6)^{3i} = (-1)^i$. Thus $\alpha^2 = \pm 1$. We claim that $\alpha^2 = 1$. Assume for a contradiction that $\alpha^2 = -1$. Write $\alpha = \kappa \circ \beta$ for some $\beta \in \GL_2(\CC)$, where $\kappa$ is complex conjugation. Note that $\beta \in \GL_2(\CC)_\infty$, hence $\beta = \begin{psmallmatrix}b_1 & b_2\\0 & b_4\end{psmallmatrix}$ for some $b_1, b_2, b_4 \in \CC$. Then $\alpha^2 = \kappa \beta \kappa \beta = \beta \circ \bar \beta = -1$ which implies that $\va{b_1}^2 = \va{b_4}^2 = -1$, which is a contradiction. Hence $\alpha^2 = 1$. 
Thus, we have found an element $\alpha \in \GL_2(\CC)_\infty'$ with $\alpha^2 = \id$, and such that $[\alpha] = \chi \in G$. Define $\alpha_0 \coloneqq \alpha \circ \kappa$. Then $\alpha_0 \circ \kappa(\alpha_0) = \alpha \circ \kappa \circ \kappa(\alpha\circ  \kappa) = \alpha \circ \kappa \circ \kappa(\alpha)= \kappa(\alpha)^2 = \kappa(\alpha^2) = \id$. Hence $\alpha_0$ defines an element in $\rm H^1(\ZZ/2, \GL_2(\CC)_\infty)$ that maps to the class of $\chi_0$ in $\rm H^1(\ZZ/2, G)$ under the natural map $\rm H^1(\ZZ/2, \GL_2(\CC)_\infty) \to \rm H^1(\ZZ/2, G)$, thus, this map is surjective. 

To finish the proof, it suffices to show $\rm H^1(\ZZ/2,\GL_2(\CC)_\infty) = 0$. Let $\alpha \colon \CC^2 \to \CC^2$ an anti-linear involution that fixes $\infty$. It suffices to show that there exists $f \in \GL_2(\CC)_\infty$ such that $\alpha = f^{-1} \kappa f$. Recall that all real structures on a complex vector space are equivalent, so that there exists $h \in \GL_2(\CC)$ with $\alpha = h^{-1}\kappa h$, where $\kappa \in \GL_2(\CC)$. 
Notice that $(h^{-1}\kappa h)(\infty) = \alpha(\infty) = \infty$ implies $\kappa(h(\infty)) = h(\infty)$. In other words, we have $h(\infty) \in \PP^1(\RR) \subset \PP^1(\CC)$. Let $f_1 \in \GL_2(\RR)$ such that $f_1 (h(\infty)) = \infty$, and define $f \coloneqq f_1 \circ h$. Then $f(\infty) = \infty$. Moreover, since $f_1 \in \GL_2(\RR)$, we have $f_1^{-1} \kappa f_1 = \kappa$, so that $f^{-1} \kappa f = h^{-1}(f_1^{-1} \kappa f_1) h  = h^{-1}\kappa h = \alpha$. 

This concludes the proof of the theorem.
\end{proof}

\begin{lemma} \label{lemma:H1lemma}
Let $X$ be a complex analytic space, equipped with an anti-holomorphic involution $\alpha \colon X \to X$. Let $G$ be a complex Lie group equipped with an anti-holomorphic involution $\sigma \colon G \to G$, such that $G$ acts holomorphically on $X$ with $\alpha(g \cdot x) = \sigma(g) \cdot \alpha(x)$ for $x \in X$ and $g \in G$. 
Assume that $\rm H^1(\ZZ/2, G) = 0$ (where $\ZZ/2$ act on $G$ by conjugation by $\sigma$, and $\rm H^1(\ZZ/2, G) $ denotes the first non-abelian cohomology group of $\ZZ/2$ with coefficients in $G$), 
and that $G$ acts freely on $X$. Then the natural map $G^\sigma \sm X^\alpha \to (G\sm X)^\alpha$ is an isomorphism.  
\end{lemma}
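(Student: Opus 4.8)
The statement is a standard descent/orbit-counting fact, so the plan is to establish surjectivity and injectivity of the natural map $G^\sigma \setminus X^\alpha \to (G \setminus X)^\alpha$ by hand, using the two hypotheses: $\rm H^1(\ZZ/2, G) = 0$ for surjectivity, and freeness of the $G$-action for injectivity (together with the triviality of the relevant stabilizer cohomology, which is automatic here since stabilizers are trivial). First I would set up notation: write $\bar x$ for the image in $G \setminus X$ of a point $x \in X$, and note that since $\alpha(g \cdot x) = \sigma(g) \cdot \alpha(x)$, the involution $\alpha$ descends to a well-defined involution on $G \setminus X$ whose fixed locus $(G \setminus X)^\alpha$ is exactly the set of orbits $G \cdot x$ with $\alpha(G \cdot x) = G \cdot x$, i.e.\ $\alpha(x) = g_x \cdot x$ for some $g_x \in G$. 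The map in question sends the $G^\sigma$-orbit of $y \in X^\alpha$ to the $\alpha$-fixed orbit $\bar y$; I would first check it is well-defined, which is immediate.

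\emph{Surjectivity.} Given $\bar x \in (G \setminus X)^\alpha$, pick a representative $x \in X$; then $\alpha(x) = c \cdot x$ for a unique $c \in G$ (uniqueness by freeness). Applying $\alpha$ again and using the twisted equivariance, $x = \alpha(\alpha(x)) = \alpha(c \cdot x) = \sigma(c) \cdot \alpha(x) = \sigma(c) c \cdot x$, so by freeness $\sigma(c) c = e$, i.e.\ $c$ is a $1$-cocycle for $\ZZ/2$ acting on $G$ via $\sigma$. Since $\rm H^1(\ZZ/2, G) = 0$, there is $b \in G$ with $c = b^{-1} \sigma(b)$ (the normalization depending on the cocycle convention in \cite{serre-galoisienne}, which I would match carefully). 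Set $y \coloneqq b \cdot x$. Then $\alpha(y) = \alpha(b \cdot x) = \sigma(b) \cdot \alpha(x) = \sigma(b) c \cdot x = \sigma(b) b^{-1} \sigma(b) \cdot x$; I would do the short computation showing this equals $b \cdot x = y$, using $\sigma(b) b^{-1} \sigma(b) = b$ iff $b^{-1}\sigma(b) = \sigma(b)^{-1} b \cdot \dots$ — the point is to choose the sign convention so that $c = b^{-1}\sigma(b)$ gives $\sigma(b) c = b$, whence $\alpha(y) = b\cdot x = y$. So $y \in X^\alpha$ maps to $\bar x$, proving surjectivity.

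\emph{Injectivity.} Suppose $y_1, y_2 \in X^\alpha$ have the same image, i.e.\ $y_2 = g \cdot y_1$ for some $g \in G$. Applying $\alpha$ and using $y_i \in X^\alpha$ gives $y_2 = \alpha(y_2) = \alpha(g \cdot y_1) = \sigma(g) \cdot \alpha(y_1) = \sigma(g) \cdot y_1$. Hence $g \cdot y_1 = \sigma(g) \cdot y_1$, and by freeness $g = \sigma(g)$, i.e.\ $g \in G^\sigma$. Therefore $y_1$ and $y_2$ lie in the same $G^\sigma$-orbit, proving injectivity. Finally, since the map is a bijection and both sides carry the quotient topology (and the quotient analytic/smooth structure) induced from $X$, one checks it is a homeomorphism — in fact an isomorphism in the relevant category — by observing that it fits into the evident commutative square with the quotient maps $X^\alpha \to G^\sigma \setminus X^\alpha$ and $X^\alpha \hookrightarrow X \to G \setminus X$, the latter being open onto its image; I would spell this out only briefly.

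\emph{Main obstacle.} The genuinely delicate point is not the algebra above but getting the cocycle bookkeeping exactly right — matching Serre's convention for $\rm H^1$ of a (possibly non-abelian) group, deciding whether the coboundary is $b \mapsto b^{-1}\,{}^s b$ or $b \mapsto b \cdot {}^s b^{-1}$, and propagating that choice consistently through the twisted equivariance $\alpha(g\cdot x) = \sigma(g)\cdot\alpha(x)$ so that the displayed computation in the surjectivity step actually closes up. In the application (proof of Theorem \ref{theorem:non-empty}) one also has to know the hypotheses hold, namely $\rm H^1(\ZZ/2, G) = 0$ with $G = \GL_2(\CC)_\infty/\langle \zeta_6\rangle$ and freeness of the $G$-action on $\mr F_0$, but both of those are checked separately there, so for the lemma itself they may be assumed.
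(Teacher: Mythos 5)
Your proposal follows essentially the same route as the paper's proof: derive the cocycle relation $\sigma(c)\,c=e$ from freeness of the action and $\alpha^2=\mathrm{id}$, use $\mathrm{H}^1(\ZZ/2,G)=0$ to write $c$ as a coboundary and adjust the representative within the orbit, and obtain injectivity directly from freeness. The only correction needed is the bookkeeping point you yourself flagged: with Serre's convention $c=b^{-1}\sigma(b)$, the $\alpha$-fixed representative is $\sigma(b)\cdot x$ rather than $b\cdot x$ (indeed $\alpha(\sigma(b)x)=b\,\alpha(x)=b\,b^{-1}\sigma(b)\,x=\sigma(b)x$), which is exactly the choice made in the paper.
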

\begin{proof}
Let $x \in X$ such that its orbit $[x] \in G\sm X$ is fixed by $\alpha$. Then $\alpha(x) = gx$ for some $g \in G$. We have $x = \alpha^2(x) = \sigma(g)\alpha(x) = \sigma(g) g x$. Since $G$ acts freely on $X$, we get $\sigma(g)\cdot g = e \in G$ and hence $g \cdot \sigma(g) = e$. Since $\rm H^1(\ZZ/2, G) = 0$, the equality $g \cdot \sigma(g) = e$ implies that there exists $h \in G$ such that $g = h^{-1}\sigma(h)$. Thus, we have $\alpha(x) = gx = h^{-1}\sigma(h)x$. Therefore, $h \alpha(x) = \sigma(h)x$, and hence $\alpha(\sigma(h)x) = h \alpha(x) = \sigma(h)x$. In other words, $\sigma(h)x \in X^\alpha$. Since $[x] = [\sigma(h)x] \in (G\sm X)^\alpha$, we conclude that the map $G^\alpha \sm X^\alpha \to (G\sm X)^\alpha$ is surjective. 

To prove injectivity of this map, let $x,y \in X^\alpha$ such that $gx = y$ for some $g \in G$. Then $\sigma(g)x = \alpha(gx) = \alpha(y) = y = gx$. As $G$ acts freely on $X$, we get 
$g \in G^\sigma$. 
\end{proof}



\begin{proof}[Proof of Theorem \ref{theorem:introduction:non-arithmeticity}]
Let $n \geq 2$. 
Consider the Eisenstein hermitian lattice $\mr L^2_{\eis} = (\QQ(\zeta_3), \ZZ[\zeta_3]^{2,1})$ of rank three, and the Eisenstein hermitian lattice $\mr L^n_{\eis} = (\QQ(\zeta_3), \ZZ[\zeta_3]^{n,1})$ of rank $n+1$. Consider moreover the anti-unitary involutions $\alpha_{n,i} \colon \ZZ[\zeta_3]^{n,1} \to \ZZ[\zeta_3]^{n,1}$ defined in \eqref{def:beta}. By definition of $\Gamma^n_{\eis}$, we have $M(\mr L^n_\eis, \alpha_{n,0}) \cong \Gamma^n_{\eis} \sm \RRH^n$. 
Our goal is to prove that $\Gamma^n_{\eis}$ is non-arithmetic, i.e., that the complete connected hyperbolic orbifold $M(\mr L^n_\eis, \alpha_0)$ is non-arithmetic. 
By Theorem \ref{useful-theorem}, each anti-unitary involution of $\ZZ[\zeta_3]^{2,1}$ is $\Aut(\ZZ[\zeta_3]^{2,1})$-conjugate to exactly one of the $\pm \alpha_{2,i}$. Thus, we have $M(\mr L^2_\eis) = M(\mr L^2_\eis, \alpha_{2,0})$ by Lemma \ref{connected}, and we can apply Theorem \ref{theorem:totally-geodesic-standard} to see that $M(\mr L^n_\eis, \alpha_{n,0})$ is non-arithmetic if $M(\mr L^2_\eis)$ is non-arithmetic.  In other words, to prove the theorem, it suffices to prove $\Gamma^2_\eis$ is non-arithmetic. 
By Theorem \ref{theorem:connection-ACT}, the lattice $\Gamma^2_\eis \subset \PO(2,1)$ is conjugate to the lattice $\Gamma^\RR_\infty \subset \PO(2,1)$ defined in \cite[Section 5]{realACTnonarithmetic}. The lattice $\Gamma^\RR_\infty \subset \PO(2,1)$ is non-arithmetic by \cite[page 168]{realACTnonarithmetic}. Therefore, $\Gamma^2_\eis \subset \PO(2,1)$ is non-arithmetic, and we are done. 
\end{proof}

\section{Volume} \label{section:volume}The goal of this section is to prove Theorem \ref{theorem:volume}. We thus return our attention to the Eisenstein hermitian lattices $\mr L_{\rm{eis}}^n = (\QQ(\zeta_3), \ZZ[\zeta_3]^{n,1})$ for $n \geq 2$. 
Let $\mr A_\eis^n$ be the set of anti-unitary involutions $\alpha \colon \ZZ[\zeta_3]^{n,1} \to \ZZ[\zeta_3]^{n,1}$, let $P\mr A_\eis^n$ be the quotient of $\mr A_\eis^n$ by $\mu_{\QQ(\zeta_3)} = \langle - \zeta_3 \rangle$, and let $C\mr A_\eis^n \subset P\mr A_\eis^n$ be a set of representatives for the action of the group $L^n \coloneqq \Aut(\ZZ[\zeta_3]^{n,1})/\langle -\zeta_3 \rangle$ on $P\mr A_\eis^n$ by conjugation. Thus, we have $C\mr A_\eis^n \cong L^n \setminus P\mr A_\eis^n$. 

For $i = 0, \dotsc, n$, consider the integral quadratic form $\Psi_i^n$ defined as 
\[
\Psi_i^n(x_0, \dotsc, x_n) = - x_0^2 + 3 x_1^2 + \cdots + 3x_i^2 + x_{i+1}^2 + \cdots + x_n^2.
\]
For each $i \in \set{0, \dotsc, n}$, define an anti-unitary involution $\alpha_i$ as follows:
\begin{align*} 
\alpha_i(x_0,  \dotsc, x_n) = \left( \bar x_0, -\bar x_1, \dotsc, 
- \bar x_i, \bar x_{i + 1}, \dotsc, \bar x_n \right). 
\end{align*}

\begin{theorem} \label{theorem:identification-of-arithmetic-part}For each $i \in \set{0, \dotsc, n}$, one has a canonical isomorphism
$$
L_{\alpha_i}^n \xrightarrow{\sim} \PO(\Psi_i^n, \ZZ), 
$$
where $\RR H^n_{\alpha_i}$ and $L^n_{\alpha_i} \subset \rm{Isom}(\RR H^n_{\alpha_i})$ are defined in equation \eqref{align:definition:RH-STAB}. 
\end{theorem}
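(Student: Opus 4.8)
The plan is to deduce the statement from Proposition \ref{proposition:identification-of-arithmetic-part}, after recognising the Eisenstein lattice as a standard one, and then to finish with an explicit computation of the fixed sublattice $\Lambda^{\alpha_i}$.

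First I would observe that $\mr L_{\rm eis}^n = \mr L_{\rm qd}^n(3)$ in the notation of Definition \ref{def:standard}: indeed $\QQ(\zeta_3) = \QQ(\sqrt{-3})$, its ring of integers is $\ZZ[\zeta_3]$, and the hermitian form $\rm{diag}(-1,1,\dotsc,1)$ is exactly the one defining $\Lambda_{\rm qd}^n(3)$. Hence $\mr L_{\rm eis}^n$ is a standard hermitian lattice of rank $n+1$, and the anti-unitary involutions $\alpha_i$ of this section are literally the involutions $\alpha_{n,i}$ of equation \eqref{def:beta}. Since $F = \QQ$ and $\OO_F = \ZZ$ here, Proposition \ref{proposition:identification-of-arithmetic-part} then tells us that the canonical embedding $L^n_{\alpha_i} \hookrightarrow \PO(V^{\alpha_i})$ of \eqref{canonical-embeddings} is an isomorphism onto $\PO(\Lambda^{\alpha_i})(\ZZ)$; more precisely, since $(L^n_{\alpha_i})^I = L^n_{\alpha_i}$ in this case, this isomorphism is simply the map sending $[g]$ to the class $[g|_{\Lambda^{\alpha_i}}]$ of the restriction of $g$ (the representative commuting with $\alpha_i$) to the fixed lattice, cf.\ \eqref{naturalembedding}.

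Next I would carry out the lattice computation identifying $(\Lambda^{\alpha_i}, h|_{\Lambda^{\alpha_i}})$ with the quadratic form $\Psi_i^n$. Writing $\theta = \zeta_3 - \zeta_3^{-1}$, so that $\bar\theta = -\theta$ and $\theta\bar\theta = 3$, the $\sigma$-fixed part of $\ZZ[\zeta_3]$ is $\ZZ$ and its $\sigma$-anti-fixed part is $\ZZ\theta$. Unwinding the definition of $\alpha_i$ on the standard basis $e_0, \dotsc, e_n$ of $\Lambda = \ZZ[\zeta_3]^{n,1}$ gives
\begin{equation*}
\Lambda^{\alpha_i} = \ZZ e_0 \oplus \bigoplus_{j=1}^{i} \ZZ\,\theta e_j \oplus \bigoplus_{j=i+1}^{n} \ZZ e_j,
\end{equation*}
and with respect to the $\ZZ$-basis $(e_0, \theta e_1, \dotsc, \theta e_i, e_{i+1}, \dotsc, e_n)$ the symmetric bilinear form $h|_{\Lambda^{\alpha_i}}$ of \eqref{signature} is diagonal with entries $h(e_0,e_0) = -1$, then $h(\theta e_j, \theta e_j) = \theta\bar\theta\, h(e_j,e_j) = 3$ for $1 \le j \le i$, then $h(e_j,e_j) = 1$ for $i+1 \le j \le n$. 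This Gram matrix is exactly $\rm{diag}(-1,3,\dotsc,3,1,\dotsc,1)$, i.e.\ the quadratic form $\Psi_i^n$, so $\tn{O}(\Lambda^{\alpha_i})(\ZZ) = \tn{O}(\Psi_i^n,\ZZ)$ and hence $\PO(\Lambda^{\alpha_i})(\ZZ) = \PO(\Psi_i^n,\ZZ)$. Composing the two identifications yields the asserted isomorphism $L^n_{\alpha_i} \xrightarrow{\sim} \PO(\Psi_i^n,\ZZ)$.

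I do not expect a serious obstacle: all the substance is already contained in Proposition \ref{proposition:identification-of-arithmetic-part} (which itself relies on Lemma \ref{lemma:conjinv:intro}, showing $\alpha_i$ and $-\alpha_i$ are not $\Aut(\Lambda)$-conjugate so that $(L^n_{\alpha_i})^I = L^n_{\alpha_i}$, and on the fact that every isometry of the $\ZZ$-lattice $\Lambda^{\alpha_i}$ extends to a $\ZZ[\zeta_3]$-isometry of $\Lambda$, giving $\PO(\Lambda^{\alpha_i})(\ZZ)^I = \PO(\Lambda^{\alpha_i})(\ZZ)$). The only point requiring a little care is justifying the word ``canonical'': one should note that the embedding used in Theorem \ref{th:crucialthm-finitevolume} restricts, on $(L^n_{\alpha_i})^I$, to the restriction-to-$\Lambda^{\alpha_i}$ map — immediate from the construction of \eqref{naturalembedding} — and that the basis change identifying $\Lambda^{\alpha_i}$ with $\Psi_i^n$ is itself canonical once a generator $\theta$ of the anti-fixed part of $\ZZ[\zeta_3]$ is fixed (any two choices differ by $\pm 1$, which acts trivially on $\PO$).
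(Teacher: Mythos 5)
Your proposal is correct and follows essentially the same route as the paper: the paper's proof of Theorem \ref{theorem:identification-of-arithmetic-part} simply re-runs, for $\mr L_{\rm eis}^n = \mr L_{\rm qd}^n(3)$, the two equalities $L_{\alpha_i}^I = L_{\alpha_i}$ (via Lemma \ref{normalisator} and Lemma \ref{lemma:conjinv:intro}) and $\PO(\Lambda^{\alpha_i})(\ZZ)^I = \PO(\Lambda^{\alpha_i})(\ZZ)$ (via the identity $\Lambda^{\alpha_i}\cap\theta\Lambda = 3(\Lambda^{\alpha_i})^\vee$), i.e.\ exactly the content of Proposition \ref{proposition:identification-of-arithmetic-part} that you invoke, followed by the same identification $\Lambda^{\alpha_i} = \ZZ \oplus \theta\ZZ^{\oplus i}\oplus\ZZ^{\oplus(n-i)} \cong (\ZZ^{n+1},\Psi_i^n)$. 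Citing the proposition instead of inlining its argument is a cosmetic difference, not a different method.
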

\begin{proof}
For simplicity, write $L = L^n$ and $\Lambda = \ZZ[\zeta_3]^{n,1}$.  Let $i \in \set{0, \dotsc, n}$,  consider the diagram
\begin{align} 
\label{diagram:new-try-works}
\xymatrix{
L_{\alpha_i} & L_{\alpha_i}^I \ar@{_{(}->}[l] \ar@{=}[r] & \PO(\Lambda^{\alpha_i})(\ZZ)^I \ar@{^{(}->}[r] & \PO(\Lambda^{\alpha_i})(\ZZ),
}
\end{align}
see equations \eqref{naturalembedding}, \eqref{align:first-inclusion-equality} and \eqref{last} in the proof of Theorem \ref{th:crucialthm-finitevolume}. We claim that, in this particular case, the inclusions on the left and right of \eqref{diagram:new-try-works} are equalities. 

Let us first prove that $L_{\alpha_i}^I =L_{\alpha_i}$. For this, recall that $L_{\alpha_i} = N_L(\alpha_i)$ (see Lemma \ref{normalisator}). Thus, to prove that $L_{\alpha_i}^I =L_{\alpha_i}$, it suffices to show that there are no $g \in \Aut(\Lambda)$ such that $g \circ \alpha_i = - \alpha_i \circ g$. This is true because of Lemma \ref{lemma:conjinv:intro}, which implies that the elements $\alpha_i$ and $-\alpha_i \in \mr A_\eis^n$ are not $L$-conjugate. 

It remains to prove that $\PO(\Lambda^{\alpha_i})(\ZZ)^I = \PO(\Lambda^{\alpha_i})(\ZZ)$. This comes down to showing that for each $i \in \set{0, \dotsc, n}$, every isometry of the $\ZZ$-lattice $\Lambda^{\alpha_j}$ is induced by an isometry of $
\Lambda = \ZZ[\zeta_3]^{n,1}$. To prove this, note that \begin{align} \label{align:lambdaalpha-i}\Lambda^{\alpha_i} = \ZZ \oplus \theta \ZZ^{\oplus i} \oplus \ZZ^{\oplus (n-i)}.
\end{align}
One can check that the lattice $M \coloneqq \Lambda^{\alpha_i} \cap \theta \Lambda \subset \Lambda$ can be described in terms of $\Lambda^{\alpha_i}$ as $M = 3 (\Lambda^{\alpha_i})^\vee$ where $(\Lambda^{\alpha_i})^\vee \subset \Lambda^{\alpha_i} \otimes_{\ZZ} \QQ$ denotes the lattice dual to $\Lambda^{\alpha_i}$. Therefore, every isometry of $\Lambda^{\alpha_i} \otimes_{\ZZ} \QQ(\zeta_3) = \Lambda \otimes_{\ZZ[\zeta_3]} \QQ(\zeta_3)$ which is induced by an isometry of $\Lambda^{\alpha_i}$ preserves the $\OO_K$-span of $\Lambda^{\alpha_i}$ and $(1/\theta)\cdot M$, which is exactly $\Lambda$. 

We conclude that $L_{\alpha_i} = \PO(\Lambda^{\alpha_i})(\ZZ)$. Finally, observe that, because of \eqref{align:lambdaalpha-i}, we have an isomorphism of $\ZZ$-lattices $\Lambda^{\alpha_i} \cong (\ZZ^{\oplus (n+1)}, \Psi_i^n)$. The theorem follows. 
\end{proof}

\begin{proof}[Proof of Theorem \ref{theorem:volume}]
Consider the anti-unitary involution $\alpha_0 \colon \ZZ[\zeta_3]^{n,1} \to \ZZ[\zeta_3]^{n,1}$ defined as $(x_0, \dotsc, x_n) \mapsto (\bar x_0, \dotsc, \bar x_n)$. Recall (see Section \ref{section:arithmetic-nature}) that $M(\mr L_{\rm{eis}}^n, \alpha_0)  \subset M(\mr L_{\rm{eis}}^n)$ denotes the connected component containing the image of the natural map $\RR H^n_{\alpha_0} \to M(\mr L_{\rm{eis}}^n)$, and that
$
\Gamma^n_{\rm{eis}} \subset \PO(n,1) 
$
is a lattice such that  $M(\mr L_{\rm{eis}}^n, \alpha_0) \cong \Gamma^n_{\rm{eis}} \setminus \RR H^n$, see Theorem \ref{theorem:introduction:uniformization}. For each $i \in \set{0, \dotsc, n}$ and each $j \neq i \in \set{0, \dotsc, j}$, define
\[
H_{ij}' \coloneqq \RR H^n_{\alpha_i} \cap \RR H^n_{\alpha_j}. 
\]
Then $H_{ij}'$ is a geodesic subspace of codimension $\va{i-j}$ in $ \RR H^n_{\alpha_i}$. For instance, if $r_1 = (0,1,0, \dotsc, 0) \in \ZZ[\zeta_3]^{n,1}$, then $r_1$ is an element of norm one, the involution $\alpha_0$ preserves the hyperplane $H_{r_1} \subset \CC H^n$, and $$H_{01}' =  \RR H^n_{\alpha_0} \cap \RR H^n_{\alpha_1}=  \RR H^n_{\alpha_0} \cap H_{r_1} = (H_{r_1})^{\alpha_0} \subset \RR H^n_{\alpha_0},$$ which is a geodesic subspace of codimension one in $ \RR H^n_{\alpha_0}$. By Lemma \ref{lemma:conjinv:intro}, the elements $\alpha_0, \dotsc, \alpha_n \in C\mr A_\eis^n$ are pairwise distinct. Consequently, if we define
\[
\mr H_i' \coloneqq \bigcup_{j \neq i} H_{ij}'  \subset \RR H^n_{\alpha_i}, 
\]
then the natural map 
\[
\coprod_{i = 0}^n L_{\alpha_i} \sm \left( \RR H^n_{\alpha_i} - \mr H_i' \right) \longhookrightarrow M(\mr L, \alpha_0) \cong \Gamma^n_{\rm{eis}} \setminus \RR H^n
\]
is an open immersion of hyperbolic orbifolds, where $L = L^n =  \Aut(\ZZ[\zeta_3]^{n,1})/\langle -\zeta_3 \rangle$. 
As $L_{\alpha_i} \cong \PO(\Psi_i^n,\ZZ)$ by Theorem \ref{theorem:identification-of-arithmetic-part}, the theorem follows in view of Theorem \ref{theorem:introduction:uniformization} and its proof, see Section \ref{section:proof-first-main}. 
\end{proof}


\section{Complex ball quotient and moduli of abelian varieties} \label{unitaryshimura}

The goal of this section is to 
study the structure of the complex ball quotient $L \setminus \CC H^n$ associated to an admissible hermitian lattice $\mr L = (K, \Lambda)$ (cf.\ Section \ref{set-up-2}) as a moduli space of abelian varieties with $\OO_K$-action of hyperbolic signature. 
The theory of this section will allow us in the next section to prove Theorem \ref{orthogonal}, saying that 
the hyperplanes in the arrangement $\mr H \subset \CC H^n$ are orthogonal along their intersection. 

\subsection{Alternating and skew-hermitian forms} 
Let $K$ be a CM field, with ring of integers $\OO_K$, and let $\sigma \colon K \to K$ be the involution compatible with complex conjugation on $\CC$. Let $\mf D_K \subset \OO_K$ be the different ideal. Thus, we have $\mf D_K^{-1} = \set{x \in K \mid \rm{Tr}_{K/\QQ}(x\OO_K) \subset \ZZ}$, where $$\rm{Tr}_{K/\QQ} \colon K \longrightarrow \QQ$$ is the trace morphism, see e.g.\ \cite[Chapter III, \S3]{localfields} or \cite[Chapter III, \S 2]{Neukirch}. 

For a finite free $\OO_K$-module $\Lambda$ we put $\Lambda_\QQ \coloneqq \Lambda \otimes_{\ZZ}\QQ$, and call a $\QQ$-bilinear form 
\[
T \colon \Lambda_\QQ \times \Lambda_\QQ \longrightarrow K
\]
\emph{skew-hermitian} if $T$ is $K$-linear in its first argument and satisfies $T(y,x) = - \sigma(T(x,y))$ for $x,y \in \Lambda_\QQ$. If $T \colon \Lambda_\QQ \times \Lambda_\QQ \to K$ is skew-hermitian, then  $T(x, a\cdot y) = 
\sigma(a)\cdot T(x,y)$ for each $x,y \in \Lambda_\QQ$ and $a \in K$. 

\begin{lemma} \label{lemma:equivalentforms}
Let $n \geq 1$. 
Let $\Lambda$ be a free $\OO_K$-module of rank $n +1$. 
The assignment $T \mapsto \textnormal{Tr}_{K/\QQ} \circ T$ defines a bijection between:
\begin{enumerate}
    \item \label{item:hermitian} The set of skew-hermitian forms $T \colon \Lambda_{\QQ} \times \Lambda_{\QQ} \to K$. 
    \item \label{item:alternating} The set of alternating forms $E\colon \Lambda_{\QQ} \times \Lambda_{\QQ} \to \QQ$ that satisfy $E(a \cdot x,y) = E( x, \sigma(a) \cdot y)$ for $x,y \in \Lambda_\QQ$ and $a \in K$. 
\end{enumerate}
Under this correspondence, $T(\Lambda, \Lambda) \subset \mf D_K^{-1}$ if and only if $E(\Lambda, \Lambda) \subset \ZZ$. 
\end{lemma}

\begin{proof}
Let $T\colon  \Lambda_{\QQ} \times \Lambda_{\QQ} \to K$ be a skew-hermitian form. Define $E_T = \text{Tr}_{K/\QQ} \circ T$. 
Since $T$ is skew-hermitian, we have, for each $x, y \in \Lambda_\QQ$, that $$\text{Tr}_{K/\QQ}T(x,y) = - \text{Tr}_{K/\QQ}\sigma \left( T(y,x) \right).$$ Since $K/\QQ$ is separable, for $x \in K$, we have 
$
\text{Tr}_{K/\QQ}(x) = \sum_{1 \leq i \leq g} \left( \tau_i(x) + \tau_i\sigma(x) \right)$, see e.g.\ \cite[(7-1)]{stevenhagen}. Therefore, $\text{Tr}_{K/\QQ}(\sigma(x)) = \text{Tr}_{K/\QQ}(x)$, so that $$E_T(x,y) = \text{Tr}_{K/\QQ}T(x,y) = - \text{Tr}_{K/\QQ}\sigma \left( T(y,x) \right) =- \text{Tr}_{K/\QQ} T(y,x)  =
- E_T(y,x)$$ for any $x,y \in \Lambda_\QQ$. Moreover, we have $T(a \cdot x,y) = a \cdot T(x,y) = T(x,\sigma(a) \cdot y)$ hence $E_T(a \cdot x,y) = E_T(x,\sigma(a) \cdot y)$ for all $x,y \in \Lambda_\QQ$ and $a \in K$. 

Conversely, let $E \colon \Lambda_\QQ \times \Lambda_\QQ \to \QQ$ be an alternating form such that $E(a\cdot x,y) = E(x, \sigma(a) \cdot y)$ for $x,y \in \Lambda_\QQ$ and $a \in K$. Choose a basis $\{b_1, \dotsc, b_{n+1} \} \subset \Lambda_\QQ$ for $\Lambda_\QQ$ over $K$. Let $\{e_i\}$ be the canonical basis of $K^{n+1}$, 
and let $Q$ be the map $K^{n+1} \times K^{n+1} \to \QQ$ defined by $Q(e_i, e_j) = E(b_i, b_j)$. For $i,j \in \set{1, \dotsc, n+1}$, define a $\QQ$-linear map 
\[
f_{ij} \colon K \longrightarrow \QQ, \quad \quad f_{ij}(a) = Q(a \cdot e_i, e_j). 
\]
Since the trace pairing 
\[
\rm{Tr}_{K/\QQ} \colon K \times K \longrightarrow \QQ
\] 
is non-degenerate (see e.g.\ \cite[\href{https://stacks.math.columbia.edu/tag/0BIL}{Tag 0BIL}]{stacks-project}), for each $i,j \in \set{1, \dotsc, n+1}$ there is a unique element $t_{ij} \in K$ such that $f_{ij}(a) = \text{Tr}_{K/\QQ}(a \cdot t_{ij})$ for every $a \in K$. This gives a matrix $(t_{ij})_{ij} \in \rm{M}_{n+1}(K)$ such that $\sigma(t_{ij}) = - t_{ji}$, and via the basis $\{b_i\}$ of $\Lambda_\QQ$, this matrix $(t_{ij})_{ij}$ induces a skew-hermitian form $T_E\colon \Lambda_\QQ \times \Lambda_\QQ \to K$. It is straightforward to show that the functions $T \mapsto E_T$ and $E \mapsto T_E$ are inverse to each other, providing the desired bijection between the sets \ref{item:hermitian} and \ref{item:alternating}. 

Finally, let $E$ be an alternating form corresponding to a skew-hermitian form $T$ under the above bijection of sets. Then $E(x,y) \in \ZZ$ for all $x,y \in \Lambda$ if and only if for each $x,y \in \Lambda$, we have $\rm{Tr}_{K/\QQ}\left(a \cdot T(x,y)\right) = \rm{Tr}_{K/\QQ}\left(T(a \cdot x,y)\right) \in \ZZ$ for all $a \in \OO_K$. The latter holds if and only if for each $x,y \in \Lambda$, we have $T(x,y) \in \mf D_K^{-1}$. 
\end{proof}

\begin{examples} \label{examplesunitary}
\begin{enumerate}
    \item \label{ex:unitone} Let $K = \QQ(\sqrt \Delta)$ is imaginary quadratic over $\QQ$, with discriminant $\Delta$ and with involution $\sigma \colon K \to K$ defined by $\sigma(\sqrt{\Delta}) = - \sqrt{\Delta}$. Let $E \colon \Lambda \times \Lambda \to \ZZ$ be an alternating form that satisfies $E(a \cdot x,y) = E(x, \sigma(a) \cdot y)$ for $x,y \in \Lambda$ and $a \in \OO_K$. Define a form $T \colon \Lambda \times \Lambda \to \mf D_{M}^{-1} = (\sqrt{\Delta})^{-1}$ as 
    \begin{equation} \label{T-E}
        T(x,y) = \frac{E( \sqrt{\Delta}\cdot  x,y) + E(x,y)\sqrt{\Delta} }{2\sqrt \Delta}. 
    \end{equation}
    Then $T$ is skew-hermitian, because it is $K$-linear in its first variable (since we have $T( \sqrt{\Delta} \cdot x,y)  
    =\sqrt{\Delta} \cdot T(x,y)$) and because $T(y,x) = -\sigma(T(x,y))$. Moreover, $T$ is the skew-hermitian form associated to $E$ by Lemma \ref{lemma:equivalentforms} since we have $\rm{Tr}_{K/\QQ}(T(x,y)) = E(x,y)$, as is readily shown.
    \item \label{ex:unittwo} Let $K = \QQ(\zeta_p)$ where $\zeta_p = e^{2 \pi i/p} \in \CC$ for some prime number $p > 2$. Let $E: \Lambda \times \Lambda \to \ZZ$ be an alternating form with $E(a \cdot x,y) = E(x, \sigma(a) \cdot y)$ for $x,y \in \Lambda$, $a \in \OO_K$. Then $\mf D_K = \left(p/(\zeta_p - \zeta_p^{-1})\right)$, and the skew-hermitian $T\colon \Lambda \times \Lambda \to \left(p^{-1}\cdot(\zeta_p - \zeta_p^{-1})\right)$ associated to $E$ 
   by Lemma \ref{lemma:equivalentforms} is defined as
\begin{equation} \label{def:T}
 T(x,y) = \frac{1}{p}\cdot \sum_{j = 0}^{p-1}\zeta_p^j \cdot E\left( x, \zeta_p^j \cdot  y \right).
\end{equation}
Indeed, the form $T$ in \eqref{def:T} is skew-hermitian because for any $i \in \set{0, \dotsc, p-1}$, one has 
$
T(\zeta_p^i \cdot x,y) = 
\zeta_p^j \cdot p^{-1}  \cdot \sum_{j = 0}^{p-1}\zeta_p^{j-i} \cdot E( x, \zeta_p^{j-i} \cdot  y ) = \zeta_p^i \cdot T(x,y),
$
and we have $T(y,x) = 
- p^{-1}\cdot\sum_{j = 0}^{p-1}\zeta_p^j \cdot E( x,\zeta_p^{-j}\cdot y ) = - \sigma(T(x,y)) $. Moreover, 
\begin{align*}
\rm{Tr}_{K/\QQ}(T(x,y)) = \frac{1}{p}\cdot
\rm{Tr}_{K/\QQ}\left( \sum_{j = 0}^{p-1}\zeta_p^j \cdot E\left( x, \zeta_p^j \cdot  y \right) \right) = E(x,y). 
\end{align*}
\end{enumerate}
\end{examples}


Consider again a finite free $\OO_K$-module $\Lambda$, and define $\Lambda_\QQ = \Lambda \otimes_\ZZ \QQ$ and $\Lambda_\CC = \Lambda \otimes_\ZZ \CC$. Let $E \colon \Lambda_\QQ \times \Lambda_\QQ \to \QQ$ be an alternating form such that $E(a\cdot x,y) = E(x, \sigma(a) \cdot y)$ for $x,y \in \Lambda_\QQ$ and $a \in K$. Let $E_\CC \colon \Lambda_\CC \times \Lambda_\CC \to \CC$ be the $\CC$-linear extension of $E$. By viewing $\Lambda$ as a finite free $\ZZ$-module, the finite dimensional complex vector space $\Lambda_\CC = (\Lambda_\RR) \otimes_\RR \CC$ has a natural anti-holomorphic involution $z \mapsto \bar z$, and the function
\[
\Lambda_\CC \times \Lambda_\CC \longrightarrow \CC, \quad \quad (x,y) \mapsto E_\CC(x, \bar y)
\]
defines a skew-hermitian form on $\Lambda_\CC$. We view $\Lambda_\CC$ as an $\OO_K$-module by putting $a \cdot (x \otimes \lambda) = (ax) \otimes \lambda$ for $x \in \Lambda$ and $\lambda \in \CC$. At the same time, $\Lambda_\CC$ is a $\CC$-vector space in the usual way. Define
$$
 \Lambda_{\CC,\phi} \coloneqq \set{x \in \Lambda_\CC \mid a \cdot x = \phi(a)x \;\; \forall a \in \OO_K}.
$$
For $x,y \in \Lambda_{\CC,\phi}$, we define $E_\CC^\phi(x, \bar y) = E_\CC(x, \bar y)$; this defines a skew-hermitian form $\Lambda_{\CC,\phi} \times \Lambda_{\CC,\phi} \to \CC$.  Note that we have a canonical decomposition
\begin{align} \label{align:decomposition}
\Lambda \otimes_\ZZ \CC = \bigoplus_{\phi \in \Hom(K,\CC)}\left(\Lambda \otimes_\ZZ \CC \right)_{\phi}.\end{align} 
Define $p_\phi$ as the projection $ \Lambda_\CC \to \Lambda_{\CC,\phi}$ induced by \eqref{align:decomposition}, and let $\pi_\phi$ denote the canonical map $\Lambda_\CC \to \Lambda \otimes_{\OO_K} \CC$. The composition 
\begin{align*}
\Lambda_{\CC,\phi} \longhookrightarrow \Lambda_\CC \xlongrightarrow{\pi_\phi} \Lambda \otimes_{\OO_K, \phi} \CC
\end{align*}
is an isomorphism of complex vector spaces. 

Let $T \colon \Lambda_\QQ \times \Lambda_\QQ \to K$ be the skew-hermitian form that corresponds to $E$ via Lemma \ref{lemma:equivalentforms}. For any embedding $\phi \colon K \to \CC$, one obtains a skew-hermitian form $$T^\phi \colon \Lambda \otimes_{\OO_K, \phi} \CC \times \Lambda \otimes_{\OO_K, \phi} \CC\to \CC$$ by putting
$
T^\phi( x \otimes \lambda, y \otimes \mu) = 
\lambda \overline{\mu} \cdot \phi\left( T(x, y) \right)$ for $x,y \in \Lambda, \lambda, \mu \in \CC$ and extending linearly. 

\begin{lemma} \label{lemma:agree}
In the above notation, the following diagram commutes:
\[
\xymatrixcolsep{8pc}
\xymatrixrowsep{3pc}
\xymatrix{
\Lambda_\CC \times \Lambda_\CC \ar[r]^-{(x,y) \mapsto E_\CC(x, \bar y)} \ar@/_5pc/[dd]_-{(\pi_\phi \times \pi_\phi)_\phi} \ar[d]^-{(p_\phi \times p_\phi)_\phi}& \CC \ar@{=}[d] \\
\bigoplus_{\phi} \left(\Lambda_{\CC, \phi} \times \Lambda_{\CC,\phi} \right) \ar[r]^-{(x_\phi, y_\phi)_\phi \mapsto \sum_\phi E_\CC^\phi(x_\phi, \bar y_\phi)} \ar[d]^-\wr & \CC \ar@{=}[d] \\
\bigoplus_{\phi} \left(\Lambda \otimes_{\OO_K, \phi}\CC\right) \times \left(\Lambda \otimes_{\OO_K, \phi}\CC\right) \ar[r]^-{(x_\phi, y_\phi)_\phi \mapsto \sum_\phi T^\phi(x_\phi, y_\phi)}  & \CC,
}
\]
where the direct sums range over the elements $\phi \in \Hom(K,\CC)$. 
In particular, $$E_\CC(x,\bar y) = \sum_{\phi \in \Hom(K,\CC)} T^\phi(\pi_\phi(x), \pi_\phi(y))$$ for every $x,y \in \Lambda_\CC$, 
where $\pi_\phi$ denotes the canonical map $\Lambda_\CC \to \Lambda \otimes_{\OO_K} \CC$.
\end{lemma}
\begin{proof}
This is straightforward and we leave the proof to the reader. 
\end{proof}

\subsection{Moduli of abelian varieties with CM endomorphisms} \label{subsec:moduliabelianvarieties} 
\begin{notation} \label{not:fixhermitian} In the rest of Section \ref{unitaryshimura}, we work with the following data. 
\begin{enumerate} 
\item Let $K$ be a CM field over $\QQ$, and define $\sigma \colon K \to K$ as the involution that corresponds to complex conjugation on $\CC$.
\item \label{condition:eta-herm} Let $\eta \in \OO_K$ be a non-zero element such that 
$\sigma(\eta) = - \eta$. 
\item \label{strange-item} Let $\Lambda$ be a free $\OO_K$-module of rank $n +1$ for some $n \in \ZZ_{\geq 0}$ equipped with a non-degenerate hermitian form $h\colon \Lambda \times \Lambda \to \eta \cdot \mf D_K^{-1} \subset K$. 
\item \label{CM-type-Phi} Let $\Phi  \subset \Hom(K, \CC)$ be the CM type such that $\Im \left(\phi(\eta) \right) > 0$ for all $\phi \in \Phi$.  
\end{enumerate}
These data define a skew-hermitian form 
$
T\colon \Lambda \times \Lambda \to \mf D_K^{-1}$ by putting $T \coloneqq \eta^{-1} \cdot h.
$
The form $T$ is in turn attached to an alternating form (see Lemma \ref{lemma:equivalentforms}):
\[
E\colon \Lambda \times \Lambda \to \ZZ \; \tn{ such that } \; E(a x, y) = E(x, \sigma(a) y)\; \tn{ for all }\; a \in \OO_K, \; x,y \in \Lambda. 
\]
Define $V_\phi = \Lambda_\QQ \otimes_{K, \phi} \CC = (\Lambda_\QQ \otimes_\QQ \CC)_\phi$, and let
$$
h_{\phi} \colon V_\phi \times V_\phi \longrightarrow \CC
$$
be the hermitian form determined by the condition that $h_\phi(x,y) = \phi(h(x,y))$ for $x,y \in \Lambda \subset V_\phi$. For $\phi \in \Phi$, let $(r_\phi,s_\phi)$ be the signature of the hermitian form $h_{\phi}$.
\end{notation}

Let $A$ be a complex abelian variety with dual abelian variety $A^\vee$. Assume that $A$ is equipped with a polarization $\lambda \colon A \to A^\vee$ and a ring homomorphism $\iota \colon \ca O_K \to \text{End}(A)$ satisfying the following conditions (compare~\cite[\S2.1]{kudlarap-special-II}):
\begin{conditions} \label{KRconditions}
\begin{enumerate} 
    \item \label{KR-1}For each $a \in K$, we have $\iota(a)^\dagger = i(\sigma(a))$, where $\dagger \colon \End(A)_{\QQ} \to\End(A)_{\QQ}$ denotes the Rosati involution.  
    \item \label{KR-2}We have 
$
\textnormal{char}(t, \iota(a) | \textnormal{Lie}(A)) =
\prod_{\phi \in \Phi} (t-a^{\phi})^{r_\phi}
\cdot 
(t-a^{\phi\sigma})^{s_\phi} \; \in \;  \CC[t].
$
\end{enumerate}
\end{conditions}
\noindent
Here, $\textnormal{char}(t, \iota(a) | \textnormal{Lie}(A)) \in \CC[t]$ denotes the characteristic polynomial of $\iota(a)$. 
Observe that $\dim(A) = g(n+1)$, where $g = [K \colon \QQ]$. Let \begin{align}\label{align:alternating-pol}E_A \colon \rm H_1(A, \ZZ) \times \rm H_1(A, \ZZ) \longrightarrow \ZZ\end{align} be the alternating form corresponding to the polarization $\lambda \colon A \to A^\vee$. Condition \ref{KRconditions}.\ref{KR-2} implies that $E_A(\iota(a)x, y) = E_A(x, \iota(\sigma(a))y)$ for $x,y \in \rm H_1(A, \QQ)$. 
Let \begin{align} \label{align:skew-pol}T_A \colon \rm H_1(A, \ZZ) \times \rm H_1(A, \ZZ) \longrightarrow \mf D_K^{-1}\end{align} denote the skew-hermitian form attached to $E_A$ via Lemma \ref{lemma:equivalentforms}, and define a hermitian form \begin{align} \label{hermitian:hA}h_A \colon \rm H_1(A, \ZZ) \times \rm H_1(A, \ZZ) \longrightarrow \eta \cdot \mf D_K^{-1} \subset K, \quad \quad h_A \coloneqq \eta \cdot T_A.\end{align} 
\begin{definition} \label{def:shimura}  Consider the above notation. 
\begin{enumerate}
\item A \emph{polarized $\OO_K$-linear abelian variety} is a triple $(A, \lambda, \iota)$ where $(A, \lambda)$ is a polarized abelian variety and $\iota \colon \OO_K \to \End(A)$ a ring homomorphism such that Condition \ref{KRconditions}.\ref{KR-1} is satisfied. A \emph{$\Lambda$-marking} of a polarized $\OO_K$-linear abelian variety $(A, \lambda, \iota)$ is an isomorphism $j \colon \rm H_1(A, \ZZ) \xrightarrow{\sim} \Lambda$ of $\OO_K$-modules which is compatible with the alternating forms $E_A$ and $E$ (equivalently, with the hermitian forms $h_A$ and $h$). We say that a polarized $\OO_K$-linear abelian variety $(A, \lambda, \iota)$ is \emph{of signature $(r_\phi, s_\phi)_{\phi \in \Phi}$} if Condition \ref{KRconditions}.\ref{KR-2} is satisfied. Note that any $\Lambda$-marked polarized $\OO_K$-linear abelian variety is of signature $(r_\phi, s_\phi)_{\phi \in \Phi}$.  
\item 
Let $\widetilde{\textnormal{Sh}}_{K}(\Lambda)$ be the set of isomorphism classes of $\Lambda$-marked polarized $\OO_K$-linear abelian varieties $(A, \lambda, \iota, j)$ (an \emph{isomorphism} $(A_1, \lambda_1, \iota_1, j_1) \xrightarrow{\sim} (A_2, \lambda_2, \iota_2, j_2)$ between such objects is an isomorphism $A_1 \xrightarrow{\sim} A_2$ compatible with the polarizations $\lambda_i$, ring homomorphisms $\iota_i$, and isometries $j_i$). 
\item 
Let $\bb D(V_\phi)$ be the space of negative $s_\phi$-planes in the hermitian space $(V_{\phi}, h_{\phi})$. 
\end{enumerate}
\end{definition}

The following proposition is due to Shimura, see \cite[Theorems 1 \& 2]{Shimura1963ONAF} or \cite[\S 1]{shimuratranscendental}. We give a different proof, since our method will imply Proposition \ref{prop:HcorrespondsNonSimple} below, whereas we did not know how to deduce Proposition \ref{prop:HcorrespondsNonSimple} from the proofs of \cite[Theorems 1 \& 2]{Shimura1963ONAF}. Remark that Shimura assumes $\Lambda$ to be an $R$-module, for an order $R \subset \OO_K$ which is not necessarily maximal; our proof carries over, but we do not need this more general result. 

\begin{proposition} \label{prop:canonicalbijection}
There is a canonical bijection $\widetilde{\textnormal{Sh}}_M(\Lambda) \cong \prod_{\phi \in \Phi} \bb D(V_\phi)$. 
\end{proposition}

\begin{proof}
Let $(A, \lambda, \iota, j)$ be a representative of an isomorphism class that defines a point in $\widetilde{\textnormal{Sh}}_{K}(\Lambda)$. 
Consider the Hodge decomposition $\rm H_1(A, \CC) = \rm H^{-1,0} \oplus \rm H^{0,-1}$. For each $\phi \in \Phi$, we have $
\overline{ \rm H^{-1,0}_{\phi\sigma}} =  \rm H^{0,-1}_{\phi}$, hence there is a decomposition 
\begin{equation} \label{eq:posneg}
\rm H_1(A, \CC)_{\phi}  = \rm H^{-1,0}_{\phi} \oplus \rm H^{0,-1}_{\phi} \quad \text{ with } \quad \dim \rm H^{-1,0}_{\phi} = r_\phi \; \text{ and } \; \dim \rm H^{0,-1}_{\phi} = s_\phi.
\end{equation}
For an embedding $\phi \colon K \hookrightarrow \CC$, let $h_{A,\phi} \colon \rm H_1(A,\CC)_\phi \times \rm H_1(A,\CC)_\phi \to \CC$ be the hermitian form induced by the hermitian form \eqref{hermitian:hA}. By Lemma \ref{lemma:agree}, we have \begin{align} \label{align:we-have-agree}\phi(\eta) \cdot E_{A, \CC}(x,\bar y) = {h}_{A,\phi}(x,y) \quad \quad \forall x,y \in \rm H_1(A, \CC)_\phi, \;\; \phi \in \Hom(K,\CC).\end{align}Since $\Im(\phi(\eta)) > 0$ for every $\phi \in \Phi$, 
the decomposition of $\rm H_1(A, \CC)_{\phi}$ in (\ref{eq:posneg}) is a decomposition into a positive definite $r_\phi$-dimensional subspace and a negative definite $s_\phi$-dimensional subspace. 
The isometry $j \colon \rm H_1(A, \QQ) \xrightarrow{\sim} \Lambda_\QQ$ induces an isometry $j_\phi \colon \rm H_1(A, \CC)_{\phi} \xrightarrow{\sim} V_\phi$ for each $\phi \in \Phi$, and so we obtain a negative $s_\phi$-plane $$j_\phi( \rm H^{0,-1}_{\phi}) \subset V_\phi$$ for each $\phi \in \Phi$. Reversing the argument shows that given a negative $s_\phi$-plane $X_\phi \subset V_\phi$ for each $\phi \in \Phi$, there is a canonical polarized abelian variety $(A, \lambda)$ with $A = \rm H^{-1,0}/\Lambda$, acted upon by $\OO_K$, and inducing the planes $X_\phi \subset V_\phi$. 
\end{proof}
\begin{definition} \label{definition-shimura-2}
Let $\textnormal{Sh}_{K}(\Lambda)$ be the set of isomorphism classes of polarized $\OO_K$-linear abelian varieties $(A, \lambda, \iota)$ of signature $(r_\phi, s_\phi)_{\phi \in \Phi}$ that admit a $\Lambda$-marking. 
\end{definition}
\begin{corollary} \label{cor-uniformization}
There is a canonical bijection $$\textnormal{Sh}_{K}(\Lambda) \xrightarrow{\sim} \Aut(\Lambda) \setminus  \prod_{i = 1}^g \bb D(V_i).$$ 
\end{corollary}
\begin{proof}
The bijection in Proposition \ref{prop:canonicalbijection} is $\Aut(\Lambda)$-equivariant. 
\end{proof}
\begin{remark}
The above corollary generalizes \cite[Proposition 3.1]{kudlarap-special-II}, where the result is proven in the case where $K$ is an imaginary quadratic number field. 
\end{remark}

\subsection{Moduli of abelian varieties and the hyperplane arrangement} Consider Notation \ref{not:fixhermitian}. 
Consider CM type $\Phi \subset \Hom(K,\CC)$, and choose an ordering $\Phi = \set{\phi_1, \dotsc, \phi_g}$ for the set $\Phi$. 
This yields an embedding \begin{align} \label{align:embedding-phi}\Phi\colon \OO_K \longhookrightarrow \CC^g, \quad \quad \Phi(a) = (\phi_1(a), \dotsc, \phi_g(a)),\end{align} such that $\Phi(\OO_K) \otimes_\ZZ \RR = \CC^g$. 
In particular, we obtain a complex torus $\CC^g/\Phi(\OO_K)$ equipped with a natural ring homomorphism $\OO_K \to \End(\CC^g/\Phi(\OO_K))$. 
The map 
\begin{align}\label{ppav-cm}
Q \colon K \times K \longrightarrow \QQ, \quad Q(x,y) = \text{Tr}_{K/\QQ}\left(\eta^{-1} \cdot  x \bar y \right) \quad \quad (\bar y = \sigma(y))
\end{align}
is a non-degenerate alternating $\QQ$-bilinear form with $Q(ax,y) = { Q}(x, \sigma(a) y)$ for $a,x,y \in K$. 
\begin{lemma} \label{lemma:milne}
Consider the element $\eta \in \OO_K$, see Notation \ref{not:fixhermitian}.\ref{condition:eta-herm}. If $\eta^{-1}\in \mf D_K^{-1}$, then  $Q(\OO_K, \OO_K) \subset \ZZ$, and the alternating form $Q \colon \OO_K \times \OO_K \to \ZZ$ defines a polarization on the complex torus $\CC^g/\Phi(\OO_K)$ such that Condition \ref{KRconditions}.\ref{KR-1} is satisfied. 
\end{lemma}
\begin{proof}
We have $Q(\OO_K, \OO_K) \subset \ZZ$ because $\eta^{-1}\in \mf D_K^{-1}$. Thus, by \cite[Example 2.9 \& Footnote 16]{milneCM}, $Q$ defines a polarization on $\CC^g/\Phi(\OO_K)$. The result follows. 
\end{proof}

Consider the signature $(r_\phi, s_\phi )$ of the hermitian form $h_\phi \colon V_\phi \times V_\phi \to \CC$, for $\phi \in \Phi$. 
Let $\tau \in \Phi$, and assume that
\begin{align} \label{assumption:shimura}
\begin{split}
(r_\phi, s_\phi )\quad =\quad 
\begin{cases}
(n,1) \quad &\tn{ if } \phi = \tau,\\
(n+1, 0) \quad &\tn{ if } \phi \neq \tau.
\end{cases}
\end{split}
\end{align}
The space $V \coloneqq \Lambda \otimes_{\OO_K, \tau} \CC
$ 
is equipped with the hermitian form $h_{\tau} \colon V \times V \to \CC$, and to ease notation, we put $h \coloneqq h_\tau$. Define $\CCH^n$ as the space of lines in $V$ which are negative for $h$; in other words, we have:
\begin{align*}
\CC H^n = \set{\tn{lines } \ell \subset V \mid h(x,x) < 0 \tn{ for } 0 \neq x \in \ell}.
\end{align*}
For $r \in \Lambda$ such that $h(r,r) = 1$, let $r_\tau \in V$ be the image of $r$ under the canonical embedding $\Lambda \hookrightarrow  \Lambda \otimes_{\OO_K, \tau} \CC= V$, and put 
\[
H_r \coloneqq \set{\ell \in \CCH^n \mid h(r_\tau,x) = 0 \tn{ for } 0 \neq x \in \ell}, \quad \text{and} \quad \mr H \coloneqq \bigcup_{h(r,r) = 1} H_r \subset \CCH^n.
\]
For polarized $\OO_K$-linear abelian varieties $(A_1, \lambda_1, \iota_1)$ and $(A_2, \lambda_2, \iota_2)$, we say that a morphism of polarized abelian varieties $f \colon (A_1, \lambda_1) \to (A_2, \lambda_2)$ is \emph{$\OO_K$-linear} if $f$ is compatible with the ring homomorphisms $\iota_1$ and $\iota_2$. 
\begin{proposition} \label{prop:HcorrespondsNonSimple}
Consider the above notation. Assume the following holds:
\begin{enumerate}
\item \label{cond:1:prop} Condition \eqref{assumption:shimura} holds, i.e.,
for $\phi \in \Phi$, we have $(r_\phi, s_\phi) = (n,1)$ if $\phi = \tau$ and $(n+1, 0)$ otherwise. 
\item We have $\eta^{-1} \in \mf D_K^{-1}$. In particular, $\left(\CC^g/\Phi(\OO_K), Q\right)$ is a polarized $\OO_K$-linear abelian variety, see Lemma \ref{lemma:milne}. 
\end{enumerate}
Then, under the bijection $  \widetilde{\textnormal{Sh}}_{K}(\Lambda) \cong \CC H^n$ of Proposition \ref{prop:canonicalbijection}, the subset $\mr H \subset \CC H^n$ is identified with the set of isomorphism classes of $\Lambda$-marked polarized $\OO_K$-linear abelian varieties $(A, \lambda, \iota,j)$ 
for which there exist an $\OO_K$-linear homomorphism $$f \colon \CC^g/\Phi(\OO_K)  \longrightarrow A$$ 
of polarized $\OO_K$-linear abelian varieties. 
\end{proposition}

\begin{proof}
Let $\mr H' \subset \CCH^n$ be the subset parametrizing isomorphism classes of $\Lambda$-marked polarized $\OO_K$-linear abelian varieties $(A, \lambda, \iota,j)$ admitting an $\OO_K$-linear homomorphism $f \colon \CC^g/\Phi(\OO_K)  \to A$ of polarized $\OO_K$-linear abelian varieties, cf.\ Proposition \ref{prop:canonicalbijection}. We must show that $\mr H = \mr H'$. 

We first show that $\mr H' \subset \mr H$. To this end, let $[(A,\lambda, \iota,j)] \in \widetilde{\textnormal{Sh}}_{K}(\Lambda) $ and $x \in V$, and assume that $h(x,x) < 0$ and that the point $[x] \in \CCH^n$ attached to $x$ lies in $\mr H' \subset \CCH^n$. Thus, 
there exists an $\OO_K$-linear homomorphism $f \colon \CC^g/\Phi(\OO_K) \to A$ of polarized $\OO_K$-linear abelian varieties. We must show that $[x] \in \mr H \subset \CCH^n$, i.e., that there exists $r \in \Lambda$ such that $h(r,r) = 1$ and $h(r_\tau,x) = 0$, where $r_\tau \in V$ is the image of $r$ under the canonical embedding $\Lambda \hookrightarrow V$. 

To prove this, we may assume that $A = \rm H^{-1,0} / \Lambda$ with $\Lambda \otimes_\ZZ \CC = \rm H^{-1,0} \oplus \rm H^{0,-1}$, and that $T_A = T$ and $E_A =E$. The morphism $f$ induces an $\OO_K$-linear map $$\OO_K \xlongrightarrow{\sim} \Phi(\OO_K) \xlongrightarrow{f_\ast} \rm H_1(A, \ZZ) = \Lambda$$ which, for simplicity, we also denote by $f \colon \OO_K \to \Lambda$. Define \begin{align} \label{def:r=1} r \coloneqq f(1) \in \Lambda. \end{align}
Since $f \colon \CC^g/\Phi(\OO_K) \to A$ is a morphism of polarized abelian varieties, the map $f \colon \OO_K \to \Lambda$ satisfies $Q = f^\ast(E)$, i.e., we have $Q(x,y) = E(f(x), f(y))$ for each $x,y \in \OO_K$. Let $T_Q$ be the skew-hermitian form on $\OO_K$ associated to $Q$ via Lemma \ref{lemma:equivalentforms}. We claim that
\begin{align}\label{claim:TQ}T_Q(x,y) = T(f(x), f(y)) \quad \quad \forall x,y \in \OO_K.\end{align}
To see this, note that $(x,y) \mapsto T(f(x), f(y))$ defines a skew-hermitian form on $K = \OO_K \otimes_\ZZ \QQ$, and that $\rm{Tr}_{K/\QQ}(T(f(x), f(y))) = E(f(x), f(y)) = Q(x,y)$
for $x,y \in \OO_K$. Thus, \eqref{claim:TQ} follows from Lemma \ref{lemma:equivalentforms}. Note that 
\begin{align}\label{by-def}
T_Q(x,y) = \eta^{-1} \cdot x \bar y \quad \quad \forall x,y \in \OO_K,
\end{align}
see 
equation \eqref{ppav-cm} and Lemma \ref{lemma:equivalentforms}. By \eqref{def:r=1}, \eqref{claim:TQ} and \eqref{by-def}, we get 
$ T(r,r) = T(f(1), f(1)) = T_Q(1,1)  = \eta^{-1},$ so that \begin{align}\label{align:norm1}h(r,r) = \eta \cdot T(r,r) = 1.\end{align} We claim that $h(r_\tau,x) = 0$, where the element $r_\tau \in V$ is the image of $r \in \Lambda$ under the natural inclusion $\Lambda \hookrightarrow V$. 
To see this, write 
$
M \coloneqq \Phi(\OO_K), $ with Hodge decomposition $ M_\CC= M \otimes_\ZZ \CC = M^{-1,0} \oplus M^{0,-1}$. Define $h_Q \coloneqq \eta T_Q$, so that $h_Q(x,y) = x \bar y$ for $x,y \in \OO_K$. 
For $\phi \in \Hom(K,\CC)$, let $h_{Q,\phi}$ denote the hermitian form on $ (K \otimes_\QQ \CC)_\phi \cong \CC$ induced by $h_Q$ and the embedding $\phi$, and note that $h_{Q, \phi}(x,y) = x \bar y$. 
  By Lemma \ref{lemma:agree}, we have $\phi(\eta) \cdot Q_\CC(x,\bar y) = h_{Q,\phi}(x,y)$ for all $ x,y \in (K \otimes_\QQ \CC)_\phi.$
  Consider the isomorphism $\Phi \colon \OO_K \otimes_\ZZ \CC \xrightarrow{\sim} M \otimes_\ZZ \CC$ induced by the isomorphism $\Phi \colon \OO_K \xrightarrow{\sim} M$. 
  As the hermitian form $h_{Q,\phi}(\Phi(x), \Phi(y)) = x \bar y$ is positive definite on $(M_\CC)_\phi$ 
  for each $\phi \in \Hom(K,\CC)$, as $\Im(\phi(\eta)) > 0$ for every $\phi \in \Phi$, and as the hermitian $iQ_\CC(\Phi(x), \overline{\Phi(y)})$ is positive definite on $M^{-1,0}$, we have: 
\begin{equation} \label{eq:posneg:2:3}
M^{-1,0} = M^{-1,0}_{\phi} = (M \otimes_\ZZ \CC)_\phi 
\quad \quad \forall \phi \in \Phi.
\end{equation}
Let $r_\tau^{-1,0} \in \rm H^{-1,0}_\tau$ and $r_\tau^{0,-1} \in \rm H^{0,-1}_\tau$ be the components of $r_\tau$ in the Hodge decomposition, so that $r_\tau = r^{-1,0}_\tau + r^{0,-1}_\tau$. As the projection of $\Phi(1) \in M$ onto $M^{0,-1}_\tau$ is zero (since $M^{0,-1}_\tau=0$ by equation \eqref{eq:posneg:2:3}), we get  that $r_\tau^{0,-1}$ is zero as well. 
Moreover, in view of \eqref{align:we-have-agree}, the Hodge decomposition $\rm H_1(A, \CC)_\tau = \rm H^{-1,0}_\tau \oplus \rm H^{0,-1}_\tau$ is orthogonal for the hermitian form $h$ on $\rm H_1(A, \CC)_\tau = V$. Therefore, we have: $$r_\tau = r^{-1,0}_\tau \in \rm H^{-1,0}_\tau = ( \rm H^{0,-1}_\tau )^\perp  = \langle x \rangle ^\perp,\quad \quad \text{hence} \quad \quad h(r_\tau, x) = 0.$$ Combining this with \eqref{align:norm1}, we see that $[x] \in H_r \subset \mr H$, 
hence $\mr H' \subset \mr H$.  

To show that $\mr H \subset \mr H'$, let $x \in V$ be non-zero such that $[x] \in H_r \subset \mr H \subset \CCH^n$ for some $r \in \Lambda$ with $h(r,r) = 1$, and consider the $\Lambda$-marked $\OO_K$-linear polarized abelian variety $A = \rm H^{-1,0}/\Lambda$ corresponding to $[x] \in \CCH^n$. Define an $\OO_K$-linear map $f \colon \OO_K \to \Lambda$ by putting $f(1) = r$. Then $f$ is a morphism of Hodge structures because its $\CC$-linear extension preserves the eigenspace decompositions and $h(x,r) = 0$. 
We obtain an $\OO_K$-linear homorphism $f \colon \CC^g/\Phi(\OO_K) \to A$. The fact that $h(r,r) = 1$ implies that $f$ preserves the polarizations on both sides, so $[x] \in \mr H'$ and we win.
\end{proof}


\section{Orthogonality of the hyperplane arrangement}  \label{sec:orthogonalhyperplane}

The goal of this section is to prove Theorem \ref{orthogonal}. For an integer $n \geq 2$, let 
$
\mr L = (K, \Lambda)
$
be an admissible hermitian lattice of rank $n+1$. In particular, $K$ denotes a CM field, and as usual, we let $F \subset K$ denote its maximal totally real subfield, $\sigma \colon K \to K$ the involution corresponding to complex conjugation on $\CC$, and  $\mf D_K \subset \OO_K$ the different ideal. Let $ \eta \in \OO_K$ such that $\mf D_K = (\eta)$ and $\sigma(\eta) = -\eta$; such an element exists because $\mr L$ is admissible (see Definition \ref{definition:introduction:admissible}). 

Let $\Phi \subset \Hom(K, \CC)$ be the unique CM type such that $\Im(\phi(\eta)) > 0$ for all $\phi \in \Phi$. Then $\Lambda$ is a free $\OO_K$-module of rank $n + 1$ equipped with a hermitian form 
$$
h \colon \Lambda \times \Lambda \longrightarrow \OO_K
$$
of signature $(n,1)$ with respect to one $\tau \in \Phi$ and of signature $(n+1,0)$ with respect to all other $\phi \neq \tau \in \Phi$. These data define a skew-hermitian form 
$
T\colon \Lambda \times \Lambda \to \mf D_K^{-1}$ by putting $T \coloneqq \eta^{-1} \cdot h.
$
By Lemma \ref{lemma:equivalentforms}, the skew-hermitian form $T$ is in turn attached to an alternating form 
$
E\colon \Lambda \times \Lambda \to \ZZ$ such that $E(a x, y) = E(x, \sigma(a) y)$ for all $a \in \OO_K, x,y \in \Lambda$. 
As in Section \ref{set-up-2}, we consider the complex vector space $$V = \Lambda \otimes_{\OO_K, \tau} \CC = \Lambda \otimes_{\OO_F, \tau}\CC$$ equipped with the hermitian form $$h \colon V \times V \longrightarrow \CC$$ induced by the hermitian form $h \colon \Lambda \times \Lambda \to \OO_K$ and the embedding $\tau \colon K \hookrightarrow \CC$, and the space $\CC H^n$ of lines in $V$ which are negative for the hermitian form $h$ on $V$. As for an element $r \in \Lambda$ with $h(r,r)= 1$, 
we define $H_r \subset \CC H^n$ as the space of negative lines $\ell \subset V$ which are orthogonal to $r$ with respect to the hermitian form $h$ on $V$. 

\begin{proof}[Proof of Theorem \ref{orthogonal}]
Let $r, t \in \Lambda$ such that $h(r,r) = h(t,t) = 1$. Suppose that \begin{align}\label{align:HrHt}H_r \cap H_t \neq \emptyset \quad \text{ and } \quad H_r \neq H_t.\end{align}
We must prove that $h(r,t) = 0$. Let $g \coloneqq [F \colon \QQ]$. 
Define $\Phi \colon \OO_K \hookrightarrow \CC^g$ as in \eqref{align:embedding-phi} and consider the complex torus $B \coloneqq \CC^g/\Phi(\OO_K)$. Since the different ideal $\mf D_K$ is the principal ideal $ (\eta) \subset \OO_K$, we have
\begin{align*} 
\{x \in K \mid \textnormal{Tr}_{K/\QQ} \left(x \eta^{-1} \OO_K\right) \subset \ZZ \} = \{x \in K \mid x \cdot \eta^{-1} \OO_K \subset \eta^{-1} \OO_K \} = \OO_K. 
\end{align*}
Therefore, the polarization $\lambda_B$ on $B$ attached to the alternating form 
\[
Q \colon \Phi(\OO_K) \times \Phi(\OO_K) \to \ZZ, \quad \left(\Phi(x), \Phi(y) \right) \mapsto 
\textnormal{Tr}_{K/\QQ} \left(\eta^{-1} \cdot x \cdot y\right),
\] 
see Lemma \ref{lemma:milne}, is in fact a principal polarization on $B$. 

Consider the moduli space $\tn{Sh}_K(\Lambda)$ of Definition \ref{definition-shimura-2}, as well as the isomorphism
$
\tn{Sh}_K(\Lambda) \xrightarrow{\sim} \Aut(\Lambda) \sm \CC H^n
$
of Corollary \ref{cor-uniformization}. 
Let $[x] \in H_r \cap H_t \subset \CC H^n$, and let $[(A,\lambda, \iota)]$ be the point in $\rm{Sh}_K(\Lambda)$ corresponding to the image of $[x]$ in $L \setminus \CC H^n$, such that $(A, \lambda, \iota)$ is a polarized $\OO_K$-linear abelian variety with $\rm H_1(A,\ZZ) = \Lambda$ (so that $A = \rm H^{-1,0} / \Lambda$ with $\Lambda \otimes_\ZZ \CC = \rm H^{-1,0} \oplus \rm H^{0,-1}$), and $E_A =E$ and $T_A = T$, where $E_A$ and $T_A$ are the forms associated to the polarization $\lambda$ of $A$ as in \eqref{align:alternating-pol} and \eqref{align:skew-pol}. 

By Proposition \ref{prop:HcorrespondsNonSimple}, the norm one elements $r$ and $t$ of $\Lambda$ induce $\OO_K$-linear morphisms 
$
f_1 \colon B \to A$ and $f_2 \colon B \to A$ of polarized $\OO_K$-linear abelian varieties. Since the polarization of $B$ is principal, the maps $f_1$ and $f_2$ are embeddings. 
Moreover, since $(B, \lambda_B)$ is a principally polarized abelian variety, there exist an abelian subvariety $C_i \subset A$ for $i = 1,2$ such that 
$ A \cong B \times C_1$ and $ A \cong B \times C_2$ as polarized abelian varieties (cf.\ \cite[Corollary 5.3.13]{birkenhake}), where for $i = 1,2$, the polarization on $B \times C_i$ is the product polarization $\lambda_B \times \lambda_{C_i}$, with $\lambda_{C_i} = \lambda|_{C_i}$ the pull-back of the polarization $\lambda$ along the embedding $C_i \subset A$. 

Note that $B$ is a simple abelian variety because $\End(B) \otimes_\ZZ \QQ = \OO_K \otimes_{\ZZ} \QQ = K$ is a field. In particular $(B, \lambda_B)$ is non-decomposable as a principally polarized principally polarized abelian variety.  By \cite{debarreproduits}, the decomposition of $(A, \lambda)$ into non-decomposable polarized abelian subvarieties is unique, in the strong sense that if $(A_i, \lambda_i)$, $i\in \{1, \dotsc, a\}$ and $(B_j, \mu_j)$, $j \in \{1, \dotsc, b\}$ are polarized abelian subvarieties such that the natural homomorphisms $\prod_i(A_i, \lambda_i) \to (A, \lambda)$ and $\prod_j(B_j, \lambda_j) \to (A, \lambda)$ are isomorphisms, then $a = b$ and there exists a permutation $\sigma$ on $\{1, \dotsc, a\}$ such that $B_j$ and $A_{\sigma(j)}$ are \textit{equal} as polarized abelian subvarieties of $(A, \lambda)$, for every $j \in \{1, \dotsc, a\}$. Thus, for the abelian subvarieties $
B_i \coloneqq f_i(B) \subset A$, we have either that $B_1 = B_2 \subset A$, or that $B_1 \cap B_2 = \{0\} \subset A$. We claim that the first option is impossible. Indeed, if $B_1 = B_2$ as polarized abelian subvarieties of $A$, then if we let $f_i \colon \OO_K \to \Lambda$ denote the map induced by $f_i \colon B = \CC^g/\Phi(\OO_K) \to \rm H^{-1,0}/\Lambda = A$, we have $\OO_K\cdot r = f_1(\OO_K) = f_2(\OO_K)= \OO_K \cdot t \subset \Lambda$, hence $r = \lambda t$ for some $\lambda \in \OO_K^\ast$; but then $H_r = H_t \subset \CC H^n$ which contradicts \eqref{align:HrHt}. Therefore, we must have 
$$A = B_1 \times B_2 \times C$$ as polarized abelian varieties, for some polarized abelian subvariety $C$ of $A$. This implies that \begin{align}\label{AV:orth}
\rm H^{-1,0} = \text{Lie}(A) = \Lie(B_1) \oplus \Lie(B_2) \oplus \Lie(C),
\end{align}
and this decomposition is orthogonal for the positive definite hermitian form $iE_\CC(x,\bar y)$ on $\rm H^{-1,0}$, where $E \colon \rm H_1(A,\ZZ) \times \rm H_1(A,\ZZ) \to \ZZ$ is the alternating form corresponding to the polarization $\lambda$ of $A$. 

Let $r_\tau$ (resp.\ $t_\tau$) be the image of $r$ (resp.\ $t$) in $V = \left(\Lambda \otimes_\ZZ \CC \right)_\tau = \Lambda \otimes_{\OO_K, \tau}\CC$ under the natural map $\Lambda \to V$. Observe that $r_\tau = r^{-1,0}_\tau \in \rm H^{-1,0}_\tau$ and $t_\tau = t^{-1,0}_\tau \in \rm H^{-1,0}_\tau$, see the proof of Proposition \ref{prop:HcorrespondsNonSimple}. By Lemma \ref{lemma:agree}, see also equation \eqref{align:we-have-agree}, we have
\begin{align*}
h(r,t) = h(r_\tau, t_\tau) = \tau(\eta)\cdot T^\tau_\CC(r_\tau, t_\tau) = \tau(\eta) \cdot E_{\CC}(r_\tau, \overline{t_\tau}) = \tau(\eta) \cdot E_\CC (r_\tau^{-1,0}, \overline{t_\tau^{-1,0}}).
\end{align*}
Since $r_\tau^{-1,0} \in \Lie(B_1)$ and $t_\tau^{-1,0} \in \Lie(B_2)$, we have $i E_\CC(r_\tau^{-1,0}, \overline{t_\tau^{-1,0}}) = 0$ by the orthogonality of \eqref{AV:orth}. We conclude that $h(r,t) = 0$, and the proof is finished. 
\end{proof}

\begin{example}
Let $n \geq 1$ be an integer and $\mr L = (K, \Lambda)$ an hermitian lattice of rank $n+1$, such that the associated hermitian form $h \colon V \times V \to \CC$ on the complex vector space $V = \Lambda \otimes_\ZZ \RR$ has signature $(n,1)$. 
Assume that $K$ is the CM field $K = \QQ(\sqrt{d}) \subset \CC$ for some $d \in \ZZ$ with $d < 0$. By Example \ref{proposition:discr}, $\mr L$ is admissible. 

There is an alternative, more elementary proof of Theorem \ref{orthogonal} in this case, see \cite[Lemma 7.29]{ACTsurfaces}. Namely, let $r, t \in \Lambda \subset V$ with $h(r,r) = h(t,t) = 1$. Assume that $H_r \neq H_t$ and that $H_r \cap H_t \neq \emptyset \subset \CC H^n$. We claim that $h(r,t) = 0$. To prove this, let $[x] \in H_r \cap H_t$, lift $[x]$ to an element $x \in V$, and consider the decomposition $V = \langle x \rangle \oplus \langle x \rangle^\perp$. Since $r, t \in \langle x \rangle^\perp$ and the signature of $(V,h)$ is $(n,1)$, the elements $r$ and $t$ span a positive definite subspace $\langle r, t \rangle \subset V$. Therefore, the matrix 
\[
\begin{pmatrix}
h(r,r) & h(r,t) \\
h(t,r) & h(t,t)
\end{pmatrix} = 
\begin{pmatrix}
1 & h(r,t) \\
h(t,r) & 1
\end{pmatrix}
\]
is positive definite, hence $\va{h(r,t)}^2 < 1$. Since $\va{h(r,t)}^2 \in \ZZ_{\geq 0}$, we have $h(r,t) = 0$. 
\end{example}

\printbibliography

@book {lee-riemannian,
    AUTHOR = {Lee, John},
     TITLE = {Introduction to {R}iemannian manifolds},
    SERIES = {Graduate Texts in Mathematics},
    VOLUME = {176},
 PUBLISHER = {Springer, Cham},
      YEAR = {2018},
     PAGES = {xiii+437},
}

@book {washington,
    AUTHOR = {Washington, Lawrence},
     TITLE = {Introduction to {C}yclotomic {F}ields},
    SERIES = {Graduate Texts in Mathematics},
    VOLUME = {83},
   EDITION = {Second Edition},
 PUBLISHER = {Springer-Verlag, New York},
      YEAR = {1997},
     PAGES = {xiv+487},
}

@book {narkiewicz,
    AUTHOR = {Narkiewicz, W\l adys\l aw},
     TITLE = {Elementary and {A}nalytic {T}heory of {A}lgebraic {N}umbers},
    SERIES = {Springer Monographs in Mathematics},
   EDITION = {Third},
 PUBLISHER = {Springer-Verlag, Berlin},
      YEAR = {2004},
     PAGES = {xii+708},
}

@article {edgar,
    AUTHOR = {Edgar and Mollin and Peterson},
     TITLE = {Class groups, totally positive units, and squares},
  JOURNAL = {Proceedings of the American Mathematical Society},
    VOLUME = {98},
      YEAR = {1986},
    NUMBER = {1},
     PAGES = {33--37},
}

@book {bridson-metric,
    AUTHOR = {Bridson, Martin and Haefliger, Andr\'{e}},
     TITLE = {Metric {S}paces of {N}on-{P}ositive {C}urvature},
    SERIES = {Grundlehren der mathematischen Wissenschaften},
    VOLUME = {319},
 PUBLISHER = {Springer-Verlag, Berlin},
      YEAR = {1999},
     PAGES = {xxii+643},
}

@article {lange-orbifolds-metric,
    AUTHOR = {Lange, Christian},
     TITLE = {Orbifolds from a metric viewpoint},
  JOURNAL = {Geometriae Dedicata},
    VOLUME = {209},
      YEAR = {2020},
     PAGES = {43--57},
}

@article {emery-hyperbolic,
    AUTHOR = {Emery, Vincent and Mila, Olivier},
     TITLE = {Hyperbolic manifolds and pseudo-arithmeticity},
  JOURNAL = {Transactions of the American Mathematical Society. Series B},
    VOLUME = {8},
      YEAR = {2021},
     PAGES = {277--295},
}

@article {emery-volumes,
    AUTHOR = {Emery, Vincent},
     TITLE = {On volumes of quasi-arithmetic hyperbolic lattices},
  JOURNAL = {Selecta Mathematica. New Series},
    VOLUME = {23},
      YEAR = {2017},
    NUMBER = {4},
     PAGES = {2849--2862},
}

@article {raimbault,
    AUTHOR = {Raimbault, Jean},
     TITLE = {A note on maximal lattice growth in {${\rm S \rm O}(1,n)$}},
  JOURNAL = {International Mathematics Research Notices. IMRN},
      YEAR = {2013},
    NUMBER = {16},
     PAGES = {3722--3731},
}

@article {gelander-counting,
    AUTHOR = {Gelander, Tsachik and Levit, Arie},
     TITLE = {Counting commensurability classes of hyperbolic manifolds},
  JOURNAL = {Geometric and Functional Analysis},
    VOLUME = {24},
      YEAR = {2014},
    NUMBER = {5},
     PAGES = {1431--1447},}

@article {ratcliffe-volumes,
    AUTHOR = {Ratcliffe, John and Tschantz, Steven},
     TITLE = {Volumes of integral congruence hyperbolic manifolds},
  JOURNAL = {Journal f\"{u}r die Reine und Angewandte Mathematik},
    VOLUME = {488},
      YEAR = {1997},
     PAGES = {55--78},
}

@article {MR2821431,
    AUTHOR = {Belolipetsky, Mikhail and Thomson, Scott},
     TITLE = {Systoles of hyperbolic manifolds},
  JOURNAL = {Algebraic \& Geometric Topology},
    VOLUME = {11},
      YEAR = {2011},
    NUMBER = {3},
     PAGES = {1455--1469},
}

@article {prasad,
    AUTHOR = {Prasad, Gopal},
     TITLE = {Volumes of {$S$}-arithmetic quotients of semi-simple groups},
  JOURNAL = {Institut des Hautes \'{E}tudes Scientifiques. Publications
              Math\'{e}matiques},
    NUMBER = {69},
      YEAR = {1989},
     PAGES = {91--117},
}

@article {MR2974199,
    AUTHOR = {Belolipetsky, Mikhail and Emery, Vincent},
     TITLE = {On volumes of arithmetic quotients of {${\rm{P}\rm{O}}(n,1)^\circ$},
              {$n$} odd},
  JOURNAL = {Proceedings of the London Mathematical Society},
    VOLUME = {105},
      YEAR = {2012},
    NUMBER = {3},
     PAGES = {541--570},
}

@article {MR2124587,
    AUTHOR = {Belolipetsky, Mikhail},
     TITLE = {On volumes of arithmetic quotients of {${\rm{S}\rm{O}}(1,n)$}},
  JOURNAL = {Annali della Scuola Normale Superiore di Pisa},
    VOLUME = {3},
      YEAR = {2004},
    NUMBER = {4},
     PAGES = {749--770},
}

@article{matsumoto-montesinos,
author = {Yukio Matsumoto and Jos{\'e} Mar{\'i}a Montesinos-Amilibia},
title = {{A Proof of Thurston's Uniformization Theorem of Geometric Orbifolds}},
volume = {14},
journal = {Tokyo Journal of Mathematics},
number = {1},
publisher = {Publication Committee for the Tokyo Journal of Mathematics},
pages = {181 -- 196},
year = {1991},
}

@article {borel-serre,
    AUTHOR = {Borel, Armand and Serre, Jean-Pierre},
     TITLE = {Th\'{e}or\`emes de finitude en cohomologie galoisienne},
  JOURNAL = {Commentarii Mathematici Helvetici},
    VOLUME = {39},
      YEAR = {1964},
     PAGES = {111--164},
}

@book {serre-galoisienne,
    AUTHOR = {Serre, Jean-Pierre},
     TITLE = {Cohomologie {G}aloisienne},
    SERIES = {Lecture Notes in Mathematics, Vol. 5},
      NOTE = {Cours au Coll\`ege de France, Paris, 1962--1963},
 PUBLISHER = {Springer},
      YEAR = {1973},
}

@article {MR1215595,
    AUTHOR = {Gromov, Mikhail and Schoen, Richard},
     TITLE = {Harmonic maps into singular spaces and {$p$}-adic
              superrigidity for lattices in groups of rank one},
 JOURNAL = {Institut des Hautes \'{E}tudes Scientifiques. Publications
              Math\'{e}matiques},
    NUMBER = {76},
      YEAR = {1992},
     PAGES = {165--246},
}

@article {MR1147961,
    AUTHOR = {Corlette, Kevin},
     TITLE = {Archimedean superrigidity and hyperbolic geometry},
  JOURNAL = {Annals of Mathematics},
    VOLUME = {135},
      YEAR = {1992},
    NUMBER = {1},
     PAGES = {165--182},
}

@inproceedings {MR0492072,
    AUTHOR = {Margulis, Gregory},
     TITLE = {Discrete groups of motions of manifolds of nonpositive
              curvature},
 BOOKTITLE = {Proceedings of the {I}nternational {C}ongress of
              {M}athematicians ({V}ancouver, {B}.{C}., 1974), {V}ol. 2},
     PAGES = {21--34},
 PUBLISHER = {Canad. Math. Congress, Montreal, Que.},
      YEAR = {1975},
}

@book{bergeron-clozel,
     author = {Bergeron, Nicolas and Clozel, Laurent},
     title = {Spectre automorphe des vari\'et\'es hyperboliques et applications topologiques},
     series = {Ast\'erisque},
     publisher = {Soci\'et\'e math\'ematique de France},
     number = {303},
     year = {2005},
}

@article{subspacestabilizers,
  author = {Belolipetsky, Mikhail and Bogachev, Nikolay and Kolpakov, Alexander and Slavich, Leone},
  title = {Subspace stabilisers in hyperbolic lattices},
        journal = {arXiv e-prints},
  year = {2021},
    url = {https://arxiv.org/abs/2105.06897}
}

@article{gaay-hyperbolic,
  author = {de Gaay Fortman, Olivier},
  title = {Hyperbolic geometry and real moduli of five points on the line},
        journal = {arXiv e-prints},
  year = {2023},
  url = {https://arxiv.org/abs/2111.06381}
}

@book {localfields,
    AUTHOR = {Serre, Jean-Pierre},
     TITLE = {Local {F}ields},
    SERIES = {Graduate Texts in Mathematics},
    VOLUME = {67},
 PUBLISHER = {Springer},
      YEAR = {1979},
     PAGES = {viii+241},
}

@article {kudlarap-special-II,
    AUTHOR = {Kudla, Stephen and Rapoport, Michael},
     TITLE = {Special cycles on unitary {S}himura varieties {II}: {G}lobal
              theory},
  JOURNAL = {Journal f\"{u}r die Reine und Angewandte Mathematik},
    VOLUME = {697},
      YEAR = {2014},
     PAGES = {91--157},
}

@incollection {stevenhagen,
    AUTHOR = {Stevenhagen, Peter},
     TITLE = {The arithmetic of number rings},
 BOOKTITLE = {Algorithmic number theory: lattices, number fields, curves and
              cryptography},
    VOLUME = {44},
     PAGES = {209--266},
 PUBLISHER = {Cambridge University Press},
      YEAR = {2008},
}

@book {milneANT,
  author={Milne, James},
  title={Algebraic {N}umber {T}heory (v3.01)},
    SERIES = {Online notes},
      YEAR = {2008}
}

@book {ruan-stringy,
    AUTHOR = {Adem, Alejandro and Leida, Johann and Ruan, Yongbin},
     TITLE = {Orbifolds and {S}tringy {T}opology},
 PUBLISHER = {Cambridge University Press},
      YEAR = {2007},
}

@article{picardperiods,
author = {Emile Picard},
     TITLE = {Sur des fonctions de deux variables ind\'{e}pendantes analogues
              aux fonctions modulaires},
  JOURNAL = {Acta Mathematica},
    VOLUME = {2},
      YEAR = {1883},
    NUMBER = {1},
     PAGES = {114--135},
}

@article {ACTsurfaces,
    AUTHOR = {Allcock, Daniel and Carlson, James and Toledo, Domingo},
     TITLE = {The complex hyperbolic geometry of the moduli space of cubic
              surfaces},
  JOURNAL = {Journal of Algebraic Geometry},
    VOLUME = {11},
      YEAR = {2002},
    NUMBER = {4},
     PAGES = {659--724},
}

@article{gromovshapiro,
     author = {Gromov, Mikhael and Piatetski-Shapiro, Ilya},
     title = {Non-arithmetic groups in {Lobachevsky} spaces},
     journal = {Publications Math\'ematiques de l'IH\'ES},
     pages = {93--103},
     publisher = {Institut des Hautes \'Etudes Scientifiques},
     volume = {66},
     year = {1987},
}

@incollection {orthogonalarrangements,
    AUTHOR = {Allcock, Daniel and Carlson, James and Toledo, Domingo},
     TITLE = {Orthogonal complex hyperbolic arrangements},
 BOOKTITLE = {Symposium in {H}onor of {C}. {H}. {C}lemens ({S}alt {L}ake
              {C}ity, {UT}, 2000)},
    SERIES = {Contemporary Mathematics},
    VOLUME = {312},
     PAGES = {1--8},
 PUBLISHER = {American Mathematical Society},
      YEAR = {2002},
}

@book {Neukirch,
    AUTHOR = {Neukirch, J\"{u}rgen},
     TITLE = {Algebraic {N}umber {T}heory},
    SERIES = {Grundlehren der mathematischen Wissenschaften},
    VOLUME = {322},
 PUBLISHER = {Springer},
      YEAR = {1999},
     PAGES = {xviii+571},
}

@article {Liang1976,
    AUTHOR = {Liang, Joseph},
     TITLE = {On the integral basis of the maximal real subfield of a
              cyclotomic field},
  JOURNAL = {Journal f\"{u}r die Reine und Angewandte Mathematik},
    VOLUME = {286(287)},
      YEAR = {1976},
     PAGES = {223--226},
}

@book {MR2954043,
    AUTHOR = {Lee, John},
     TITLE = {Introduction to {S}mooth {M}anifolds},
    SERIES = {Graduate Texts in Mathematics},
    VOLUME = {218},
   EDITION = {Second Edition},
 PUBLISHER = {Springer},
      YEAR = {2013},
     PAGES = {xvi+708},
}

@article{shimuratranscendental,
author = {Goro Shimura},
     TITLE = {On purely transcendental fields automorphic functions of
              several variable},
  JOURNAL = {Osaka Mathematical Journal},
    VOLUME = {1},
      YEAR = {1964},
    NUMBER = {1},
     PAGES = {1--14},
}

@article {Shimura1963ONAF,
    AUTHOR = {Shimura, Goro},
     TITLE = {On analytic families of polarized abelian varieties and
              automorphic functions},
  JOURNAL = {Annals of Mathematics},
    VOLUME = {78},
      YEAR = {1963},
     PAGES = {149--192},
}

@article {debarreproduits,
    AUTHOR = {Debarre, Olivier},
     TITLE = {Polarisations sur les vari\'{e}t\'{e}s ab\'{e}liennes produits},
  JOURNAL = {Comptes Rendus de l'Acad\'{e}mie des Sciences},
    VOLUME = {323},
      YEAR = {1996},
    NUMBER = {6},
     PAGES = {631--635},
}

@book {milneCM,
  author={Milne, James},
	title = {Complex {M}ultiplication (v0.10)},
    SERIES = {Online notes},
      YEAR = {2020}
}

@incollection {beauvillecubicsurfaces,
    AUTHOR = {Beauville, Arnaud},
     TITLE = {Moduli of cubic surfaces and {H}odge theory (after {A}llcock,
              {C}arlson, {T}oledo)},
 BOOKTITLE = {G\'{e}om\'{e}tries \`a courbure n\'{e}gative ou nulle, groupes discrets et
              rigidit\'{e}s},
    SERIES = {S\'{e}minaires et Congr\`{e}s},
    VOLUME = {18},
     PAGES = {445--466},
 PUBLISHER = {Société Mathématique de France},
      YEAR = {2009},
}

@incollection {fox,
    AUTHOR = {Fox, Ralph},
     TITLE = {Covering spaces with singularities},
 BOOKTITLE = {Algebraic geometry and topology. {A} symposium in honor of
              {S}. {L}efschetz},
     PAGES = {243--257},
 PUBLISHER = {Princeton University Press},
      YEAR = {1957},
}

@book {Gromov2007,
    AUTHOR = {Gromov, Misha},
     TITLE = {Metric {S}tructures for {R}iemannian and non-{R}iemannian
              {S}paces},
    SERIES = {Modern Birkh\"{a}user Classics},
 PUBLISHER = {Birkh\"{a}user},
      YEAR = {2007},
     PAGES = {xx+585},
}

@article {realACTsurfaces,
    AUTHOR = {Allcock, Daniel and Carlson, James and Toledo, Domingo},
     TITLE = {Hyperbolic geometry and moduli of real cubic surfaces},
  JOURNAL = {Annales Scientifiques de l'\'{E}cole Normale Sup\'{e}rieure},
    VOLUME = {43},
      YEAR = {2010},
    NUMBER = {1},
     PAGES = {69--115},
}

@article {realACTnonarithmetic,
    AUTHOR = {Allcock, Daniel and Carlson, James and Toledo, Domingo},
     TITLE = {Nonarithmetic uniformization of some real moduli spaces},
  JOURNAL = {Geometriae Dedicata},
    VOLUME = {122},
      YEAR = {2006},
     PAGES = {159--169},
}

@incollection {realACTbinarysextics,
    AUTHOR = {Allcock, Daniel and Carlson, James and Toledo, Domingo},
     TITLE = {Hyperbolic geometry and the moduli space of real binary
              sextics},
 BOOKTITLE = {Arithmetic and {G}eometry around {H}ypergeometric {F}unctions},
    SERIES = {Progress in Mathematics},
    VOLUME = {260},
     PAGES = {1--22},
 PUBLISHER = {Birkh\"{a}user},
      YEAR = {2007},
}

@article {heckman2016hyperbolic,
    AUTHOR = {Heckman, Gert and Rieken, Sander},
     TITLE = {Hyperbolic geometry and moduli of real curves of genus three},
  JOURNAL = {Mathematische Annalen},
    VOLUME = {370},
      YEAR = {2018},
    NUMBER = {3-4},
     PAGES = {1321--1360},
}

@article {DeligneMostow,
    AUTHOR = {Deligne, Pierre and Mostow, George},
     TITLE = {Monodromy of hypergeometric functions and nonlattice integral
              monodromy},
  JOURNAL = {Institut des Hautes \'{E}tudes Scientifiques. Publications
              Math\'{e}matiques},
    NUMBER = {63},
      YEAR = {1986},
     PAGES = {5--89},
}

@article{article,
author = {Gonzalez, JAN and Salas, JBS},
year = {2003},
month = {01},
pages = {1-+},
title = {C-infinity-differentiable spaces},
volume = {1824},
journal = {Lecture Notes in Mathematics -Springer-verlag-}
}

@BOOK{Thurston80,
    AUTHOR = "Thurston, William",
    TITLE = "The {G}eometry and {T}opology of {T}hree-{M}anifolds",
    PUBLISHER = "Princeton University Press",
    YEAR = "1980",
}

@book {birkenhake,
    AUTHOR = {Birkenhake, Christina and Lange, Herbert},
     TITLE = {Complex {A}belian {V}arieties},
 PUBLISHER = {Springer-Verlag},
      YEAR = {2004},
}

@misc{stacks-project,
    shorthand    = {Stacks},
    author       = {The {Stacks Project Authors}},
    title        = {\textit{Stacks Project}},
    howpublished = {\url{https://stacks.math.columbia.edu}},
    year         = {2018},
  }

\textsc{Olivier de Gaay Fortman, Department of Mathematics, Universiteit Utrecht, Budapestlaan 6, 3584 CD Utrecht, The Netherlands}\par\nopagebreak 
  \textit{E-mail address:} \texttt{a.o.d.degaayfortman@uu.nl}

\end{document}